\documentclass[11pt]{article}
\usepackage[english]{babel}
\usepackage[a4paper, left=3.4cm, right=3.4cm, top=3cm]{geometry}

\usepackage{xfrac}

\usepackage{mathptmx}      % use Times fonts if available on your TeX system

\usepackage{amsmath,amsthm}
\usepackage{amssymb}
\usepackage{amsfonts}
\usepackage{oldgerm}
\usepackage{mathrsfs}
\usepackage{comment}
\usepackage{tikz}
\usepackage{pgfplots}
\usepackage[scanall]{psfrag}
\usepackage{graphicx,pst-all,bm}
\usepackage{flushend}
\usepackage{subfig}
\usepackage{mdwlist}
\usepackage{multirow}
\usepackage{color}

\usepackage{longtable}
\usepackage{cuted}
\usepackage{verbatim}

\usepackage{siunitx}

\newcommand{\C}{\mathbb{C}}
\newcommand{\K}{\mathbb{K}}

\newcommand{\Q}{\mathbb{Q}}
\newcommand{\R}{\mathbb{R}}
\newcommand{\Z}{\mathbb{Z}}

\DeclareMathOperator{\im}{im \, }

\DeclareMathOperator{\diag}{diag}

\newcommand{\Real}{\mathbb{R}}
\newcommand{\Natu}{\mathbb{N}}
\newcommand{\pPi}{\mathnormal{\Pi}} 
\newcommand{\rank}{\operatorname{rank}}
%%%%%%%%%%%%%%%
\newtheorem{proposition}{Proposition}[section]
\newtheorem{theorem}[proposition]{Theorem}
\newtheorem{corollary}[proposition]{Corollary}
\newtheorem{lemma}[proposition]{Lemma}
\newtheorem{definition}[proposition]{Definition}
\newtheorem{remark}[proposition]{Remark}
\newtheorem{example}[proposition]{Example}

\newtheorem{conjecture}[proposition]{Conjecture}
\newtheorem{hypothesis}[proposition]{Hypothesis}
\newcommand{\corank}{\operatorname{corank}}

\renewcommand{\it}{\itshape}
\renewcommand{\rm}{\mathrm}
\pgfplotsset{compat=1.17}
\usepackage{authblk}

\setlength{\parindent}{0cm}
%%%%%%%%%%%%%%%%%%%%%%%%%%%%%%%%
%%%%%%%%%%%%%%%%%%%%%%%%%%%%%%%%%%%%%%%%%%%%%%%%%%%%
%%%%%%%%%%%%%%%%%%%%%%%%%%%%%%%%%%%%%%%%%%%%%%%%%%%%
%%%%%%%%%%%%%%%%%%%%%%%%%%%%%%%%%%%%%%%%%%%%%%%%%%%%

\begin{document}

\title{The common ground of DAE approaches.\\
\Large An overview of diverse DAE frameworks \\
emphasizing their commonalities.}

\author[1]{Diana Est\'evez Schwarz}
\author[2]{Ren\'e Lamour}
\author[2]{Roswitha M\"arz}

\affil[1]{\small Berliner Hochschule f\"ur Technik}
\affil[2]{\small Humboldt University of Berlin, Institute of Mathematics}

\maketitle

\begin{abstract}
We analyze different approaches to differential-algebraic equations with attention to the implemented rank conditions of various matrix functions. These conditions are apparently very different and certain rank drops in some matrix functions actually indicate a critical solution behavior. We look for common ground by considering various index and regularity notions from literature generalizing the Kronecker index of regular matrix pencils. In detail, starting from the most transparent reduction framework, 
we work out a comprehensive regularity concept with canonical characteristic values applicable across all frameworks and prove the equivalence of thirteen distinct definitions of regularity. 
This makes it possible to use the findings of all these concepts together.
Additionally, we show why not only the index but also these canonical characteristic values are crucial to describe the properties of the DAE.
\end{abstract}
\textbf{Keywords:} Differential-Algebraic Equation, Higher Index, Regularity, Critical Points, Singularities, Structural Analysis, Persistent Structure, Index Concepts, Canonical Characteristic Values
\medskip \\
\textbf{AMS Subject Classification:} 34A09, 34A12, 34A30, 34A34, 34-02

%\pagebreak
\setcounter{secnumdepth}{3}
\setcounter{tocdepth}{3}

%\maketitle
\tableofcontents
\pagebreak
%%%%%%%%%%%%%%%%%%%%%%%%%%%%%%%%%%
%-------------------------------------------------------------------------------
\section{Introduction}\label{sec:Introduction}
%-------------------------------------------------------------------------------

\begin{flushright}
\textsc{What proven concepts differ is remarkable,\\
but what they have in common is essential.}                                                                                              \end{flushright}
                                                                                                                                                                                            
%%%%%%%%%%%%%%%%%%%%%%%
 \emph{Who coined the term DAEs?} is asked in the engaging essay \cite{Simeon} and the answer is given there: Bill Gear. The first occurrence of the term \emph{Differential-Algebraic Equation} can be found in the title of Gear's  paper from 1971 \emph{Simultaneous numerical solution of differential-algebraic equations}\cite{Gear71} and in his book \cite{Gear71B} where he considers examples from electric circuit analysis. 
 The German term \emph{Algebro-Differentialgleichungssysteme} comes from physicists and electronics engineers and it is first found as a chapter title in the book \emph{Rechnergest{\"u}tzte Analyse in der Elektronik} from 1977, \cite{EMR77}, in which the above two works are already cited.
Obviously, electric circuit analysis accompanied by the diverse computer-aided engineering that was emerging at the time gave the impetus  for many developments in the following 50 years. Actually, there are several quite different approaches with a large body of literature, such as the ten volumes of the DAE-Forum book series, but still too few commonalities have been revealed. We would like to contribute to this, in particular by showing equivalences.
\bigskip

We are mainly focused on linear differential algebraic equations (DAEs) in standard form,
\begin{align}\label{1.DAE}
 Ex'+Fx=q,
\end{align}
in which $E,F:\mathcal I\rightarrow \Real^{m\times m}$ are sufficiently smooth, at least continuous, matrix functions on the interval $\mathcal I\subseteq\Real$  so that all the index concepts we look at apply\footnote{With regard to linearizations of nonlinear DAEs, we explicitly do not assume that $E, F$ are real analytic or from $C^{\infty}$.}. The matrix $E(t)$ is  singular for all $t\in\mathcal I$. 
\bigskip

If E and F are constant matrices, the regularity of the DAE means the regularity of the matrix pair $\{E,F\}$, i.e., $\det(sE+F)$ which is a polynomial in $s$ must not be identical zero. However, 
it must be conceded that, so far, for DAEs with variable coefficients, there are partially quite different definitions of regularity bound to the technical concepts behind them. 
Surely, regular DAEs have no freely selectable solution components and do not yield any consistency conditions for the inhomogeneities. But that's not all, certain qualitative characteristics of the flow and the input-output behavior are just as important, the latter especially with regard to the applications.
We are pursuing the question:
To what extent are the various rank conditions which support  DAE-index notions appropriate, informative and comparable? The answer results from 
an overview of diverse approaches to DAEs emphasizing their commonalities.
We hope that our analysis will also contribute to a harmonization of understanding in this matter. To our understanding, our main equivalence theorem from Section \ref{sec:MainTheorem} is a significant step toward this direction.
\medskip

In the vast majority of papers about DAEs, continuously differentiable solutions $x\in\mathcal C^{1}(\mathcal I,\Real^{m})$ are assumed, and smoother if necessary. On the other hand, since $E(t)$ is singular for every $t\in\mathcal I$, obviously only a part of the first derivative of the unknown solution is actually involved\footnote{For instance, the Lagrange parameters in DAE-formulations of mechanical systems do not belong to the differentiated unknowns.} in  the DAE \eqref{1.DAE}.
To emphasize this fact, the DAE \eqref{1.DAE} can be reformulated by means of a suitable factorization $E=AD$ as
\begin{align}\label{1.propDAE}
 A(Dx)'+Bx=q,
\end{align}
in which $B=F-AD'$. This allows the admission of only continuous solutions $x$ with continuously differentiable parts $Dx$. However, we do not make use of this possibility here. 
 Just as we
 focus on the original coefficient pair $\{E, F\}$ and smooth solutions in the present paper, we underline the identity,
\begin{align*}
 A(Dx)'+Bx=Ex'+Fx,\quad \text{ for }\; x\in\mathcal C^{1}(\mathcal I,\Real^{m}),
\end{align*}
being valid equally for each special factorization. In addition, we will highlight, that the auxiliary coefficient triple $\{A,D, B\}$ takes over the structural rank characteristics of $\{E, F\}$, and vice versa. With this we want to clear the frequently occurring  misunderstanding
that so-called \textit{DAEs with properly and quasi-properly stated leading term} are something completely different from  standard form DAEs.\footnote{A \textit{DAE with properly involved derivative} or \textit{properly stated leading term} is a DAE of the form \eqref{1.propDAE} with the properties $\im A=\im AD$, $\ker D=\ker AD$. We refer to \cite{CRR} for the general description and properties.}

Based on the realization that the \textsl{Kronecker index} is an adequate means to understand DAEs with constant coefficients,
we survey and compare different notions which generalize the  Kronecker index for regular matrix pairs.
We shed light on the concerns behind the concepts, but emphasize common features to a large extent 
as opposed to simply list them next to each other
 or to stress an otherness without further arguments.
We are convinced that especially
the basic rank conditions within  the various concepts prove to be an essential, unifying characteristic and gives the possibility of a better understanding and use.
\medskip

This paper is organized as follows.  
After clarifying important notions like solvability and equivalence transformations in Sections  \ref{s.Arrangements} and \ref{s.Equivalence}, we start introducing  a reference basic concept with its associated characteristic values, that depend on the rank of certain matrices in Section \ref{s.Basic&more}. This basic notion is our starting point to prove many equivalences. \\

The structure of the paper reflects that, roughly speaking, there are two types of frameworks to analyze DAEs:
\begin{itemize}
	\item Approaches based on the direct construction of a matrix chain or a sequence of matrix pairs without using the so-called derivative array. The basic concept and all concepts discussed in Section \ref{s.Solvab} are of this type. They turn out to be equivalent and lead to a common notion of regularity. This is also equivalent to tranformability into specifically structured standard canonical form.
	
	\item Approaches based on the derivative array are addressed in Section \ref{s.notions}. In this case, it turns out that some of these are equivalent to the basic concept, whereas others are different in the sense that weaker regularity properties are used.  The later ones lead to our notion of almost regular DAEs. 
\end{itemize}

%%%%%%%%%%%%%%%%%%%%%%%%%%%
\begin{center}
\begin{table}[ht]
 \begin{tabular}{|c | c|c|c|} 
 \hline
 & \multicolumn{1}{ c|}{without derivative array} & \multicolumn{1}{c|}{with derivative array} \\
 \hline
\parbox[t]{2mm}{\multirow{7}{*}{\rotatebox[origin=c]{90}{regularity}}}
 \parbox[t]{2mm}{\multirow{7}{*}{\rotatebox[origin=c]{90}{ }}}
 &  & \\ 
 & Basic (Sec. \ref{s.regular}) &\\
 & Elimination (Sec. \ref{subs.elimination}) & Regular Differentiation (Sec. \ref{subs.qdiff})\\
 & Dissection (Sec. \ref{subs.dissection}) & \\
 & Regular Strangeness (Sec. \ref{subs.strangeness})& Projector Based Differentiation (Sec. \ref{subs.pbdiff})\\
 & Tractability (Sec. \ref{subs.tractability})&\\
 &  & \\
\hline
\parbox[t]{2mm}{\multirow{7}{*}{\rotatebox[origin=c]{90}{regularity or}}}
 \parbox[t]{2mm}{\multirow{7}{*}{\rotatebox[origin=c]{90}{almost regularity}}}
&  & \\
 &  & \\
 &  &  Differentiation (Sec. \ref{subs.diff}) \\
 &  &  \\
 &  & Strangeness (Sec. \ref{subs.Hyp}) \\
 &  &\\
&  & \\
 \hline
 \end{tabular}
\caption{Overview of the discussed index notions. The different regularity properties are defined in Section \ref{subs.equivalence}.}
\label{t.overview}
 \end{table}
\end{center}
%%%%%%%%%%%%%%%%%%%%%%%%%%

An overview of the approaches we discuss for linear DAEs can be found in Table \ref{t.overview}.  Illustrative examples for the different types of regularity are compiled in Section \ref{s.examples}. \\

All approaches use own characteristic values that correspond to ranks of matrices or dimensions of subspaces and in the end it turns out that, in case of regularity, they can be calculated with so-called canonical characteristic values and vice versa.\\

Section \ref{s.Notions} starts with a summary of all the obtained equivalence results in a quite extensive theorem with hopefully enlightening and pleasant content. Based on this, a discussion of the meaning of regularity completed by an inspection of related literature follows. \\

Finally, in Section \ref{s.nonlinearDAEs} we briefly outline the generalization of the discussed approaches to nonlinear DAEs with a view to linearizations.
To facilitate reading, some technical details  are provided in the appendix.

%%%%%%%%%%%%%%%
\section{Special arrangements for this paper}\label{s.Arrangements}
%%%%%%%%%
Throughout this paper the coefficients of the DAE \eqref{1.DAE} are  matrix functions $E, F:\mathcal I\rightarrow\Real^{m\times m}$ that are sufficiently smooth to allow the application of all the approaches discussed here, 
by convention of class $\mathcal C^m$, and $\mathcal C^{\mu}$, if applicable, if an index $\mu \leq m$ is already known, but not from $\mathcal C^{\infty}$ and the real-analytic function space. 
Our aim is to uncover the common ground between the various concepts, in particular the rank conditions.
We will not go into the undoubted differences between the concepts in terms of smoothness requirements here, which are very important, of course.  Please refer to the relevant literature.
\medskip

This is neither a historical treatise nor a comprehensive overview of approaches and results, but rather an attempt to reveal what is common to the popular approaches.
Wherever possible, we cite widely used works such as monographs and refer to the references therein for the classification of corresponding original works.

Our particular goal is the harmonizing comparison of the basic rank conditions behind the various concepts combined with the characterization of the class of regular  pairs $\{E,F\}$ or DAEs \eqref{1.DAE}.
Details regarding solvability statements within the individual concepts would go beyond the scope of this paper. Here we merely point out the considerable diversity of approaches.

While on the one 
hand, in many papers, from a rather functional analytical point of view, attention is paid to the lowest possible smoothness, suitable function spaces, rigorous solvability assertions, and precise statements about relevant operator properties such as surjectivity, continuity, e.g., \cite{GM86,CRR,Ma2014,Jansen2014}, we observe that,
on the one hand, solvability in the sense of the Definition \ref{d.solvableDAE} below is assumed and integrated into several developments from the very beginning, e.g., \cite{BCP89,KuMe2006,BergerIlchmann}.

We quote \cite[Definition 2.4.1]{BCP89}\footnote{See also Remark \ref{r.generalform} below.}:
\begin{definition}\label{d.solvableDAE}
 The system \eqref{1.DAE} is \emph{solvable} on the interval $\mathcal I$ if for every $m$-times differentiable $q$, there is at least one continuously differentiable solution to \eqref{1.DAE}. In addition, solutions are defined on all of $\mathcal I$ and are uniquely determined by their value at any $t\in\mathcal I$.
\end{definition}

\medskip

Here we examine and compare only those approaches whose characteristics do not change under equivalence transformations and which generalize the Kronecker index for regular matrix pairs. This rules out the so-called \emph{structural index}, e.g. \cite{Pantelides88,Pryce1998,RMB2000,PryceDAESA}.
\medskip

A widely used and popular means of investigating DAEs is the so-called \emph{perturbation index}, which according to \cite{HairerWanner} can be interpreted as a sensitivity measure in relation to perturbations of the given problem. 
For time-invariant coefficients $\{E,F\}$, the perturbation index coincides with the regular Kronecker index. 
We adapt \cite[Definition 5.3]{HairerWanner} to be valid for the linear DAE \eqref{1.DAE} on the interval $\mathcal I=[a,b]$:
\begin{definition}\label{d.perturbation}
 The system \eqref{1.DAE} has \emph{perturbation index} $\mu_p\in\Natu$ if  $\mu_p$ is the smallest integer such that for all functions $x:\mathcal I \rightarrow \Real^{m}$ having a defect $\delta = Ex'+Fx$ there exists an estimate
 \begin{align*}
  |x(t)|\leq c\{|x(a)|+ \max_{a\leq\tau\leq t}|\delta(\tau)|+\cdots+ \max_{a\leq\tau\leq t}|\delta^{(\mu_p-1)}(\tau)|\}, \quad t\in\mathcal I.
 \end{align*}
\end{definition}
The perturbation index does not contain any information about whether the DAE has a solution for an arbitrarily given $\delta$, but only records resulting defects. In the following, we do not devote an extra section to the perturbation index, but combine it with the proof of corresponding solvability statements and repeatedly involve it in the relevant discussions.
\medskip

We close this section with a comment on the index names below, more precisely on the various additional epithets used in the literature such as differentiation, dissection, elimination, geometric, strangeness, tractability, etc. We try to organize them and stick to the original names as far as possible, if there were any. In earlier works, simply the term \emph{index} is used, likewise \emph{local index} and \emph{global index}, other modifiers were usually only added in attempts at comparison, e.g., \cite{GHM,Mehrmann,RR2008}.
After it became clear that the so-called \emph{local index} (Kronecker index of the matrix pencil $\lambda E(t)+F(t)$ at fixed $t$) is irrelevant for the general characterization of time-varying linear DAEs, the term \emph{global index} was used in contrast. We are not using the extra label \emph{global} here, as all the terms considered here  could have this.

%%%%%%%%%%%%%%%%%%%%%%%%
\section{Comments on equivalence relations}\label{s.Equivalence}
%%%%%%%%%%%%%%%%%%%%%%%%

Equivalence relations and special structured forms are an important matter of the DAE theory from the beginning. Two pairs of matrix functions $\{E,F\}$ and $\{\bar E,\bar F\}$, and also the associated DAEs, are called \textit{equivalent}\footnote{In the context of the strangeness index \textit{globally equivalent}, e.g. \cite[Definition 2.1]{KuMe1996}, and \textit{analytically equivalent} in \cite[Section 2.4.22]{BCP89}. We underline that \eqref{1.Equivalence} actually defines a reflexive, symmetric, and transitive equivalence relation $\{E,F\}\sim \{\tilde E,\tilde F\}$.}, if there exist pointwise nonsingular, sufficiently smooth\footnote{$L$ is at least continuous, $K$ continuously differentiable. The further smoothness requirements in the individual concepts differ; they are highest when derivative arrays play a role.} matrix functions $L, K:\mathcal I\rightarrow \Real^{m\times m}$, such that
\begin{align}\label{1.Equivalence}
 \tilde E=LEK,\quad \tilde F=LFK+LEK'.
\end{align}
An equivalence transformation 
 goes along with  the premultiplication of \eqref{1.DAE} by $L$ and the coordinate change $x=K\tilde x$ resulting in the further DAE $\tilde E\tilde x'+\tilde F\tilde x=Lq$.
\medskip

It is completely the same whether one refers the equivalence transformation to the standard DAE \eqref{1.DAE} or to the version with properly involved derivative \eqref{1.propDAE} owing to the following relations:
\begin{align*}
 \tilde A&=LAK,\;\tilde D=K^{-1}DK,\;\tilde B=LBK+LAK'K^{-1}DK,\\
 \tilde A&=\tilde A\tilde D=\tilde E,\;\tilde B=\tilde F-\tilde E\tilde D'.
\end{align*}

 The DAE \eqref{1.DAE} is in \textit{standard canonical form} (SCF) \cite[Definition 2.4.5]{BCP89}, if
 \begin{align}\label{1.SCF}
  E=\begin{bmatrix}
     I_{d}&0\\0&N
    \end{bmatrix},\quad
F=\begin{bmatrix}
     \Omega&0\\0&I_{m-d}
    \end{bmatrix},
 \end{align}
and $N$ is strictly upper triangular.\footnote{Analogously, $N$ may also have strict lower triangular form.} 

The matrix function $N$ does not need to have constant rank or nilpotency index. Trivially, choosing 
\begin{align*}
 A=E,\; D=\diag\{I_{d},0,1,\ldots,1\},\; B=F,
\end{align*}
one obtains the form \eqref{1.propDAE}.
Obviously, a DAE in SCF decomposes into two essentially different parts, on the one hand a regular explicit ordinary differential equation (ODE) in $\Real^{d}$ and on the other some algebraic relations which require certain differentiations of components of the right-hand side $q$. More precisely, if $N^{\mu}$ vanishes identically, but $N^{\mu-1}$ does not, then derivatives up to the order $\mu-1$ are involved. The dynamical degree of freedom of the DAE in SCF is determined by the first part and equals $d$.
\medskip

In the particular case of constant $N$ and $\Omega$, the matrix pair $\{E,F\}$ in \eqref{1.SCF} has Weierstra{\ss}--Kronecker form \cite[Section 1.1]{CRR} or Quasi-Kronecker form \cite{BergerReis}, and the nilpotency index of $N$ is again called \emph{Kronecker index} of the pair $\{E,F\}$ and the matrix pencil $\lambda E+F$, respectively.\footnote{In general the Kronecker canonical form is complex-valued and $\Omega$ is in Jordan form. We refer to \cite[Remark 3.2]{BergerReis} for a plea not to call \eqref{1.SCF} a canonical form.}

The basic regularity notion \ref{d.2} below generalizes  regular matrix pairs (pencils) and their Kronecker index. Thereby, the Jordan structure of the nilpotent matrix $N$, in particular the characteristic values
$\theta_0\geq\cdots\geq\theta_{\mu-2}>\theta_{\mu-1}=0$, 
 \begin{align*}
  &\theta_0 \quad \text{number of Jordan blocks of order } \geq 2,\\
  &\theta_1 \quad \text{number of Jordan blocks of order } \geq 3,\\
  ...\\
  &\theta_{\mu-2}\quad \text{number of Jordan blocks of order }  \mu,\\
 \end{align*}
 play their role and one has $d=\rank E-\sum_{i=0}^{\mu-2}\theta_i $. Generalizations of these characteristic numbers play a major role further on.

\medskip
For readers who are familiar with at least one of the  DAE concepts discussed in this article, for a better understanding of the meaning of the characteristic values $\theta_i$ we recommend taking a look at Theorem \ref{t.Sum_equivalence} already now.

\section{Basic terms and beyond that}\label{s.Basic&more}
%%%%%%%%%%%%%%%%%%%%%%%%%%%%%%%%%%%%%
\subsection{What serves as our basic regularity notion}\label{s.regular}
%%%%%%%%%%%%%%
%%%%%%%%%%%%%%%%%%%%%%%%%%%%%%
In our view, the elimination-reduction approach to DAEs is the most immediately obvious and accessible with the least technical effort, which is why we choose it as the basis here. We largely use the representation from \cite{RaRh}.

We turn to the ordered pair $\{E,F\}$ of matrix functions $E,F:\mathcal I\rightarrow\Real^{m\times m}$ being sufficiently smooth, at least continuous, and consider the associated DAE
\begin{align}\label{DAE0}
 E(t)x'(t)+F(t)x(t)=q(t),\quad t\in \mathcal I,
\end{align}
as well as the accompanying time-varying subspaces in $\Real^{m}$,
\begin{align}\label{sub}
 \ker E(t),\quad S(t)=\{z\in \Real^m:F(t)z\in\im E(t)\},\quad t\in\mathcal I.
\end{align}
Let $S_{can}$ denote the so-called \emph{flow-subspace} of the DAE, which means that $S_{can}(\bar t)$ is the subspace containing the overall flow of the homogeneous DAE at time $\bar t$, that is, the set of all possible function values $x(\bar t)$ of solutions of the DAE $Ex'+Fx=0$\footnote{$S_{can}(\bar t)$ is also called \emph{linear subspace of initial values which are consistent at time }$\bar t$, e.g., \cite{BergerIlchmann}.},
\begin{align*}
 S_{can}(\bar t):=\{\bar x\in \Real^m: \text{there is a solution \;} x:(\bar t-\delta,\bar t+\delta)\cap\mathcal I\rightarrow \Real^m,\; \delta>0,\\
 \text{ of the homogeneous DAE such that } x(\bar t)=\bar x\},\quad \bar t\in\mathcal I.
\end{align*}

In accordance with various concepts, see \cite[Remark 3.4]{HaMae2023}, we 
agree on what \emph{regular} DAEs are, and show that then the time-varying flow-subspace $S_{can}(\bar t)$ is well-defined on all $\mathcal I$, and has constant dimension.

\begin{definition}\label{d.qualified}
 The pair  $\{E,F\}$ is called \emph{qualified} on $\mathcal I$ if 
 \begin{align*}
  \im [E(t) \,F(t)]=\Real^m,\quad \rank E(t)=r,\quad  t\in\mathcal I,
 \end{align*}
with integers $0\leq r\leq m$.
\end{definition}

\begin{definition}\label{d.prereg}
 The pair  $\{E,F\}$ and the DAE \eqref{DAE0}, respectively, are called \emph{pre-regular} on $\mathcal I$ if 
 \begin{align*}
  \im [E(t) \,F(t)]=\Real^m,\quad \rank E(t)=r,\quad \dim S(t)\cap \ker E(t)=\theta, \quad t\in\mathcal I,
 \end{align*}
with integers $0\leq r\leq m$ and $\theta\geq 0$.
Additionally, if $\theta =0$ and $r<m$, then the DAE is called \emph{regular with index one}, but if $\theta =0$ and $r=m$, then the DAE is called \emph{regular with index zero}.
\end{definition}
We underline that any pre-regular pair $\{E,F\}$ features three subspaces $S(t)$, $\ker E(t)$,  and $S(t)\cap \ker E(t)$ having constant dimensions $r$, $m-r$, and $\theta$, respectively. 
\medskip

We emphasize and keep in mind that now not only the coefficients are time dependent, but also the resulting subspaces. Nevertheless, we suppress in the following mostly the argument $t$, for the sake of better readable formulas. The equations and relations are then meant pointwise for all arguments.
\medskip

The different cases for $\theta=0$ are well-understood. 
A regular index-zero DAE is actually a regular implicit  ODE and $S_{can}=S=\Real^m,\, \ker E=\{0\}$. 
Regular index-one DAEs feature $S_{can}=S,\, \dim\ker E> 0$, e.g., \cite{GM86,CRR}. Note that $r=0$ leads to $S_{can}=\{0\}$.
All these cases are only interesting here as intermediate results.
\bigskip

We turn back to the general case, describe the flow-subspace $S_{can}$, and end up with a regularity notion associated with a regular flow.
 
The pair $\{E, F\}$ is supposed to be \emph{pre-regular}.
The first step of the reduction procedure from  \cite{RaRh} is then well-defined, we refer to \cite[Section 12]{RaRh} for the substantiating arguments.  In the first instance, we apply this procedure to homogeneous DAEs only.

We start by $E_0=E,\,F_0=F,\,m_0=m,\,r_{0}=r$, $\theta_0=\theta$, and consider the homogeneous DAE
\begin{align*}
 E_0x'+F_0 x=0.
\end{align*}
By means of a basis $Z_0:\mathcal I\rightarrow \Real^{m_0\times(m_0-r_0)}$ of $(\im E_0)^{\perp}=\ker E_0^{*}$ and a basis $Y_0:\mathcal I\rightarrow \Real^{m_0\times r_0}$ of $\im E_0$ we divide the DAE into the two parts
\begin{align*}
 Y_0^*E_0x'+Y_0^*F_0x=0,\quad Z_0^*F_0x=0.
\end{align*}
From  $\im[E_0,\,F_0]=\Real^m$ we derive that $\rank Z_0^*F_0= m_0-r_{0}$, and hence the subspace   
$S_{0}=\ker Z_0^*F_0$ has dimension $r_{0}$. Obviously, each solution of the homogeneous DAE must stay in the subspace $S_{0}$. Choosing a continuously differentiable basis 
 $C_0:\mathcal I\rightarrow \Real^{m_0\times r_0}$ of $S_{0}$, each solution of the DAE can be represented as $x=C_0 x_{(1)}$, with a function $x_{(1)}:\mathcal I\rightarrow \Real^{r_0}$ satisfying the DAE reduced to size $m_1=r_{0}$,
 \begin{align*}
 Y_0^*E_0C_0 x_{(1)}'+Y_0^*(F_0C_0+E_0C'_0)x_{(1)}=0.
\end{align*}
Denote $E_1=Y_0^*E_0C_0$ and $F_1=Y_0^*(F_0C_0+E_0C'_0)$ which have size $m_1\times m_1$.
The pre-regularity assures that $E_1$ has constant rank $r_{1}=r_0-\theta_0\leq r_{0}$. Namely,
we have
\begin{align*}
 \ker E_1=\ker E_0C_0=C^{+}_0 (\ker E_0\cap S_{0}),\quad \dim \ker E_1= \dim (\ker E_0\cap S_{0})=\theta_0.
\end{align*}
Here, $C_0(t)^+$ denotes the Moore-Penrose generalized inverse of $C_0(t)$.

Next we repeat the reduction step,
\begin{equation}\label{basic_reduction}
\begin{array}{rl}
 E_{i}&:=Y_{i-1}^*E_{i-1}C_{i-1},\quad F_{i}:= Y_{i-1}^*(F_{i-1}C_{i-1}+E_{i-1}C'_{i-1}),\\
 &Y_{i-1}, Z_{i-1}, C_{i-1} \text{ are smooth bases of the three subspaces}\\
 &\quad \im E_{i-1},\; (\im E_{i-1})^{\perp},\; \text{ and }\; S_{i-1}:=\ker Z_{i-1}^*F_{i-1},\\
 &\theta_{i-1}=\dim (\ker E_{i-1}\cap S_{i-1}),
 \end{array}
\end{equation}
supposing that the new pair 
$\{E_{i},F_{i}\}$ is  pre-regular again, and so on. The pair $\{E_{i},F_{i}\}$ has size $m_{i}:=r_{i-1}$ and $E_{i}$ has rank $r_{i}=r_{i-1}-\theta_{i-1}$.
This yields the decreasing sequence
$ m\geq r_{0}\geq \cdots\geq r_{j}\geq r_{j-1}\geq\cdots \geq 0 $ and rectangular matrix functions $C_i: \mathcal I \rightarrow \Real^{r_{i-1}\times r_i}$ with full column-rank $r_i$.
Denote by $\mu$ the smallest integer  such that either $r_{\mu-1}=r_{\mu}>0$ or $r_{\mu-1}=0$.
Then, it follows that $(\ker E_{\mu-1})\cap S_{\mu-1}=\{0\}$,  which means in turn  that
\begin{align*}
 E_{\mu-1}x_{(\mu-1)}'+F_{\mu-1}x_{(\mu-1)}=0
\end{align*}
represents a regular index-1 DAE. 

If  $r_{\mu-1}=0$, that is $E_{\mu-1}=0$, then $F_{\mu-1}$ is nonsingular due to the pre-regularity of the pair, which leads to $S_{\mu-1}=\{0\}$, $C_{\mu-1}=0$, and a zero flow 
$x_{(\mu-1)}(t)\equiv 0 $.
In turn  there is only the  identically vanishing solution 
\[x=C_0C_1\cdots C_{\mu-2}x_{(\mu-1)}=0\]
of the homogeneous DAE, and $C_0C_1\cdots C_{\mu-2}C_{\mu-1}=0$.

On the other hand,
 if $r_{\mu-1}=r_{\mu}>0$ then $x_{(\mu-1)}=C_{\mu-1}x_{(\mu)}$, $\rank C_{\mu-1}=r_{\mu-1}$, 
 and 
 $E_{\mu}$ remains nonsingular such that  the DAE 
\begin{align*}
 E_{\mu}x_{(\mu)}'+F_{\mu}x_{(\mu)}=0
\end{align*}
  is actually an implicit regular ODE living in $\Real^{m_{\mu}}$,\;$m_{\mu}=r_{\mu-1}$ and $S_{\mu}=\Real^{r_{\mu-1}}$.
Letting $C_{\mu}=I_{m_{\mu}}= I_{r_{\mu-1}}$, each solutions of the original homogeneous DAE \eqref{DAE0} has the form
\begin{align*}
x=Cx_{(\mu)}, \quad C:=C_0C_1\cdots C_{\mu-1}=C_0C_1\cdots C_{\mu-1}C_{\mu}
  :\mathcal I\rightarrow\Real^{m\times r_{\mu-1}},\; \rank C=r_{\mu-1}.
\end{align*}
Moreover, for each $\bar t\in \mathcal I$ and each $z\in\im C(\bar t)$, there is exactly one solution of the original homogeneous DAE passing through, $x(\bar t)=z$ which indicates that $\im C=S_{can}$.

As proved in \cite{RaRh}, the ranks $r=r_{0}> r_{1}>\cdots> r_{\mu-1}$ are independent of the special choice of the involved basis functions. In particular,
\begin{align*}
 d:=r_{\mu-1}=r-\sum_{i=0}^{\mu-2}\theta_i=\dim C
\end{align*}
appears to be the dynamical degree of freedom of the DAE.

The property of pre-regularity does not necessarily carry over to the subsequent reduction pairs, e.g.,\cite[Example 3.2]{HaMae2023}.

%%%%%%%%%
\begin{definition}\label{d.2a}
 The pre-regular pair  $\{E,F\}$ with $r<m$  and the associated DAE \eqref{DAE0}, respectively, are called \emph{regular} if there is an integer $\mu\in\Natu$ such that 
 the above reduction procedure \eqref{basic_reduction} is well-defined up to level $\mu-1$, each pair $\{E_{i},F_{i}\}$, $i=0,\ldots,\mu-1$, is pre-regular,  and if $r_{\mu-1}>0$ then $E_{\mu}$ is well-defined and  nonsingular, $r_{\mu}=r_{\mu-1}$. If $r_{\mu-1}=0$ we set $r_{\mu}=r_{\mu-1}=0$.
 
 The integer $\mu$ is called \emph{the index of the DAE \eqref{DAE0} and the given pair $\{E,F\}$}.
 The index $\mu$ and the ranks $r=r_{0}> r_{1}>\cdots > r_{\mu-1}=r_{\mu}$ are called \emph{characteristic values} of the pair and the DAE, respectively.
\end{definition}
By construction, for a regular pair it follows that $r_{i+1}= r_{i}-\theta_{i}$, $i=0,\ldots,\mu-1$. Therefore, in place of the above $\mu+1$ rank values $r_0,\ldots,r_{\mu}$, the following rank and the dimensions,
\begin{align}\label{theta}
 r \quad \text{and} \quad \theta_0 \geq\theta_1 \geq \cdots \geq \theta_{\mu-2} >\theta_{\mu-1}=0, 
\end{align}
\begin{align}\label{thetadef}
 \theta_{i}=\dim (\ker E_{i}\cap S_{i}),\; i\geq0,
\end{align}

can serve as characteristic quantities. Later it will become clear that these data also play an important role in other concepts, too, which  is the reason for the following definition equivalent to Definition \ref{d.2a}. 
\begin{definition}\label{d.2}
 The pre-regular pair  $\{E,F\}$, $E,F:\mathcal I\rightarrow \Real^{m\times m}$,  with $r=\rank E<m$,  and the associated DAE \eqref{DAE0}, respectively, are called \emph{regular} if there is an integer $\mu\in\Natu$ such that 
 the above reduction procedure is well-defined up to level $\mu-1$, with each pair $\{E_{i},F_{i}\}$, $i=0,\ldots,\mu-1$, being pre-regular,  and associated values \eqref{theta}.
 
 The  integer $\mu$ is called \emph{the index of the DAE \eqref{DAE0} and the given pair $\{E,F\}$}.
 The index $\mu$ and the values \eqref{theta} are called \emph{characteristic values} of the pair and the DAE, respectively.
\end{definition}
At this place we add the further  relationship,
\begin{align}\label{thetarank}
 \theta_{i}=\dim\ker \begin{bmatrix}
                      Y_{i}^{*}E_{i}\\Z_{i}^{*}F_{i}
                     \end{bmatrix}= m_{i} -\rank \begin{bmatrix}
                      Y_{i}^{*}E_{i}\\Z_{i}^{*}F_{i}
                     \end{bmatrix},\quad i=0,\ldots,\mu-1,
\end{align}
with which all quantities in \eqref{theta} are related to rank functions.
\begin{remark}\label{r.pencil}
 If $\{E,F\}$ is actually a pair of matrices $E,F\in \Real^{m\times m}$, then the pair is regular with index $\mu$ and characteristics $\theta_0\geq\cdots\geq\theta_{\mu-2}>\theta_{\mu-1}=0$, if and only if the matrix pencil is regular and the nilpotent matrix in its Kronecker normal form  shows
 \begin{align*}
  &\theta_0 \quad \text{Jordan blocks of order } \geq 2,\\
  &\theta_1 \quad \text{Jordan blocks of order } \geq 3,\quad\\
	&...\\
  &\theta_{\mu-2}\quad \text{Jordan blocks of order } \mu,
 \end{align*}
\end{remark}
\begin{remark}\label{r.regularity}
 As mentioned above, the presentation in this section mainly goes back to \cite{RaRh}. However, we have not taken up their notations \emph{regular} and \emph{completely regular} for the coefficient pairs  and \emph{reducible} and \emph{completely reducible} for DAEs, but that of other works, what we consider more appropriate to the matter.\footnote{In \cite{RaRh}, the coefficient pairs of DAEs which have arbitrary many solutions like \cite[Example 3.2 ]{HaMae2023} may belong to  \emph{regular} ones.}
 
 Not by the authors themselves, but sometimes by others, the index from \cite{RaRh} is also called \emph{geometric index}, e.g., \cite[Subsection 2.4]{RR2008}.
 
 An early predecessor version of this reduction procedure 
was already proposed and analyzed in \cite{Cis1982} under the name  \emph{elimination of the unknowns}, even for  more general pairs of rectangular matrix functions, see also Subsection \ref{subs.elimination}.
The regularity notion given in \cite{Cis1982} is consistent with Definition \ref{d.2}. 
Another very related such reduction technique has been presented and extended a few years ago under the name \emph{dissection concept} \cite{Jansen2014}. This notion of regularity also agrees with Definition \ref{d.2}, see Section \ref{subs.dissection}.

\end{remark}
\begin{theorem}\label{t.Scan}
 Let the DAE  \eqref{DAE0} be regular on $\mathcal I$ with index $\mu$ and  characteristic values \eqref{theta}.
 \begin{description}
  \item[\textrm{(1)}] Then the subspace $S_{can}(t)\subset \Real^m$ has  dimension $d=r-\sum_{i=0}^{\mu-2}\theta_i=r_{\mu-1}$ for all $t\in\mathcal I$, and
  the matrix function $C:\mathcal I\rightarrow \Real^{m\times d}$, $C=C_{0}\cdots C_{\mu-2}$, generated by the reduction procedure is a basis of $S_{can}$.
   \item[\textrm{(2)}]  The DAE features precisely the same structure on each subinterval $\mathcal I_{sub}\subset \mathcal I$.
 \end{description}
\end{theorem}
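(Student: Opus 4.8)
The plan is to read part (1) off from the reduction analysis that immediately precedes the theorem, and to obtain part (2) from the observation that every characteristic value is the rank of a matrix function which the regularity hypothesis forces to be constant on all of $\mathcal I$.

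\emph{Part (1).} First I would record that the reduction \eqref{basic_reduction}, applied to the homogeneous DAE, expresses every $\mathcal C^{1}$-solution $x$ in the form $x=C\,x_{(\mu-1)}$, where $C$ is the product of the reduction matrices $C_i$ produced by \eqref{basic_reduction} (a matrix function of constant full column rank $r_{\mu-1}$), and $x_{(\mu-1)}$ solves the terminal reduced equation $E_{\mu-1}x_{(\mu-1)}'+F_{\mu-1}x_{(\mu-1)}=0$, which by construction is a regular index-one DAE (or, after the last reduction step, a regular implicit ODE). Telescoping $r_{i+1}=r_i-\theta_i$ for $i=0,\dots,\mu-2$ gives $r_{\mu-1}=r-\sum_{i=0}^{\mu-2}\theta_i=d$, constant on $\mathcal I$ by pre-regularity at each level; hence $S_{can}(t)\subseteq\im C(t)$ with $\dim\im C(t)=d$. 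For the reverse inclusion I would take $\bar t\in\mathcal I$ and $z\in\im C(\bar t)$, write $z=C(\bar t)w$ with $w$ lying in the flow-subspace of the terminal equation (which, being index-one / implicit-ODE, admits a genuine $\mathcal C^{1}$-solution $x_{(\mu-1)}$ on a neighbourhood of $\bar t$ attaining $w$), and then substitute back up the chain: since each $C_i$ is $\mathcal C^{1}$ and $F_i$ was defined precisely as $Y_{i-1}^{*}(F_{i-1}C_{i-1}+E_{i-1}C_{i-1}')$, the function $x=Cx_{(\mu-1)}$ is a $\mathcal C^{1}$-solution of \eqref{DAE0} with $q=0$ and $x(\bar t)=z$, so $z\in S_{can}(\bar t)$. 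Thus $S_{can}=\im C$ throughout $\mathcal I$ and $C$ is a (pointwise) basis of it; the supporting details of the single reduction step and of this back-substitution are exactly those referenced from \cite[Section 12]{RaRh}.

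\emph{Part (2).} Here I would fix a subinterval $\mathcal I_{sub}\subseteq\mathcal I$ and restrict $E,F$ together with the smooth bases $Y_i,Z_i,C_i$ to it. The restricted bases remain pointwise of full rank and of the required smoothness, so \eqref{basic_reduction} runs verbatim on $\mathcal I_{sub}$ and produces exactly the restrictions of $E_i,F_i$. By Definition \ref{d.2} the characteristic data are the ranks of $E_i$ and of $[\,Y_i^{*}E_i;\,Z_i^{*}F_i\,]$ together with the dimensions $\theta_i=\dim(\ker E_i\cap S_i)$ as in \eqref{thetarank}, and the regularity hypothesis makes all of these constant on $\mathcal I$, hence on $\mathcal I_{sub}$; in particular $E_\mu$ stays nonsingular there. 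Since the characteristic values are independent of the chosen bases (\cite{RaRh}), the restricted DAE is regular on $\mathcal I_{sub}$ with the same index $\mu$ and the same characteristic values \eqref{theta}. Finally, $S_{can}(\bar t)$ is by definition governed by solutions on $(\bar t-\delta,\bar t+\delta)\cap\mathcal I$, so it is literally unchanged when $\mathcal I$ is shrunk to $\mathcal I_{sub}$, in agreement with part (1).

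I expect the only genuine obstacle to lie in the reverse inclusion of part (1): one must make sure that the terminal reduced equation really admits a $\mathcal C^{1}$-solution through every point of its flow-subspace (this is precisely where its index-one / implicit-ODE character is used) and that the back-substitution preserves both $\mathcal C^{1}$-regularity and the DAE itself, i.e. that the recursive definition of $F_i$ exactly cancels the derivative of the coordinate change $x_{(i-1)}=C_{i-1}x_{(i)}$. Both points are covered by the substantiating arguments of \cite[Section 12]{RaRh}. Part (2) is then essentially bookkeeping; the one thing to check is that restricting smooth full-rank bases to a subinterval keeps them smooth and of full rank, which is immediate.
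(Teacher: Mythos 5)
Your argument is correct and follows essentially the same route as the paper: the paper's proof consists of noting the telescoping relation $r_{i+1}=r_i-\theta_i$ and then citing \cite[Theorem 13.3]{RaRh}, whose content (the representation $x=Cx_{(\mu)}$ of all homogeneous solutions, the existence of a solution through each point of $\im C(\bar t)$ via the terminal index-one/implicit-ODE stage, and the basis-independence of the ranks) is exactly what you reconstruct. The only cosmetic point is that the full-column-rank-$d$ product is $C_0\cdots C_{\mu-1}$ (with $C_\mu=I$), matching the displayed construction in Section \ref{s.regular} rather than the $C_0\cdots C_{\mu-2}$ written in the theorem statement; your rank bookkeeping is consistent with the former.
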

\begin{proof}
Regarding the relation $r_{i+1}= r_{i}-\theta_{i}$, $i=0,\ldots,\mu-2$ directly resulting from the reduction procedure, the assertion 
 is an immediate consequence of \cite[Theorem 13.3]{RaRh}.
\end{proof}
Two canonical subspaces varying with time in $\Real^m$ are associated with a regular DAE \cite{CRR,HaMae2023}.  The first one is the flow-subspace $S_{can}$. The second one is a unique pointwise complement $N_{can}$ to the flow-subspace, such that
\begin{align*}
 S_{can}(t)\oplus N_{can}(t)=\Real^m,\quad N_{can}(t)\supset \ker E(t),\quad t\in \mathcal I,
\end{align*}
and the initial condition $x(\bar t)-\bar x\in N_{can}(\bar t)$ fixes exactly one of the DAE solutions for each given\\ $\bar t\in \mathcal I,\, \bar x\in \Real^{m}$  without any consistency conditions for the right-hand side $q$ or its derivatives, \cite[Theorem 5.1]{HaMae2023}, also \cite{CRR}.

\begin{theorem}\label{t.solvability}
 If the DAE  \eqref{DAE0} is regular on $\mathcal I$ with index $\mu$ and  characteristics 
 \eqref{theta}, then the following assertions are valid:
 \begin{description}
  \item[\textrm{(1)}] The DAE is solvable at least for each arbitrary right-hand side $q\in C^{m}(\mathcal I,\Real^{m})$.
  \item[\textrm{(2)}] $d=r-\sum_{i=0}^{\mu-2}\theta_i=r_{\mu-1}$  is the dynamical degree of freedom.
   \item[\textrm{(3)}] The condition $r=\sum_{i=0}^{\mu-2}\theta_i$ indicates a DAE with zero degree of freedom\footnote{So-called \emph{purely algebraic} systems.} and $S_{can}=\{0\}$, i.e. $d=0$.
    \item[\textrm{(4)}] For arbitrary given $q\in C^{m}(\mathcal I,\Real^{m})$, $\bar t\in \mathcal I$, and $\bar x\in\Real^m$, the initial value problem
    \begin{align*}
     Ex'+Fx=q,\quad x(\bar t)=\bar x,
    \end{align*}
is uniquely solvable, if the consistency condition \eqref{cons2} in the proof below is satisfied. Otherwise there is no solution.
  \item[\textrm{(5)}] The DAE has perturbation index $\mu$ on each compact subinterval of $\mathcal I$.
 \end{description}
\end{theorem}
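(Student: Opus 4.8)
The plan is to leverage the reduction procedure \eqref{basic_reduction} itself, now applied to the inhomogeneous DAE rather than just the homogeneous one, together with Theorem \ref{t.Scan}. First I would carry out one reduction step on $Ex'+Fx=q$: splitting via $Y_0,Z_0$ gives $Y_0^*E_0x'+Y_0^*F_0x=Y_0^*q$ and the constraint $Z_0^*F_0x=Z_0^*q$. Because $\rank Z_0^*F_0=m_0-r_0$, the constraint equation determines an affine subspace; writing $x=C_0x_{(1)}+v_0$ where $v_0$ is a smooth particular solution of $Z_0^*F_0v_0=Z_0^*q$ (available since $q\in C^m$ and $Z_0^*F_0$ has constant rank $m_0-r_0$), one obtains a reduced DAE $E_1x_{(1)}'+F_1x_{(1)}=q_1$ with $q_1$ depending on $q,q'$ and the smooth bases. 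Iterating $\mu-1$ times produces at the bottom either a regular index-1 DAE $E_{\mu-1}x_{(\mu-1)}'+F_{\mu-1}x_{(\mu-1)}=q_{\mu-1}$ (if $r_{\mu-1}>0$) or, after one more step, an implicit regular ODE $E_\mu x_{(\mu)}'+F_\mu x_{(\mu)}=q_\mu$; in the degenerate case $r_{\mu-1}=0$ the bottom equation $F_{\mu-1}x_{(\mu-1)}=q_{\mu-1}$ is uniquely solvable since $F_{\mu-1}$ is nonsingular. At each level the right-hand side picks up at most one further derivative of the previous one, so $q_j$ involves derivatives of $q$ up to order $j$, and $q\in C^m$ with $\mu\le m$ guarantees everything stays at least continuous; this simultaneously proves (1) and, by counting, establishes the perturbation-index estimate needed for (5).

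For (2) and (3), once solvability is in hand, the general solution of the homogeneous DAE is $x=Cx_{(\mu-1)}$ with $C$ of rank $d=r_{\mu-1}$ a basis of $S_{can}$ by Theorem \ref{t.Scan}(1), while a particular solution of the inhomogeneous DAE is obtained by back-substituting the $v_j$'s; hence the solution manifold is an affine space of dimension exactly $d$, which is the dynamical degree of freedom. The identity $d=r-\sum_{i=0}^{\mu-2}\theta_i$ is just $r_{i+1}=r_i-\theta_i$ telescoped, already noted after Definition \ref{d.2}. Statement (3) is the special case $d=0$: then $C=0$, the homogeneous DAE has only the trivial solution, and $S_{can}=\{0\}$.

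For (4), I would make the consistency condition \eqref{cons2} explicit by unwinding the substitution $x=C_0C_1\cdots C_{j-1}x_{(j)}+(\text{lower }v\text{-terms})$. Given $\bar t$ and $\bar x$, the bottom ODE (or algebraic equation) has a unique solution once its initial value is prescribed; pulling this back up, $x(\bar t)=\bar x$ is attainable iff $\bar x$ minus the accumulated particular-solution value at $\bar t$ lies in $\im C(\bar t)=S_{can}(\bar t)$ — equivalently, the complementary projection of $\bar x$ must match a quantity built from $q,\dots,q^{(\mu-1)}$ at $\bar t$; that is the content of \eqref{cons2}, and when it holds the initial value of $x_{(\mu-1)}$ (or $x_{(\mu)}$) is uniquely fixed, giving a unique solution. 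Finally, for (5) the composition of the $\mu$ reduction steps, each contributing one differentiation of the defect, together with the elementary perturbation estimate for a regular index-1 DAE (or implicit ODE) at the bottom — where Gronwall gives a bound in terms of $|x_{(\mu-1)}(a)|$ and $\max|q_{\mu-1}|$ on a compact subinterval — yields the bound $|x(t)|\le c\{|x(a)|+\sum_{k=0}^{\mu-1}\max|\delta^{(k)}|\}$, and minimality of $\mu$ follows because the index-$1$ bottom block genuinely requires one derivative and each higher reduction level is forced, so no smaller integer works; this is where I expect to invoke the known perturbation-index results for the index-0 and index-1 cases cited around Definition \ref{d.prereg}.

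The main obstacle I anticipate is the bookkeeping in (4) and (5): propagating the particular solutions $v_j$ and their derivatives back through all $\mu$ levels to write \eqref{cons2} in closed form, and verifying that the derivative orders accumulate exactly to $\mu-1$ (not more), so that the perturbation index is precisely $\mu$ and not merely $\le\mu$. Showing the lower bound $\mu_p\ge\mu$ — i.e. exhibiting defects for which no estimate with fewer than $\mu$ derivatives can hold — is the genuinely non-routine part and will likely rest on the index-1 reduced DAE together with non-degeneracy of the $\theta_i$'s encoded in the Jordan structure of Remark \ref{r.pencil}.
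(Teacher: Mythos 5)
Your proposal follows essentially the same route as the paper: apply the reduction procedure to the inhomogeneous DAE, obtain the representation $x=Cx_{(\mu-1)}+p$ with $p$ built from the particular solutions of the derivative-free parts at each level, read off solvability and the degree of freedom from Theorem \ref{t.Scan}, and split the initial condition via the projector onto $S_{can}(\bar t)$ along $N_{can}(\bar t)$ to obtain the consistency condition \eqref{cons2}. The paper likewise dispatches (2), (3), (5) as immediate consequences (citing the reference for details), so the parts you flag as requiring bookkeeping — in particular the lower bound $\mu_p\geq\mu$ — are exactly the parts the paper also delegates rather than writes out.
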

\begin{proof}
 \textrm (1): Given $q\in C^{m}(\mathcal I,\Real^{m})$ we apply the previous reduction now to the inhomogeneous DAE \eqref{DAE0}. We describe the first level only.  The general solution of the derivative-free part $Z_0^{*}F_0x=Z_0^{*}q$ of the given DAE reads now
 \begin{align*}
  x=(I-(Z_0^{*}F_0)^{+}Z_0^{*}F_0)x+ (Z_0^{*}F_0)^{+}Z_0^{*}F_0x= C_0x_{(1)}+ (Z_0^{*}F_0)^{+}Z_0^{*}q,
 \end{align*}
and inserting into $Y_0^{*}E_0x'+Y_0^{*}F_0x=Z_0^{*}q$ yields the reduced DAE $E_{1}x_{(1)}'+F_{1}x_{(1)}=q_{(1)}$, with
\begin{align*}
q_{(0)}=q,\;
 q_{(1)}=Y_0^{*}q_{(0)}-Y_0^{*}E_0((Z_0^{*}F_0)^{+}Z_0^{*}q_{(0)})'-Y_0^{*}F_0(Z_0^{*}F_0)^{+}Y_0^{*}q_{(0)}.
\end{align*}
Finally, using the constructed above matrix function sequence, each solution of the DAE has the form
\begin{align}
 x&=C_0x_{(1)}+(Z_0^*F_0)^{+}Z_0^*q_{(0)}
 =C_0(C_1x_{(2)}+(Z_1^*F_1)^{+}Z_1^*q_{(1)})+(Z_0^*F_0)^{+}Z_0^*q_{(0)}=\cdots\nonumber\\
 &=\underbrace{C_0C_1\cdots C_{\mu-2}}_{= C}x_{(\mu-1)}+p,\label{DAEsol}\\
p&=(Z_0^*F_0)^{+}Z_0^*q_{(0)}+C_0(Z_1^*F_1)^{+}Z_1^*q_{(1)}+\cdots+ C_0C_1\cdots C_{\mu-2}(Z_{\mu-1}^*F_{\mu-1})^{+}Z_{\mu-1}^*q_{(\mu-1)}, \nonumber \\\nonumber 
\quad &q_{(j+1)}=Y_{j}^{*}q_{(j)}-Y_{j}^{*}E_{j}((Z_{j}^{*}F_{j})^{+}Z_{j}^{*}q_{(j)})'-Y_{j}^{*}F_{j}(Z_{j}^{*}F_{j})^{+}Y_{j}^{*}q_{(j)},\; j=0,\ldots,\mu-2,\nonumber
\end{align}
in which $x_{(\mu-1)}$ is any solution of the regular index-one DAE
\[ E_{\mu-1}x_{(\mu-1)}'+F_{\mu-1}x_{(\mu-1)}=q_{(\mu-1)}. 
\]

Since $q$ and the coefficients are supposed to be smooth, all derivatives exist, and no further conditions with respect to $q$ will arise.

{\textrm (4)} Expression \eqref{DAEsol} yields $x(\bar t)= C(\bar t)x_{[\mu]}(\bar t)+p(\bar t)$. The initial condition $x(\bar t)=\bar x$ splits by means of the projector $\pPi_{can}(\bar t)$  onto $S_{can}(\bar t)$ along $N_{can}(\bar t)$ into the two parts 
\begin{align}
 \pPi_{can}(\bar t)\bar x= C(\bar t) x_{[\mu]}(\bar t)+ \pPi_{can}(\bar t)p(\bar t)\label{cons1},\\
( I-\pPi_{can}(\bar t))\bar x= (I-\pPi_{can}(\bar t))p(\bar t)\label{cons2}.
\end{align}
Merely part \eqref{cons1} contains the component $x_{(\mu)}(\bar t)$, which is to be freely selected  in $\Real^{r_{\mu-1}}$, and \\
 $x_{(\mu)}(\bar t)=C(\bar t)^+\pPi_{can}(\bar t)(\bar x-p(\bar t))$ is the only solution. 
 
 In contrast, \eqref{cons2} does not contain any free components. It is a strong consistency requirement and must be given a priori for solvability. Otherwise this (overdetermined) initial value problem fails to be solvable.

{\textrm (2),(3),(5)} are straightforward now, for details see \cite[Theorem 5.1]{HaMae2023}.
 \end{proof}
%%%%%%%%%%%%%%%%%
%%%%%%%%%%
The following proposition comprises enlightening special cases which will be an useful tool to provide equivalence assertions later on. Namely, for given integers $\kappa \geq 2$, $d\geq 0$, $l=l_{1}+\cdots +l_{\kappa}$, $l_{i}\geq
 1$, $m=d+l$
we consider the pair $\{E,F\}$, $E,F:\mathcal I\rightarrow \Real^{m\times m}$, in special block structured form,
\begin{align}\label{blockstructure}
 E=\begin{bmatrix}
    I_{d}&\\&N
   \end{bmatrix},\quad
F=\begin{bmatrix}
    \Omega&\\&I_{l}
   \end{bmatrix}, \quad
N=\begin{bmatrix}
   0&N_{12}&&\cdots&N_{1\kappa}\\
   &0&N_{23}&&N_{2\kappa}\\
   &&\ddots&\ddots&\vdots\\
   &&&&N_{\kappa-1, \kappa}\\
   &&&&0
   \end{bmatrix},\\
   \text{with blocks}\quad N_{ij} \quad\text{of sizes}\quad l_{i}\times l_{j}.\nonumber
\end{align}
If $d=0$ then the respective parts are absent. All blocks are sufficiently smooth on the given interval $\mathcal I$. $N$ is strictly block upper triangular, thus nilpotent and $N^{\kappa}=0$. 

We set further $N=0$ for $\kappa=1$.
Obviously, then the pair $\{E,F\}$ is pre-regular with $r=d$ and $\theta_0=0$, and hence the DAE has index $\mu=\kappa=1$. Below we are mainly interested in the case  $\kappa\geq 2$.
%%%%%%%%%%%%%%
\begin{proposition}\label{p.STform}
Let the pair $\{E,F\}$, $E,F:\mathcal I\rightarrow \Real^{m\times m}$ be given in the form \eqref{blockstructure} and $\kappa \geq 2$.
 \begin{description}
  \item[\textrm{(1)}] If the secondary diagonal blocks $N_{i, i+1}:\mathcal I\rightarrow \Real^{l_{i}\times l_{i+1}}$ in \eqref{blockstructure} have full column-rank, that is,
  \[\rank N_{i, i+1}=l_{i+1}, \quad i=1,\ldots,\kappa-1, 
   \]
then $l_{1}\geq \cdots\geq l_{\kappa}$ and the corresponding DAE is regular with index $\mu=\kappa$ and characteristic values
  \begin{align*}
   r=m-l_{1},\; \theta_{0}=l_{2}, \ldots,\,  \theta_{\mu-2}=l_{\mu}.
  \end{align*}
\item[\textrm{(2)}] If the secondary diagonal blocks $N_{i, i+1}$ in \eqref{blockstructure} have full row-rank, that is,
\[\rank N_{i, i+1}=l_{i}, \quad i=1,\ldots,\kappa-1,
\]
then $l_{1}\leq \cdots\leq l_{\kappa}$ and the corresponding DAE is regular with index $\mu=\kappa$ and characteristic values
  \begin{align*}
   r=m-l_{\mu},\; \theta_{0}=l_{\mu-1}, \ldots,\,  \theta_{\mu-2}=l_{1}.
  \end{align*}
 \end{description}
\end{proposition}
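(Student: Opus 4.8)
The plan is to verify directly that the block pair $\{E,F\}$ in \eqref{blockstructure} is pre-regular and then to run the reduction procedure \eqref{basic_reduction}, tracking the block structure at each level. I would treat part (1) in detail and obtain part (2) either by an analogous computation or by a transposition/reversal-of-order argument. For part (1), first I would compute the relevant subspaces for the initial pair. Since $E=\diag\{I_d,N\}$ with $N$ strictly block upper triangular, we have $\rank E = d + \rank N$, and because the secondary diagonal blocks $N_{i,i+1}$ have full column rank $l_{i+1}$, the columns of $N$ corresponding to block-columns $2,\dots,\kappa$ are linearly independent while the first block-column of $N$ is zero; hence $\rank N = l_2+\cdots+l_\kappa = l - l_1$ and $r = \rank E = d + l - l_1 = m - l_1$, giving $\ker E$ dimension $l_1$, spanned by the first block of coordinates inside the $N$-part. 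The condition $\im[E\ F]=\Real^m$ is immediate since $F=\diag\{\Omega,I_l\}$ already contributes the full $I_l$ in the lower block and $I_d$ in the upper. Then $S=\{z: Fz\in\im E\}$: writing $z=(u,v)$ with $v=(v_1,\dots,v_\kappa)$, the condition $Fz\in\im E$ forces the $\Omega u$ part to be free (it lands in $I_d$) and forces $v\in\im N$; intersecting $S$ with $\ker E$ and computing the dimension should give $\theta_0 = l_2$, because $\ker E\cap S$ is governed by the first secondary block $N_{12}$ having full column rank $l_2$. So $\{E,F\}$ is pre-regular with $r=m-l_1$, $\theta_0=l_2$.

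Next I would carry out one reduction step and show that the reduced pair $\{E_1,F_1\}$ again has exactly the form \eqref{blockstructure} but with $\kappa$ replaced by $\kappa-1$ and block sizes $l_2,\dots,l_\kappa$ (the parameter $d$ unchanged). Concretely: choosing $Z_0$ a basis of $(\im E_0)^\perp$, the equation $Z_0^*F_0x=0$ cuts out $S_0$; a natural basis $C_0$ of $S_0$ can be taken in block form, and because $N_{12}$ has full column rank one can solve for the $v_1$-component in terms of $v_2,\dots,v_\kappa$ (up to the kernel of $N_{12}$, which is trivial on the relevant complement), so that $x=C_0x_{(1)}$ with $x_{(1)}$ carrying coordinates $(u,v_2,\dots,v_\kappa)\in\Real^{r_0}$, $r_0 = d + l_2+\cdots+l_\kappa$. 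Substituting into $Y_0^*E_0x'+Y_0^*(F_0C_0+E_0C_0')x_{(1)}=0$ and simplifying, the leading coefficient $E_1=Y_0^*E_0C_0$ becomes $\diag\{I_d,\tilde N\}$ where $\tilde N$ is again strictly block upper triangular with blocks built from $N_{ij}$ for $i,j\ge 2$; crucially its secondary diagonal blocks are (up to pointwise nonsingular transformations coming from the bases and the $C_0'$ correction) the original $N_{i,i+1}$ for $i\ge 2$, which still have full column rank. The correction term $E_0C_0'$ only perturbs the strictly-upper part above the secondary diagonal and the $\Omega$-block, so it does not affect the secondary-diagonal ranks; the new lower identity block $I_{l_2+\cdots+l_\kappa}$ is preserved from $F$. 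This is the step I expect to be the main obstacle: showing carefully that after the reduction the secondary diagonal blocks retain full column rank and that the overall shape is preserved, because the derivative term $E_0C_0'$ and the change of basis $Y_0^*$, $C_0$ a priori could spoil the clean block pattern. The resolution is that full column rank of $N_{i,i+1}$ lets one choose $C_0$ smoothly and block-triangularly so that these effects are confined to strictly-above-secondary-diagonal positions, which are irrelevant for pre-regularity of the next pair.

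Granting that, the reduction becomes a finite induction on $\kappa$. After $j$ steps we have a pair of the same form with parameter $\kappa-j$ and block sizes $l_{j+1},\dots,l_\kappa$, rank $r_j = d + l_{j+2}+\cdots+l_\kappa$ (with the convention that an empty sum is $0$), and $\theta_j = l_{j+2}$; in particular $r_{j+1}=r_j-\theta_j$ is consistent with \eqref{theta}. After $\kappa-1$ steps the $N$-part has collapsed entirely and the pair is $\{I_d,\Omega\}$-like, i.e.\ $E_{\kappa-1}$ has rank $d>0$ equal to $r_{\kappa-2}$ (when $d>0$; if $d=0$ one instead reaches $E_{\kappa-1}=0$, $F_{\kappa-1}$ nonsingular), so the procedure terminates with $\mu=\kappa$. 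Reading off the characteristic values gives $r = r_0 = m-l_1$ and $\theta_0=l_2,\ \theta_1=l_3,\ \dots,\ \theta_{\mu-2}=l_\mu$, with $\theta_{\mu-1}=0$; the inequalities $l_1\ge\cdots\ge l_\kappa$ follow from $\rank N_{i,i+1}=l_{i+1}\le l_i$. For part (2), I would observe that full row rank of $N_{i,i+1}$ is the transpose condition, and that replacing $\{E,F\}$ by the pair obtained via the flip that reverses the order of the blocks $1,\dots,\kappa$ and transposes turns a full-row-rank secondary diagonal into a full-column-rank one; alternatively, one repeats the same reduction, now solving for the \emph{last} block-component using that $N_{\kappa-1,\kappa}$ is onto, which peels off $l_\kappa$ first. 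Either way one gets $r=m-l_\mu$ and $\theta_0=l_{\mu-1},\dots,\theta_{\mu-2}=l_1$, with $l_1\le\cdots\le l_\kappa$ from $\rank N_{i,i+1}=l_i\le l_{i+1}$. Since the ranks $r_i$ are basis-independent by the cited results of \cite{RaRh}, the computed characteristic values are the genuine ones, completing the proof.
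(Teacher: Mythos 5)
Your proposal is correct and follows essentially the same route as the paper: verify pre-regularity with $r=m-l_1$ and $\theta_0=\rank N_{12}=l_2$, then show one reduction step reproduces the block form with $\kappa-1$ in place of $\kappa$, and induct; for part (2) the paper carries out your second alternative (a direct reduction with a constant truncation basis that peels off the last block). The technical point you flag — that the derivative correction $E_0C_0'$ and the basis changes only disturb entries strictly above the secondary diagonal — is resolved in the paper exactly as you anticipate, by taking $C=\tilde E$ and scaling $\im E$ by a nonsingular factor $M$ that absorbs $(I+\mathring{N_1'})^{-1}$, so that the new secondary diagonal blocks coincide with the old ones.
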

\begin{proof}
\textrm (1) Suppose the secondary diagonal blocks $N_{i, i+1}$ have full column-ranks $l_{i+1}$.
It results that $r=\rank E =d+l-l_1=m-l_1$ and $\theta_0=\dim S\cap\ker E=\dim(\ker N\cap\im N)= \rank N_{12}=l_2$, thus the pair is pre-regular. For deriving the reduction step we form the two auxiliary matrix functions
\begin{align*}
 \tilde N=\begin{bmatrix}
   N_{12}&&\cdots&N_{1\kappa}\\
  0 &N_{23}&&N_{2\kappa}\\
   &&\ddots&\vdots\\
   &&&N_{\kappa-1, \kappa}\\
   0&&\cdots&0
   \end{bmatrix}:\mathcal I\rightarrow\Real^{l\times (l-l_1)},\quad 
   \tilde E=\begin{bmatrix}
             I_d&\\&\tilde N
            \end{bmatrix}:\mathcal I\rightarrow\Real^{m\times (m-l_1)},
\end{align*}
which have full column rank, $l-l_1$ and $m-l_1$, respectively. By construction, one has $\im \tilde N=\im N$, $\im \tilde E=\im E$. The matrix function $C=\tilde E$ serves as basis of the subspace 
\begin{align*}
 S=\left\{ \begin{bmatrix}
      u\\v
     \end{bmatrix}\in \Real^{d+l}:v\in \im N
\right\}.
\end{align*}
Furthermore, with any smooth pointwise nonsingular matrix function $M:\mathcal I\rightarrow \Real^{(m-l_1)\times (m-l_1)}$, the matrix function $Y=\tilde EM$ serves as a basis of $\im E$. We will specify $M$ subsequently. 

Since $\tilde N^*\tilde N$ remains pointwise nonsingular, one obtains the relations
\begin{align*}
 \mathfrak A:=[
              \underbrace{0}_{l_1} \;\underbrace{\tilde N^*\tilde N}_{l-l_1}
             ]\,\tilde N
             = \tilde N^*\tilde N \,[
              \underbrace{0}_{l_1} \,I_{l-l_1}]\, \tilde N =  \tilde N^*\tilde N \; \mathring{N_1}
\end{align*}
with the structured matrix function
\begin{align*}
 \mathring{N_1}=\begin{bmatrix}
   0&N_{23}&&\cdots&N_{2\kappa}\\
   &0&N_{34}&&N_{3\kappa}\\
   &&\ddots&\ddots&\vdots\\
   &&&&N_{\kappa-1, \kappa}\\
   &&&&0
   \end{bmatrix} :\mathcal I\rightarrow \Real^{(l-l_1)\times(l-l_1)},\\
   \text{again with the full column-rank blocks}\; N_{ij}.
\end{align*}
We will show that the reduced pair $\{E_1,F_1\}$ actually features an analogous structure. We have
\begin{align*}
 E_1= Y^{*}EC=M^{*}\begin{bmatrix}
                    I_d&\\&\mathfrak A
                   \end{bmatrix}
=M^{*}\begin{bmatrix}
                    I_d&\\& \tilde N^*\tilde N \; \mathring{N_1}
                   \end{bmatrix}
                   =
                   M^{*}\begin{bmatrix}
                    I_d&\\& \tilde N^*\tilde N 
                   \end{bmatrix}
                   \begin{bmatrix}
                    I_d&\\&  \mathring{N_1}
                   \end{bmatrix},
\end{align*}
and 
 \begin{align*}
  F_1&= Y^{*}FC+ Y^{*}EC'=M^{*}\begin{bmatrix}
                    I_d&\\& \tilde N^*\tilde N 
                   \end{bmatrix}
                   \begin{bmatrix}
                    \Omega&\\& I_{l-l_1} +\mathring{N_1'}
                   \end{bmatrix}\\
                   &=
  M^{*}\begin{bmatrix}
                    I_d&\\& \tilde N^*\tilde N (I_{l-l_1} +\mathring{N_1'})
                   \end{bmatrix}
                   \begin{bmatrix}
                    \Omega&\\& I_{l-l_1}
                   \end{bmatrix}.                 
 \end{align*}
 Regarding that $I_{l-l_1}+\mathring{N_1'}$ is nonsingular, we choose 
\begin{align*}
 M^{*}=\begin{bmatrix}
                    I_d&\\& (I_{l-l_1} +\mathring{N_1'})^{-1}(\tilde N^*\tilde N )^{-1}
                   \end{bmatrix},
\end{align*}
which leads to 
\begin{align*}
E_1&=\begin{bmatrix}
                    I_d&\\& (I_{l-l_1}+\mathring{N_1'})^{-1}
                   \end{bmatrix}\begin{bmatrix}
                    I_d&\\&  \mathring{N_1}
                   \end{bmatrix}=
                   \begin{bmatrix}
                    I_d&\\& (I_{l-l_1}+\mathring{N_1'})^{-1} \mathring{N_1}
                   \end{bmatrix}=:
                   \begin{bmatrix}
                    I_d&\\& N_1
                   \end{bmatrix},\\
 F_1&=\begin{bmatrix}
                    \Omega&\\& I_{l-l_1}
                   \end{bmatrix}.
\end{align*}
By construction, see Lemma \ref{l.SUT1}, the resulting matrix function $N_1$ has again strictly upper triangular block structure and it shares its secondary diagonal blocks with those from $N$ (except for $N_{12}$), that is
\begin{align*}
 N_1=\begin{bmatrix}
   0&N_{23}&*&\cdots&*\\
   &0&N_{34}&&*\\
   &&\ddots&\ddots&\vdots\\
   &&&&N_{\kappa-1, \kappa}\\
   &&&&0
   \end{bmatrix} :\mathcal I\rightarrow \Real^{(l-l_1)\times(l-l_1)}.
\end{align*}
Thus, the new pair has an analogous block structure as the given one, is again pre-regular but now with $m_1=r=m-l_1 $, $r_1=m_1-l_2=m-l_1-l_2 $, $\theta_1=\rank N_{23}=l_3$.
Proceeding further in such a way we arrive at
the pair $\{E_{\kappa-2},F_{\kappa-2}\}$,
\begin{align*}
 E_{\kappa-2}=\begin{bmatrix}
               I_d&\\&N_{\kappa-2}
              \end{bmatrix},\quad
F_{\kappa-2}=\begin{bmatrix}
               \Omega&\\&I_{l_{\kappa-1}+l_{\kappa}}
              \end{bmatrix},\quad
              N_{\kappa-2}=\begin{bmatrix}
               0&N_{\kappa-1 ,\kappa}\\0&0
              \end{bmatrix},
\end{align*}
with $m_{\kappa-2}=m-l_1-\cdots-l_{\kappa-2}$, $r_{\kappa-2}=m-l_1-\cdots-l_{\kappa-1}=d+l_{\kappa}$, and $\theta_{\kappa-2}=\rank N_{\kappa-1,\kappa}=l_{\kappa}$, and the  final pair $\{E_{\kappa-1},F_{\kappa-1}\}$,
\begin{align*}
 E_{\kappa-1}=\begin{bmatrix}
               I_d&\\&0
              \end{bmatrix},\quad
F_{\kappa-1}=\begin{bmatrix}
               \Omega&\\&I_{l_{\kappa}}
              \end{bmatrix},\quad m_{\kappa-1}=d+l_{\kappa}, r_{\kappa-1}=d, \theta_{\kappa-1}=0,
\end{align*}
which completes the proof of the first assertion.

\textrm (2): We suppose now secondary diagonal blocks $N_{i, i+1}$ which have full row-ranks $l_i$, thus nullspaces of dimension $l_{i+1}-l_i$, $ i=1,\ldots,\kappa-1$. The pair $\{E,F\}$ is pre-regular and $r=\rank E=d+\rank N=d+l-l_{\kappa}= m-l_{\kappa}$, and $\dim \ker E\cap S=\dim \ker N\cap\im N=l_{1}+(l_2-l_1)+\cdots + (l_{\kappa -1}-l_{\kappa-2})= l_{\kappa-1} $, thus $\theta_0=l_{\kappa-1} $. The constant matrix function
\begin{align*}
 C=\begin{bmatrix}
    I_d&&&\\
    &I_{l_1}&&\\
    &&\ddots&\\
    &&&I_{l_{\kappa -1}}\\
    &&&0
   \end{bmatrix}
\end{align*}
serves as a basis of $S$ and also as a basis of $\im E$, $Y=C$. This leads simply to
\begin{align*}
 E_1=C^*EC=\begin{bmatrix}
            I_d&\\&N_1
           \end{bmatrix}, \quad 
  F_1=C^*FC=\begin{bmatrix}
            \Omega&\\&I_{l-l_{\kappa}}
           \end{bmatrix},         
\end{align*}
with $m_1=m-l_{\kappa}$, $r_1=m_1-l_{}$, and
\begin{align*}
 N_1=\begin{bmatrix}
   0&N_{12}&&\cdots&N_{1,\kappa-1}\\
   &0&N_{23}&&N_{2,\kappa-1}\\
   &&\ddots&\ddots&\vdots\\
   &&&&N_{\kappa-2, \kappa-1}\\
   &&&&0
   \end{bmatrix}.
\end{align*}
It results that $\theta_1=l_{\kappa-2}$. 
and so on.
\end{proof}

In Section \ref{sec:SCF} and Section \ref{subs.A_strictly} we go into further detail about these two structural forms from Proposition \ref{p.STform} and also illustrate there the difference to the Weierstraß–Kronecker form with a simple example.

%%%%%%
\subsection{A specifically geometric view on the matter}\label{subs.degree}
%%%%
A regular DAE living in $\Real^m$ can now be viewed as an embedded regular implicit ODE in $\Real^d$, which in turn uniquely defines a vector field on the configuration space $\Real^d$. 
Of course, this perspective has an impressive potential in the case of nonlinear problems, when smooth submanifolds replace linear subspaces, etc. We will give a brief outline and references in Section \ref{s.nonlinearDAEs} below. An important aspect hereby is that one first provides the manifold that makes up the configuration space, and only then examine the flow, which allows also  a flow that is not necessarily regular. In this context, the extra notion \emph{degree of the DAE} introduced by \cite[Definition 8]{Reich}\footnote{Definition \ref{d.degree} below.} is relevant. It actually measures the  degree of the embedding depth.

In the present section we concentrate on the linear case and do not use the special geometric terminology. Instead we adapt the notion so that it fits in with our presentation.
\medskip

Let us start by a further look at the basic procedure yielding a regular DAE.
In the second to last step of our basis reduction, the pair  $\{E_{\mu-1},F_{\mu-1}\}$ is pre-regular and $\theta_{\mu-1}=0$ on all $\mathcal I$. If thereby $r_{\mu-1}=0$ then there is no dynamic part, one has $d=0$ and $S_{can}=\{0\}$. This instance is of no further interest within the geometric context. 

However, the interest comes alive, if $r_{\mu-1}>0$. 
 Recall that by construction $r_{\mu-1}=r_0-\sum_{i=0}^{\kappa-2}\theta_i=d$.
In the regular case we see
\begin{align*}
 \im C_0\cdots C_{\mu-2}\supsetneqq  \im C_0\cdots C_{\mu-1}=\im C_0\cdots C_{\mu}, \quad r_{\mu-2}>r_{\mu-1}=r_{\mu}.
\end{align*}

If now the second to last pair would fail to be pre-regular, but would be
qualified with the associated rank function $\theta_{\mu-1}$ being positiv at a certain point $t_*\in\mathcal I$, and zero otherwise on $\mathcal I$, then the eventually  resulting last  matrix function $E_{\mu}(t)$ fails to remain nonsingular just at this critical point, because of $\rank E_{\mu}(t)=r_{\mu-1}-\theta_{\mu-1}(t)$. Nevertheless, we could state  $C_{\mu}=I_{r_{\mu-1}}$ and arrive at
\begin{align*}
 \im C_0\cdots C_{\mu-2}\supsetneqq  \im C_0\cdots C_{\mu-1}=\im C_0\cdots C_{\mu}, \quad r_{\mu-2}>r_{\mu-1}\geq r_{\mu}(t)
\end{align*}
Clearly, then the resulting ODE in $\Real^{r_{\mu-1}}$ and in turn the given DAE are no longer regular and one is confronted with a singular vector field.
\begin{example}\label{e.degree}
Given is the qualified pair with $m=2, r=1$,
 \begin{align*}
  E(t)=\begin{bmatrix}
        1&-t\\1&-t
       \end{bmatrix},\quad 
F(t)=\begin{bmatrix}
        2&0\\0&2
       \end{bmatrix},\quad t\in \Real,
 \end{align*}
yielding 
\begin{align*}
 Z_0&=\begin{bmatrix}
      1\\-1
     \end{bmatrix},\; 
     Z_0^*F_0=
\begin{bmatrix}
      2\\-2
     \end{bmatrix}, \;
  C_0=   \begin{bmatrix}
      1\\1
     \end{bmatrix},\; 
Y_0=   \begin{bmatrix}
      1\\1
     \end{bmatrix},\\ 
     E_1(t)&=2(1-t),\; F_1(t)=4,\;  m_1=r_0=1, \\     
  &\ker E_0(t) \cap  \ker (Z_0^*F_0)(t)=\{z\in\Real^2: z_1-tz_2=0, z_1=z_2\},
\end{align*}
and further $\theta_0(t)=0$ for $t\neq 1$, but $\theta_0(1)=1$. The homogeneous DAE has the solutions
\begin{align*}
 x(t)=\gamma (1-t)^2\begin{bmatrix}
                     1\\1
                    \end{bmatrix},\; t\in \Real, \quad \text{with arbitrary}\; \gamma\in \Real,
\end{align*}
which  manifests the singularity of the flow at point $t_*=1$. Observe that now the canonical subspace varies its dimension, more precisely,
\begin{align*}
 S_{can}(t_*)=\{0\},\quad S_{can}(t)=\im C_0,\; \text{ for all}\;t\neq t_*.
\end{align*}
\end{example}
\begin{definition}\label{d.degreelin}
The DAE given by the pair $\{E,F\}$, $E,F:\mathcal I\rightarrow\Real^{m\times m}$ 
has, if it exists, \emph{degree $s\in \Natu $}, if the reduction procedure in Section \ref{s.regular} is well-defined up to level $s-1$, the pairs  $\{E_i,F_i\}$, $i=0,\ldots,s-1,$  are pre-regular, the pair $\{E_s,F_s\}$ is qualified,
\begin{align*}
 \im C_0\cdots C_{s-1}\supsetneqq  \im C_0\cdots C_{s}, \quad r_{s-1}>r_{s},
\end{align*}
and $s$ is the largest such integer. The subspace 
$\im C_0\cdots C_{s}$ is called \emph{configuration space} of the DAE.
\end{definition}
We mention that $\im C_0\cdots C_{s}= C_0\cdots C_{s} (\Real^{r_{s}})$ and admit that, depending on the view, alternatively, $\Real^{r_{s}}$ can be regarded as the configuration space, too.
\medskip

If the pair $\{E,F\}$ is regular with index $\mu\in\Natu$, then its degree is $s=\mu-1$ and 
\begin{align*}
 \im C_0\cdots C_{\mu-2}\supsetneqq  \im C_0\cdots C_{\mu-1}=\im C_0\cdots C_{\mu}, \quad r_{\mu-2}>r_{\mu-1}=r_{\mu}.
\end{align*}

On the other hand, if the DAE has degree $s$ and $r_{s}=0$ then it results that $C_{s}=0$, in turn $\im C_0\cdots C_{s}=\{0\}$ and $\theta_{s}=0$.  Then the DAE is regular with index $\mu=s+1$ but the configuration space is trivial. As mentioned already, since the dynamical degree is zero, this instance is of no further interest in the geometric context. 
\medskip

Conversely, if the DAE has degree $s$ and $r_{s}>0$, then the pair $\{E_{s},F_{s}\}$ is not necessarily pre-regular but merely qualified such that, nevertheless, the next level $\{E_{s+1},F_{s+1}\}$ is well-defined, we can state $m_{s+1}=r_{s}$, $C_{s+1}=I_{r_{s}}$, and
\begin{align*}
 \rank E_{s+1}(t)&=m_{s+1}-\dim(\ker E_{s}(t)\cap \ker Z^*_{s}(t)F_{s}(t))
 =r_{s}-\theta_{s}(t),\quad t\in\mathcal I.
\end{align*}
It comes out that if $\theta_{s}(t)$ vanishes almost overall on $\mathcal I$, then a vector field with isolated singular points is given. If  $\theta_{s}(t)$ vanishes identically, then the DAE is regular.

This approach unfolds its potential especially for quasi-linear autonomous problems, see \cite{RaRh,Reich} and Section \ref{subs.nonlinearDAEsGeo}, however, the questions concerning the sensibility of the solutions with respect to the perturbations of the right-hand sides fall by the wayside.

%%%%%%%%%%%%%%%%%%%%%%%%%%%%%%%%%%%
%%%%%%%%%%%%%%%%%%%%%%%%%%%%%%%%%%%
%%%%%%%%%%%%%%%%%%%%%%%%%%%%%%%%%
\section{Further direct concepts without recourse to derivative arrays}\label{s.Solvab}
%%%%%%%%%%%%%%%%%%%%

We are concerned here with the regularity notions and approaches from \cite{Cis1982,Jansen2014,KuMe2006,CRR} associated with the elimination procedure, the dissection concept, the strangeness reduction, and the tractability framework compared to Definition \ref{d.2}. 
The approaches in \cite{Cis1982,Jansen2014,KuMe2006,RaRh} are 
de facto special solution methods including reduction steps by elimination of variables and differentiations of certain variables.
In contrast,  the concept in \cite{CRR} aims at a structural projector-based decomposition of the given DAE in order to analyze them subsequently.

Each of the concepts is associated with a sequence of pairs of matrix functions, each supported by certain rank conditions that look very different. 
Thus also the regularity notions, which require in each case that the sequences are well-defined with well-defined termination, are apparently completely different. However,
at the end of this section, we will know that all these regularity terms agree with our Definition \ref{d.2}, and that the characteristics \eqref{theta} capture all the rank conditions involved.

When describing the individual method, traditionally the same characters are used to clearly highlight certain parallels, in particular, $\{E_j, F_j \}$ or $\{G_j, B_j \}$ for the matrix function pairs and  $r_j$ for the characteristic values.  Except for the dissection  concept, $r_j$ is the rank of the first pair member $E_j$ and $G_j$, respectively. 

 To avoid confusion we label the different characters with corresponding top indices $E$ (elimination), $D$ (dissection), $S$ (strangeness) and $T$ (tractability), respectively.
The letters without upper index  refer to the basic regularity in Section \ref{s.regular}. In some places we also give an upper index, namely $B$ (basic), for better clarity.
 \medskip
 
 Theorem \ref{t.equivalence} below  will provide the index relations $\mu^{E}=\mu^{D}=\mu^{T}=\mu^{S}+1=\mu^B$ as well as expressions of all $r_j^{E}$, $r_j^{D}$, $r_j^{S}$, and $r_j^{T}$ in terms of \eqref{theta}.
\bigskip

% 

%%%%%%%%%%%%%%%%%
\subsection{Elimination of the unknowns procedure}\label{subs.elimination}
%%%%%%%%%%%%%%%%%

A special predecessor version of the  procedure described in  \cite{RaRh}
was already proposed and analyzed in \cite{Cis1982} and entitled by  \emph{elimination of the unknowns}, even for  more general pairs of rectangular matrix functions. Here we describe the issue already in our notation and confine the description to square matrix functions.

Let the pair $\{E,F\}$, $E,F:\mathcal I\rightarrow\Real^{m\times m} $,  be qualified in the sense of Definition \ref{d.qualified},  i.e. $\im [E(t)\;F(t)]=\Real^{m},\;t\in\mathcal I, $ and  $E(t)$ has constant rank $r$ on $\mathcal I$.

Let $T,T^c,Z$, and $Y $ represent bases of $\ker E, (\ker E)^{\perp}, (\im E)^{\perp}$, and $\im E $, respectively.
By scaling with $ [Y\, Z]^*$ one splits  the DAE
\begin{align*}
 Ex'+Fx=q
\end{align*}
 into the partitioned shape
\begin{align}
 Y^*Ex'+Y^*Fx&=Y^*q,\label{A.1}\\
 Z^*Fx&=Z^*q.\label{A.2}
\end{align}
Then the $(m-r)\times m$ matrix function  $ Z^*F$ features full row-rank $m-r$ and the subspace  $S=\ker Z^*F$ has dimension $r$. Equation \eqref{A.2} represents an underdetermined system. The idea is to provide its general solution in the following special way.

Taking a nonsingular matrix function  $K$ of size $m\times m$ such that $Z^*FK=: [\mathfrak A\, \mathfrak B]$, with $\mathfrak B:\mathcal I\rightarrow \Real^{(m-r)\times(m-r)}$ being nonsingular, the transformation  $x=K\tilde x$ turns  \eqref{A.2} into 
\begin{align*}
&Z^*FK\tilde x=:\mathfrak A u+\mathfrak B\tilde v=Z^*q,\quad \tilde x=\begin{bmatrix}
                                                                      u\\v
                                                                     \end{bmatrix}
 \\
&\text{yielding}\quad v=-\mathfrak B^{-1}\mathfrak A u + \mathfrak B^{-1}Z^*q.
\end{align*}
The further matrix function
\begin{align*}
 C:= K \begin{bmatrix}
     I_{r}\\-\mathfrak B^{-1}\mathfrak A
    \end{bmatrix} :\mathcal I\rightarrow \Real^{m\times r},
\end{align*}
has full column-rank $r$ on all $\mathcal I$ and serves as basis of $\ker Z^*F= S$. Each  solution of \eqref{A.2} can be represented in terms of $u$ as
\begin{align*}
 x=Cu+ p,\quad p:= K\begin{bmatrix}
                   0\\\mathfrak B^{-1}Z^*q
                  \end{bmatrix}.
\end{align*}
Next we insert this expression into \eqref{A.1}, that is,
\begin{align}\label{elimnew}
 Y^*ECu'+(Y^*FC + Y^*EC')u=Y^*q-Y^*Ep'-Y^*Fp.
\end{align}
Now the variable $v$  is eliminated and we are confronted with a new DAE with respect to $u$  living in $\Real^{r}$. By construction, it holds that
\begin{align*}
 \rank (Y^*EC)(t)= m-\dim (S(t)\cap \ker E(t))=:m-\theta(t).
\end{align*}
Therefore, the new matrix function has constant rank precisely if the pair $\{E, F\}$ is pre-regular such that $\theta$ is constant.

We underline again that the procedure in \cite{RaRh} and Section \ref{s.regular} allows for the choice of an arbitrary basis for $S$.
Obviously, the  earlier elimination procedure of \cite{Cis1982} can now be classified as its special version.

This way a sequence of matrix functions pairs $\{E_{j}^{E}, F_{j}^{E}\}$ of size $m_{j}^{E}$ , $j\geq 0$, starting from 
\[ m_{0}^{E}=m,\; r_{0}^{E}=r,\; E_{0}^{E}=E,\; F_{0}^{E}=F,
\]
and letting
\[ m_{j+1}^{E}=r_{j}^{E},\;  E_{j+1}^{E}=Y_{j}^*E_{j}^{E}C_{j},\; r_{j+1}^{E}=\rank E_{j+1}^{E},\; F_{j+1}^{E}=Y_{j}^*F_{j}^{E}C_{j}+ Y_{j}^*E_{j}^{E}C'_{j}.
\]
The corresponding regularity notion from \cite[p.\ 58]{Cis1982} is then:
\begin{definition}\label{d.Elim}
The DAE \eqref{DAE0} is called \emph{regular} on the interval $\mathcal I$ if the above process of dimension reduction is well-defined, i.e., at each level $[E_{j}^{E}\,F_{j}^{E}]=\Real^{m_{j}^{E}}$ and $E_{j}^{E}$ has constant rank $r^E_j$, and there is a number $\kappa$ such that either $E_{\kappa}^{E}$ is nonsingular or $E_{\kappa}^{E}=0$, but then $F_{\kappa}^{E}$ is nonsingular. 
\end{definition}
This regularity definition obviously fully agrees with  Definition \ref{d.2} in the matter and also with the name, but without naming the characteristic values. It is evident that
\begin{align}\label{elimchar}
 \kappa=\mu \quad \text{and}\quad r_{j}^{E}=\rank E_j^{E}= r-\sum_{i=0}^{j-1}\theta_i, \quad j=0,\ldots,\mu,
\end{align}
and each pair $\{E_{j}^{E},\,F_{j}^{E}\}$
 must be pre-regular.
The relevant solvability statements from \cite{Cis1982} match those in Section \ref{s.regular}.
 %%%%%%%%%%%%%%%%%
\subsection{Dissection concept}\label{subs.dissection}
%%%%%%%%%%%%%%%%%%%
%%%%%%%%%%%%%%%%%
 A  decoupling technique has been presented and extended to apply to nonlinear DAEs quite recently under the name \emph{dissection concept} \cite{Jansen2014}. The intention behind this is to modify the nonlinear theory belonging to the projector based analysis in \cite{CRR}  by using appropriate basis functions along the lines of \cite{KuMe2006} instead of projector valued functions. This is, by its very nature, incredibly technical. We filter out the corresponding linear version here.
 
 Let the pair $\{E,F\}$, $E,F:\mathcal I\rightarrow\Real^{m\times m}$, be pre-regular with constants $r$ and $\theta$ according to Definition \ref{d.prereg}.

Let $T,T^c,Z$, and $Y $ represent bases of $\ker E, (\ker E)^{\perp}, (\im E)^{\perp}$, and $\im E $, respectively.
The matrix function $Z^*FT$ has size $(m-r)\times(m-r)$ and
\begin{align*}
 \dim \ker Z^*FT =T^+( \ker E \cap S)=\theta,\quad \rank Z^*FT =m-r-\theta=:a.
\end{align*}

By scaling with $ [Y\, Z]^*$ one splits  the DAE
\begin{align*}
 Ex'+Fx=q
\end{align*}
 into the partitioned shape
\begin{align}
 Y^*Ex'+Y^*Fx&=Y^*q,\label{A.1D}\\
 Z^*Fx&=Z^*q.\label{A.2D}
\end{align}
Owing to the pre-regularity, the $(m-r)\times m$ matrix function  $ Z^*F$ features full row-rank $m-r$. We keep in mind that $S=\ker Z^*F$ has dimension $r$.

The approach in \cite{Jansen2014} needs several additional splittings. Let $V,W$ be bases of  $(\im Z^*FT)^{\perp}$, and $\im Z^*FT$. By construction, $V$ has size $(m-r)\times a$ and $W$ has size $(m-r)\times \theta$.
One  starts with the  transformation
\begin{align*}
 x= \begin{bmatrix}
                       T^c& T
                      \end{bmatrix}\tilde x, \quad 
                      \tilde x=\begin{bmatrix}
                       \tilde x_1\\\tilde x_2
                      \end{bmatrix},\quad x=T^c\tilde x_1+ T\tilde x_2.
\end{align*}
The background is the associated possibility to suppress the derivative of the nullspace-part $T\tilde x_n$ similarly as in the context of properly formulated DAEs and to set $Ex'= ET^c\tilde x_1'+E{T^c}'\tilde x_1 +ET'\tilde x_2$, which, however, does not play a role in our context, where altogether continuously differentiable solutions are assumed. Furthermore, an additional partition of the derivative-free equation \eqref{A.2} by means of the scaling with $[V\,W]^*$ is applied, which results in the system
\begin{align}
Y^*ET^c\tilde x'_1 +Y^*(FT^c+E{T^c}')\tilde x_1+ Y^*(FT+E{T}')\tilde x_2&=Y^*q,\label{A3}\\
V^*Z^*FT^c\tilde x_1+V^*Z^*FT\tilde x_2&=V^*Z^*q,\label{A4}\\
W^*Z^*FT^c\tilde x_1 \hspace*{18mm} &=W^*Z^*q,\label{A5}.
\end{align}
The matrix function $W^*Z^*FT^c$ has full row-rank $\theta$ and $V^*Z^*FT$ has full row-rank $a$. Now comes another split. Choosing bases $G, H$ of $\ker W^*Z^*FT^c\subset\Real^{\theta}$ and $\ker V^*Z^*FT\subset\Real^{a}$, as well as bases of respective complementary subspaces, we transform 
\begin{align*}
 \tilde x_1= \begin{bmatrix}
                       G^c& G
                      \end{bmatrix}\bar x_1,\quad 
                      \bar x_1=\begin{bmatrix}
                       \bar x_{1,1}\\\bar x_{1,2}
                      \end{bmatrix},\quad \tilde x_1=G^c\bar x_{1,1}+ G\bar x_{1,2},\\
 \tilde x_2= \begin{bmatrix}
                       H^c& H
                      \end{bmatrix}\bar x_2,\quad 
                      \bar x_2=\begin{bmatrix}
                       \bar x_{2,1}\\\bar x_{2,2}
                      \end{bmatrix},\quad \tilde x_2=H^c\bar x_{2,1}+ H\bar x_{2,2}.                     
\end{align*}
Thus equations \eqref{A4} and \eqref{A5} are split into
\begin{align}
 V^*Z^*FT^c(G^c\bar x_{1,1}+ G\bar x_{1,2})+V^*Z^*FT H^c\bar x_{2,1}&=V^*Z^*q,\label{A6}\\
W^*Z^*FT^c G^c\bar x_{1,1} \hspace*{38mm} &=W^*Z^*q.\label{A7}
\end{align}
The matrix functions $V^*Z^*FT H^c$ and $W^*Z^*FT^c G^c$ are nonsingular each, which allows the resolution to $\bar x_{1,1}$ and $\bar x_{2,1}$. In particular, for $q=0$ it results that 
$\bar x_{1,1}= 0$ and $\bar x_{2,1}= \mathfrak E \bar x_{1,2}$, with 
\[\mathfrak E:=-(V^*Z^*FT H^c)^{-1}V^*Z^*FT^cG.
\]
Overall, therefore, the latter procedure presents again a transformation, namely
\begin{align*}
 x= K\bar x,\quad K=\begin{bmatrix}
    T^cG^c&T^cG&TH^c&TH
   \end{bmatrix},\quad \bar x=\begin{bmatrix}
   \bar x_{1,1}\\\bar x_{1,2}\\\bar x_{2,1}\\\bar x_{2,2}
   \end{bmatrix}
   \in \Real^{\theta}\times\Real^{r-\theta}\times\Real^{a}\times\Real^{\theta},
\end{align*}
and we realize that we have found again a basis of the subspace $S$, namely
\begin{align*}
 S=\im C,\quad C=K
 \begin{bmatrix}
    0&0\\I_{r-\theta}&0\\\mathfrak E&0\\0&I_{\theta}
                  \end{bmatrix}=
\begin{bmatrix}
 T^cG+TH^c\mathfrak E &\; TH
\end{bmatrix},
\end{align*}
 which makes the dissection approach a particular case of \cite{RaRh} and Section \ref{s.regular}. Consequently, the corresponding reduction procedure from there is well-defined for all regular DAEs in the sense of our basic Definition \ref{d.2}.
 \medskip
 
 In \cite{Jansen2014}  the approach is somewhat different. Again a sequence of matrix function pairs $\{E_{i}^{D},\,F_{i}^{D}\}$ is  built up starting from $E^{D}_0=E$, $F^{D}_0=F$. The construction of  $\{E_{1}^{D},\,F_{1}^{D}\}$ is closely related to the system given by \eqref{A3}, \eqref{A6}, and \eqref{A7}, 
  where the last two equations are solved with respect to $\tilde x_{1,1}$ and $\tilde x_{1,1}$ and these variables are replaced in \eqref{A3} accordingly. This leads to
  \begin{align*}
   E^{D}_1=\begin{bmatrix}
            0&Y^*ET^{c}G&0&0\\
            0&0&0&0\\
            0&0&0&0
            \end{bmatrix},\quad \rank E^{D}_1=\rank Y^*ET^{c}G= \rank G= r-\theta.
  \end{align*}
 In contrast to the basic procedure in Section \ref{s.regular} in which the dimension is reduced and variables are actually eliminated on each level, now all variables stay included and the original dimension $m$ is kept analogous to the strangeness concept in Section \ref{subs.strangeness}.
 We omit the further technically complex representation here and refer to \cite{Jansen2014}.
 It is evident that $\rank E^{D}_0>\rank E^{D}_1$ and so on.
 
 The characteristic values of the dissection concept are formally adapted to certain corresponding values of the tractability index framework. 
 It starts with $r^{D}_0=r$, and is continued in ascending order as the following definition from \cite[Definition 4.13, p. \ 83]{Jansen2014} says. 
 \begin{definition}\label{d.diss}
Let all basis functions exist and have constant ranks on $\mathcal I$ and let the sequence of the matrix function pairs be well-defined. The characteristic values of the DAE \eqref{DAE0} are defined as 
\begin{align*}
 r^{D}_0=r,\quad r^{D}_{i+1}=r^{D}_{i}+ a^{D}_{i}=r^{D}_{i}+\rank Z^*_{i}F^{D}_i T_i,\quad i\geq 0.
\end{align*}
If $r_0^{D}=r=m$ then the DAE is said to be regular with dissection index zero.
If there is an integer $\kappa\in \Natu$ and $r^{D}_{\kappa-1}<r^{D}_{\kappa}=m$ then the DAE is said to be \emph{regular with dissection index} $\mu^{D}=\kappa$.
The DAE is said to be \emph{regular}, if it is regular with any dissection index.
 \end{definition}
 In particular, in the first step one has
 \begin{align*}
  r^{D}_1= r+a=r+(m-r-\theta)=m-\theta=(m-r)+r-\theta =(m-r)+\rank E^{D}_1.
 \end{align*}
 
Owing to \cite[Theorem 4.25,p.101]{Jansen2014}, the tractability index (see Section \ref{subs.strangeness}) and the dissection index coincide, and also the corresponding characteristic values, that is,
\begin{align*}
 \mu^{D}=\mu^{T},\quad r_{i}^{D}=r_{i}^{T},\quad i=0,\ldots, \mu^{D}.
\end{align*}

%%%%%%%%%%%%%%%%%
\subsection{Regular strangeness index}\label{subs.strangeness}
%%%%%%%%%%%%%%%%%
The strangeness concept applies to rectangular matrix functions in general, but here we are interested in the case of square sizes only, i.e., $E,F:\mathcal I\rightarrow \Real^{m\times m}$.
Within the strangeness reduction framework the following five rank-values of the matrix function pair $\{E,F\}$  play their role, e.g., \cite[p. 59]{KuMe2006}:
\begin{align}
 r&=\rank E,\label{S1}\\
 a&=\rank Z^*FT, \;(\text{algebraic part})\label{S2}\\
 s&=\rank V^*Z^*FT^{c}, \;(\text{strangeness})\label{S3}\\
 d&=r-s, \;(\text{differential part})\label{S4}\\
 v&=m-r-a-s, \;(\text{vanishing equations})\label{S5}
\end{align}
whereby $T,T^{c}, Z,V$ represent orthonormal bases of $\ker E$, $(\ker E)^{\bot}$, $(\im E)^{\bot}$, and $(\im Z^*FT)^{\bot}$, respectively.
The strangeness concept is tied to the requirement that $r,a$, and $s$ are well-defined constant integers. Owing to \cite[Lemma 4.1]{HaMae2023},
the pair $\{E,F\}$ is pre-regular, if and only if the rank-functions \eqref{S1}-\eqref{S5} are constant and $v=0$. In case of pre-regularity, see Definition \ref{d.prereg}, one has
 \begin{align*}
  a=m-r-\theta,\quad s=\theta,\quad d=r-\theta.
 \end{align*}

Let the pair $\{E,F\}$ have constant rank values \eqref{S1}--\eqref{S5}, and $v=0$. We describe the related step from $\{E^{S}_0,F^{S}_0\}:=\{E,F\} $ to the next matrix function pair $\{E^{S}_1,F^{S}_1\}$.
Applying the  basic arguments of the strangeness reduction \cite[p.\ 68f]{KuMe2006}   the pair $\{E,F\}$ is equivalently transformed to $\{\tilde{E},\tilde F\}$,
 \begin{align*}
 \tilde{E}=\begin{bmatrix}
            I_s&&&\\&I_d&&\\
            &&0&\\
            &&&0
           \end{bmatrix},\quad   
 \tilde{F}=\begin{bmatrix}
            0&\tilde F_{12}&0&\tilde F_{14}\\
            0&0&0&\tilde F_{24}\\
            0&0&I_a&0\\
            I_s&0&0&0
           \end{bmatrix},        
 \end{align*}
with $d+s=r,\; a+s=m-r$. This means that the DAE is transformed into the intermediate form
\begin{align*}
 \tilde x_1' +\tilde F_{12}\tilde x_2 +\tilde F_{14}\tilde x_4 &= \tilde q_1,\\
 \tilde x_2'  +\tilde F_{24}\tilde x_4 &= \tilde q_2,\\
 \tilde x_3&=\tilde q_3,\\
  \tilde x_1&=\tilde q_4.
 \end{align*}
Replacing now in the first line $\tilde x_1'$ by  $\tilde q_4'$ leads to the 
new pair  defined as
\begin{align*}
 E^{S}_1=\begin{bmatrix}
            0&&&\\&I_d&&\\
            &&0&\\
            &&&0
           \end{bmatrix},\quad   
  F^{S}_1=\begin{bmatrix}
            0&\tilde F_{12}&0&\tilde F_{14}\\
            0&0&0&\tilde F_{24}\\
            0&0&I_a&0\\
            I_s&0&0&0
           \end{bmatrix}.        
 \end{align*}
Proceeding further  in this way, each pair $\{E^{S}_j, F^{S}_j\}$ must be supposed to be pre-regular for obtaining well-defined characteristic tripels $(r^S_j,a^S_j,s^S_j)$ and $v^S_j=0$. Owing to \cite[Theorem 3.14]{KuMe2006} these characteristics persist under equivalence transformations. The obvious relation $r^{S}_{j+1}=r^{S}_{j}-s^{S}_{j}$ guarantees that after a finite number of steps the so-called strangeness $s^{S}_{j}$ must vanish.
We adapt Definition 3.15 from \cite{KuMe2006} accordingly\footnote{The notion  \cite[Definition 3.15]{KuMe2006} is valid for more general rectangular matrix functions $E,F$. For quadratic matrix functions $E,F$ we are interested in here, it allows also nonzero  values $v_j^{S}=m-r_j^S-a_j^S-s_j^S$, thus  instead of pre-regularity of $\{E^{S}_j, F^{S}_j\}$, it is only required that $ r^S_j, a^S_j, s^S_j$ are constant on $\mathcal I$.}:
\begin{definition}\label{d.strangeness}
 Let each pair $\{E^{S}_j, F^{S}_j\}$, $j\geq 0$, be pre-regular and 
 \begin{align*}
  \mu^{S}=\min\{j\geq 0:s^{S}_{j}=0 \}.
 \end{align*}
Then the pair $\{E,F\}$ and the associated DAE are called \emph{regular with strangeness index} $ \mu^{S}$ and characteristic values $(r^S_j,a^S_j,s^S_j)$, $j\geq 0$.
In the case that $\mu^{S}=0$ the pair and the DAE are called \emph{strangeness-free}.
\end{definition}

Finally, if the DAE $Ex'+Fx=q$ is regular with strangeness index $ \mu^{S}$, this reduction procedure ends up with the strangeness-free pair
\begin{align}
 E^S_{\mu^S}=\begin{bmatrix}
  I_{d^S}&\\&0
 \end{bmatrix},\;
F^S_{\mu^S}=\begin{bmatrix}
  0&\\&I_{a^S}
 \end{bmatrix},\quad d^S:=d^S_{\mu^S},\; a^S:=a^S_{\mu^S},\; d^S+a^S=m,
\end{align}
and the transformed DAE 
 showing a simple form, which already incorporates its solution, namely
\begin{align*}
\tilde{\tilde x}'_1&=\tilde{\tilde q}_1,\\
 \tilde{\tilde x}_2&=\tilde{\tilde q}_2.
\end{align*}
 The function $\tilde{\tilde x}:\mathcal I\rightarrow \Real^m$ is a solution $x:\mathcal I\rightarrow \Real^m$ of the original DAE transformed by a pointwise nonsingular matrix function.
\medskip

 As a consequence of Theorem 2.5 from \cite{KuMe1996}, each  pair $\{E,F\}$ being regular with strangeness index $\mu^S$ can  be equivalently transformed into a pair  $\{\tilde E,\tilde F\}$,
 \begin{align}\label{SCFs}
  \tilde E=\begin{bmatrix}
            I_{d^S}&*\\0&N
           \end{bmatrix},\quad
\tilde F=\begin{bmatrix}
        *&0\\0&I_{a^S}
           \end{bmatrix},\quad d^S:=d^S_{\mu^S},\;  a^S:=a^S_{\mu^S},
 \end{align}
in which the matrix function $N$ is pointwise nilpotent with nilpotency index $\kappa=\mu^S +1$ and has size $a^S\times a^S$. $N$ is pointwise strictly block upper  triangular and the entries $N_{1,2}, \ldots, N_{\kappa-1,\kappa}$  have full row-ranks $l_1=s^S_{\mu^S-1},\ldots, l_{\kappa-1}=s^S_0$. Additionally, one has $l_{\kappa}=s^S_0+a^S_0 =m-r$, and $N$ has exactly the structure that is required in \eqref{blockstructure} and Proposition \ref{p.STform}(2). It results that each DAE having a well-defined regular strangeness index is regular in the sense of Definition \ref{d.2}.

%%%%%%%%%%%%%%%%%
\subsection{Tractability index}\label{subs.tractability}
%%%%%%%%%%%%%%%%%
The background of the tractability index concept is the projector based analysis which aims at an immediate characterization of the structure of the originally given DAE, its relevant subspaces and components, e.g., \cite{CRR}.  
In contrast to the reduction procedures with their transformations and built-in differentiations of the right-hand side, the original DAE is actually only written down in a very different pattern using the projector functions.
No differentiations are carried out, but it is only made clear which components of the right-hand side must be correspondingly smooth. This is important in the context of input-output analyses and also when functional analytical properties of relevant operators are examined \cite{Ma2014}. The decomposition using projector functions reveals the inherent structure of the DAE, including the inherent regular ODE. Transformations of the searched solution are avoided in this decoupling framework, which is favourable for stability investigations and also for the analysis of discretization methods \cite{CRR,HMT}.

As before we assume  $E,F:\mathcal I\rightarrow \Real^{m\times m}$  to be sufficiently smooth and the pair $\{E, F\}$ to be pre-regular. We choose any continuously differentiable projector-valued function $P$ such that
\[P:\mathcal I\rightarrow \Real^{m\times m},\quad P(t)^2=P(t),\; \ker P(t)=\ker E(t),\quad t\in \mathcal I,
\]
and regarding that $Ex'=EPx'=E(Px)'-EP'x$ for each continuously differentiable function $x:\rightarrow\Real^m$, we rewrite the DAE $Ex'+Fx=q$ as
\begin{align}\label{DAEP}
 E(Px)'+(F-EP')x=q.
\end{align}
\begin{remark}\label{r.AD}
 The DAE \eqref{DAEP} is a special version of a DAE with \emph{properly stated leading term} or properly involved derivative, e.g., \cite{CRR},
 \begin{align}\label{2.DAE}
 A(Dx)'+Bx=q,
\end{align}
which is obtained by a special  \emph{proper factorizations} of $E$,
 which are subject to the general requirements: $E=AD$, $A:\mathcal I\rightarrow \Real^{n\times m}$ is continuous, $D:\mathcal I\rightarrow \Real^{m\times n}$ is continuously differentiable, $B=F-AD'$, and 
 \begin{align*}
  \ker A\oplus \im D=\Real^{n},\; \ker D=\ker E,
 \end{align*}
whereby both subspaces $\ker A$ and $\im D$ have continuously differentiable basis functions.

As mentioned already above, a properly involved derivative makes sense, if not all components of the unknown solution are expected to be continuously differentiable, which does not matter here. In contrast, in view of  applications and numerical treatment 
 the model \eqref{2.DAE} is quite reasonable \cite{CRR}. 
\end{remark}
In order to be able to directly apply the more general results of the relevant literature, in the following we denote 
\begin{align*}
 P=:D,\quad G_0;=E,\quad B_0:=F-ED',\quad A:=E.
\end{align*}
Observe that the pair $\{G_0, B_0\}$ is pre-regular with constants $r$ and $\theta$ at the same time as $\{E, F\}$.
Now we build a sequence of matrix functions pairs starting from the pair $\{G_0, B_0\}$. 
Denote $N_0=\ker G_0$ and choose a second projector valued function $P_0:\mathcal I\rightarrow\Real^{m\times m}$, such that $\ker P_0=N_0$. With the complementary projector function  $Q_{0}:=I-P_{0}$ and $D^{-}:=P_{0}$ it results that
\begin{align*}
 DD^{-}D=D,\quad D^{-}DD^{-}=D^{-},\quad DD^{-}=P_{0},\quad D^{-}D=P_{0}.
\end{align*}
On this background we construct the following  sequence of matrix functions and associated projector functions:

Set $r^T_{0}=r=\rank G_0$ and $\pPi_{0}=P_{0}$
and build successively for $i\geq 1$,
\begin{align}
 G_{i}&=G_{i-1}+B_{i-1}Q_{i-1},\quad
 r^T_{i}=\rank G_{i},\label{2.Gi}\\
 \quad N_{i}&=\ker G_{i},\quad \widehat{N_{i}}=(N_{0}+\cdots+N_{i-1})\cap N_{i},\quad u^T_{i}=\dim \widehat{N_{i}},\nonumber
\end{align}
fix a subset $X_{i}\subseteq N_{0}+\cdots+N_{i-1}$ such that $\widehat{N_{i}}+X_{i}=N_{0}+\cdots+N_{i-1}$ and  choose then a projector function $Q_{i}:\mathcal I\rightarrow\Real^{m\times m}$ to achieve
\begin{align}\label{2.Qi}
 \im Q_{i}=N_{i},\quad X_{i}\subseteq\ker Q_{i},\quad P_{i}=I-Q_{i},\quad \pPi_{i}=\pPi_{i-1}P_i,
\end{align}
and then form
\begin{align}\label{2.Bi}
 B_{i}=B_{i-1}P_{i-1}-G_{i}D^{-}(D\pPi_{i}D^{-})'D\pPi_{i-1}.
\end{align}
By construction, the inclusions
\begin{align*} 
 \im G_{0}\subseteq \im G_{1}&\subseteq\cdots \im G_{k}\subseteq \Real^{m},\\
 \widehat{N_{1}}&\subseteq\widehat{N_{2}}\subseteq\cdots\subseteq\widehat{N_{k}},
\end{align*}
come off, which leads to the inequalities
\begin{align*}
 0\leq r^T_{0}&\leq r^T_{1}\leq \cdots\leq r^T_{k},\\
 0&\leq u^T_{1}\leq \cdots\leq u^T_{k}.
\end{align*}
The sequence $G_{0},\ldots, G_{k}$ is said to be \emph{admissible} if, for each $i=1,\ldots,k$,  the two rank functions $r^T_{i}$,  $u^T_{i}$ are constant,  $\pPi_{i}$ is continuous and $D\pPi_{i}D^{-}$ is continuously differentiable.
It is worth mentioning that the matrix functions $G_{0},\ldots,G_{k}$ of an admissible sequence are continuous and
the products $\pPi_{i}$ and $D\pPi_{i}D^{-}$ are projector functions again \cite{CRR}.
Moreover, if  $u^T_{k}=0$, then  $u^T_{i}=0$, for $i<k$. We refer to \cite[Section 2.2]{CRR} for further useful properties.
%%%%%%%%
\begin{definition}{\cite[Section 2.2.2]{CRR}}\label{d.trac}
The smallest number $\kappa\geq 0$, if it exists, leading to an admissible matrix function sequence ending up with a nonsingular matrix function $G_{\kappa}$ is called the  \emph{tractability index (regular case)}\footnote{We refer to \cite[Sections 2.2.2 and 10.2.1]{CRR} for details and  more general notions including also nonregular DAEs.} of the pair $\{E,F\}$, and the DAEs \eqref{1.DAE} and \eqref{2.DAE}, respectively. It is indicated by $\kappa=: \mu^T$.
The associated characters
\begin{align}\label{2.characvalues}
 0\leq r^T_{0}\leq r^T_{1}\leq \cdots\leq r^T_{\kappa-1}< r^T_{\kappa}=m,\quad d^T=m-\sum_{i=0}^{\kappa-1}(m-r^T_{i}),
\end{align}
are called characteristic values of the pair $(E,F)$ and the DAEs \eqref{1.DAE} and \eqref{2.DAE}, respectively. The pair $(E,F)$ and the DAEs \eqref{1.DAE} and \eqref{2.DAE},
are called regular each. 
\end{definition}
%%%%%%%%%%%%
By definition, if the DAE is regular, then  $r^T_{\mu^T}=m, u^T_{\mu^T}=0 $ and all rank functions $u^T_{i}$ have to be zero and play no further role here. The special possible choice of the projector functions $P, P_0,\ldots ,P_{\mu-1}$ does not affect regularity and the characteristic values \cite{CRR}.
\begin{remark}\label{r.Riaza}
An alternative way to construct admissible matrix function sequences  for the regular case if $u^T_i=0$,  $i\geq1$, is described in \cite[Section 2.2.4]{RR2008}. It avoids the explicit use of the nullspace projector functions onto $N_i$.  One starts with $G_0, B_0$, and $\pPi_0$ as above, introduces $M_0:=I-\pPi_0$, $G_1=G_0+B_0M_0$, and then for $i\geq 1$:
\begin{align*}
 &\text{choose a projector function }\; \pPi_i \; \text{ along }\; N_0\oplus\cdots\oplus N_i,\;\text{ with }\; \im \pPi_i \subseteq \pPi_{i-1} ,\\
 &B_i=(B_{i-1}-G_iD^-(D\pPi_iD^-)'D)\pPi_i,\\
 &M_i=\pPi_{i-1}-\pPi_i,\\
 &G_{i+1}=G_i+B_iM_i.
\end{align*}

\end{remark}

\begin{remark}\label{r.Tpairs}
If the pair $(E,F)$ is regular in the sense of Definition \ref{d.trac} then the subspace $S^T_j(t)$,
\begin{align*}
 S^T_j(t):=\{z\in\Real^m: B_j(t)z\in \im G_j(t)\}=\ker W^T_j(t)B_j(t),\quad W^T_j:=I-G_jG_j^+,
\end{align*}
has constant dimension $r_{j}$ on all $\mathcal I$. Moreover,
\begin{align*}
 \rank [G_j \; B_j]=\rank [G_j \; W^T_jB_j]= r^T_j+ m-r^T_j=m,\\
 \dim \ker G_{j+1}=\dim (\ker G_j\cap S^T_j)= m-r^T_{j+1}, \quad j=0,\ldots, \mu^T-1.
\end{align*}
All intermediate pairs $\{G_j,B_j\}$ are pre-regular. It is worth highlighting that in terms of the basic regularity notion\footnote{See Definition \ref{d.2} and Theorem \ref{t.equivalence}.} one has $\mu^T=\mu$ and 
\begin{align*}
 \dim (\ker G_j\cap S^T_j)= \theta_{j}, \quad j=0,\ldots, \mu-1.
\end{align*}

\end{remark}

The decomposition
\begin{align*}
 I_{m}=\pPi_{\mu^T-1}+Q_{0}+\pPi_{0}Q_{1}+\cdots+\pPi_{\mu^T-2}Q_{\mu^T-1}
\end{align*}
is valid and the
involved projector functions show constant ranks, in particular,
\begin{align}\label{2.ranks}
 \rank Q_{0}=m-r^T_{0},\;\rank \pPi_{i-1}Q_{i}=m-r^T_{i},\;i=1,\ldots,\mu^T-1,\; \rank \pPi_{\mu^T-1}=d^T.
\end{align}
\medskip

Let the  DAE \eqref{1.DAE} be regular with tractability index $\mu^T\in \Natu$
and characteristic values \eqref{2.characvalues}.
Then the admissible matrix functions and associated projector functions
provide a far-reaching decoupling of the DAE, which exposes the intrinsic structure of the DAE, for details see \cite[Section 2.4]{CRR}. In particular, the following  representation  of the scaled by $G_{\mu^T}^{-1}$ DAE was proved in \cite[Proposition 2.23]{CRR}):
\begin{align*}
 G_{\mu^T}^{-1}A(Dx)'+G_{\mu^T}^{-1}Bx&=G_{\mu^T}^{-1}q,\\
 G_{\mu^T}^{-1}A(Dx)'+G_{\mu^T}^{-1}Bx
 &=D^{-}(D\pPi_{\mu^T-1}x)'+G_{\mu^T}^{-1}B_{\mu^T}x\\&+
 \sum_{l=0}^{\mu^T-1}\{Q_{l}x-(I-\pPi_{l})Q_{l+1}D^{-}(D\pPi_{l}Q_{l+1}x)'+V_{l}D\pPi_{l}x\},
\end{align*}
with $V_{l}=(I-\pPi_{l})\{P_{l}D^{-}(D\pPi_{l}D^{-})'-Q_{l+1}D^{-}(D\pPi_{l+1}D^{-})'\}D\pPi_{l}D^{-}$.

Regarding the decomposition of the unknown function
\begin{align*}
 x=\pPi_{\mu^T-1}x+Q_{0}x+\pPi_{0}Q_{1}x+\cdots+\pPi_{\mu^T-2}Q_{\mu^T-1}x\\
\end{align*}
and several projector properties, we get
\begin{align}\label{Grundformel}
 G_{\mu^T}^{-1}A(Dx)'+G_{\mu^T}^{-1}Bx
 =&D^{-}(D\pPi_{\mu^T-1}x)'- \sum_{l=0}^{\mu^T-1}(I-\pPi_{l})Q_{l+1}D^{-}(D\pPi_{l}Q_{l+1}x)' \\
 &+G_{\mu^T}^{-1}B_{\mu^T}\pPi_{\mu^T-1}x+ \sum_{l=0}^{\mu^T-1}V_{l}D\pPi_{\mu^T-1}x\nonumber\\
 &+Q_{0}x
 + \sum_{l=0}^{\mu^T-1}Q_{l}\pPi_{l-1}Q_{l}x
 + \sum_{l=0}^{\mu^T-2}V_{l} \sum_{s=0}^{\mu^T-2}D\pPi_{s}Q_{s+1}x.\nonumber
\end{align}
The representation \eqref{Grundformel} is the base of two closely related versions of fine and  complete structural decouplings of the DAE \eqref{1.DAE} into the so-called \textit{inherent regular ODE} (and its compressed version, respectively),
\begin{align}\label{IRODE}
 (D\pPi_{\mu^T-1}x)'-(D\pPi_{\mu^T-1}D^-)'D\pPi_{\mu^T-1}x+D\pPi_{\mu^T-1}G_{\mu^T}^{-1}B_{\mu^T}D^-D\pPi_{\mu^T-1}x=D\pPi_{\mu^T-1}G_{\mu^T}^{-1}q,
\end{align}
and the extra part indicating and including all the necessary differentiations of $q$. It is worth mentioning that the explicit ODE \eqref{IRODE} is not at all affected from derivatives of $q$.

While the first decoupling version is a swelled system residing in a $m$-dimensional subspace of $\Real^{(\mu^T+1)m}$, the second version remains in $\Real^{m}$ and represents an equivalently transformed DAE\footnote{In the literature there are quite a few misunderstandings about this.}. More precisely,
owing to \cite[Theorem 2.65]{CRR}, each  pair $\{E,F\}$ being regular with tractability index $\mu^T$ can  be equivalently transformed into a pair  $\{\tilde E,\tilde F\}$,
 \begin{align}\label{SCFt}
  \tilde E=\begin{bmatrix}
            I_{d^T}&0\\0&N
           \end{bmatrix},\quad
\tilde F=\begin{bmatrix}
        \Omega&0\\0&I_{m-d^T}
           \end{bmatrix},
 \end{align}
in which the matrix function $N$ is pointwise nilpotent with nilpotency index $\kappa=\mu^T$  and has size $(m-d^T)\times (m-d^T)$. $N$ is pointwise strictly block upper  triangular and the entries $N_{1,2}, \ldots, N_{\kappa-1,\kappa}$  have full column-ranks $l_2=m-r^T_{1},\ldots, l_{\kappa}=m-r^T_{\kappa-1}$. Additionally, one has $l_{1}=m-r$, and $N$ has exactly the structure that is required in \eqref{blockstructure} and Proposition \ref{p.STform}(1).
 
The projector based approach sheds light on the role of several subspaces. In particular, the two canonical subspaces $S_{can}$ and $N_{can}$, see \cite{HaMae2023}, originate from this concept, e.g., \cite{CRR}. For regular pairs it holds that $N_{can}=N_0+\cdots+N_{\mu^T-1}$.

The following assertion provided in \cite{LinhMae,HaMae2023} plays its role when analyzing DAEs and its canonical subspaces.
\begin{proposition}\label{p.adjoint}
 If the DAE \eqref{1.DAE} is regular with tractability index $\mu^T$ and characteristics $0<r_0^T\leq\cdots<r^{T}_{\mu^T}=m$, then the adjoint DAE
 \begin{align*}
  -E^*y'+(F^*-{E^*}')y=0
 \end{align*}
is also regular with the same index and characteristics, and the  canonical subspaces $S_{can}, N_{can}$ and $S_{adj, can}, N_{adj, can}$,  are related by
\begin{align*}
 N_{can}=\ker C^*_{adj}E,\quad  N_{adj, can}=\ker C^*E^*,
\end{align*}
in which $C$ and $C_{adj}$ are bases of the flow-subspaces $S_{can}$ and $S_{adj, can}$, respectively.
\end{proposition}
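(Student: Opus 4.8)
The plan is to verify the three assertions of Proposition~\ref{p.adjoint} in turn. First I would establish that the adjoint DAE $-E^*y'+(F^*-{E^*}')y=0$ is again a regular DAE with the same tractability index $\mu^T$ and the same characteristic values $r_0^T\le\cdots< r^T_{\mu^T}=m$. The natural route is the transformation result \eqref{SCFt}: since $\{E,F\}$ is regular with tractability index $\mu^T$, it is equivalent to a pair $\{\tilde E,\tilde F\}$ in the block form \eqref{SCFt}, with $N$ strictly block upper triangular, pointwise nilpotent of index $\mu^T$, and secondary blocks $N_{i,i+1}$ of full column rank $l_{i+1}=m-r^T_i$. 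I would check that forming the adjoint commutes with equivalence transformations in the appropriate sense (premultiplication by $L^*$, coordinate change by $(K^{-1})^*$, using the identity $-{(LEK)}^*=-K^*E^*L^*$ and the product rule on the leading term), so that the adjoint of $\{E,F\}$ is equivalent to the adjoint of $\{\tilde E,\tilde F\}$. The adjoint of \eqref{SCFt} has leading coefficient $\begin{bmatrix} I_{d^T}&0\\0&-N^*\end{bmatrix}$ up to sign conventions, and $-N^*$ is strictly block \emph{lower} triangular, nilpotent of the same index $\mu^T$, with secondary blocks $N_{i,i+1}^*$ now of full \emph{row} rank $l_{i+1}$. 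By Proposition~\ref{p.STform}(2) (after a trivial reordering of the blocks to bring the lower-triangular nilpotent into the canonical shape), this pair is regular with index $\mu^T$ and characteristic values that are exactly the reversed list $l_{\mu^T},\dots,l_1$ — which, on converting back via $r^T_i=m-l_{i+1}$ and $d^T=m-\sum(m-r^T_i)$, reproduces the same $d^T$ and the same multiset of $r^T_i$; hence the same index and characteristics.

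Second, I would identify the canonical subspaces. Recall $S_{can}=\im C$ with $C$ a basis of the flow-subspace, and $N_{can}=N_0+\cdots+N_{\mu^T-1}$ is the canonical complement with $S_{can}\oplus N_{can}=\Real^m$. The key structural fact I would use is that $N_{can}$ is precisely the set of $z$ for which the ``algebraic'' part of the decoupling \eqref{Grundformel} forces $z$ out of the flow; concretely $N_{can}=\ker(D\pPi_{\mu^T-1})$ in the decoupled coordinates, equivalently $N_{can}$ is the image of $Q_0+\pPi_0Q_1+\cdots+\pPi_{\mu^T-2}Q_{\mu^T-1}$. To relate this to the adjoint, I would pass to the SCF-type form \eqref{SCFt} where $S_{can}=\im\begin{bmatrix}I_{d^T}\\0\end{bmatrix}$ is spanned by the first $d^T$ coordinates and $N_{can}=\im\begin{bmatrix}0\\I_{m-d^T}\end{bmatrix}$, and similarly for the adjoint, $S_{adj,can}$ lives in a block determined by $-N^*$. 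Then $\ker C^*_{adj}E$ is computed directly from the block forms: $E$ kills nothing on $S_{can}$ but its image meets $S_{adj,can}$ only trivially away from the nilpotent part, so $\ker C^*_{adj}E$ collapses exactly to $N_{can}$. The identity $N_{adj,can}=\ker C^*E^*$ is then the mirror image, obtained by swapping the roles of the DAE and its adjoint and using $(E^*)^*=E$ together with the fact that the double adjoint of a regular DAE is equivalent to the original.

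Alternatively, and perhaps more cleanly, I would avoid canonical forms for this part and argue abstractly: $\ker C^*_{adj}E$ is an $(m-d^T)$-dimensional subspace (since $C_{adj}$ has full column rank $d^T$ and $E$ restricted suitably is surjective onto $S_{adj,can}$), it contains $\ker E$, and it is invariant in the sense needed to be a canonical complement — then by the \emph{uniqueness} of $N_{can}$ as the complement satisfying $S_{can}\oplus N_{can}=\Real^m$, $N_{can}\supset\ker E$, it suffices to show $\im C\cap\ker C^*_{adj}E=\{0\}$, i.e. that for $z=C\eta$, $C^*_{adj}ECz=0$ implies $z=0$. This reduces to the nonsingularity of the $d^T\times d^T$ matrix $C^*_{adj}EC$, which is a Wronskian-type object for the inherent regular ODE \eqref{IRODE} and its adjoint; its invertibility follows from the duality between the inherent ODE of the DAE and that of the adjoint DAE, as in \cite{LinhMae,HaMae2023}.

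\textbf{Main obstacle.} The routine part is the invariance of index and characteristic values under adjoining — this is essentially bookkeeping with \eqref{SCFt} and Proposition~\ref{p.STform}. The genuine difficulty is the pair of subspace identities $N_{can}=\ker C^*_{adj}E$ and $N_{adj,can}=\ker C^*E^*$: the clean way to see these is to know that the inherent regular ODE \eqref{IRODE} of the DAE and the inherent regular ODE of the adjoint DAE are themselves adjoint (up to the coordinate identifications built into the projector chain), so that their fundamental solution matrices satisfy the classical reciprocal relation. Making this precise without re-deriving the full decoupling machinery — i.e. extracting just the orthogonality $\im C_{adj}\perp_{E} N_{can}$ and the rank count — is where the care is needed; I would lean on \cite[]{LinhMae} and \cite[]{HaMae2023} for the adjoint decoupling and cite the relevant statement rather than reconstruct it.
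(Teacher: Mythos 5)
The paper does not actually prove this proposition: it is stated as a result ``provided in \cite{LinhMae,HaMae2023}'' and used without proof, so there is no in-paper argument to compare against. Judged on its own merits, the first half of your sketch is sound and stays within the paper's toolkit: adjoining commutes with equivalence transformations (if $\tilde E=LEK$, $\tilde F=LFK+LEK'$, then $-(\tilde E^*\tilde y)'+\tilde F^*\tilde y=K^*\bigl(-(E^*y)'+F^*y\bigr)$ with $y=L^*\tilde y$, so the roles of $L$ and $K$ swap), the adjoint of the structured SCF \eqref{SCFt} has nilpotent part $N^*$ whose secondary blocks have full row rank, and after block reversal Proposition \ref{p.STform}(2) returns the same $r$ and the same $\theta_i$, hence the same tractability characteristics by Theorem \ref{t.equivalence}. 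One detail you gloss over: after adjoining, the second diagonal block of the $F$-part is $I_a-(N^*)'$ rather than $I_a$, so a further scaling by $(I_a-(N^*)')^{-1}$ is needed; Lemma \ref{l.SUT1}(5) guarantees this preserves the secondary diagonal blocks, so the conclusion stands. Your SCF route (a) for the subspace identities also works and is in fact the cleanest complete argument: one checks that $C=K\tilde C$, $C_{adj}=L^*\tilde C_{adj}$, hence $\ker C_{adj}^*E=K\ker\tilde C_{adj}^*\tilde E$ and $N_{can}=K\tilde N_{can}$, reducing everything to the SCF where $C=C_{adj}=\begin{bmatrix}I_d\\0\end{bmatrix}$ and $\ker C_{adj}^*E=\ker[I_d\ 0]=N_{can}$ is immediate. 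You should state this covariance explicitly and replace the vague phrase about $E$ ``killing nothing on $S_{can}$'' with that two-line block computation.

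Your ``cleaner'' abstract route (b) has a genuine gap. For $\mu\ge2$ the subspace $N_{can}$ is \emph{not} characterized as the unique complement of $S_{can}$ containing $\ker E$: since $\dim\ker E=m-r<m-d=\dim N_{can}$, there are infinitely many such complements, and the paper's own definition of $N_{can}$ singles it out by the additional dynamic property that the initial condition $x(\bar t)-\bar x\in N_{can}(\bar t)$ fixes a solution without consistency conditions on $q$ (equivalently $N_{can}=N_0+\cdots+N_{\mu^T-1}$ in the tractability framework). So proving that $C_{adj}^*EC$ is nonsingular and that $\ker C_{adj}^*E\supseteq\ker E$ establishes only that $\ker C_{adj}^*E$ is \emph{some} complement of $S_{can}$ containing $\ker E$ of the right dimension, not that it is the canonical one. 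To close this you would either have to verify the dynamic characterization directly (essentially re-deriving the adjoint decoupling of \cite{LinhMae}) or abandon route (b) in favour of route (a). Since your fallback is in any case to cite \cite{LinhMae,HaMae2023}, you end up where the paper itself stands; but if a self-contained proof is wanted, route (a) with the covariance check is the way to write it.
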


%%%%%%%%%%%%%%%%%%%%%%%%%%
\subsection{Equivalence results and other commonalities}\label{s.equivalence}
%%%%%%%%%%%%%%%%%%%%%%%%%%%%%%%%%%%%%%
\begin{theorem}\label{t.equivalence}
Let $E, F:\mathcal I\rightarrow\Real^{m\times m}$ be sufficiently smooth and $\mu\in\Natu$.
The following assertions are equivalent in the sense that the individual characteristic values of each two of the variants are mutually uniquely determined.
\begin{description}
\item[\textrm{(1)}] 
The  pair $\{E,F\}$ is regular on $\mathcal I$ with index $\mu\in \Natu$ and  characteristics $r<m$,
$\theta_0=0$ if $\mu=1$, and, for $\mu>1$,
\begin{align*}
r<m,\quad \theta_0\geq\cdots\geq\theta_{\mu-2}>\theta_{\mu-1}=0.
\end{align*}
 \item[\textrm{(2)}] The strangeness index $\mu^S$ is well-defined for $\{E,F\}$ and regular, and $ \mu^S=\mu-1$. The associated characteristics are the tripels
 \begin{align*}
  (r^S_i,\;a^S_i, s^S_i ),\quad i=0,\ldots, \mu^S,\quad r^S_0=r, \quad \mu^S=\min\{i\in \Natu_0:s^S_i=0\}.
 \end{align*}
\item[\textrm{(3)}] The pair $\{E,F\}$ is regular with tractability index $\mu^T=\mu$ and characteristics 
\begin{align*}
 r^T_0=r,\quad r^T_0\leq \cdots\leq r^T_{\mu-1}<r^T_{\mu}=m.
 %r^T_i=m-\theta_{i-1},\quad i=1,\ldots,\mu.
\end{align*}
\item[\textrm{(4)}] The pair $\{E,F\}$ is regular with dissection  index $\mu^D=\mu$ and characteristics 
\begin{align*}
r^D_0=r,\quad r^D_0\leq \cdots\leq r^D_{\mu-1}<r^D_{\mu}=m.
 %r^D_0=r,\quad r^D_i=m-\theta_{i-1},\quad i=1,\ldots,\mu.
\end{align*}
\item[\textrm{(5)}] 
The  pair $\{E,F\}$ is regular on $\mathcal I$ with elimination index $\mu^E=\mu$ and  characteristics $r<m$,
$\theta_0=0$ if $\mu=1$, and, for $\mu>1$,
\begin{align*}
r<m,\quad \theta_0\geq\cdots\geq\theta_{\mu-2}>\theta_{\mu-1}=0.
\end{align*}
\end{description}
\end{theorem}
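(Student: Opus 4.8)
The plan is to establish the five-way equivalence by using variant (1), the basic reduction-based regularity of Definition~\ref{d.2}, as a central hub and showing that each of the other four notions is equivalent to it; by transitivity this yields the full equivalence, and the explicit dictionaries between characteristic values will emerge along the way. Several of the implications are essentially already recorded in the preceding subsections, so the proof is largely a matter of assembling them carefully and filling the one genuine gap.

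First I would dispose of the easy directions. The equivalence $(1)\Leftrightarrow(5)$ is immediate: the elimination procedure of Subsection~\ref{subs.elimination} is literally the reduction procedure of Section~\ref{s.regular} with a particular choice of bases, so $\mu^E=\mu$ and, as noted in \eqref{elimchar}, $r^E_j=r-\sum_{i=0}^{j-1}\theta_i$, which is the rank sequence of Definition~\ref{d.2a}; conversely a regular pair in the sense of Definition~\ref{d.2} makes the elimination process well-defined and terminating. For $(1)\Leftrightarrow(4)$ I would invoke the computation in Subsection~\ref{subs.dissection}: the dissection transformation was shown there to produce, at each level, a basis matrix $C$ of the subspace $S$, so the dissection sequence is again a realization of the basic reduction; regularity therefore transfers both ways, and the dissection characteristic values $r^D_i$ are tied to \eqref{theta} through $r^D_{i+1}=r^D_i+a^D_i$ with $a^D_i=m-r^B_i-\theta_i$ (recall $r^B_0=r$, $r^B_{i+1}=r^B_i-\theta_i$), giving the stated $r^D_0\le\cdots\le r^D_{\mu-1}<r^D_\mu=m$.

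The two implications that carry real content are $(1)\Leftrightarrow(3)$ (tractability) and $(1)\Leftrightarrow(2)$ (strangeness). For tractability, the direction ``$(3)\Rightarrow(1)$'' is essentially contained in Remark~\ref{r.Tpairs}: if $\{E,F\}$ is regular with tractability index $\mu^T$, then all intermediate pairs $\{G_j,B_j\}$ are pre-regular, $\dim(\ker G_j\cap S^T_j)=\theta_j$, and $\mu^T=\mu$; one still has to check that the $G_j$-sequence actually reproduces the basic reduction, i.e.\ that the $S^T_j$ play the role of the reduction subspaces $S_j$ and that $r^T_j=m-\dim\ker G_j$ translates into $r^B_j$ via $r^B_j=r^T_j-(m-r)=\cdots$; this is a matter of matching the projector-based bookkeeping to the basis-reduction bookkeeping. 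For ``$(1)\Rightarrow(3)$'' the cleanest route is to use the structured-form results already proved: a regular pair in the sense of Definition~\ref{d.2} is (by the elimination/SCF analysis) equivalent to a pair of the block-triangular form \eqref{blockstructure} with full-column-rank secondary blocks, and Proposition~\ref{p.STform}(1) together with \eqref{SCFt} shows such a pair has a well-defined tractability index equal to $\mu$ with the asserted $r^T_i$; since the tractability characteristics are invariant under equivalence \cite{CRR}, the claim follows. The strangeness equivalence $(1)\Leftrightarrow(2)$ is handled symmetrically: Subsection~\ref{subs.strangeness} shows that a regular strangeness index forces, via \eqref{SCFs}, the block form \eqref{blockstructure} with full-row-rank secondary blocks, so Proposition~\ref{p.STform}(2) gives $(2)\Rightarrow(1)$ with $\mu=\mu^S+1$ and $\theta_{i}$ read off from the $s^S_j$ (explicitly $l_1=s^S_{\mu^S-1},\dots,l_{\kappa-1}=s^S_0$, whence $\theta_0=s^S_0,\dots,\theta_{\mu-2}=s^S_{\mu^S-1}$ after reindexing, together with $a^S_i=m-r^S_i-s^S_i$); conversely, starting from Definition~\ref{d.2}, the SCF/elimination normal form feeds into the strangeness reduction step described in Subsection~\ref{subs.strangeness} and one verifies inductively that each reduced pair $\{E^S_j,F^S_j\}$ is pre-regular with $r^S_{j+1}=r^S_j-s^S_j$ and $s^S_{\mu-1}=0$.

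I expect the main obstacle to be the bookkeeping in $(1)\Leftrightarrow(3)$ and $(1)\Leftrightarrow(2)$: one must make fully precise the claim that the respective sequences of matrix-function pairs are, up to equivalence transformations and choices of bases/projectors, the same object as the basic reduction, and then track the three different labelling conventions ($r^B_j$ decreasing, $r^T_j$ and $r^D_j$ increasing, strangeness triples $(r^S_j,a^S_j,s^S_j)$) through the identities $r^B_j=r-\sum_{i<j}\theta_i$ without sign errors. The underlying structural facts are all available — pre-regularity of the intermediate pairs, invariance of the characteristic values under equivalence, and the normal forms \eqref{blockstructure}, \eqref{SCFs}, \eqref{SCFt} — so the argument is conceptually routine, but the unique determination of each variant's characteristics from another's (the precise sense of ``equivalent'' asserted in the theorem) requires stating these dictionaries explicitly, which I would collect in a short table or displayed list at the end of the proof. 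Finally, I would remark that the index-one cases $\mu=1$ (where all $\theta_i$, $s^S_i$ vanish) are consistent with the general formulas, closing the argument.
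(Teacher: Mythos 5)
Your overall architecture (variant (1) as hub, Proposition \ref{p.STform} and the structured block forms as the main tool) matches the spirit of the paper's argument, but the paper's actual proof is far shorter: it cites \cite[Theorem 4.3]{HaMae2023} for every implication except $(3)\Rightarrow(1)$, and proves that one remaining implication by passing from tractability regularity to the structured SCF \eqref{blockstructure} with full column-rank secondary blocks (via \cite[Theorem 2.65]{CRR}) and then applying Proposition \ref{p.STform}(1). Your sketches of $(1)\Leftrightarrow(5)$, of $(2)\Rightarrow(1)$ via \eqref{SCFs} and Proposition \ref{p.STform}(2), and of the dissection bookkeeping are consistent with what the surrounding subsections establish.

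The genuine gap is in your treatment of the tractability equivalence, where the normal-form argument points the wrong way. For $(1)\Rightarrow(3)$ you propose: basic-regular $\Rightarrow$ structured form \eqref{blockstructure} with full column-rank blocks $\Rightarrow$ tractability-regular. Neither arrow is available. The first is exactly Theorem \ref{t.SCF}, which the paper proves \emph{from} Theorem \ref{t.equivalence} (it first upgrades basic regularity to tractability regularity and only then invokes \cite[Theorem 2.65]{CRR}); using it here is circular. The second arrow is not what Proposition \ref{p.STform}(1) says --- that proposition runs the \emph{basic} reduction on the structured form and concludes basic regularity; it does not construct an admissible matrix function sequence, and \eqref{SCFt} is the implication tractability-regular $\Rightarrow$ structured form, again the converse of what you need. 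Symmetrically, for $(3)\Rightarrow(1)$ you lean on Remark \ref{r.Tpairs}, but the identification $\dim(\ker G_j\cap S^T_j)=\theta_j$ asserted there is itself flagged as a consequence of the equivalence theorem, and the "matching of the projector-based bookkeeping to the basis-reduction bookkeeping" you defer is precisely the nontrivial step (the $G_j$ live in $\Real^{m\times m}$ and never shrink, while the $E_j$ do). The repair is to orient the normal-form argument as the paper does --- prove $(3)\Rightarrow(1)$ by the chain tractability-regular $\Rightarrow$ \eqref{blockstructure} $\Rightarrow$ basic-regular --- and to supply an independent argument (or the citation to \cite[Theorem 4.3]{HaMae2023}) for $(1)\Rightarrow(3)$ and for $(1)\Rightarrow(2)$, where your "one verifies inductively" likewise leaves the substance unproved.
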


\begin{proof}
 Owing to \cite[Theorem 4.3]{HaMae2023}, it remains to verify the implication {\textrm (3)}$\Rightarrow ${\textrm (1)}. A DAE being regular with tractability index $\mu^{T}$ and characteristics \eqref{trac}
is equivalent to a DAE in the form \eqref{blockstructure} with $\kappa=\mu^{T}$, $r=r_0^{T}$, and $l_i=m-r_{i-1}^{T}$ for $i=1,\ldots,\kappa $. Hence, by Proposition \ref{p.STform}, the DAE is regular with index $\mu=\kappa=\mu^{T}$ and characteristic values $r=r_0^{T}$, $m-\theta_0=r_1^{T},\ldots,m-\theta_{\mu-2}=r_{\mu-1}^{T}$, and $m=m-\theta_{\mu-1}=r_{\mu}^{T}$
 \end{proof}
Next we highlight the relations between the various characteristic values and trace back all of them to 
\begin{align*}
r<m,\quad \theta_0\geq\cdots\geq\theta_{\mu-2}>\theta_{\mu-1}=0.
\end{align*}
\begin{theorem}\label{t.indexrelation}
 Let the  pair $\{E,F\}$  regular on $\mathcal I$ with index $\mu\in \Natu$ and  characteristics $r<m$,
$\theta_0=0$ if $\mu=1$, and, for $\mu>1$,
\begin{align*}
r<m,\quad \theta_0\geq\cdots\geq\theta_{\mu-2}>\theta_{\mu-1}=0.
\end{align*}
 Then the following relations concerning the various characteristic values arise:
\begin{description}
 \item[\textrm{(1)}] The pair $\{E,F\}$ is regular with strangeness index $\mu^S=\mu-1$. The associated characteristics are
 \begin{align*}
  r^S_0&=r,\\
  s^S_i&=\theta_i,\\
d^S_i&=r^S_i-\theta_i  = r-\sum_{j=0}^{i} \theta_j,\\
  a^S_i&=m-r^S_i-\theta_i = m-r + \sum_{j=0}^{i-1} \theta_j - \theta_i,\\
  v^S_i&=0,\\
  r^S_{i+1}&=d^S_i = r-\sum_{j=0}^{i} \theta_j,\quad i=0,\ldots,\mu-1.
 \end{align*}
\item[\textrm{(2)}] The pair $\{E,F\}$ is regular with tractability index $\mu^T=\mu$ and characteristics 
\begin{align}\label{trac}
 r^T_0=r,\quad r^T_i=m-\theta_{i-1},\quad i=1,\ldots,\mu.
\end{align}
\item[\textrm{(3)}] The pair $\{E,F\}$ is regular with dissection  index $\mu^D=\mu$ and characteristics 
\begin{align*}
 r^D_0=r,\quad r^D_i=m-\theta_{i-1},\quad i=1,\ldots,\mu.
\end{align*}
\item[\textrm{(4)}] 
The  pair $\{E,F\}$ is regular on $\mathcal I$ with elimination index $\mu^E=\mu$ and  characteristics $r<m$,
$\theta_0=0$ if $\mu=1$, and, for $\mu>1$,
\begin{align*}
r<m,\quad \theta_0\geq\cdots\geq\theta_{\mu-2}>\theta_{\mu-1}=0.
\end{align*}
\end{description}
\end{theorem}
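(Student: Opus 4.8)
The plan is to obtain the explicit characteristic values by reducing everything to the structured forms \eqref{blockstructure} and to Proposition \ref{p.STform}. Throughout I would use that, by Theorem \ref{t.equivalence}, the pair $\{E,F\}$ is regular in each of the five senses with $\mu^E=\mu^T=\mu^D=\mu^S+1=\mu$, and that the characteristic values attached to each concept are invariant under the equivalence transformations \eqref{1.Equivalence} (Section \ref{s.Arrangements}), so that they may be read off from any convenient equivalent pair; the index-one case $\mu=1$ (all $\theta_i=0$) is the well-understood one and can be disposed of separately. Part (4) is then immediate: as explained in Subsection \ref{subs.elimination}, the elimination-of-unknowns sequence is precisely the basic reduction of Section \ref{s.regular} carried out with a particular choice of the bases $C_j$; hence the pairs $\{E_j^E,F_j^E\}$ coincide with the basic reduction pairs, each is pre-regular, and $r_j^E=r-\sum_{i=0}^{j-1}\theta_i$ for $j=0,\dots,\mu$ (this is \eqref{elimchar}), so the elimination characteristics are exactly $r$ together with $\theta_0\ge\cdots\ge\theta_{\mu-2}>\theta_{\mu-1}=0$.

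For the tractability part (2), I would invoke \cite[Theorem 2.65]{CRR} (restated around \eqref{SCFt}): being regular with tractability index $\mu^T=\mu$, the pair $\{E,F\}$ is equivalent to a pair $\{\tilde E,\tilde F\}$ of the form \eqref{blockstructure} with $\kappa=\mu$, $l_1=m-r$, and secondary diagonal blocks $N_{i,i+1}$ of full column rank $l_{i+1}=m-r_i^T$, $i=1,\dots,\mu-1$. Applying Proposition \ref{p.STform}(1) to $\{\tilde E,\tilde F\}$ shows it is basic-regular with index $\mu$ and $\theta_{i-1}=l_{i+1}$, $i=1,\dots,\mu-1$; by equivalence-invariance of the basic characteristics these coincide with the given $\theta$'s, whence $r_i^T=m-l_{i+1}=m-\theta_{i-1}$ for $i=1,\dots,\mu-1$, while $r_0^T=r$ and $r_\mu^T=m=m-\theta_{\mu-1}$ by Definition \ref{d.trac}; this is \eqref{trac}. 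Part (3) then follows at once, since by \cite[Theorem 4.25]{Jansen2014} (end of Subsection \ref{subs.dissection}) one has $\mu^D=\mu^T$ and $r_i^D=r_i^T$ for all $i$.

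The strangeness part (1) is where I expect the main effort. By \cite[Theorem 2.5]{KuMe1996} (restated around \eqref{SCFs}), $\{E,F\}$, being regular with strangeness index $\mu^S=\mu-1$, is equivalent to a pair whose nilpotent block $N$ has the structure of \eqref{blockstructure}, with full row-rank secondary blocks of ranks $l_i=s_{\mu^S-i}^S$ for $i=1,\dots,\mu-1$ and $l_\mu=s_0^S+a_0^S=m-r$. Proposition \ref{p.STform}(2) then gives the basic characteristics of this pair — hence of $\{E,F\}$ — as $r=m-l_\mu$ and $\theta_j=l_{\mu-1-j}$, $j=0,\dots,\mu-2$; composing the two indexings yields $s_j^S=\theta_j$ for $j=0,\dots,\mu-2$, together with $s_{\mu-1}^S=0=\theta_{\mu-1}$. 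For the remaining data I would use that each intermediate pair $\{E_j^S,F_j^S\}$ is pre-regular (which Definition \ref{d.strangeness} combined with Theorem \ref{t.equivalence} grants), so the pre-regularity relations recalled in Subsection \ref{subs.strangeness} give $v_j^S=0$, $a_j^S=m-r_j^S-s_j^S$, $d_j^S=r_j^S-s_j^S$ and $r_{j+1}^S=r_j^S-s_j^S=d_j^S$; starting from $r_0^S=r$ and iterating $r_{j+1}^S=r_j^S-\theta_j$ gives $r_j^S=r-\sum_{i=0}^{j-1}\theta_i$, and substituting back produces the stated closed forms for $d_j^S$ and $a_j^S$.

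The one genuinely delicate point is the double index reversal in part (1): the block ranks in \eqref{SCFs} are listed in the order $s^S_{\mu^S-1},\dots,s^S_0$ while Proposition \ref{p.STform}(2) reads off $\theta_0=l_{\mu-1},\dots,\theta_{\mu-2}=l_1$, and one must check that these two reflections compose to the identity, so that indeed $s_j^S=\theta_j$ rather than some shifted indexing; there is also the minor point that \eqref{SCFs} carries an inessential off-diagonal block which has to be seen not to affect the rank quantities entering the characteristic values. Everything else is routine substitution, which I would keep brief.
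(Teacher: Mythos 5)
Your parts (2)--(4) reproduce what the paper actually does: part (2) is exactly the computation in the proof of Theorem \ref{t.equivalence} (structured SCF \eqref{SCFt} from \cite[Theorem 2.65]{CRR} plus Proposition \ref{p.STform}(1) plus equivalence-invariance), part (3) is the quoted result \cite[Theorem 4.25]{Jansen2014}, and part (4) is \eqref{elimchar}. Your index bookkeeping in part (1) is also right: $\theta_j=l_{\mu-1-j}$ from Proposition \ref{p.STform}(2) composed with $l_i=s^S_{\mu^S-i}$ does give $s^S_j=\theta_j$, and the remaining formulas for $d^S_i, a^S_i, v^S_i, r^S_{i+1}$ follow from pre-regularity of each level as you say.

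The one point you cannot wave away is the off-diagonal block in \eqref{SCFs}. Proposition \ref{p.STform} is proved only for the block-diagonal form \eqref{blockstructure}, and the strangeness canonical form has $\tilde E=\left[\begin{smallmatrix}I_{d^S}&*\\0&N\end{smallmatrix}\right]$; the paper itself stresses in Remark \ref{r.Scanform} that this extra block $M$ is a genuine obstruction (the form \eqref{StrangeSCF} ``fails to be in SCF if the entry $M$ does not vanish''), and removing $M$ by a further equivalence transformation is not obviously possible, since $XN=-M$ need not be solvable when the secondary blocks of $N$ only have full row rank. So calling this ``inessential'' leaves a real gap: you must either rerun the reduction of Proposition \ref{p.STform}(2) with the $M$-block carried along (the first step does go through: one checks $\rank\tilde E$, $S$, and $\ker\tilde E\cap S$ are unchanged by $M$, but the induction has to be completed), or take the route the paper implicitly relies on, namely \cite[Lemma 4.1]{HaMae2023} giving $s=\theta$, $a=m-r-\theta$, $d=r-\theta$ for any pre-regular pair, propagated level by level via \cite[Theorem 4.3]{HaMae2023}, which identifies the pre-regularity data of the strangeness reduction pairs with those of the basic reduction pairs and avoids the canonical form altogether. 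Everything else in your proposal is fine.
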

Thus, the statements of Theorems \ref{t.Scan} and \ref{t.solvability} apply equally to all concepts in this section. Every regular DAE with index $\mu\in\Natu$ is a solvable system in the sense of Definition \ref{d.solvableDAE}, and it has the pertubation index $\mu$.

\begin{remark}\label{r.inf}
 Obviously, for a regular  pair $\{E,F\}$ with index $\mu$, each of the above procedures is feasible up to infinity and will eventually stabilize. This can now be recorded by setting 
 \begin{align*}
  \theta_k:=0,\quad k\geq \mu.
 \end{align*}
Namely, in particular,
the strangeness index is well defined and regular, $\mu^S=\mu-1$,
 \begin{align*}
  r^S_0=r,\quad r^S_i=r^S_{i-1}-\theta_{i-1},\quad i=1,\ldots, \mu-1,\\
  s^S_i=\theta_i,\quad  i=0,\ldots, \mu-1,\\
  a^S_i=m-r^S_i-\theta_i,\quad  i=0,\ldots, \mu-1.
 \end{align*}
After reaching the zero-strangeness $s^S_{\mu-1}=0$ the corresponding sequence $\{E^S_i,F^S_i\}$ can be continued and for $i\geq\mu$ it becomes stationary \cite[p.\ 73]{KuMe2006},
\begin{align*}
  r^S_i=r^S_{\mu-1}=d^S=d,\quad i\geq \mu,\\
  s^S_i=0,\quad  i\geq \mu,\\
  a^S_i=m-r^S_{\mu-1}=m-d,\quad  i\geq \mu,
 \end{align*}
 which goes along with $\theta_i=0$ for $i\geq\mu$ and justifies the setting $\theta_k=0$ for $k\geq\mu$.
\end{remark}

\begin{corollary}\label{c.degree}
 The dynamical degree of freedom of a regular DAE is
 \begin{align*}
  d=r-\sum_{i=0}^{\mu -2}\theta_i=d^S=d^T=\dim S_{can}.
 \end{align*}
\end{corollary}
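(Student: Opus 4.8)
The plan is to treat this Corollary as a pure bookkeeping consequence of the characteristic-value identities already in hand: all four quantities have to be shown to reduce to the single number $r-\sum_{i=0}^{\mu-2}\theta_i$, so the argument is a short assembly of previously established formulas rather than anything new.

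First I would dispose of the two outer equalities directly. By Theorem \ref{t.Scan}(1) the flow-subspace $S_{can}(t)$ has dimension $d=r-\sum_{i=0}^{\mu-2}\theta_i=r_{\mu-1}$ for every $t\in\mathcal I$, and by Theorem \ref{t.solvability}(2) this same number $d$ is the dynamical degree of freedom of the DAE. This already yields ``dynamical degree of freedom $=r-\sum_{i=0}^{\mu-2}\theta_i=\dim S_{can}$''; it remains only to identify $d^S$ and $d^T$ with this value.

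For the strangeness quantity I would invoke Theorem \ref{t.indexrelation}(1), which provides $d^S_i=r^S_i-\theta_i=r-\sum_{j=0}^{i}\theta_j$ for $i=0,\ldots,\mu-1$, together with the normalization $d^S=d^S_{\mu^S}$ coming from the strangeness-free terminal pair and the index relation $\mu^S=\mu-1$. Evaluating at $i=\mu-1$ and using $\theta_{\mu-1}=0$ gives
\begin{align*}
 d^S=d^S_{\mu-1}=r-\sum_{j=0}^{\mu-1}\theta_j=r-\sum_{j=0}^{\mu-2}\theta_j.
\end{align*}
For the tractability quantity I would start from the defining formula $d^T=m-\sum_{i=0}^{\mu^T-1}(m-r^T_i)$ in Definition \ref{d.trac}, use $\mu^T=\mu$, and substitute the characteristic values of Theorem \ref{t.indexrelation}(2), namely $r^T_0=r$ and $r^T_i=m-\theta_{i-1}$ for $i=1,\ldots,\mu$, so that $m-r^T_0=m-r$ and $m-r^T_i=\theta_{i-1}$ for $i\geq 1$:
\begin{align*}
 d^T=m-(m-r)-\sum_{i=1}^{\mu-1}\theta_{i-1}=r-\sum_{j=0}^{\mu-2}\theta_j.
\end{align*}
Comparing the two displayed right-hand sides with $d=r-\sum_{i=0}^{\mu-2}\theta_i$ closes the chain $d=d^S=d^T=\dim S_{can}$.

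I do not expect a genuine obstacle: every ingredient is already proved in Theorems \ref{t.Scan}, \ref{t.solvability} and \ref{t.indexrelation}. The only points that need a little care are keeping the summation ranges consistent when translating between the ``$r^T_i$'' indexing and the ``$\theta_i$'' indexing, and the degenerate index-one case $\mu=1$, where all the sums are empty and one simply records $\theta_0=0$, whence $d=r=d^S=d^T$ and $S_{can}=S$.
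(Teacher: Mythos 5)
Your proposal is correct and follows exactly the route the paper intends: the corollary is stated without proof as an immediate consequence of Theorem \ref{t.Scan}(1), Theorem \ref{t.solvability}(2), and the characteristic-value formulas of Theorem \ref{t.indexrelation} together with Definition \ref{d.trac}, which is precisely the assembly you carry out. The index bookkeeping in both the $d^S$ and $d^T$ computations checks out, including the degenerate case $\mu=1$.
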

After we have recognized that the rank conditions in Definition \ref{d.2} are appropriate for a regular DAE, the question arises what rank violations can mean.

Based on the above equivalence statements, the findings of the projector-based analysis on regular and critical points, for instance in \cite{RR2008,CRR}  are generally valid. The characterization of critical and singular points presupposes a corresponding definition of regular points.

\begin{definition}\label{d.regpoint}
 Given is the pair $\{E,F\}$, $E,F:\mathcal I\rightarrow\Real^{m\times m}$. 
 The point $t_*\in\mathcal I$ is said to be a \emph{regular point} of the pair and the associated DAE, if there is an open neighborhood  $\mathcal U\ni t_*$ such that the pair restricted to  $\mathcal I\cap\mathcal U$, is regular. 
 Otherwise $t_*\in\mathcal I$ will be called \emph{critical or singular}.
 
 In the regular case the characteristic values \eqref{theta} are then also assigned to the regular point. 
 The set of all regular points within $\mathcal I$ will be denoted by $\mathcal I_{reg}$. 

A subinterval $\mathcal I_{sub}\subset \mathcal I$ is called regularity interval if all its points are regular ones.  
\end{definition}

We refer to \cite[Chapter 4]{RR2008} for a careful discussion and classification of possible critical points. Section \ref{s.examples} below  comprises a series of relevant but simple examples.

Critical points arise when rank conditions ensuring regularity are violated. We now realize that the question of whether a point is regular or critical can be answered independently of the chosen approach.
According to our equivalence result, critical points arise, if at all, then simultaneously in all concepts at the corresponding levels.
\medskip

When viewing a DAE as a vector field on a manifold, critical points are allowed exclusively in the very last step of the basis reduction,  with the intention of then being able to examine singularities of the flow, see Section \ref{subs.degree}.
The concept of geometric reduction basically covers regular DAEs and those with well-defined degree and configuration space, i.e. only rank changes in the very last reduction level are permitted.

\begin{remark}\label{Nonregular}
We end this section with an very important note:
The strangeness index and the tractability index are defined also for DAEs in rectangular size,with $E,F:\mathcal I\rightarrow\Real^{n\times m}$, $n\neq m$, but then they differ substantially from each other \cite{HM2020, CRR}. It remains to be seen whether and to what extent the above findings can be generalized.
\end{remark}

\subsection{Standard canonical forms}\label{sec:SCF}
%%%%%%%%%%
DAEs in standard canonical form (SCF), that is,
\begin{align}\label{SCFDAE}
 \begin{bmatrix}
  I_d&0\\0&N(t)
 \end{bmatrix} x'(t)+
\begin{bmatrix}
  \Omega(t)&0\\0&I_a
 \end{bmatrix} x(t)=q(t),\quad t\in\mathcal I,
\end{align}
where $N$ is strictly upper (or lower) triangular, but it need not have constant rank or index, see \cite[Definition 2.4.5]{BCP89}, 
play a special role in the DAE literature \cite{BCP89,BergerIlchmann}. Their coefficient pairs represent  generalizations of the Weierstraß–Kronecker form\footnote{Quasi-Weierstraß form in \cite{BergerIlchmannTrenn,Trenn2013}} of matrix pencils. If $N$ is even constant, then the DAE is said to be in \emph{strong standard canonical form}. A DAE in SCF is also characterized by the simplest canonical subspaces which are even orthogonal to each other, namely 
\begin{align*}
 S_{can}=\im \begin{bmatrix}
              I_d\\0
             \end{bmatrix},\quad
N_{can}=\im \begin{bmatrix}
              0\\I_a
             \end{bmatrix}.
\end{align*}

DAEs being transformable into SCF are solvable systems in the sense of Definition \ref{d.solvableDAE}, but they are not necessary regular, see Examples \ref{e.1}, \ref{e.7} in Section \ref{s.examples}.  The critical points that occur here are called \emph{harmless} \cite{RR2008,CRR} because they do not generate a singular flow. We will come back to this below.

Furthermore, not all solvable systems can be transformed into SCF as Example \ref{e.2} below confirms. We refer to \cite{BCP89} and in turn to Remark \ref{r.generalform} below for the description of the general form of solvable systems.
\medskip

In Sections \ref{s.regular} and \ref{subs.tractability} we already have faced DAEs in SCFs with a special structure, which in turn represent narrower generalizations of the Weierstraß–Kronecker form. 
For given integers $\kappa \geq 2$, $d\geq 0$, $l=l_{1}+\cdots +l_{\kappa}$, $l_{i}\geq
 1$, $l=a$, $m=d+l$
 the pair $\{E,F\}$, $E,F:\mathcal I\rightarrow \Real^{m\times m}$, is structured as follows:
\begin{align}\label{blockstructureSCF}
 E=\begin{bmatrix}
    I_{d}&\\&N
   \end{bmatrix},\quad
F&=\begin{bmatrix}
    \Omega&\\&I_{l}
   \end{bmatrix}, \quad
N=\begin{bmatrix}
   0&N_{12}&&\cdots&N_{1\kappa}\\
   &0&N_{23}&&N_{2\kappa}\\
   &&\ddots&\ddots&\vdots\\
   &&&&N_{\kappa-1 \kappa}\\
   &&&&0
   \end{bmatrix},\\
   &\text{with blocks}\; N_{ij} \;\text{of sizes}\; l_{i}\times l_{j}.\nonumber
\end{align}
If $d=0$ then the respective parts are absent. All blocks are sufficiently smooth on the given interval $\mathcal I$. $N$ is strictly block upper triangular, thus nilpotent and $N^{\kappa}=0$. 
\medskip

The following theorem proves that and to what extent regular DAEs are distinguished by a uniform inner structure of the matrix function $N$ and thus of the canonical subspace $N_{can}$.
\begin{theorem}\label{t.SCF}
 Each regular DAE with index $\mu\in\Natu$ and characteristics 
 $r<m$,
$\theta_0=0$ if $\mu=1$, and, for $\mu>1$,
\begin{align*}
r<m,\quad \theta_0\geq\cdots\geq\theta_{\mu-2}>\theta_{\mu-1}=0,
\end{align*}
is transformable into a structured SCF \eqref{blockstructureSCF} where $\kappa=\mu$ and all blocks of the secondary diagonal have full column rank, that means,
\begin{align*}
\rank N_{12}&=l_{2}= m-r,\; \ker N_{12}=\{0\},\\
 \rank N_{i,i+1}&=l_{i+1}=\theta_{i-1},\; \ker N_{i,i+1}=\{0\},\quad i=1,\ldots,\mu-1,
\end{align*}
and the powers of $N$ feature constant rank, 
\begin{align*}
 \rank N&=r-d=\theta_0+\cdots+\theta_{\mu-2},\\
 \rank N^2&=\theta_1+\cdots+\theta_{\mu-2},\\
 &\cdots\\
 \rank N^{\mu-1}&=\theta_{\mu-2}.
\end{align*}
\end{theorem}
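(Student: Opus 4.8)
The statement combines a structural normal‐form claim with explicit rank data, and the natural strategy is to obtain the normal form first and then read off all the ranks from it. By Theorem~\ref{t.equivalence}(3) together with Theorem~\ref{t.indexrelation}(2), a DAE that is regular with index $\mu$ in the sense of Definition~\ref{d.2} is regular with tractability index $\mu^{T}=\mu$ and characteristic values $r^{T}_{0}=r$, $r^{T}_{i}=m-\theta_{i-1}$ for $i=1,\ldots,\mu$. Now invoke the result already quoted in Subsection~\ref{subs.tractability} (from \cite[Theorem 2.65]{CRR}): such a pair is equivalent to a pair $\{\tilde E,\tilde F\}$ of the form \eqref{SCFt}, i.e.\ of the form \eqref{blockstructureSCF} with $\kappa=\mu$, $d=d^{T}$, $a=m-d^{T}$, and $N$ pointwise strictly block upper triangular with secondary‐diagonal blocks $N_{i,i+1}$ of full column rank equal to $l_{i+1}=m-r^{T}_{i}$ for $i=1,\ldots,\kappa-1$, and first block size $l_{1}=m-r$. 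Substituting $r^{T}_{i}=m-\theta_{i-1}$ gives $l_{i+1}=\theta_{i-1}$ for $i=1,\ldots,\mu-1$ and $l_{2}=\theta_{0}=m-r^{T}_{1}$, which is exactly the asserted $\ker N_{i,i+1}=\{0\}$ with $\rank N_{12}=l_{2}=m-r$ (using $r=r^{T}_{0}$ and the block‐size bookkeeping $l_{1}=m-r$, not $l_{2}$ — careful: here $\rank N_{12}=l_2$ since $N_{12}$ has $l_1$ rows and $l_2$ columns and full column rank) and $\rank N_{i,i+1}=l_{i+1}=\theta_{i-1}$. This disposes of the normal‐form part and the secondary‐diagonal rank claims; the only work is to match indices carefully, since the tractability chain is indexed in ascending rank while the reduction chain in Section~\ref{s.regular} is indexed in descending rank.

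\textbf{Computing the powers of $N$.} It remains to compute $\rank N^{k}$ for $k=1,\ldots,\mu-1$. The point is that a strictly block upper triangular matrix whose \emph{secondary} diagonal blocks all have full column rank behaves, with respect to powers, exactly like the companion‐type nilpotent from Proposition~\ref{p.STform}(1): multiplying out $N^{k}$, the only block that can be nonzero in the $(i,j)$ position with $j\ge i+k$ is obtained by composing the secondary blocks $N_{i,i+1}N_{i+1,i+2}\cdots N_{i+k-1,i+k}$ (the ``leading'' $k$-th superdiagonal term), plus lower‐order contributions, and a product of full‐column‐rank maps is again full column rank. More precisely, I would argue as in the proof of Proposition~\ref{p.STform}: either directly, using Lemma~\ref{l.SUT1}, one shows that the $k$-th block superdiagonal of $N^{k}$ consists of the compositions $N_{i,i+1}\cdots N_{i+k-1,i+k}$ (each of full column rank $l_{i+k}$), and these rows/columns are ``independent enough'' across blocks because the block rows $k+1,\ldots,\kappa$ of $N^{k}$ are linearly independent. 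Hence
\begin{align*}
 \rank N^{k}=\sum_{i=k+1}^{\kappa}l_{i}=\sum_{i=k}^{\mu-2}\theta_{i},\qquad k=1,\ldots,\mu-1,
\end{align*}
which in particular gives $\rank N=\sum_{i=0}^{\mu-2}\theta_i=r-d$ (using $d=r-\sum_{i=0}^{\mu-2}\theta_i$ from Corollary~\ref{c.degree}), $\rank N^{2}=\sum_{i=1}^{\mu-2}\theta_i$, down to $\rank N^{\mu-1}=\theta_{\mu-2}$, and $N^{\mu}=0$ since the chain of $\kappa=\mu$ block superdiagonals is exhausted. Constancy of these ranks on all of $\mathcal I$ is automatic: $l_{i}$ and $\theta_{i}$ are the constant characteristic values of the regular pair.

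\textbf{Alternative route and the main obstacle.} A cleaner alternative avoiding the ad hoc rank count for $N^{k}$ is to change coordinates once more within the $a\times a$ nilpotent block so that each full‐column‐rank $N_{i,i+1}$ becomes $\begin{bmatrix}I_{l_{i+1}}\\0\end{bmatrix}$ (possible because full column rank is preserved under left/right multiplication by nonsingular block‐diagonal matrix functions, and such a transformation is an equivalence transformation of the DAE that does not alter regularity or the characteristic values by Theorem~\ref{t.equivalence}); then $N$ is conjugate to a direct sum of nilpotent Jordan‐type blocks whose sizes are governed by the $l_{i}$, and the rank of $N^{k}$ is read off the Jordan structure just as in Remark~\ref{r.pencil}. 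The genuinely delicate point in either route is bookkeeping the index shift between the ascending tractability ranks $r^{T}_{i}$ and the descending basic ranks $r_{i}$, and correspondingly between $l_{i}$ and $\theta_{i}$: one must check that $\theta_{\mu-1}=0$ forces exactly $\kappa=\mu$ (no spurious extra block), that the first block has size $l_{1}=m-r$ rather than $m-r^{T}_{1}$, and that the telescoping $\sum_{i=k}^{\mu-2}\theta_{i}$ lines up with $\sum_{i=k+1}^{\kappa}l_{i}$; everything else is routine linear algebra over the ring of smooth matrix functions, where constancy of ranks is guaranteed by the regularity hypothesis.
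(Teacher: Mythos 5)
Your proposal follows exactly the paper's route: pass to the tractability index via Theorem \ref{t.equivalence} and formula \eqref{trac}, invoke \cite[Theorem 2.65]{CRR} for the structured SCF with full-column-rank secondary blocks $l_1=m-r$, $l_{i+1}=m-r_i^T=\theta_{i-1}$, and then read off the ranks of the powers of $N$ by "straightforward computations" (which the paper delegates to Lemma \ref{l.Ncol}, the precise form of your product-of-injective-blocks argument). The only blemish is an index slip in your general formula, which should read $\rank N^{k}=\sum_{i=k+1}^{\kappa}l_{i}=\sum_{j=k-1}^{\mu-2}\theta_{j}$ rather than $\sum_{i=k}^{\mu-2}\theta_{i}$; the specific instances you then list are all correct.
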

\begin{proof}
 Owing to Theorem \ref{t.equivalence} the DAE is regular with tractability index $\mu^T=\mu$ and the associated characteristics given by formula \eqref{trac}. By \cite[Theorem 2.65]{CRR}, each DAE being regular with tractability index $\mu$ can be equivalently transformed into a structured SCF, with N having the block upper triangular structure as in \eqref{blockstructureSCF}, $\kappa=\mu$,  $l_1=m-r, l_2=m-r^T_1,\ldots, l_{\kappa}=m-r^T_{\kappa-1}$. Now the assertion results by straightforward computations.
\end{proof}

Sometimes structured SCFs, in which the blocks on the secondary diagonal have full row rank, are more convenient to handle, as can be seen in the case of the proof of Proposition \ref{p.STform}, for example.
\begin{corollary}\label{c.SCT}
 Given is the  strictly upper block triangular matrix function with full row-rank blocks on the secondary block diagonal,
  \begin{align*}
\tilde N&=\begin{bmatrix}
   0&\tilde N_{12}&&\cdots&\tilde N_{1\kappa}\\
   &0&\tilde N_{23}&&\tilde N_{2\kappa}\\
   &&\ddots&\ddots&\vdots\\
   &&&&\tilde N_{\kappa-1 \kappa}\\
   &&&&0
   \end{bmatrix}:\mathcal I\rightarrow\Real^{l\times l},\\
   \text{with blocks}\; &\tilde N_{ij} \;\text{of sizes}\; \tilde l_{i}\times \tilde l_{j},\; 
   \rank \tilde N_{i,i+1}=\tilde l_{i}, \quad 1\leq\tilde l_1\leq \tilde l_2\leq\cdots\leq \tilde l_{\kappa}, \\&l=\sum_{i=1}^{\kappa}\tilde l_i,\; r_{\tilde N}=\rank \tilde N.
\end{align*}
Then the following two assertions are valid:
\begin{description}
 \item[\textrm{(1)}] 
The pair $\{\tilde N,I_l\}$ can be equivalently transformed to a pair $\{N,I_l\}$ with full column-rank blocks on the secondary block diagonal,
 \begin{align*}
N&=\begin{bmatrix}
   0&N_{12}&&\cdots&N_{1\kappa}\\
   &0&N_{23}&&N_{2\kappa}\\
   &&\ddots&\ddots&\vdots\\
   &&&&N_{\kappa-1 \kappa}\\
   &&&&0
   \end{bmatrix}:\mathcal I\rightarrow\Real^{l\times l},\\
   \text{with blocks}\; &N_{ij} \;\text{of sizes}\; l_{i}\times l_{j},\; 
  \rank N_{i,i+1}=l_{i+1}, \quad l_1\geq l_2\geq\cdots\geq l_{\kappa}\geq 1,\\&l=\sum_{i=1}^{\kappa} l_i,\;  r_{N}=\rank N,
\end{align*}
such there are pointwise nonsingular matrix functions $L,K:\mathcal I\rightarrow \Real^{l\times l}$ yielding
\begin{align}\label{NN}
 LNK=\tilde N,\; LK+LNK'=I_l.
\end{align}
Furthermore, both pairs $\{N,I_l\}$ and $\{\tilde N,I_l\}$ are regular with index $\mu=\kappa$ and characteristics
\begin{align*}
r_{N}= r_{\tilde N}&=l-\tilde l_{\mu}=l-l_1,\\
 \theta_0&=\tilde l_{\mu-1}=l_2,\\
  \theta_1&=\tilde l_{\mu-2}=l_3,\\
  &\cdots\\
   \theta_{\mu-2}&=\tilde l_{1}=l_{\mu}.
\end{align*}
\item[\textrm{(2)}] The pairs $\{E,F\}$ and $\{\tilde E,\tilde F\}$, given by 
\begin{align*}
 \tilde E=\begin{bmatrix}
    I_d&0\\0&\tilde N
   \end{bmatrix},\;
   \tilde F=\begin{bmatrix}
    \Omega &0\\0&I_l
   \end{bmatrix},\quad
   E=\begin{bmatrix}
    I_d&0\\0&N
   \end{bmatrix},\;
   F=\begin{bmatrix}
    \Omega &0\\0&I_l
   \end{bmatrix},\;
\end{align*}
with $N$ from {\textrm (1)} having full column-rank blocks on the secondary block diagonal,
are equivalent. Both pairs are regular with index $\mu=\kappa$ and characteristics $r=d+r_{N}$ and $\theta_0,\ldots, \theta_{\mu-2}$ from {\textrm (1)}.
\end{description}
\end{corollary}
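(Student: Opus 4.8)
The plan is to read everything off Proposition~\ref{p.STform} and Theorem~\ref{t.SCF}, after observing that both $\{\tilde N,I_l\}$ and $\{N,I_l\}$ are instances of the block form \eqref{blockstructure}/\eqref{blockstructureSCF} with $d=0$ (the $\Omega$-part being absent). First I would apply Proposition~\ref{p.STform}(2) to $\{\tilde N,I_l\}$: since the secondary blocks $\tilde N_{i,i+1}$ have full row rank $\tilde l_i$, this pair is regular with index $\mu=\kappa$, with $r_{\tilde N}=\rank\tilde N=l-\tilde l_\mu$ and $\theta_j=\tilde l_{\mu-1-j}$ for $j=0,\dots,\mu-2$; in particular $r_{\tilde N}<l$, and the dynamical degree is $r_{\tilde N}-\sum_{j=0}^{\mu-2}\theta_j=l-(\tilde l_1+\cdots+\tilde l_\mu)=0$ (the case $\kappa=1$, where $\tilde N=N=0$, being immediate).

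For part~(1): because $\{\tilde N,I_l\}$ is regular with index $\mu=\kappa$, Theorem~\ref{t.SCF} makes it transformable into a structured SCF \eqref{blockstructureSCF} whose secondary blocks have full column rank, and since its dynamical degree vanishes this SCF is exactly a pair $\{N,I_l\}$ of the claimed shape; Theorem~\ref{t.SCF} moreover fixes the block sizes as $l_1=l-r_{\tilde N}=\tilde l_\mu$ and $l_{i+1}=\theta_{i-1}=\tilde l_{\mu-i}$, i.e. $l_i=\tilde l_{\mu+1-i}$ for $i=1,\dots,\mu$, with $l_1\ge\cdots\ge l_\mu\ge 1$ forced by $\tilde l_1\le\cdots\le\tilde l_\kappa$. ``Transformable into'' is, by definition \eqref{1.Equivalence} with $E=N$, $F=I_l$, $\tilde E=\tilde N$, $\tilde F=I_l$, precisely the existence of pointwise nonsingular, sufficiently smooth $L,K$ with $LNK=\tilde N$ and $LK+LNK'=I_l$, that is, \eqref{NN}. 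Finally, applying Proposition~\ref{p.STform}(1) to $\{N,I_l\}$ yields that it is regular with index $\mu=\kappa$, $r_N=\rank N=l-l_1$ and $\theta_0=l_2,\dots,\theta_{\mu-2}=l_\mu$; combining this with the values found above and $l_i=\tilde l_{\mu+1-i}$ gives exactly the asserted list of characteristics, and in particular $r_N=r_{\tilde N}$.

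For part~(2) I would lift the transformation of part~(1) blockwise: with $L,K$ from \eqref{NN} set $\mathcal L=\diag\{I_d,L\}$ and $\mathcal K=\diag\{I_d,K\}$, both pointwise nonsingular and of the required smoothness. Using $\mathcal K'=\diag\{0,K'\}$, a one-line block computation gives $\mathcal L E\mathcal K=\diag\{I_d,LNK\}=\tilde E$ and $\mathcal L F\mathcal K+\mathcal L E\mathcal K'=\diag\{\Omega,LK+LNK'\}=\tilde F$, so $\{E,F\}$ and $\{\tilde E,\tilde F\}$ are equivalent in the sense of \eqref{1.Equivalence}. Since $\{E,F\}$ is an instance of \eqref{blockstructure} with the given $d,l$ and with $N$ having full column-rank secondary blocks of sizes $l_i$, Proposition~\ref{p.STform}(1) shows it is regular with index $\mu=\kappa$, $r=m-l_1=d+r_N$, and $\theta_0=l_2,\dots,\theta_{\mu-2}=l_\mu$; likewise $\{\tilde E,\tilde F\}$ is an instance of \eqref{blockstructure} with full row-rank secondary blocks, so Proposition~\ref{p.STform}(2) gives it the same index and the same characteristics, consistently with the equivalence just established.

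The only step carrying real content is obtaining the equivalence in part~(1), i.e. the passage from full row-rank to full column-rank secondary blocks. Reading it off Theorem~\ref{t.SCF} makes it painless, but a self-contained argument would be the genuine obstacle: the correction term $LNK'$ in \eqref{1.Equivalence} obstructs the naive ``block anti-transpose'' that settles the constant-coefficient case (where $N$, $N^{*}$ and $JN^{*}J$ are similar), so one would instead have to reproduce the inductive reduction/decoupling behind \cite[Theorem 2.65]{CRR}. The remaining ingredients --- the block-diagonal extension of part~(2) and the bookkeeping of characteristic values --- are routine.
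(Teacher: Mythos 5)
Your proposal is correct and follows essentially the same route as the paper: Proposition \ref{p.STform}(2) gives regularity of $\{\tilde N,I_l\}$ with $d=0$ and the stated characteristics, Theorem \ref{t.SCF} supplies the equivalent full-column-rank pair $\{N,I_l\}$ (whence \eqref{NN}), Proposition \ref{p.STform}(1) reads off the characteristics of $\{N,I_l\}$, and part (2) is the same block-diagonal lift $\diag\{I_d,L\}$, $\diag\{I_d,K\}$ used in the paper. Your closing remark that the only real content sits inside Theorem \ref{t.SCF} (i.e.\ in \cite[Theorem 2.65]{CRR}) is accurate; the paper likewise does not re-derive that step.
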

\begin{proof} {\textrm (1):}
 The pair $\{\tilde N,I_l\}$ is regular with the characteristics 
 $r_{\tilde N}=l-\tilde l_{\mu},
 \theta_0=\tilde l_{\mu-1},
  \theta_1=\tilde l_{\mu-2},
  \ldots,
   \theta_{\mu-2}=\tilde l_{1}$ and $d=0$ owing to Proposition \ref{p.STform}(2). By Theorem \ref{t.SCF}
   it is equivalent to the pair $\{N,I_l\}$ which proves the assertion. The characteristic values are  provided by Proposition \ref{p.STform}.
   
 {\textrm (2):} By means of the transformation 
 \begin{align*}
   L=\begin{bmatrix}
    I_d&0\\0&\mathring L
   \end{bmatrix},\;
   K=\begin{bmatrix}
    I_d &0\\0&\mathring K
   \end{bmatrix}:\mathcal I\rightarrow\Real^{(d+l)\times(d+l)},
 \end{align*}
in which $\mathring L,\mathring K:\mathcal I\rightarrow \Real^{l\times l}$ represent the transformation from {\textrm (1)} we verify the equivalence by
\begin{align*}
 LEK&=\begin{bmatrix}
    I_d&0\\0&\mathring LN\mathring K
   \end{bmatrix}=
   \begin{bmatrix}
    I_d&0\\0&\tilde N
   \end{bmatrix}=\tilde E,\\
 LFK+LEK'&= \begin{bmatrix}
    \Omega&0\\0&\mathring L\mathring K
   \end{bmatrix}+
   \begin{bmatrix}
    I_d&0\\0&\mathring L N
   \end{bmatrix} 
    \begin{bmatrix}
    0&0\\0&\mathring K'
   \end{bmatrix} = \begin{bmatrix}
    \Omega&0\\0&\mathring L \mathring K+\mathring L N\mathring K'
   \end{bmatrix} =
   \begin{bmatrix}
    \Omega &0\\0&I_l
   \end{bmatrix}=\tilde F.
\end{align*}
The characteristic values are  provided by Proposition \ref{p.STform}.
\end{proof}

In case of constant matrices $\tilde N$ and $N$, $K$ is constant, too, and relation \eqref{NN} simplifies to the similarity transform $K^{-1}\tilde NK=N$.

\begin{example}\label{e.D}
Consider the following DAE in Weierstraß–Kronecker form:
\[
\left[\begin{array}{@{}ccc|cc|cc@{}}
	0 & 1 & 0 & 0 & 0 & 0 & 0  \\
	0 & 0 & 1 & 0 & 0 & 0 & 0  \\
	0 & 0 & 0 & 0 & 0 & 0 & 0\\ \hline
	0 & 0 & 0 & 0 & 1  & 0 & 0\\
	0 & 0 & 0 & 0 & 0 & 0 & 0 \\ \hline
	0 & 0 & 0 & 0 & 0 & 0 & 1 \\
	0 & 0 & 0 & 0 & 0 & 0 & 0
\end{array}\right]
\begin{bmatrix}
	x'_1\\
	x'_2 \\
	x'_3 \\
	x'_4 \\
	x'_5 \\
	x'_6 \\
	x'_7
\end{bmatrix}
+
\begin{bmatrix}
	x_1\\
	x_2 \\
	x_3 \\
	x_4 \\
	x_5 \\
	x_6 \\
	x_7
\end{bmatrix}=\begin{bmatrix}
	q_1\\
	q_2 \\
	q_3 \\
	q_4 \\
	q_5 \\
	q_6 \\
	q_7
\end{bmatrix}
\]
with  $m=7$, $r=4$, $d=0$, $\theta_0=3$, $\theta_1=1$, $\theta_2=0$.
\begin{itemize}
	\item An equivalent DAE with blockstructure \eqref{blockstructure} with full column rank secondary blocks is
	\[
	\left[\begin{array}{@{}ccc|ccc|cc@{}}
	0 & 0 & 0 & 1 & 0 & 0 & 0 \\
	0 & 0 & 0 & 0 & 1 & 0 & 0 \\ 
	0 & 0 & 0 & 0 & 0 & 1 & 0 \\ \hline
	0 & 0 & 0 & 0 & 0 & 0 & 0\\
	0 & 0 & 0 & 0 & 0 & 0 & 0 \\ 
	0 & 0 & 0 & 0 & 0 & 0 & 1 \\ \hline
	0 & 0 & 0 & 0 & 0 & 0 & 0
\end{array}\right]
\begin{bmatrix}
	x'_4\\
	x'_6\\
	x'_1 \\
	x'_5 \\
	x'_7 \\
	x'_2 \\
	x'_3
\end{bmatrix}
+ \begin{bmatrix}
	x_4\\
	x_6\\
	x_1 \\
	x_5 \\
	x_7 \\
	x_2 \\
	x_3
\end{bmatrix}=\begin{bmatrix}
	q_4\\
	q_6 \\
	q_1 \\
	q_5 \\
	q_7 \\
	q_2 \\
	q_3
\end{bmatrix}
\]
with $ \rank N_{1,2} = l_2=3$, $\rank N_{2,3}=l_3=1$, $l_1 \geq l_2 \geq l_3$. $\theta_0=3$, $\theta_1=1$, $\theta_2=0$. 
	\item An equivalent DAE with blockstructure \eqref{blockstructure} with full row rank secondary blocks is
\[
	\left[\begin{array}{@{}c|ccc|ccc@{}}
	0 & 1 & 0 & 0 & 0 & 0 & 0  \\ \hline
	0 & 0 & 0 & 0 & 1 & 0 & 0  \\
	0 & 0 & 0 & 0 & 0 & 1 & 0 \\ 
	0 & 0 & 0 & 0 & 0 & 0 & 1 \\ \hline
	0 & 0 & 0 & 0 & 0 & 0 & 0 \\
	0 & 0 & 0 & 0 & 0 & 0 & 0 \\
	0 & 0 & 0 & 0 & 0 & 0 & 0
\end{array}\right]
\begin{bmatrix}
	x'_1\\
	x'_2 \\
	x'_4 \\
	x'_6 \\
	x'_3 \\
	x'_5 \\
	x'_7
 \end{bmatrix}
+ \begin{bmatrix}
	x_1\\
	x_2 \\
	x_4\\
	x_6 \\
	x_3 \\
	x_5 \\
	x_7
\end{bmatrix}=\begin{bmatrix}
	q_1\\
	q_2 \\
	q_4 \\
	q_6 \\
	q_3 \\
	q_5 \\
	q_7
\end{bmatrix}
\]
with $ \rank N_{1,2} = l_1=1$, $\rank N_{2,3}=l_2=3$, $l_1 \leq l_2 \leq l_3$.  $\theta_0=3$, $\theta_1=1$, $\theta_2=0$. 
\end{itemize}
\end{example}
%%%%
\begin{remark}\label{r.Scanform}
Theorem \ref{t.SCF} ensures that also each  pair with regular strangeness index is equivalently transformable into SCF. At this place it
should be added that the canonical form\footnote{Global canonical form in \cite{KuMe1994}} of regular pairs figured out in the context of the strangeness index \cite{KuMe1994,KuMe2006}  reads 
 
 \begin{align}\label{StrangeSCF}
 E=\begin{bmatrix}
    I_{d}&M\\&N
   \end{bmatrix},\quad
F&=\begin{bmatrix}
    \Omega&\\&I_{l}
   \end{bmatrix}, \quad
N=\begin{bmatrix}
   0&N_{12}&&\cdots&N_{1\kappa}\\
   &0&N_{23}&&N_{2\kappa}\\
   &&\ddots&\ddots&\vdots\\
   &&&&N_{\kappa-1 \kappa}\\
   &&&&0
   \end{bmatrix},\\
  M&=\begin{bmatrix}
     0&M_2&\cdots&M_{\kappa}
    \end{bmatrix},\nonumber
\end{align}
with full row-rank blocks $ N_{i,i+1}$ and $l=a=m-d$, $\kappa-1=\mu^S$.  In \cite[Theorem 3.21]{KuMe2006} one has even $\Omega=0$, taking into account that this is the result of the equivalence transformation
\begin{align*}
 LEK=\begin{bmatrix}
    I_{d}&K_{11}^{-1}M\\&N
   \end{bmatrix},\quad
LFK+LEK'&=\begin{bmatrix}
    0&\\&I_{l}
   \end{bmatrix}, 
  \end{align*} 
 in which $K_{11}$ is the fundamental solution matrix of the ODE $y'+\Omega y=0$. 
 Nevertheless this form fails to be in SCF if the entry $M$ does not vanish. This is apparently a technical problem caused by the special transformations used there.   
\end{remark}

\begin{remark}\label{r.SCFgeometric}
The structured SCF in Theorem \ref{t.SCF} makes the limitation of the geometric view from Section \ref{subs.degree} above and Section \ref{subs.nonlinearDAEsGeo} below obvious. These are regular DAEs with index $\mu$, 
degree $s=\mu-1$, and as figuration space serves  $\Real^d$ resp. $\im\begin{bmatrix}I_d\\0 \end{bmatrix}$. 
Of course, this enables the user to study the flow of the inherent ODE $u'+\Omega u=p $; however, the other part $Nv'+v=r$, which involves  the actual challenges from an application point of view, no longer plays any role.
\end{remark}

%%%%%%%%%%%%%%%%%%%%%%%%%%%%%%%%%%%%%%%%%%%%%%%
%%%%%%%%%%%%%%%%%%%%%%%%%%%%%%%%%%%%%%%%%%%%%%%%
\section{Notions defined by means of derivative arrays }\label{s.notions}
%%%%%%%%%%%%%%%%%%%%%%%%%%%%%%%%%%%
\subsection{Preliminaries  and general features}\label{subs.Preliminaries}
Here we consider the DAE \eqref{1.DAE} on the given interval $\mathcal I\subseteq \Real$.
Differentiating the DAE  $k\geq 1$ times yields the inflated system
\begin{align*}
Ex^{(1)}+Fx&=q,\\
Ex^{(2)}+(E^{(1)}+F)x^{(1)}+F^{(1)}x&=q^{(1)},\\
Ex^{(3)}+(2E^{(1)}+F)x^{(2)}+(E^{(2)}+2F^{(1)})x^{(1)}+F^{(2)}x&=q^{(2)},\\
&\ldots\\
Ex^{(k+1)}+(kE^{(1)}+F)x^{(k)}+\cdots+(E^{(k)}+kF^{(k-1)})x^{(1)}+F^{(k)}x&=q^{(k)},
\end{align*}
or tightly arranged,
\begin{align}\label{1.inflated1}
 \mathcal E_{[k]}x'_{[k]}+ \mathcal F_{[k]}x 
=q_{[k]},
\end{align}
with the continuous matrix functions
$\mathcal E_{[k]}:\mathcal I\rightarrow \Real^{(mk+m)\times(mk+m)}$,\\
$\mathcal F_{[k]}:\mathcal I\rightarrow \Real^{(mk+m)\times m}$,
\begin{align}\label{1.GkLR}
\mathcal E_{[k]}=
\begin{bmatrix}
 E&0&&&\cdots&0\\
 E^{(1)}+F&E&&&\cdots&0\\
 E^{(2)}+2F^{(1)}&2E^{(1)}+F&&&\\
 \vdots&\vdots&&&\ddots&\\
 E^{(k)}+kF^{(k-1)}&\cdots&&& kE^{(1)}+F&E
\end{bmatrix},\quad
\mathcal F_{[k]}=
\begin{bmatrix}
 F\\
 F^{(1)}\\
 F^{(2)}\\
 \vdots\\
 F^{(k)}
\end{bmatrix},
\end{align}
and the variables and right-hand sides
\begin{align*}
x_{[k]}=
\begin{bmatrix}
 x\\
 x^{(1)}\\
 x^{(2)}\\
 \vdots\\
 x^{(k)}
\end{bmatrix},\qquad
q_{[k]}=
\begin{bmatrix}
 q\\
 q^{(1)}\\
 q^{(2)}\\
 \vdots\\
 q^{(k)}
\end{bmatrix}:\mathcal I\rightarrow\Real^{mk+m}.
\end{align*}
Set $\mathcal F_{[0]}=F,\, \mathcal E_{[0]}=E$,\ $x_{[0]}=x, \, q_{[0]}=q$, such that the DAE \eqref{1.DAE} itself coincides with
\begin{align}\label{1.inflated0}
 \mathcal E_{[0]}x'_{[0]}+\mathcal F_{[0]}x
=q_{[0]}.
\end{align}
By its design, the system \eqref{1.inflated1} includes all previous systems with lower dimensions,
\begin{align*}
 \mathcal E_{[j]}x'_{[j]}+\mathcal F_{[j]}x 
=q_{[j]},\quad j=0,\ldots,k-1,
\end{align*}
and the sets
\begin{align}\label{1.consitent1}
 \mathcal C_{[j]}(t)=\{z\in \Real^{m}: \mathcal F_{[j]}(t)z -q_{[j]}(t) \in \im \mathcal E_{[j]}(t)\},\quad t\in \mathcal I,\quad j=0,\ldots,k,
\end{align}
satisfy the inclusions
\begin{align}\label{1.consitent2}
 \mathcal C_{[k]}(t)\subseteq\mathcal C_{[k-1]}(t)\subseteq\cdots\subseteq \mathcal C_{[0]}(t)=\{z\in\Real^{m}:F(t)z-q(t)\in \im E(t)\}, \quad t\in \mathcal I.
\end{align}
Therefore, each smooth solution $x$ of the original DAE must meet the so-called constraints, that is,
\begin{align*}
 x(t)\in \mathcal C_{[k]}(t), \quad t\in\mathcal I.
\end{align*}
In the following, the rank functions $r_{[k]}:\mathcal I\rightarrow \Real$,
\begin{align}\label{eq:rankGR}
 r_{[k]}(t)=\rank \mathcal E_{[k]}(t),\quad t\in \mathcal J,\; k\geq 0,
\end{align}
and the projector valued functions $\mathcal W_{[k]}:\mathcal I\rightarrow \Real^{(mk+m)\times(mk+m)}$,
\begin{align}\label{eq:WGR}
\mathcal W_{[k]}(t)=I_{mk+m}- \mathcal E_{[k]}(t)\mathcal E_{[k]}(t)^{+},\quad t\in \mathcal J,\; k\geq 0,
\end{align}
will play their role,
and further the associated linear subspaces
\begin{align}\label{1.subspace1}
 S_{[k]}(t)=\{z\in \Real^{m}: \mathcal F_{[k]}(t)z \in \im \mathcal E_{[k]}(t)\}=\ker \mathcal W_{[k]}(t)\mathcal F_{[k]}(t) ,\quad t\in \mathcal I,\quad k\geq 0.
\end{align}
 Obviously, it holds that
\begin{align}\label{1.subspace2}
 S_{[k]}(t)\subseteq S_{[k-1]}(t)\subseteq\cdots\subseteq S_{[0]}(t)=\{z\in\Real^{m}:F(t)z\in \im E(t)\}, \quad t\in \mathcal I.
\end{align}
It should be emphasized that, if the rank function $r_{[k]}$ is constant, then the pointwise Moore-Penrose inverse ${\mathcal E_{[k]}}^{+}$ and the projector function $\mathcal W_{[k]}$ are as smooth as $\mathcal E_{[k]}$. Otherwise one is confronted with discontinuities.
\begin{remark}[A necessary regularity condition]\label{r.a1}
 One aspect of regularity is that the DAE \eqref{1.DAE} should be such that it has a correspondingly smooth solution to any $m$ times continuously differentiable function $q:\mathcal I\rightarrow \Real^{m}$. If this is so, all matrix functions 
 \begin{align*}
  \begin{bmatrix}
   \mathcal E_{[k]}&\mathcal F_{[k]}
  \end{bmatrix}:\mathcal I\rightarrow \Real^{(mk+m)\times(mk+2m)}
 \end{align*}
 must have full-row rank, i.e.,
  \begin{align}\label{eq:fullrank}
 \rank \begin{bmatrix}
   \mathcal E_{[k]}(t)&\mathcal F_{[k]}(t)
  \end{bmatrix}=mk+m,\quad t\in\mathcal I, \; k\geq 0.
 \end{align}
 If, on the contrary, condition \eqref{eq:fullrank} is not valid, i.e., there are a $\bar k$ and a $\bar t$ such that
 \begin{align*}
 \rank \begin{bmatrix}
   \mathcal E_{[\bar k]}(\bar t)&\mathcal F_{[\bar k]}(\bar t)
  \end{bmatrix}< m\bar k+m,
 \end{align*}
 then there exists a nontrivial $w\in\Real^{m\bar k+m}$ such that
 \begin{align*}
 w^{*}\begin{bmatrix}
   \mathcal E_{[\bar k]}(\bar t)&\mathcal F_{[\bar k]}(\bar t)
  \end{bmatrix}=0.
 \end{align*}
 Regarding the relation 
 \begin{align*}
   \mathcal E_{[\bar k]}(\bar t)x'_{[\bar k]}(\bar t)+\mathcal F_{[\bar k]}(\bar t)x(\bar t)=q_{[\bar k]}(\bar t)
 \end{align*}
one is confronted with the restriction $w^{*}q_{[\bar k]}(\bar t)=0$ for all inhomogeneities.
\end{remark}
\begin{remark}[Representation of $\mathcal C_{[k]}(t)$]\label{r.a2}
The full row rank condition  \eqref{eq:fullrank}, i.e. also
\begin{align}\label{eq:fullrank(t)}
 \im [\mathcal E_{[k]}(t)\, \mathcal F_{[k]}(t)]=\Real^{mk+m}
\end{align}
implies
\begin{align*}
 \im \underbrace{\mathcal W_{[k]}(t)[\mathcal F_{[k]}(t)\, \mathcal E_{[k]}(t)]}_{[\mathcal W_{[k]}(t)\mathcal F_{[k]}(t)\quad 0]}=\im \mathcal W_{[k]}(t),
\end{align*}
thus
 \begin{align}\label{eq:WRGL}
  \im \mathcal W_{[k]}(t)\mathcal F_{[k]}(t)=\im \mathcal W_{[k]}(t),
  \end{align}
and in turn 
\begin{align}
 \mathcal C_{[k]}(t)=S_{[k]}(t)+(\mathcal W_{[k]}(t)\mathcal F_{[k]}(t))^{+}\mathcal W_{[k]}(t)q_{[k]}(t),\label{eq:RepresC}\\
 \dim S_{[k]}(t)=m-\rank \mathcal W_{[k]}(t)=r_{[k]}(t)-mk.\label{eq:Represrank}
\end{align}
By representation \eqref{eq:RepresC}, $\mathcal C_{[k]}(t)$ appears to be an affine subspace of $\Real^{m}$ associated with $S_{[k]}(t)$.
\end{remark}
It becomes clear  that under the necessary regularity condition \eqref{eq:fullrank} the dimensions of the subspaces $S_{[k]}(t)$ are fully determined by the ranks of $\mathcal E_{[k]}(t)$ and vice versa. In particular, then $\dim S_{[k]}(t)$ is independent of $t$ if and only if $r_{[k]}(t)$ is so, a matter that will later play a quite significant role.
\medskip

If the DAE \eqref{1.DAE} is interpreted as in \cite{Chist1996,ChistShch} as a Volterra integral equation
\begin{align}\label{Int}
 E(t)x(t)+\int_a^t (F(s)-E'(s))x(s)){\mathrm ds}= c+\int_a^t q(s){\mathrm ds}
\end{align}
 then the inflated system created on this basis  reads
 \begin{align*}
  \mathcal{D}_{[k]}x_{[k]}=\begin{bmatrix}
                            -\int_a^t (F(s)-E'(s))x(s)){\mathrm ds}+ c+\int_a^t q(s){\mathrm ds}\\q_{[k-1]}
                           \end{bmatrix},
 \end{align*}
with the array function 
\begin{align}\label{1.Dk}
\mathcal{D}_{[k]} =
\begin{bmatrix}
	  E & 0 \\
	\mathcal F_{[k-1]}& \mathcal E_{[k-1]}
\end{bmatrix}:\mathcal I\rightarrow  \Real^{(m+mk)\times(m+mk)}.
\end{align}
To get an idea about the rank of $\mathcal D_{[k]}(t)$ we take a closer look at the time-varying subspace $\ker \mathcal D_{[k]}(t)$.
We have for $k\geq 1$ that 
\begin{align}
 \ker \mathcal D_{[k]}&=\left\{\begin{bmatrix}
                          z\\w
                         \end{bmatrix}\in \Real^{m}\times\Real^{mk}: Ez=0, \mathcal F_{[k-1]}z+ \mathcal E_{[k-1]}w=0
\right\}\nonumber\\
&=\left\{\begin{bmatrix}
            z\\w
     \end{bmatrix}\in \Real^{m}\times\Real^{mk}: z\in\ker E, \mathcal W_{[k-1]}\mathcal F_{[k-1]}z=0, \mathcal E_{[k-1]}^{+}\mathcal E_{[k-1]}w=- \mathcal E_{[k-1]}^{+}\mathcal F_{[k-1]}z
\right\}\nonumber\\
&=\left\{\begin{bmatrix}
            z\\w
     \end{bmatrix}\in \Real^{m}\times\Real^{mk}: z\in\ker E\cap S_{[k-1]}, \mathcal E_{[k-1]}^{+}\mathcal E_{[k-1]}w=-\mathcal E_{[k-1]}^{+}\mathcal F_{[k-1]}z
\right\},\label{1.kerDk}
\end{align}
and consequently,
\begin{align}
%  \dim \ker \mathcal D_{[k]}&=\dim(\ker E\cap S_{[k-1]})+\dim \ker \mathcal E_{[k-1]},\\
 \rank \mathcal D_{[k]}&= m-\dim(\ker E\cap S_{[k-1]})+r_{[k-1]}. \label{1.rankDk}
\end{align}

If $ \mathcal E_{[k]}$  has constant rank, then the projector functions $\mathcal W_{[k]}$ and the Moore-Penrose inverse $\mathcal E_{[k]}^{+}$ inherit the smoothness of $\mathcal E_{[k]}$.

The following proposition makes clear that, in any case, both $r_{[k]}(t)=\rank \mathcal E_{[k]}(t)$ and $\rank \mathcal D_{[k]}(t)$ as well as  $\dim S_{[k]}(t)$ and  $\dim (\ker E(t)\cap S_{[k]}(t))$, $t\in\mathcal I$,  are invariant under equivalence transformations.
%%%%%%%%%%%
\begin{proposition}\label{p.equivalenc}
 Given are two equivalent coefficient pairs $\{E,F\}$ and $\{\tilde E,\tilde F\}$, $\tilde E=LEK$, $\tilde F=LFK+LEK'$,  $E,F,L,K:\mathcal I\rightarrow \Real^{m\times m}$ sufficiently smooth, $L$ and $K$ pointwise nonsingular.
 
 Then, the inflated matrix function pair $\tilde{\mathcal E}_{[k]}:\mathcal I\rightarrow \Real^{(mk+m)\times(mk+m)}$,
$\tilde{\mathcal F}_{[k]}:\mathcal I\rightarrow \Real^{(mk+m)\times m}$ and the subspace $\tilde S_{[k]}$  related to $\{\tilde E,\tilde F\}$ satisfy the following:
 \begin{align*}
  \tilde{\mathcal E}_{[k]}&=\mathcal L_{[k]}\mathcal E_{[k]}\mathcal K_{[k]},\quad \tilde{\mathcal F}_{[k]}=\mathcal L_{[k]}\mathcal F_{[k]}K+\mathcal L_{[k]}\mathcal E_{[k]}\mathcal H_{[k]},\quad \mathcal H_{[k]}=
  \begin{bmatrix}
    K'\\\vdots\\K^{(k+1)}                                                                                                                                                                                                                                                                                              \end{bmatrix},\\
\tilde S_{[k]}&=K^{-1}S_{[k]}, \quad \tilde S_{[k]}\cap\ker \tilde E=K^{-1}(S_{[k]}\cap\ker E),
 \end{align*}
in which the 
 matrix functions $\mathcal L_{[k]},\mathcal K_{[k]}:\mathcal I\rightarrow \Real^{(m+mk)\times (m+mk}$ are uniquely determined by $L$ and $K$, and their derivatives, respectively. They are pointwise nonsingular and have lower triangular block structure,
\begin{align*}
 \mathcal L_{[k]}=\begin{bmatrix}
                   L&0&\cdots&0\\
                   \ast&L&\cdots&0\\
                   \vdots&&\ddots&0\\
                   \ast&&\cdots&L
                  \end{bmatrix},\quad
   \mathcal K_{[k]}=\begin{bmatrix}
                   K&0&\cdots&0\\
                   \ast&K&\cdots&0\\
                   \vdots&&\ddots&0\\
                   \ast&&\cdots&K
                  \end{bmatrix}=:
                  \begin{bmatrix}
                   K&0\\\mathcal K_{{[k]}\,{21}}&\mathcal K_{{[k]}\;{22}}
                  \end{bmatrix}.              
\end{align*}
\end{proposition}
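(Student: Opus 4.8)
The plan is to reduce everything to a single algebraic identity between the differential operators of the two equivalent pairs and then differentiate it. For an arbitrary smooth $x:\mathcal I\to\Real^m$ put $g:=Ex'+Fx$, and for an arbitrary smooth $\tilde x:\mathcal I\to\Real^m$ put $\tilde g:=\tilde E\tilde x'+\tilde F\tilde x$. The first step is to record that if $x=K\tilde x$ then $\tilde g=Lg$; this is immediate from $\tilde E=LEK$, $\tilde F=LFK+LEK'$ together with $x'=K\tilde x'+K'\tilde x$, and it is exactly the operator identity already noted below \eqref{1.Equivalence}.

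Next I would differentiate. By the very construction of the inflated arrays, the block column $\mathcal E_{[k]}x'_{[k]}+\mathcal F_{[k]}x$ has entries $g,g^{(1)},\dots,g^{(k)}$, and likewise $\tilde{\mathcal E}_{[k]}\tilde x'_{[k]}+\tilde{\mathcal F}_{[k]}\tilde x$ has entries $\tilde g,\tilde g^{(1)},\dots,\tilde g^{(k)}$. Applying the Leibniz rule to $\tilde g=Lg$ gives $\tilde g^{(j)}=\sum_{i=0}^{j}\binom ji L^{(j-i)}g^{(i)}$, so the first column equals $\mathcal L_{[k]}$ times the second, where $\mathcal L_{[k]}$ is the block lower triangular matrix with $(j,i)$-block $\binom ji L^{(j-i)}$; in particular it carries $L$ on the diagonal, is uniquely determined by $L$ and its derivatives, and is pointwise nonsingular since $\det\mathcal L_{[k]}=(\det L)^{k+1}$. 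This yields
\[ \tilde{\mathcal E}_{[k]}\tilde x'_{[k]}+\tilde{\mathcal F}_{[k]}\tilde x=\mathcal L_{[k]}\bigl(\mathcal E_{[k]}x'_{[k]}+\mathcal F_{[k]}x\bigr)\qquad\text{whenever } x=K\tilde x. \]

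The third step is to feed the coordinate change into the right-hand side. Applying the Leibniz rule to $x=K\tilde x$ gives $x^{(j)}=\sum_{i=1}^{j}\binom ji K^{(j-i)}\tilde x^{(i)}+K^{(j)}\tilde x$; reading this off for $j=1,\dots,k+1$ shows $x'_{[k]}=\mathcal K_{[k]}\tilde x'_{[k]}+\mathcal H_{[k]}\tilde x$, where $\mathcal H_{[k]}$ is the block column with entries $K^{(1)},\dots,K^{(k+1)}$ and $\mathcal K_{[k]}$ is the pointwise nonsingular block lower triangular matrix carrying $K$ on the diagonal, with leading block $K$ and $\det\mathcal K_{[k]}=(\det K)^{k+1}\ne 0$, again determined by $K$ and its derivatives. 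Substituting $x=K\tilde x$ and this expression for $x'_{[k]}$ into the displayed identity and comparing coefficients of $\tilde x$ and of $\tilde x'_{[k]}$ — which is legitimate because at any fixed $t$ the data $\tilde x(t),\tilde x^{(1)}(t),\dots,\tilde x^{(k+1)}(t)$ can be prescribed independently by choosing $\tilde x$ a suitable polynomial — produces exactly $\tilde{\mathcal E}_{[k]}=\mathcal L_{[k]}\mathcal E_{[k]}\mathcal K_{[k]}$ and $\tilde{\mathcal F}_{[k]}=\mathcal L_{[k]}\mathcal F_{[k]}K+\mathcal L_{[k]}\mathcal E_{[k]}\mathcal H_{[k]}$.

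It remains to deduce the subspace statements, which is routine: since $\mathcal L_{[k]}$ and $\mathcal K_{[k]}$ are pointwise nonsingular we have $\im\tilde{\mathcal E}_{[k]}=\mathcal L_{[k]}\im\mathcal E_{[k]}$, and since $\mathcal E_{[k]}\mathcal H_{[k]}z\in\im\mathcal E_{[k]}$ trivially, it follows that $\tilde{\mathcal F}_{[k]}z\in\im\tilde{\mathcal E}_{[k]}\Longleftrightarrow\mathcal F_{[k]}Kz\in\im\mathcal E_{[k]}\Longleftrightarrow Kz\in S_{[k]}$, i.e. $\tilde S_{[k]}=K^{-1}S_{[k]}$; combined with $\ker\tilde E=\ker(LEK)=K^{-1}\ker E$ this gives $\tilde S_{[k]}\cap\ker\tilde E=K^{-1}(S_{[k]}\cap\ker E)$, whence the announced invariance of $r_{[k]}$, of $\rank\mathcal D_{[k]}$ (via \eqref{1.rankDk}), of $\dim S_{[k]}$ and of $\dim(\ker E\cap S_{[k]})$. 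I expect the only genuine care to be the bookkeeping that keeps the two Leibniz expansions apart — the one for $Lg$, producing $\mathcal L_{[k]}$, versus the one for $K\tilde x$, producing $\mathcal K_{[k]}$ together with the extra $\mathcal H_{[k]}\tilde x$ coming from the derivatives of $K$ — and the justification of the coefficient comparison; alternatively both matrix identities can be obtained by induction on $k$ from the recursive build-up of $\mathcal E_{[k]},\mathcal F_{[k]}$, at the cost of a messier recursion.
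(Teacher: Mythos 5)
Your proof is correct. For the two matrix identities the paper itself gives no argument — it simply cites \cite[Theorem 3.29]{KuMe2006} ("a slight adaptation") — whereas you supply a self-contained derivation: the operator identity $\tilde g=Lg$ under $x=K\tilde x$, two Leibniz expansions producing $\mathcal L_{[k]}$ on one side and $\mathcal K_{[k]},\mathcal H_{[k]}$ on the other, and a coefficient comparison justified by the freedom to prescribe the jet $\tilde x(t),\dots,\tilde x^{(k+1)}(t)$ at any fixed $t$. That comparison is legitimate, the block-triangular structure and nonsingularity of $\mathcal L_{[k]},\mathcal K_{[k]}$ follow as you say from $\det\mathcal L_{[k]}=(\det L)^{k+1}$ and $\det\mathcal K_{[k]}=(\det K)^{k+1}$, and your treatment of the subspace claims ($\tilde{\mathcal F}_{[k]}z\in\im\tilde{\mathcal E}_{[k]}\Leftrightarrow\mathcal F_{[k]}Kz\in\im\mathcal E_{[k]}$ after absorbing the $\mathcal E_{[k]}\mathcal H_{[k]}z$ term, plus $\ker\tilde E=K^{-1}\ker E$) is exactly the argument the paper gives for that part. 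So your write-up matches the paper where the paper argues, and fills in with a clean direct proof the piece the paper outsources to a reference.
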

\begin{proof}
The representation of $\tilde{\mathcal E}_{[k]}$ and $\tilde{\mathcal F}_{[k]}$ is given by a slight adaption of \cite[Theorem 3.29]{KuMe2006}. We turn to $\tilde S_{[k]}$.

$\tilde z\in \tilde S_{[k]}$ means $\tilde{\mathcal F}_{[k]}\tilde z\in \im \tilde{\mathcal E}_{[k]}$, thus $ \mathcal F_{[k]}\tilde z +\mathcal E_{[k]} \mathcal H_{[k]}\tilde z\in \im \mathcal E_{[k]}$, then also $ \mathcal F_{[k]}\tilde z \in \im \mathcal E_{[k]}$, that is, $K\tilde z\in S_{[k]}$.
Regarding also that $\tilde z\in \ker \tilde E$ means $K\tilde z\in \ker E$ we are done.
\end{proof}
 The following lemma gives a certain first idea about the size of the rank functions.
\begin{lemma}\label{l.rk}
 The rank functions  $r_{[k]}=\rank \mathcal E_{[k]}$ and $r^{\mathcal D}_{[k]}=\rank \mathcal D_{[k]}$, $k\geq 1$,  $r^{\mathcal D}_{[0]}= r_{[0]}=\rank E$, satisfy the inequalities
 \begin{align*}
  r_{[k]}(t)+r(t)\leq r_{[k+1]}(t)\leq r_{[k]}(t)+m,\quad t\in\mathcal I, \; k\geq0,\\
   r^{\mathcal D}_{[k]}(t)+r(t)\leq r^{\mathcal D}_{[k+1]}(t)\leq r^{\mathcal D}_{[k]}(t)+m,\quad t\in\mathcal I, \; k\geq0,
 \end{align*}
\end{lemma}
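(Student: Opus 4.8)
The plan is to establish the four inequalities of Lemma \ref{l.rk} by exploiting the recursive block structure that connects $\mathcal E_{[k+1]}$ with $\mathcal E_{[k]}$ and $\mathcal D_{[k+1]}$ with $\mathcal D_{[k]}$.

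First I would write down the block decomposition that makes the recursion visible. Differentiating the inflated system once more shows that
\begin{align*}
 \mathcal E_{[k+1]}=\begin{bmatrix} \mathcal E_{[k]} & 0 \\ \ast & E\end{bmatrix},
\end{align*}
where the last block row of $\mathcal E_{[k+1]}$ is $[\,E^{(k+1)}+(k+1)F^{(k)}\;\;\cdots\;\;(k+1)E^{(1)}+F\;\;E\,]$ and the bottom-right $m\times m$ block is exactly $E$. This is the standard observation behind the derivative-array construction (cf.\ the explicit form \eqref{1.GkLR}). For the upper bound, deleting the last $m$ columns can decrease the rank by at most $m$, and the resulting $(mk+2m)\times(mk+m)$ matrix $\left[\begin{smallmatrix}\mathcal E_{[k]}\\ \ast\end{smallmatrix}\right]$ has the same rank as $\mathcal E_{[k]}$ since adding rows below a matrix cannot reduce its rank and the new rows are in the row space being supplemented by nothing new in terms of a rank bound from above --- more precisely $\rank\left[\begin{smallmatrix}\mathcal E_{[k]}\\ \ast\end{smallmatrix}\right]\le mk+m$ is automatic and the columns being a subset gives $\rank\mathcal E_{[k+1]}\le \rank\left[\begin{smallmatrix}\mathcal E_{[k]}&0\\ \ast& E\end{smallmatrix}\right]$; the clean way is: removing the last $m$ columns drops the rank by at most $m$, and what remains is $\left[\begin{smallmatrix}\mathcal E_{[k]}\\ \ast\end{smallmatrix}\right]$ whose rank is $\ge\rank\mathcal E_{[k]}$, hence $r_{[k+1]}\le r_{[k]}+m$ needs the reverse; so instead I bound $r_{[k+1]}=\rank\left[\begin{smallmatrix}\mathcal E_{[k]}&0\\ \ast& E\end{smallmatrix}\right]\le\rank\left[\begin{smallmatrix}\mathcal E_{[k]}&0\\ \ast& 0\end{smallmatrix}\right]+\rank\left[\begin{smallmatrix}0&0\\0&E\end{smallmatrix}\right]$ is false too. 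The correct elementary fact is: appending a block column of width $m$ raises the rank by at most $m$, so $r_{[k+1]}\le\rank\left[\begin{smallmatrix}\mathcal E_{[k]}\\\ast\end{smallmatrix}\right]+m\le (mk+m)\wedge$, and since $\rank\left[\begin{smallmatrix}\mathcal E_{[k]}\\\ast\end{smallmatrix}\right]$ can exceed $r_{[k]}$, the bound I actually want follows instead from the column-deletion viewpoint applied to the transpose. Let me just say: the upper bound $r_{[k+1]}\le r_{[k]}+m$ follows because $\mathcal E_{[k+1]}$ arises from $\left[\begin{smallmatrix}\mathcal E_{[k]}&0\end{smallmatrix}\right]$ (rank $r_{[k]}$) by appending $m$ new rows and $m$ new columns, and each appended row increases the rank by at most one. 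For the lower bound $r_{[k+1]}\ge r_{[k]}+r$, observe that $\mathcal E_{[k+1]}$ contains $\mathcal E_{[k]}$ as the leading principal block and $E$ as the trailing diagonal block with zeros above $E$ in its block column, so a maximal set of independent rows of $\mathcal E_{[k]}$ together with a maximal set of independent rows of $E$ placed in the last block row remain independent in $\mathcal E_{[k+1]}$ (the zero upper-right block is what guarantees no cancellation).

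For $\mathcal D_{[k]}$ the argument is the same with the recursion read off from \eqref{1.Dk}: since $\mathcal D_{[k+1]}=\left[\begin{smallmatrix}E&0\\ \mathcal F_{[k]}&\mathcal E_{[k]}\end{smallmatrix}\right]$ and $\mathcal D_{[k]}=\left[\begin{smallmatrix}E&0\\ \mathcal F_{[k-1]}&\mathcal E_{[k-1]}\end{smallmatrix}\right]$, one checks directly that $\mathcal D_{[k+1]}$ is obtained from $\mathcal D_{[k]}$ by bordering with one extra block row and one extra block column whose new diagonal block (reading the last block row/column of $\mathcal E_{[k]}$ inside $\mathcal D_{[k+1]}$) is again $E$, with a zero block in the upper-right corner. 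Hence the identical two-sided estimate applies: at most $m$ more rows so $r^{\mathcal D}_{[k+1]}\le r^{\mathcal D}_{[k]}+m$, and the block-triangular placement of $E$ against a zero block gives $r^{\mathcal D}_{[k+1]}\ge r^{\mathcal D}_{[k]}+r$.

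The main obstacle is purely bookkeeping: making the claim ``$\mathcal E_{[k+1]}$ is $\mathcal E_{[k]}$ bordered by one block row and column with a zero upper-right corner'' precise enough to read off both the $+m$ upper bound (trivial, from adding $m$ rows) and the $+r$ lower bound (from the block-triangular structure $\left[\begin{smallmatrix}\mathcal E_{[k]}&0\\ \ast&E\end{smallmatrix}\right]$, using $\rank\left[\begin{smallmatrix}A&0\\ C&B\end{smallmatrix}\right]\ge\rank A+\rank B$, which is immediate by choosing independent rows of $A$ and of $B$ and noting the zero block prevents them from combining). I would state this block-triangular rank inequality as a one-line sublemma and then apply it twice, once to $\mathcal E_{[k]}$ and once to $\mathcal D_{[k]}$, which finishes the proof.
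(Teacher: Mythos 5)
Your proposal is correct and follows essentially the same route as the paper, which delegates both inequalities to the bordered block-triangular structure $\mathcal M_{[k+1]}=\left[\begin{smallmatrix}\mathcal M_{[k]}&0\\ \ast&E\end{smallmatrix}\right]$ isolated in Appendix Lemma \ref{l.app2} and applies it to $\mathcal E_{[k]}$ and $\mathcal D_{[k]}$ exactly as you do. The several false starts on the upper bound do resolve into the right argument (append the $m$ zero columns first, which leaves the rank at $r_{[k]}$, then note that appending $m$ rows raises the rank by at most $m$), and the lower bound via $\rank\left[\begin{smallmatrix}A&0\\ C&B\end{smallmatrix}\right]\geq\rank A+\rank B$ matches the paper's.
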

\begin{proof}
 The special structure of both matrix functions  satisfies the requirement of Lemma \ref{l.app2} ensuring the inequalities.
\end{proof}

The question of whether the ranks $r_{[i]}$ of the matrix functions $\mathcal E_{[i]}$ are constant will play an important role below. We are also interested in the relationships to the rank conditions associated with the Definition \ref{d.2}. We see points where these rank conditions are violated as critical points which require closer examination. In Section \ref{s.examples} below a few examples are discussed in detail to illustrate the matter.

\begin{lemma}\label{l.R1}
 Let the matrix functions $E,F:\mathcal I\rightarrow\Real^{m\times m}$ be such that, for all $t\in \mathcal I$, $\rank E(t)=r$, $\rank [E(t) F(t)]=m$. 
 Denote $\theta_{0}(t)=\dim(\ker E(t)\cap S_{[0]}(t))=\dim(\ker E(t)\cap \ker Z(t)^*F(t))$ in which $Z:\mathcal I\rightarrow\Real^{m\times(m-r)}$ is a  basis of $(\im E)^{\perp}$.
 
 Then it results that 
 \[r_{[1]}=\rank\mathcal E_{[1]}(t)=\rank\mathcal D_{[1]}(t)=m+r-\theta_{0}(t),\quad t\in \mathcal I,
  \]
  and both $\mathcal E_{[1]}$ and $\mathcal D_{[1]}$ have constant rank precisely if the pair is pre-regular.
\end{lemma}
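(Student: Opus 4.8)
The plan is to obtain the formula for $\mathcal{D}_{[1]}$ straight from the rank identity \eqref{1.rankDk} (which was derived from \eqref{1.kerDk} with no extra assumptions), and then to transfer it to $\mathcal{E}_{[1]}$ by passing to an equivalent pair whose leading coefficient is constant, thereby killing the derivative term that distinguishes $\mathcal{E}_{[1]}$ from $\mathcal{D}_{[1]}$. Specializing \eqref{1.rankDk} to $k=1$ and using $r_{[0]}=\rank E=r$ together with the lemma's definition $\theta_{0}(t)=\dim(\ker E(t)\cap S_{[0]}(t))$ gives at once
\[
 \rank \mathcal{D}_{[1]}(t)=m-\dim(\ker E(t)\cap S_{[0]}(t))+r=m+r-\theta_{0}(t),\qquad t\in\mathcal{I}.
\]

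To see that $\rank\mathcal{E}_{[1]}(t)=\rank\mathcal{D}_{[1]}(t)$ for every $t$, I would use that $\rank E(t)\equiv r$ implies that $\ker E$ and $\im E$ have constant dimension and admit sufficiently smooth bases, from which one builds pointwise nonsingular, sufficiently smooth $L,K:\mathcal{I}\rightarrow\Real^{m\times m}$ so that the equivalent pair $\{\tilde E,\tilde F\}$, $\tilde E=LEK$, $\tilde F=LFK+LEK'$, has the \emph{constant} coefficient $\tilde E=\begin{bmatrix} I_{r}&0\\0&0\end{bmatrix}$. Then $\tilde E'=0$, so
\[
 \tilde{\mathcal{E}}_{[1]}=\begin{bmatrix}\tilde E&0\\ \tilde E'+\tilde F&\tilde E\end{bmatrix}=\begin{bmatrix}\tilde E&0\\ \tilde F&\tilde E\end{bmatrix}=\tilde{\mathcal{D}}_{[1]},
\]
and $\rank\tilde{\mathcal{E}}_{[1]}(t)=\rank\tilde{\mathcal{D}}_{[1]}(t)$ trivially. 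By Proposition \ref{p.equivalenc} and the invariance of $\rank\mathcal{D}_{[k]}$ noted just before it, both $r_{[1]}(t)=\rank\mathcal{E}_{[1]}(t)$ and $\rank\mathcal{D}_{[1]}(t)$ are unchanged under equivalence transformations; hence $\rank\mathcal{E}_{[1]}(t)=\rank\tilde{\mathcal{E}}_{[1]}(t)=\rank\tilde{\mathcal{D}}_{[1]}(t)=\rank\mathcal{D}_{[1]}(t)$, and with the first display we conclude $r_{[1]}(t)=\rank\mathcal{D}_{[1]}(t)=m+r-\theta_{0}(t)$ on $\mathcal{I}$.

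For the last assertion, since $m$ and $r$ are constants the formula just proved shows that $\mathcal{E}_{[1]}$ (equivalently $\mathcal{D}_{[1]}$) has constant rank on $\mathcal{I}$ if and only if $\theta_{0}(\cdot)$ is constant; together with the standing hypotheses $\rank E\equiv r$ and $\im[E\;F]\equiv\Real^{m}$ this is exactly Definition \ref{d.prereg} of pre-regularity of $\{E,F\}$ (recall $S=S_{[0]}$), so both array functions have constant rank precisely when the pair is pre-regular. I expect the one genuine obstacle to be the equality $\rank\mathcal{E}_{[1]}=\rank\mathcal{D}_{[1]}$: a head-on computation of $\ker\mathcal{E}_{[1]}$ produces the condition $(E'+F)z\in\im E$ for $z\in\ker E$, and since $E'z$ need not belong to $\im E$ it is not transparent that the nullspace dimension is governed by $\theta_{0}$ rather than by a derivative-dependent quantity; the reduction to a constant leading coefficient is precisely what removes this difficulty, by absorbing all derivative dependence into the rank-preserving transformation matrices.
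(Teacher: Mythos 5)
Your proof is correct, but it follows a genuinely different route from the paper's. The paper computes $\ker\mathcal D_{[1]}$ and $\ker\mathcal E_{[1]}$ head-on and disposes of the derivative term by the pointwise identity $Z^*E'(I-E^+E)=-Z^*E(I-E^+E)'=0$ (obtained by differentiating $E(I-E^+E)=0$ and using $Z^*E=0$), which shows directly that $\ker E\cap\ker Z^*F=\ker E\cap\ker Z^*(E'+F)$; both nullspaces then have dimension $\theta_0+m-r$ and the rank formula follows for $\mathcal E_{[1]}$ and $\mathcal D_{[1]}$ simultaneously. You instead read off $\rank\mathcal D_{[1]}$ from \eqref{1.rankDk} and transfer the result to $\mathcal E_{[1]}$ by passing to an equivalent pair with constant leading coefficient $\tilde E=\operatorname{diag}(I_r,0)$, so that $\tilde E'=0$ forces $\tilde{\mathcal E}_{[1]}=\tilde{\mathcal D}_{[1]}$, and then invoking the invariance statements around Proposition \ref{p.equivalenc}. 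Both arguments are sound and rest only on material established before the lemma. The paper's computation is more self-contained and makes the mechanism explicit — the offending term $Z^*E'z_1$ vanishes for $z_1\in\ker E$ — whereas yours is more structural: it buys a cleaner conceptual explanation (all derivative dependence is absorbed into rank-preserving transformation matrices) at the cost of needing a smooth constant-rank factorization of $E$ with continuously differentiable $K$, which the standing smoothness assumptions do supply. Your identification of the equality $\rank\mathcal E_{[1]}=\rank\mathcal D_{[1]}$ as the one genuine obstacle is exactly right; the paper's identity and your normalization are two legitimate ways past it.
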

\begin{proof}
 We consider the nullspaces of $\mathcal D_{[1]}$ and $\mathcal E_{[1]}$, that is
 \begin{align*}
 \ker \mathcal D_{[1]}&=\ker \begin{bmatrix}
                              E&0\\F&E
                             \end{bmatrix}
   =\{z\in\Real^{2m}:Ez_1=0, Fz_1+Ez_2=0 \}\\
   &=\{z\in\Real^{2m}:Ez_1=0, Fz_1\in \im E, E^+Ez_2=-E^+Fz_1 \}\\
   &=\{z\in\Real^{2m}:z_1\in \ker E\cap \ker Z^*F, E^+Ez_2=-E^+Fz_1 \},\\
  \ker \mathcal E_{[1]}&=\ker \begin{bmatrix}
                              E&0\\E'+F&E
                             \end{bmatrix}
   =\{z\in\Real^{2m}:Ez_1=0, (E'+F)z_1+Ez_2=0 \} \\
   &=\{z\in\Real^{2m}:Ez_1=0, (E'+F)z_1\in \im E, E^+Ez_2=-E^+(E'+F)z_1 \}\\
   &=\{z\in\Real^{2m}:z_1\in \ker E\cap \ker Z^*(E'+F), E^+Ez_2=-E^+(E'+F)z_1 \}.
 \end{align*}
Since $Z^*E'(I-E^+E)=-Z^*E(I-E^+E)'=0$ we know that $\ker E\cap \ker Z^*F=\ker E\cap \ker Z^*(E'+F)$ and hence
$\dim  \ker \mathcal E_{[1]}=\dim  \ker \mathcal E_{[1]}= \dim (\ker E\cap\ker Z^*F) +m-r=\theta_0+m-r$, thus $\rank  \mathcal E_{[1]}= 2m-(\theta_0+m-r)=m+r-\theta_0 $.
\end{proof}
%%%%%%%%%%%%%%%%%%%%%%
%%%%%%%%%%%%%%%%%%%%%%%%
\subsection{Array functions for DAEs being transformable into SCF and for regular DAEs}\label{subs.SCFarrays}
In this Section, we consider important properties of the array function $\mathcal E_{[k]}$ and $\mathcal D_{[k]}$ from \eqref{1.GkLR} and \eqref{1.Dk}. First of all we observe that both are special cases of the matrix function 
\begin{footnotesize}
\begin{align}\label{eq.arrayHk_SCF}
\mathcal H_{[k]}:=
\begin{bmatrix}
 E&0&&\cdots&&0\\
 \alpha_{2,1}E^{(1)}+ F&E&&&&\vdots\\
 \alpha_{3,1}E^{(2)}+\beta_{3,1}F^{(1)}&\alpha_{3,2}E^{(1)}+F&E&\\
 \vdots&\ddots&\ddots&\ddots&&0\\
 \alpha_{k+1,1}E^{(k)}+\beta_{k+1,k}F^{(k-1)}&\cdots& \alpha_{k+1,k-1} E^{(2)}+ \beta_{k+1,k-1} F^{(1)}& \alpha_{k+1,k} E^{(1)}+ F&&E
\end{bmatrix},
\end{align}
\end{footnotesize}
each with different coefficients $\alpha_{i,j}$ and $\beta_{i,j}$. We do not specify them, as they do not play any role later on.
\medskip

Let for a moment the given DAE be in SCF, see \eqref{1.SCF}, that is,
\begin{align*}
 E=\begin{bmatrix}
	I_d & 0 \\
	0 & N
\end{bmatrix}, \quad F =\begin{bmatrix}
	\Omega & 0 \\
	0 & I_{m-d}
	\end{bmatrix}, 
\end{align*}
with a strictly upper triangular matrix function $N$. We evaluate the nullspace of the corresponding matrix $\mathcal H_{[k]}(t)\in \Real^{(m+km)\times (m+km}$ for each fixed $t$, but drop the argument $t$ again.

Denote 
\begin{align*}
 z=\begin{bmatrix}
    z_0\\\vdots\\z_k
   \end{bmatrix}\in \Real^{(k+1)m},\; 
   z_j=\begin{bmatrix}
    x_j\\y_j
   \end{bmatrix}\in \Real^{m},\; x_j\in \Real^d,\; y_j\in \Real^{m-d},\; \begin{bmatrix}
    y_0\\\vdots\\y_k
   \end{bmatrix}=:y\in \Real^{(k+1)(m-d)}
\end{align*}
and evaluate the linear system $\mathcal H_{[k]}z=0$. 
The first block line gives 
\begin{align*}
 x_0=0, \quad Ny_0=0,
\end{align*}
 and the entire system decomposes in parts for $x$ and $y$.
All components  $x_j$ are fully determined and zero, and it results that $\mathcal{N}_{[k]}y=0$, with
\begin{align}\label{eq.arrayNk}
\mathcal N_{[k]}:=
\begin{bmatrix}
 N&0&&&\cdots&0\\
 I+\alpha_{2,1} N^{(1)}&N&&&&\vdots\\
 \alpha_{3,1} N^{(2)}&I+\alpha_{3,2}N^{(1)}&N&&\\
\vdots& \ddots&\ddots&\ddots& &\\
 \vdots& &\ddots&\ddots&\ddots&0\\
 \alpha_{k+1,1}N^{(k)}&\cdots&&\alpha_{k+1,k-2}N^{(2)}&I+ \alpha_{k+1,k}N^{(1)}&N
\end{bmatrix}.
\end{align}
This leads to the relations
\begin{align*}
 \rank \mathcal H_{[k]}=(k+1)d+\rank  \mathcal N_{[k]},\\
 \dim\ker \mathcal H_{[k]}= \dim\ker \mathcal N_{[k]},
\end{align*}
such that the question how $ \rank \mathcal H_{[k]}$ behaves can be traced back to $\mathcal N_{[k]}$. We have prepared relevant properties of $ \rank \mathcal N_{[k]}$ in some detail in  Appendix \ref{subs.A_strictly}, which enables us to formulate the following basic general results. 
Obviously, if the pair $\{E,F\}$ is transferable into SCF and $N$ changes its rank on the given interval, then $E$ and $\mathcal E_{[0]}=E$ do so, too. It may also happen that $N$ and in turn $\mathcal E_{[0]}=E$ show constant rank but further $\mathcal E_{[i]}$ suffer from rank changes, as Example \ref{e.5} confirms for $i=1$. Nevertheless, the subsequent matrix functions at the end have a constant rank as the next assertion shows. 
\begin{theorem}\label{t.rankSCF}
 If the pair $\{E,F\}$ is transferable into SCF with characteristics $d$ and $a=m-d$ then 
  \begin{description}
  \item[\textrm{(1)}] the 
 derivative array functions $\mathcal E_{[k]}$ and $\mathcal D_{[k]}$ become constant ranks for $k\geq a-1$, namely
 \begin{align*}
 r_{[k]}= \rank \mathcal E_{[k]}= \rank \mathcal D_{[k]}=km+d,\quad  k \geq a-1.
 \end{align*}
 \item[\textrm{(2)}] Moreover, 
\begin{align*}
\dim (\ker E \cap S_{[k]})=0,\quad k\geq a.
\end{align*}
 \end{description}
\end{theorem}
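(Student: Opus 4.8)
The plan is to reduce the ranks of both array functions to the rank of the structural array $\mathcal N_{[k]}$ of the nilpotent block $N$ and then to quote the rank analysis of $\mathcal N_{[k]}$ prepared in Appendix~\ref{subs.A_strictly}. First I would use that the property of being transferable into SCF, together with the quantities $\rank \mathcal E_{[k]}(t)$, $\rank \mathcal D_{[k]}(t)$ and $\dim(\ker E(t)\cap S_{[k]}(t))$, are invariant under equivalence transformations by Proposition~\ref{p.equivalenc}; hence I may assume without loss of generality that $\{E,F\}$ itself has the SCF \eqref{1.SCF} with a strictly upper triangular $a\times a$ matrix function $N$. Both $\mathcal E_{[k]}$ and $\mathcal D_{[k]}$ are then instances of the array $\mathcal H_{[k]}$ from \eqref{eq.arrayHk_SCF}, and the block splitting carried out in the discussion preceding the theorem (the $x$-part being block triangular with $I_d$ on its diagonal, hence pointwise nonsingular, while the $y$-part carries the entire kernel) yields
\[
 \rank \mathcal E_{[k]}=\rank \mathcal D_{[k]}=(k+1)d+\rank \mathcal N_{[k]},\qquad \dim\ker \mathcal E_{[k]}=\dim\ker \mathcal D_{[k]}=\dim\ker \mathcal N_{[k]},
\]
with $\mathcal N_{[k]}$ as in \eqref{eq.arrayNk}. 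In this way assertion~(1) becomes equivalent to the claim $\rank \mathcal N_{[k]}=ka$ identically on $\mathcal I$ for every $k\ge a-1$.

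The decisive structural feature, established in Appendix~\ref{subs.A_strictly}, is that the blocks on the first subdiagonal of $\mathcal N_{[k]}$ have the shape $I_a+\alpha_{i+1,i}N^{(1)}$, and since $N^{(1)}$ is again strictly upper triangular these blocks are pointwise nonsingular. Block elimination with these subdiagonal blocks reduces the solvability of $\mathcal N_{[k]}y=0$ to conditions involving the power $N^{k+1}$, exactly as in the constant-coefficient model $Nv'+v=0$, where $\ker \mathcal N_{[k]}$ is freely parametrized by $y_k$ subject only to $N^{k+1}y_k=0$. Since $N$ is $a\times a$ and strictly upper triangular, $N(t)^{a}=0$ for every $t\in\mathcal I$, so for $k\ge a-1$ these conditions hold identically; hence $\dim\ker \mathcal N_{[k]}=a$ and $\rank \mathcal N_{[k]}=(k+1)a-a=ka$ for all $t$. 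Substituting back gives $\rank \mathcal E_{[k]}=\rank \mathcal D_{[k]}=(k+1)d+ka=km+d$ for $k\ge a-1$, which is~(1).

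For~(2) I would read off the pointwise identity \eqref{1.rankDk} with the index shifted by one, $\rank \mathcal D_{[k+1]}(t)=m-\dim(\ker E(t)\cap S_{[k]}(t))+r_{[k]}(t)$. For $k\ge a-1$ part~(1) supplies $r_{[k]}=km+d$ and $\rank \mathcal D_{[k+1]}=(k+1)m+d$, so
\[
 \dim(\ker E(t)\cap S_{[k]}(t))=m+(km+d)-((k+1)m+d)=0,\qquad t\in\mathcal I,
\]
for every $k\ge a-1$, in particular for $k\ge a$, which is~(2).

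The main obstacle is entirely located in the middle step: showing that the variable-coefficient array $\mathcal N_{[k]}$ of a strictly upper triangular matrix function really attains its maximal rank $ka$ uniformly in $t$ once $k\ge a-1$ — all the more striking since, as Example~\ref{e.5} shows, intermediate arrays such as $\mathcal E_{[1]}$ may well fail to have constant rank. Once the invertibility of the subdiagonal blocks $I_a+\alpha N^{(1)}$ is exploited this is a careful block-Gaussian-elimination argument keeping track of how the powers $N,N^2,\dots$ propagate through $\mathcal N_{[k]}$; since this is exactly what Appendix~\ref{subs.A_strictly} prepares, in the proof proper it is only invoked.
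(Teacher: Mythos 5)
Your proposal is correct and follows essentially the same route as the paper: reduce to SCF via Proposition \ref{p.equivalenc}, split off the $(k+1)d$ identity part so that everything hinges on $\rank\mathcal N_{[k]}$, invoke the Appendix (Lemma \ref{l.existence.Ntilde} and Proposition \ref{prop.rank.Nk}) to get $\rank\mathcal N_{[k]}=ka+\rank N\tilde N_2\cdots\tilde N_{k+1}=ka$ for $k\geq a-1$, and then read off part (2) from \eqref{1.rankDk}. The only slight imprecision is your phrasing that the kernel condition involves "the power $N^{k+1}$" — in the time-varying case it is the product $N\tilde N_2\cdots\tilde N_{k+1}$ of $k+1$ \emph{distinct} strictly upper triangular factors, but since any such product of at least $a$ strictly upper triangular $a\times a$ matrices vanishes, your conclusion is unaffected.
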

\begin{proof}
Owing to Proposition \ref{p.equivalenc} we  may turn to the SCF, which leads to
\begin{align*}
 \rank \mathcal D_{[k]}=\rank \mathcal E_{[k]}=\rank \mathcal H_{[k]}=(k+1)d+\rank \mathcal N_{[k]},
\end{align*}
and regarding Proposition \ref{prop.rank.Nk} we obtain
\begin{align*}
 \rank \mathcal H_{[k]}=(k+1)d+\rank \mathcal N_{[k]}=(k+1)d +ka+\rank N\tilde N_2 \cdots \tilde N_{k+1},
\end{align*}
in which $N\tilde N_2 \cdots \tilde N_{k+1}$ is a product of $k+1$ strictly upper triangular matrix functions  of size $a\times a$. Clearly, if $k\geq a-1$ then  $N\tilde N_2 \cdots \tilde N_{k+1}= 0$ and in turn 
\begin{align*}
 \rank \mathcal H_{[k]}=(k+1)d +ka=km+d.
\end{align*}
Now formula \eqref{1.rankDk} implies for $k\geq a$,
\begin{align*}
 \dim(\ker E\cap S_{[k]})&=m+r_{[k]}-\rank \mathcal D_{[k+1]}\\&=m+r_{[k]}-r_{[k+1]}=m+(km+d)-((k+1)m+d)=0.
\end{align*}
\end{proof}
It is an advantage of regular pairs that all associated matrix functions arrays have constant rank as we know from the following assertion.

\begin{theorem}\label{th.ranks}
Let the pair $\{E,F\}$ be regular on $\mathcal I$ with index $\mu $ and characteristic values $r$ and $\theta_0\geq\cdots\geq \theta_{\mu-2}>\theta_{\mu-1}=0$. Set $\theta_k=0$ for $k\geq\mu$. Then the
 following assertions are valid:
 \begin{description}
  \item[\textrm{(1)}] The 
 derivative array functions $\mathcal E_{[k]}$ and $\mathcal D_{[k]}$ have constant ranks, namely 
 \begin{align*}
 r_{[k]}= \rank \mathcal E_{[k]}= \rank \mathcal D_{[k]}=km+r-\sum_{i=0}^{k-1}\theta_i,\quad  k \geq 1.
 \end{align*}
 \item[\textrm{(2)}] In particular, $r_{[k]}= \rank \mathcal E_{[k]} = km+d$,\; $\dim\ker\mathcal E_{[k]}=m-d=a $,\;  if $k\geq\mu-1$.
\item[\textrm{(3)}] For $k\geq \mu$, there is a continuous function $H_k:\mathcal I\rightarrow\Real^{km\times km}$ such that the nullspace of $\mathcal E_{[k]}$ has the special form
\begin{align*}
 \ker \mathcal E_{[k]}=\{\begin{bmatrix}
                          z\\w
                         \end{bmatrix}\in\Real^{m+km}:z=0, H_kw=0\}.
\end{align*}
\item[\textrm{(4)}] $\dim S_{[k]}=r-\sum_{i=0}^{k-1}\theta_i, \; k\geq 1$, and  $\dim S_{[\mu-1]}= \dim S_{[\mu]}=d$.
\item[\textrm{(5)}] $S_{[\mu-1]}= S_{[\mu]}=S_{can}$.
\item[\textrm{(6)}] $ \dim(\ker E\cap S_{[k]})=\theta_k,\quad k\geq 0$.
 \end{description}
\end{theorem}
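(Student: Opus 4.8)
The strategy is to pass to a structured standard canonical form and read off all the quantities there: this is legitimate because, by Proposition~\ref{p.equivalenc}, the numbers $\rank\mathcal E_{[k]}$, $\rank\mathcal D_{[k]}$, $\dim S_{[k]}$ and $\dim(\ker E\cap S_{[k]})$ are all invariant under equivalence transformations. By Theorem~\ref{t.equivalence} the pair is regular with tractability index $\mu$, hence by Theorem~\ref{t.SCF} it is equivalent to a pair in the form~\eqref{blockstructureSCF} with $\kappa=\mu$ whose secondary diagonal blocks have full column rank, $l_1=m-r$ and $l_{i+1}=\theta_{i-1}$ for $i=1,\dots,\mu-1$, so that $\rank N^{j}=\theta_{j-1}+\cdots+\theta_{\mu-2}$ for $j\ge1$ (an empty sum being $0$). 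For part~(1) I would use that both $\mathcal E_{[k]}$ and $\mathcal D_{[k]}$ are instances of the array $\mathcal H_{[k]}$ from~\eqref{eq.arrayHk_SCF}; evaluating $\mathcal H_{[k]}$ on the SCF as in Section~\ref{subs.SCFarrays} gives $\rank\mathcal H_{[k]}=(k+1)d+\rank\mathcal N_{[k]}$ with $\mathcal N_{[k]}$ as in~\eqref{eq.arrayNk}. Appealing to the full-column-rank case of Proposition~\ref{prop.rank.Nk} in Appendix~\ref{subs.A_strictly} I would obtain $\rank\mathcal N_{[k]}=ka+\rank N^{k+1}$, and a short computation using $a=m-d$ and $d=r-\sum_{i=0}^{\mu-2}\theta_i$ then yields
\[
\rank\mathcal E_{[k]}=\rank\mathcal D_{[k]}=km+d+\sum_{i=k}^{\mu-2}\theta_i=km+r-\sum_{i=0}^{k-1}\theta_i ,
\]
a fixed integer for each $k\ge1$. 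Part~(2) follows at once: for $k\ge\mu-1$ the last sum equals $r-d$, so $r_{[k]}=km+d$ and $\dim\ker\mathcal E_{[k]}=(k+1)m-(km+d)=m-d=a$.

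For parts~(4)--(6) I would argue directly, invoking part~(1). Part~(4): a regular DAE is solvable for every smooth $q$ by Theorem~\ref{t.solvability}(1), so the necessary rank condition~\eqref{eq:fullrank} of Remark~\ref{r.a1} holds, whence~\eqref{eq:Represrank} gives $\dim S_{[k]}=r_{[k]}-km=r-\sum_{i=0}^{k-1}\theta_i$; for $k=\mu-1$ and $k=\mu$ this equals $r-\sum_{i=0}^{\mu-2}\theta_i=d$, using $\theta_{\mu-1}=0$. Part~(5): the subspaces $S_{[k]}$ are nested by~\eqref{1.subspace2} and have the same dimension $d$ at levels $\mu-1$ and $\mu$ by part~(4), so $S_{[\mu-1]}=S_{[\mu]}$; moreover every (smooth) solution $x$ of the homogeneous DAE satisfies $\mathcal F_{[k]}(t)x(t)=-\mathcal E_{[k]}(t)x'_{[k]}(t)\in\im\mathcal E_{[k]}(t)$, hence $x(t)\in S_{[k]}(t)$, so $S_{can}\subseteq S_{[\mu-1]}$, and equality follows since $\dim S_{can}=d$ by Theorem~\ref{t.Scan}(1). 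Part~(6): for $k=0$ one has $S_{[0]}=S$ and pre-regularity (Definition~\ref{d.prereg}) gives $\dim(\ker E\cap S_{[0]})=\theta_0$; for general $k\ge0$ I would combine formula~\eqref{1.rankDk}, i.e.\ $\rank\mathcal D_{[k+1]}=m-\dim(\ker E\cap S_{[k]})+r_{[k]}$, with part~(1) (in particular $\rank\mathcal D_{[k+1]}=r_{[k+1]}$) to obtain $\dim(\ker E\cap S_{[k]})=m+r_{[k]}-r_{[k+1]}=m-(m-\theta_k)=\theta_k$.

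For part~(3) I would again work in the SCF: solving $\mathcal E_{[k]}v=0$ forces, as in Section~\ref{subs.SCFarrays}, all $d$-dimensional blocks of $v$ to vanish and reduces the remaining $(m-d)$-dimensional blocks $y=(y_0,\dots,y_k)$ to the homogeneous system $\mathcal N_{[k]}y=0$. The structural description of $\ker\mathcal N_{[k]}$ from Appendix~\ref{subs.A_strictly}, together with $N^{\mu}=0$, should show that once the number of block rows exceeds the nilpotency index, i.e.\ for $k\ge\mu$, the leading block $y_0$ is forced to vanish, so the first $m$-block of every kernel element of the SCF array is zero. Transferring back via $\ker\mathcal E_{[k]}=\mathcal K_{[k]}\ker\tilde{\mathcal E}_{[k]}$ and using that the transformation matrix $\mathcal K_{[k]}$ from Proposition~\ref{p.equivalenc} is block lower triangular preserves this property, so $\ker\mathcal E_{[k]}\subseteq\{z=0\}$. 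To put this in the stated form I would let $\hat{\mathcal E}_{[k]}$ denote $\mathcal E_{[k]}$ with its first $m$ columns deleted, note $\ker\mathcal E_{[k]}=\{0\}\times\ker\hat{\mathcal E}_{[k]}$, and set $H_k:=\hat{\mathcal E}_{[k]}^{+}\hat{\mathcal E}_{[k]}:\mathcal I\to\Real^{km\times km}$, which is continuous because $\rank\hat{\mathcal E}_{[k]}=km-a$ is constant by part~(2) and satisfies $\ker H_k=\ker\hat{\mathcal E}_{[k]}$.

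The step I expect to be the main obstacle is the rank and nullspace bookkeeping for $\mathcal N_{[k]}$ in Appendix~\ref{subs.A_strictly}, in particular coping with the non-constant coefficient terms $N^{(j)}$ occurring in~\eqref{eq.arrayNk}: both the exact value $\rank\mathcal N_{[k]}=ka+\rank N^{k+1}$ needed in part~(1) and the vanishing of the leading block $y_0$ for $k\ge\mu$ needed in part~(3) rest on it. Once those appendix facts are in hand, the remaining arguments are routine manipulations of the already-established identities~\eqref{1.rankDk},~\eqref{eq:Represrank} and~\eqref{1.subspace2}.
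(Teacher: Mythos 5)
Your proposal is correct and follows essentially the same route as the paper's proof: reduction to the structured SCF via the tractability index, evaluation of $\mathcal H_{[k]}$ and $\mathcal N_{[k]}$ with Proposition~\ref{prop.rank.Nk} and Lemma~\ref{l.Ncol} for parts (1)--(3), and the identities \eqref{eq:Represrank}, \eqref{1.subspace2}, \eqref{1.rankDk} for parts (4)--(6). The only cosmetic differences are that the paper dispatches $\mu=1$ separately via Lemma~\ref{l.R1}, proves (5) through the inclusion $S_{[\mu]}\subseteq S_{can}$ rather than your (equally valid) $S_{can}\subseteq S_{[\mu-1]}$, and leaves the explicit construction of $H_k$ and the transfer of the kernel structure through $\mathcal K_{[k]}$ implicit where you spell them out.
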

\begin{proof}
 {\textrm (1):} We note that this assertion is a straightforward consequence of 
 \cite[Theorem 3.30]{KuMe2006}. Nevertheless, we formulate here a more transparent direct proof based on the preceding arguments, which at the same time serves as an auxiliary means for the further proofs. For $\mu=1$ we are done by Lemma \ref{l.R1}, so we assume $\mu\geq 2$.

Each regular pair $\{ E,F \}$ with index $\mu \ge 2$ and characteristic values $r, \theta_0 \ge \cdots \ge \theta_{\mu-2} > \theta_{\mu-1} = 0$, features also  the regular tractability  index $\mu$ and can be equivalently transformed into the structured SCF \cite[Theorem 2.65]{CRR}
\begin{align}\label{N0}
 E=\begin{bmatrix}
    I_d&0\\0&N
   \end{bmatrix},\quad
F=\begin{bmatrix}
    \Omega&0\\0&I_a
   \end{bmatrix}
\end{align}
in which the matrix function $N$ is strictly block upper triangular with exclusively full  column-rank blocks on the secondary diagonal and $N^{\mu}=0$, in more detail, see Proposition \ref{p.STform}(1), 
\begin{align*}
N=\begin{bmatrix}
   0&N_{12}&&\cdots&N_{1,\mu}\\
   &0&N_{23}&&N_{2,\mu}\\
   &&\ddots&\ddots&\vdots\\
   &&&&N_{\mu-1,\mu}\\
   &&&&0
   \end{bmatrix},\\
   \text{with blocks}\; N_{ij} \; \text{ of sizes}\; l_{i}\times l_{j},\; \rank N_{i,i+1}=l_{i+1}.
\end{align*}
and $l_1=m-d-r, l_2=\theta_0,\ldots, l_{\mu}=\theta_{\mu-2}$. 
Since $\rank \mathcal E_{[k]}$ and $\rank \mathcal D_{[k]}$ are invariant with respect to equivalence transformations, we can turn to the array function $ \mathcal H_{[k]}$ applied 
to the structured SCF, and further to $ \mathcal N_{[k]}$.
Regarding the relation 
\begin{align*}
 \rank \mathcal H_{[k]}&=(k+1)d+\rank  \mathcal N_{[k]},\\
 \dim\ker \mathcal H_{[k]}&= \dim\ker \mathcal N_{[k]},
\end{align*}
we obtain by Proposition \ref{prop.rank.Nk}, formula \eqref{Nk_rank},
\begin{align*}
 \rank \mathcal H_{[k]}&=(k+1)d+\rank  \mathcal N_{[k]}=(k+1)d+k(m-d)+\rank N^{k+1}\\
 &=km+d+\rank N^{k+1}.
\end{align*}
Lemma \ref{l.Ncol} (with $l=m-d$) implies $\rank N^{k+1}= m-d-(l_1+\cdots l_{k+1})$, thus 
$\rank N^{k+1}= m-d-(m-r+\theta_0+\cdots+\theta_{k-1})= r-d-(\theta_0+\cdots+\theta_{k-1})$, and 
therefore
\begin{align*}
 \rank \mathcal H_{[k]}
 =km+r-\sum_{j=0}^{k-1}\theta_j.
\end{align*}
{\textrm(2):} This is a direct consequence of  {\textrm(1)}.

{\textrm(3):} This follows from Corollary \ref{c.Nk_-full}. 

{\textrm(4):} This is a consequence of relation \eqref{eq:Represrank} and the solvability properties provided by Theorem \ref{t.solvability}.

{\textrm(5):} This results from the inclusions $S_{[\mu]}\subseteq S_{[\mu-1]}$ and $S_{[\mu]}\subseteq S_{can}$ since all these subspaces have the same dimension, namely $d$.

{\textrm(6):} Next we investigate the intersection $S_{[k]} \cap \ker E$.

Applying {\textrm(1)} formula \eqref{1.rankDk} (which concerns the nullspace of $\mathcal D_{[k]}$) immediately yields
\begin{align*}
 \dim(\ker E\cap S_{[k-1]})=m+r_{[k-1]}-\rank \mathcal D_{[k]}=m+r_{[k-1]}-r_{[k]}=\theta_{k-1}.
\end{align*}
\end{proof}
\bigskip

%%%%%%%%%%%%%%%%%%%%%%%% 
\subsection{Differentiation index}\label{subs.diff}
The most popular idea behind the index of a DAE is to filter an explicit ordinary differential equation (ODE) with respect to $x$ out of the inflated system \eqref{1.inflated1}, a so-called \textit{completion ODE}, also \emph{underlying ODE}, of the form
\begin{align}\label{1.completionODE}
x^{(1)}+ Ax=f,
\end{align}
with a continuous matrix function $A:\mathcal I\rightarrow\Real^{m\times m}$. The  index of the DAE is the minimum number of differentiations needed to determine such an explicit ODE, e.g., \cite[Definition 2.4.2]{BCP89}.
At this point it should be emphasized that in early work the index type was not yet specified. It was simply spoken of the\emph{ index}. Only later epithets were used for distinction of various approaches. In particular in  \cite{GHM} the term \emph{differentiation index} is used  which is now widely practiced, e.g., \cite[Section 3.3]{KuMe2006}, \cite[Section 3.7]{RR2008}.
The following definition after \cite{Campbell87}\footnote{In \cite{BCP89}, this is the statement of \cite[Proposition 2.4.2]{BCP89}.} is the specification common today. 
%%%%%
\begin{definition}\label{d.diff}
 The smallest number $\nu\in\Natu$, if it exists, for which the matrix function $\mathcal E_{[\nu]}$ has constant rank and is smoothly $1$-full is called the \emph{differentiation index} of the pair $(E,F)$ and the DAE \eqref{1.DAE}, respectively.\footnote{For $1$-fullness we refer to the Appendix \ref{subs.A1}
} 
We then indicate the differentiation index by $\mu^{diff}=\nu$.
\end{definition}
If   $\mathcal E_{[\nu]}$ is smoothly $1$-full, then there is a nonsingular, continuous matrix function $\mathcal T$ such that
\begin{align}\label{1.1full}
 \mathcal T \mathcal E_{[\nu]}=\begin{bmatrix}
                                 I_{m}&0\\0&H_{\mathcal E}
                                \end{bmatrix},
\end{align}
and the first block-line of the inflated system \eqref{1.inflated1} scaled by $\mathcal T$ is actually an explicit ODE with respect to $x$, i.e.,
\begin{align*}
  x^{(1)}+(\mathcal T\mathcal F_{[\nu]})_{1}x 
=(\mathcal Tq_{[\nu]})_{1},
\end{align*}
with a continuous matrix coefficient $(\mathcal T\mathcal F_{[\nu]})_{1}:\mathcal I\rightarrow\Real^{m\times m}$.
Supposing a consistent initial value for $x$, that is, $x(t_0)=x_0\in \mathcal{C}_{[{\nu}]}(t_0)$\footnote{See representation \eqref{eq:RepresC}.}, the solution of the IVP for this ODE is a solution of the DAE \cite[Theorem 2.48]{
BCP89}.
\begin{proposition}\label{p.diffind}
The differentiation index remains invariant under sufficiently smooth equivalence transformations.
\end{proposition}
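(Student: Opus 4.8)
The plan is to use Proposition \ref{p.equivalenc}, which already does the structural heavy lifting. That proposition tells us that if $\{E,F\}$ and $\{\tilde E,\tilde F\}$ are equivalent via pointwise nonsingular, sufficiently smooth $L,K$, then the inflated matrices transform as $\tilde{\mathcal E}_{[k]}=\mathcal L_{[k]}\mathcal E_{[k]}\mathcal K_{[k]}$ and $\tilde{\mathcal F}_{[k]}=\mathcal L_{[k]}\mathcal F_{[k]}K+\mathcal L_{[k]}\mathcal E_{[k]}\mathcal H_{[k]}$, with $\mathcal L_{[k]},\mathcal K_{[k]}$ pointwise nonsingular block-lower-triangular (with diagonal blocks $L$ and $K$) and $\mathcal K_{[k]}=\begin{bmatrix}K&0\\\mathcal K_{[k]\,21}&\mathcal K_{[k]\,22}\end{bmatrix}$. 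Since the differentiation index is defined (Definition \ref{d.diff}) purely in terms of two properties of $\mathcal E_{[\nu]}$ — constancy of its rank, and smooth $1$-fullness — it suffices to check that each of these two properties is invariant under the above transformation, and then the \emph{smallest} such $\nu$ is the same for both pairs.

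First I would dispatch the rank condition: $\rank\tilde{\mathcal E}_{[k]}(t)=\rank\bigl(\mathcal L_{[k]}(t)\mathcal E_{[k]}(t)\mathcal K_{[k]}(t)\bigr)=\rank\mathcal E_{[k]}(t)$ for all $t\in\mathcal I$, because $\mathcal L_{[k]}(t)$ and $\mathcal K_{[k]}(t)$ are nonsingular. Hence $r_{[k]}(t)$ is constant on $\mathcal I$ for $\{E,F\}$ if and only if it is constant for $\{\tilde E,\tilde F\}$, with the same value. Next, the $1$-fullness: suppose $\mathcal E_{[k]}$ is smoothly $1$-full, i.e.\ there is a pointwise nonsingular continuous $\mathcal T$ with $\mathcal T\mathcal E_{[k]}=\begin{bmatrix}I_m&0\\0&H_{\mathcal E}\end{bmatrix}$ in the block partition separating the first $m$ columns (corresponding to the $x$-block) from the remaining $km$ columns. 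The point is that the column-block structure of $\mathcal K_{[k]}$ is adapted to exactly this partition: its first block column is $\begin{bmatrix}K\\\mathcal K_{[k]\,21}\end{bmatrix}$ and the remaining columns form $\begin{bmatrix}0\\\mathcal K_{[k]\,22}\end{bmatrix}$, so right-multiplication by $\mathcal K_{[k]}$ mixes later columns into the first block but never mixes the first block into later ones. Concretely, consider $\widehat{\mathcal T}:=\begin{bmatrix}K^{-1}&0\\ -\mathcal K_{[k]\,22}^{-1}\mathcal K_{[k]\,21}K^{-1}&\mathcal K_{[k]\,22}^{-1}\end{bmatrix}\mathcal T\mathcal L_{[k]}^{-1}$, which is pointwise nonsingular and continuous; a direct block computation gives $\widehat{\mathcal T}\,\tilde{\mathcal E}_{[k]}=\begin{bmatrix}K^{-1}&0\\ *&\mathcal K_{[k]\,22}^{-1}\end{bmatrix}\begin{bmatrix}I_m&0\\0&H_{\mathcal E}\end{bmatrix}\mathcal K_{[k]}=\begin{bmatrix}K^{-1}&0\\ *&\mathcal K_{[k]\,22}^{-1}H_{\mathcal E}\end{bmatrix}\begin{bmatrix}K&0\\\mathcal K_{[k]\,21}&\mathcal K_{[k]\,22}\end{bmatrix}$, which, after carrying out the product of the two block matrices, has first block row $\begin{bmatrix}I_m&0\end{bmatrix}$, showing that $\tilde{\mathcal E}_{[k]}$ is smoothly $1$-full as well. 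By symmetry of the equivalence relation the converse holds, so $\mathcal E_{[k]}$ has constant rank and is smoothly $1$-full exactly when $\tilde{\mathcal E}_{[k]}$ does; taking the minimal such $k$ gives $\mu^{diff}=\tilde\mu^{diff}$.

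The main obstacle — and the only place that needs care — is verifying that the $1$-fullness block structure is genuinely respected by $\mathcal K_{[k]}$, i.e.\ that the partition into the "first $m$ columns vs.\ the rest" used in the definition of $1$-fullness is compatible with the $2\times 2$ block partition $\mathcal K_{[k]}=\begin{bmatrix}K&0\\ \mathcal K_{[k]\,21}&\mathcal K_{[k]\,22}\end{bmatrix}$ supplied by Proposition \ref{p.equivalenc}. This is exactly what makes the off-diagonal block $\mathcal K_{[k]\,12}$ vanish; one should note that this is why the transformation $x=K\tilde x$ only affects the undifferentiated unknown in the leading block, while the higher derivatives $\tilde x^{(j)}$ feed into the original $x^{(j)}$ together with lower-order derivative terms — hence block-lower-triangular, with the clean top-left $K$. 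Once that observation is in place, the computation is routine block-matrix algebra, and continuity of $\widehat{\mathcal T}$ follows from continuity of $K^{-1}$, $\mathcal K_{[k]\,22}^{-1}$, $\mathcal T$, $\mathcal L_{[k]}^{-1}$ (all pointwise nonsingular continuous), plus the smoothness hypotheses on $L,K$. I would also remark that the constancy of $\rank\mathcal E_{[k]}$ is what guarantees $\mathcal T$ (equivalently $H_{\mathcal E}$, $\mathcal E_{[k]}^{+}$, $\mathcal W_{[k]}$) can be chosen continuous in the first place, so the two conditions in Definition \ref{d.diff} interact correctly and the invariance is genuine rather than merely formal.
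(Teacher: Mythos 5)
Your proof is correct and follows essentially the same route as the paper: rank invariance of $\mathcal E_{[k]}$ under the nonsingular factors from Proposition \ref{p.equivalenc}, plus a modified scaling matrix that restores the $1$-full normal form for $\tilde{\mathcal E}_{[k]}$. The only cosmetic difference is that the paper chooses $\tilde{\mathcal T}=\begin{bmatrix}K^{-1}&0\\-H_{\mathcal E}\mathcal K_{[k]\,21}K^{-1}&I\end{bmatrix}\mathcal T\mathcal L_{[k]}^{-1}$, which yields exactly $\begin{bmatrix}I&0\\0&H_{\mathcal E}\mathcal K_{[k]\,22}\end{bmatrix}$ with vanishing lower-left block, whereas your $\widehat{\mathcal T}$ leaves a nonzero lower-left block $\mathcal K_{[k]\,22}^{-1}(H_{\mathcal E}-I)\mathcal K_{[k]\,21}$ that still needs one further elementary block row operation (or an appeal to Lemma \ref{l.app}) to match the definition of $1$-fullness literally.
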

\begin{proof}
Let $\mathcal E_{[k]}$ have constant rank and be smoothly $1$-full such that \eqref{1.1full} is given. The transformed 
$\tilde{\mathcal E}_{[k]}$ has the same constant rank as $\mathcal E_{[k]}$.
Following \cite[Theorem 3.38]{KuMe2006}, with the notation of Proposition \ref{p.equivalenc}, we derive
\begin{align*}
\underbrace{\begin{bmatrix}
 K^{-1}&0\\
 -H_{\mathcal E}\mathcal K_{[k]\;21}K^{-1}&I
\end{bmatrix} \mathcal T\mathcal L_{[k]}^{-1}}_{=:\tilde{\mathcal T}}\tilde{\mathcal E}_{[k]}=
\begin{bmatrix}
 I&0\\0&H_{\mathcal E}\mathcal K_{[k]\;{22}}
\end{bmatrix}.
\end{align*}
$\tilde{\mathcal T}$ is pointwise nonsingular and continuous. The matrix function $\mathcal E_{[k]}$ and $\tilde{\mathcal E}_{[k]}$ are smoothly $1$-full simultaneously, which completes the proof.
\end{proof}
\begin{proposition}\label{p.index1}
The DAE \eqref{1.DAE} and the pair $\{E,F\}$ have differentiation index one, if and only if they are regular with index $\mu=1$ in the sense of Definition \ref{d.2}.
 The index-one case goes along with $S_{[0]}=S_{[1]}=S_{can}$ and $d=\dim S_{can}=\rank E=r$.
\end{proposition}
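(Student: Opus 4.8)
The plan is to prove the two implications of the equivalence separately, with everything hinging on an explicit description of $\ker\mathcal E_{[1]}$ for the first array function $\mathcal E_{[1]}=\begin{bmatrix} E&0\\ E'+F&E\end{bmatrix}$ from \eqref{1.GkLR}. First I would record the trivial remark that, since $E(t)$ is singular for every $t$, the $m\times m$ array $\mathcal E_{[0]}=E$ is never smoothly $1$-full (that would make $E$ pointwise nonsingular), so the differentiation index, if it exists, is at least one; hence "differentiation index one'' means precisely that $\mathcal E_{[1]}$ has constant rank and is smoothly $1$-full. Then I would compute: $(z_1,z_2)\in\ker\mathcal E_{[1]}$ iff $Ez_1=0$ and $(E'+F)z_1+Ez_2=0$, and the latter equation is solvable in $z_2$ exactly when $(E'+F)z_1\in\im E$. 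Consequently $\ker\mathcal E_{[1]}\subseteq\{0\}^m\times\Real^m$ holds pointwise iff the only $z$ with $Ez=0$ and $(E'+F)z\in\im E$ is $z=0$, and in that case $\ker\mathcal E_{[1]}=\{0\}^m\times\ker E$. Invoking the characterization of smooth $1$-fullness for a square array split after its first $m$ columns (Appendix \ref{subs.A1}), this is the same as saying $\mathcal E_{[1]}$ is $1$-full; and once $\mathcal E_{[1]}$ has constant rank, the transforming matrix $\mathcal T$ of \eqref{1.1full} can be taken continuous (built from the then-continuous Moore--Penrose inverse and continuous bases of the relevant subspaces).

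For the direction "regular with index one $\Rightarrow$ differentiation index one'' I would start from Definition \ref{d.prereg}: the pair is pre-regular, $r=\rank E<m$, and $\theta_0=\dim(\ker E\cap S)=0$. Lemma \ref{l.R1} then gives the constant rank $r_{[1]}=m+r$ of $\mathcal E_{[1]}$. Since $\rank E$ is constant I would fix a smooth basis $Z$ of $(\im E)^\perp$; differentiating $Z^*E=0$ yields $Z^*E'z=0$ for all $z\in\ker E$, so the set $\{z:Ez=0,\ (E'+F)z\in\im E\}$ equals $\ker E\cap\ker Z^*F=\ker E\cap S=\{0\}$, and by the first paragraph $\mathcal E_{[1]}$ is smoothly $1$-full; thus the differentiation index is one.

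For the converse I would assume $\mathcal E_{[1]}$ has constant rank $\rho$ and is smoothly $1$-full, so $\ker\mathcal E_{[1]}=\{0\}^m\times\ker E$; since $\dim\ker\mathcal E_{[1]}=2m-\rho$ is constant, $\rank E\equiv r:=\rho-m$ is constant, and $E$ being singular forces $r<m$. Fixing a smooth $Z$ with $Z^*E=0$ as before, I would read off from $1$-fullness that $\ker E\cap S=\ker E\cap\ker Z^*F=\{0\}$, i.e.\ $\theta_0=0$; and the remaining pre-regularity requirement $\im[E\ F]=\Real^m$ would follow because $Z^*F$, being injective on the $(m-r)$-dimensional space $\ker E$, has full row rank $m-r$, whence no nonzero row covector annihilates $[E\ F]$. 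Thus the pair is regular with index one. The supplementary identities I would take from Theorem \ref{th.ranks}(4)--(5) specialized to $\mu=1$, which gives $S_{[0]}=S_{[1]}=S_{can}$ with $\dim S_{can}=d$, while $d=r$ because the sum $\sum_{i=0}^{\mu-2}\theta_i$ is empty, and $d=\dim S_{can}=\rank E$ by Theorem \ref{t.Scan}(1); alternatively one reads $S_{can}=S$ off the index-one discussion in Section \ref{s.regular}.

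The step I expect to be the main obstacle is "$\mathcal E_{[1]}$ of constant rank and $1$-full $\Rightarrow$ $\{E,F\}$ pre-regular'': at first glance $1$-fullness only controls $\ker E\cap\ker(E'+F)$, which is genuinely weaker than $\ker E\cap S$. The way around it is to phrase $1$-fullness as the inclusion $\ker\mathcal E_{[1]}\subseteq\{0\}^m\times\Real^m$, so that the relaxed relation "$(E'+F)z\in\im E$'' rather than "$(E'+F)z=0$'' appears; to harvest constancy of $\rank E$ for free from constancy of $\rank\mathcal E_{[1]}$ (without which one cannot even differentiate $Z$); and only then to use $Z^*E'|_{\ker E}=0$ to trade $E'+F$ for $F$, after which $\im[E\ F]=\Real^m$ drops out as a bonus. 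I would also check that the borderline case $r=m$ is ruled out by the standing hypothesis that $E$ is singular, so that the target notion is exactly "regular with index one'' in the sense of Definitions \ref{d.prereg} and \ref{d.2}.
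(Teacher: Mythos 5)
Your proof is correct, and the backward direction (differentiation index one $\Rightarrow$ regular with index one) follows essentially the same path as the paper: compute $\ker\mathcal E_{[1]}$, introduce the auxiliary space $\tilde S=\{w:(E'+F)w\in\im E\}$, use Lemma \ref{l.app} to force $\ker E\cap\tilde S=\{0\}$, recover constancy of $\rank E$ from constancy of $\dim\ker\mathcal E_{[1]}$, and then exploit $Z^*E'|_{\ker E}=0$ (the paper uses $WE'Q=-WEQ'=0$ with orthoprojectors instead of a basis $Z$, but it is the same identity) to trade $E'+F$ for $F$ and deduce both $\theta_0=0$ and $\im[E\ F]=\Real^m$. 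Where you genuinely diverge is the forward direction: the paper passes through the tractability framework, noting that regularity with index one makes $G_1=E+(F-EP')Q$ nonsingular and then writing down an explicit continuous nonsingular $\mathcal T$ with $\mathcal T\mathcal E_{[1]}=\diag(I,P)$, whereas you obtain constant rank $r_{[1]}=m+r$ from Lemma \ref{l.R1} and smooth $1$-fullness abstractly from the kernel description $\ker\mathcal E_{[1]}=\{0\}\times\ker E$ via Lemma \ref{l.app}. Your route is more symmetric with the converse and avoids importing the projector chain; the paper's route buys a concrete formula for the transforming matrix $\mathcal T$. Your handling of the supplementary identities ($S_{[0]}=S_{[1]}=S_{can}$, $d=r$) by specializing Theorem \ref{th.ranks} and Theorem \ref{t.Scan} to $\mu=1$ is legitimate and involves no circularity, since those results do not depend on Proposition \ref{p.index1}.
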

\begin{proof}
 Let $\mathcal E_{[1]}$ be smoothly $1-$ full,
 \[\mathcal E_{[1]}=\begin{bmatrix}
                     E&0\\E'+F&E
                    \end{bmatrix}.
 \]
Owing to Lemma \ref{l.app} there is a continuous matrix function $H:\mathcal I\rightarrow\Real^{m\times m}$ with constant rank such that
\begin{align}\label{index1}
 \ker \mathcal E_{[1]}=\{z\in\Real^{2m}: z_1=0, Hz_2=0  \}.
\end{align}
On the other hand we derive
\begin{align*}
 \ker \mathcal E_{[1]}&=\{z\in\Real^{2m}: Ez_1=0, (E'+F)z_1+Ez_2=0 \}\\
 &=\{z\in\Real^{2m}: Ez_1=0, (E'+F)z_1\in \im E,  Ez_2=-(E'+F)z_1 \}.
\end{align*}
Introduce the subspace $\tilde S:= \{w\in\Real^m: (E'+F)w\in \im E\}$. It comes out that 
\begin{align*}
 \ker \mathcal E_{[1]}
 =\{z\in\Real^{2m}: z_1\in \ker E\cap \tilde S,  Ez_2=-(E'+F)z_1 \}.
\end{align*}
Comparing with \eqref{index1} we obtain that the condition $\ker E\cap \tilde S=\{0\}$ must be valid, and hence
\begin{align*}
 \ker \mathcal E_{[1]}
 =\{z\in\Real^{2m}: z_1=0,  Ez_2=0 \},\quad \dim \ker \mathcal E_{[1]}=\dim \ker E.
\end{align*}
Then, in particular, $\rank E$ is constant and the projector functions $Q:=I-E^+E, W:=I-EE^+$
are as smooth as $E$. This leads to $WE'Q=-WEQ'=0$, thus $\ker E\cap \ker WF=\ker E\cap\tilde S=\{0\}$. Then the matrix function $E+WF$ remains nonsingular and $\im [E\,F]=\im [E\,WF]=\Real^m$.
Now it is evident that the pair $\{E,F\}$ is pre-regular with $\theta=0$ and furthermore  regular with index $\mu=1$.

In the opposite direction we assume the pair $\{E,F\}$ to be regular with index $\mu=1$. Then it is also regular with tractability index one and the matrix function $G_1=E+(F-EP')Q:\mathcal I\rightarrow\Real^{m\times m}$ remains nonsingular, $P:=I-Q$. With
\begin{align*}
 \mathcal T:=\begin{bmatrix}
              (I+QG_1^{-1}E'P)^{-1}&0\\
              -PG_1^{-1}E'(I+QG_1^{-1}E'P)^{-1}&I
             \end{bmatrix}
             \begin{bmatrix}
              P&Q\\Q-PG_1^{-1}F&P
             \end{bmatrix}
             \begin{bmatrix}
              G_1^{-1}&0\\0&G_1^{-1}
             \end{bmatrix}
\end{align*}
we obtain that
\begin{align*}
 \mathcal T\mathcal E_{[1]}=\begin{bmatrix}
                             I&0\\0&P
                            \end{bmatrix},
                            \end{align*}
and we are done.
\end{proof}
\begin{proposition}\label{p.diff}
 If the differentiation index $\mu^{diff}$ is well-defined for the pair $\{E,F\}$, then it follows that 
 \begin{description}
  \item[\textrm{(1)}] $\mathcal E_{[\mu^{diff}]}$ has constant rank $r_{[\mu^{diff}]}$.
   \item[\textrm{(2)}] The DAE has a solution to each arbitrary $q\in \mathcal C^{(m)}(\mathcal I,\Real^m)$ and the necessary solvability condition in Remark \ref{r.a1} is satisfied, that is,
   \[
    \rank [\mathcal E_{[k]} \,\mathcal F_{[k]}]= (k+1)m,\; k=0,\ldots, \mu^{diff}.
   \]
 \item[\textrm{(3)}] $\mathcal E_{[\mu^{diff}-1]}$ has constant rank $r_{[\mu^{diff}-1]}=r_{[\mu^{diff}]}-m$.
 \item[\textrm{(4)}] $S_{[\mu^{diff}-1]}=S_{[\mu^{diff}]}=S_{can}$.
 \item[\textrm{(5)}] $\ker E\cap S_{can}=\{0\}$.
 \end{description}
\end{proposition}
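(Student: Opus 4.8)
Set $\nu:=\mu^{diff}$. Item (1) is nothing but the definition: by Definition~\ref{d.diff} the array function $\mathcal E_{[\nu]}$ has constant rank, and this constant is what I would call $r_{[\mu^{diff}]}$. The plan for the rest is to use the smooth $1$-fullness of $\mathcal E_{[\nu]}$ to pass from the inflated system \eqref{1.inflated1} to the completion ODE \eqref{1.completionODE}, to settle solvability from there, and then to read off (3)--(5) as rank identities for the matrix functions $\mathcal E_{[k]}$, $\mathcal D_{[k]}$ and the subspaces $S_{[k]}$, using Lemma~\ref{l.rk}, formulas \eqref{eq:Represrank}, \eqref{1.rankDk}, and the monotonicity \eqref{1.subspace2}.

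For (2): smooth $1$-fullness furnishes a pointwise nonsingular continuous $\mathcal T$ with $\mathcal T\mathcal E_{[\nu]}=\diag(I_m,H_{\mathcal E})$, $H_{\mathcal E}$ of constant rank, so that the first block row of the $\mathcal T$-scaled inflated system is the explicit ODE $x'+(\mathcal T\mathcal F_{[\nu]})_1x=(\mathcal Tq_{[\nu]})_1$. I would then invoke \cite{Campbell87} (equivalently \cite[Proposition 2.4.2]{BCP89}) --- the classical fact that a constant-rank, smoothly $1$-full derivative array is exactly what makes the DAE solvable of differentiation index $\nu$ --- to conclude that for every $q\in C^m(\mathcal I,\Real^m)$ and every $t_0$ the constraint set $\mathcal C_{[\nu]}(t_0)$ is nonempty and, by \cite[Theorem 2.48]{BCP89}, the solution of the completion ODE through any consistent value solves the DAE; hence solvability. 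The full-row-rank condition $\rank[\mathcal E_{[k]}\,\mathcal F_{[k]}]=(k+1)m$ then follows from Remark~\ref{r.a1}: at level $\nu$ a nontrivial left null vector of $[\mathcal E_{[\nu]}(\bar t)\,\mathcal F_{[\nu]}(\bar t)]$ would impose a linear restriction on $q_{[\nu]}(\bar t)$, impossible for a solvable DAE, and the condition propagates down to every $k\le\nu$ because the first $(k{+}1)m$ rows of $[\mathcal E_{[k+1]}\,\mathcal F_{[k+1]}]$ coincide with $[\mathcal E_{[k]}\ 0\ \mathcal F_{[k]}]$.

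Granting (2), the remaining items are counts. For (3): Lemma~\ref{l.rk} gives $r_{[\nu-1]}(t)+r(t)\le r_{[\nu]}\le r_{[\nu-1]}(t)+m$, while smooth $1$-fullness of $\mathcal E_{[\nu]}$ forces the rank jump at the $\nu$-th level to be the maximal $m$; concretely, writing $\mathcal E_{[\nu]}$ in the block form carrying $\mathcal E_{[\nu-1]}$ in the top-left $\nu m\times\nu m$ corner, $E$ in the bottom-right corner and $0$ above it, and running the same normal-form reasoning as in the proof of Proposition~\ref{p.index1} (for general $\nu$ this is the derivative-array bookkeeping collected in Section~\ref{subs.Preliminaries} and the appendix), one gets $\dim\ker\mathcal E_{[\nu]}=\dim\ker\mathcal E_{[\nu-1]}$, i.e. $r_{[\nu-1]}=r_{[\nu]}-m$ is constant. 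For (4): by Remark~\ref{r.a2}, under the full-row-rank condition $\dim S_{[k]}(t)=r_{[k]}(t)-mk$, so $\dim S_{[\nu-1]}=r_{[\nu-1]}-m(\nu-1)=r_{[\nu]}-m\nu=\dim S_{[\nu]}$, which with the inclusion $S_{[\nu]}\subseteq S_{[\nu-1]}$ of \eqref{1.subspace2} gives $S_{[\nu-1]}=S_{[\nu]}$; and $S_{[\nu]}=S_{can}$ because for $q=0$ one has $\mathcal C_{[\nu]}(\bar t)=S_{[\nu]}(\bar t)$ by \eqref{eq:RepresC}, every point of which is the value at $\bar t$ of a homogeneous DAE solution (take it as initial value for the completion ODE and apply \cite[Theorem 2.48]{BCP89}), while conversely any homogeneous solution meets the constraints $x(t)\in S_{[\nu]}(t)$. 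For (5): formula \eqref{1.rankDk} at level $\nu{+}1$ reads $\rank\mathcal D_{[\nu+1]}=m-\dim(\ker E\cap S_{[\nu]})+r_{[\nu]}$; since $\rank\mathcal D_{[\nu+1]}=\rank\mathcal E_{[\nu+1]}=r_{[\nu]}+m$ --- the stabilization following once more from $1$-fullness of $\mathcal E_{[\nu]}$ propagating to $\mathcal E_{[\nu+1]}$, hence $\dim S_{[\nu+1]}=\dim S_{[\nu]}$ --- the intersection is trivial, i.e. $\ker E\cap S_{can}=\ker E\cap S_{[\nu]}=\{0\}$.

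I expect the main obstacle to be precisely the input to (2): upgrading the purely linear-algebraic hypothesis ``$\mathcal E_{[\nu]}$ smoothly $1$-full with constant rank'' to genuine solvability for every right-hand side --- equivalently, to nonemptiness of the constraint sets $\mathcal C_{[\nu]}(t_0)$ and the full-row-rank condition at all levels $\le\nu$ --- together with the precise rank jump $r_{[\nu-1]}=r_{[\nu]}-m$ and the stabilization beyond level $\nu$ that feed (3) and (5). These all rest on the same derivative-array normal-form analysis; once they are in place, item (4) and the subspace part of (5) are pure monotonicity plus the identities \eqref{eq:Represrank} and \eqref{1.rankDk}.
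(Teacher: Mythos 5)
Your items (1)--(4) follow essentially the same route as the paper: (1) is definitional, (2) is the solvability argument of Remark \ref{r.a1} combined with the completion-ODE construction, (3) is the corank identity $\corank\mathcal E_{[\nu]}=\corank\mathcal E_{[\nu-1]}$ (which the paper does not re-derive but imports as \cite[Lemma 3.6]{KuMe1996} --- your ``normal-form bookkeeping'' sketch is a gesture at that lemma, not a proof of it, but the statement and its use are correct), and (4) is exactly the paper's dimension count via \eqref{eq:Represrank} plus the inclusion \eqref{1.subspace2}, with $S_{[\nu]}=S_{can}$ taken from \cite[Theorem 2.4.8]{BCP89}.

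Item (5), however, is circular as written. You deduce $\ker E\cap S_{[\nu]}=\{0\}$ from \eqref{1.rankDk} at level $\nu+1$ together with the claim $\rank\mathcal D_{[\nu+1]}=\rank\mathcal E_{[\nu+1]}=r_{[\nu]}+m$. But \eqref{1.rankDk} says precisely $\rank\mathcal D_{[\nu+1]}=m-\dim(\ker E\cap S_{[\nu]})+r_{[\nu]}$, so asserting $\rank\mathcal D_{[\nu+1]}=r_{[\nu]}+m$ is a verbatim restatement of the conclusion; and the identity $\rank\mathcal D_{[k]}=\rank\mathcal E_{[k]}$ that you invoke to get it is not available here --- the paper establishes it only for $k=1$ (Lemma \ref{l.R1}), for SCF-transformable pairs (Theorem \ref{t.rankSCF}), and for regular pairs (Theorem \ref{th.ranks}), all via the structured $\mathcal N_{[k]}$ machinery, none of which applies to a general pair with merely a well-defined differentiation index. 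The matrices $\mathcal D_{[\nu+1]}$ and $\mathcal E_{[\nu+1]}$ differ in their lower-left and lower-right blocks, so equality of their ranks is exactly as hard as the statement you are proving. The paper instead obtains (5) from (4) together with Lemma \ref{l.app}: smooth $1$-fullness of $\mathcal E_{[\nu]}$ with constant rank forces $\ker\mathcal E_{[\nu]}=\{(z,w):z=0,\,Hw=0\}$, i.e. the first component of any kernel element vanishes, and this kernel structure (rather than a rank identity for $\mathcal D_{[\nu+1]}$) is what rules out a nonzero element of $\ker E\cap S_{can}$. You should replace your rank-chain argument by this kernel-structure argument, or otherwise supply an independent proof that $\rank\mathcal D_{[\nu+1]}=\rank\mathcal E_{[\nu+1]}$ in this generality.
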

\begin{proof}
The issue{\textrm (1)} is already part of the definition.

{\textrm (2)}: The solvability assertion is evident and the necessary solvability condition is validated in  Remark \ref{r.a1}.

{\textrm (3)}: Owing to \cite[Lemma 3.6]{KuMe1996}  one has $\corank \mathcal E_{[\mu^{diff}]}=\corank \mathcal E_{[\mu^{diff-1}]}$ yielding $(\mu^{diff}+1)m-r_{[\mu^{diff}]}=\mu^{diff}m-r_{[\mu^{diff}-1]}$, and hence $r_{[\mu^{diff}-1]}=r_{[\mu^{diff}]}-m$.

{\textrm (4)}: Remark \ref{r.a2} provides the subspace dimensions $\dim S_{[\mu^{diff}-1]}=r_{[\mu^{diff}-1]}-(\mu^{diff}-1)m$ and $\dim S_{[\mu^{diff}]}=r_{[\mu^{diff}]}-\mu^{diff}m$. Regarding {\textrm (3)} this gives $\dim S_{[\mu^{diff}-1]}=\dim S_{[\mu^{diff}]}$. Due to the inclusion \eqref{1.subspace2} we arrive at $S_{[\mu^{diff}-1]}=S_{[\mu^{diff}]}$. It only remains to state that $S_{[\mu^{diff}]}=S_{can}$ by \cite[Theorem 2.4.8]{BCP89}.

{\textrm (5)}: This is a straightforward consequence of Assertion (4) and Lemma \ref{l.app}.
\end{proof}
\begin{theorem}\label{t.regulardiff}
 Let the pair $\{E,F\}$ and the DAE \eqref{1.DAE} be regular on $\mathcal I$ with index $\mu $ and characteristic values $r$ and $\theta_0\geq\cdots\geq \theta_{\mu-2}>\theta_{\mu-1}=0$. Then the
 differentiation index is well-defined, $\mu^{diff}=\mu$, and, additionally, the matrix functions $\mathcal E_{[k]}$ have constant ranks and the subspaces $S_{[k]}$ have constant dimensions.
\end{theorem}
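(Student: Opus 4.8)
The plan is to extract the two \emph{additional} assertions directly from Theorem~\ref{th.ranks} and then to determine $\mu^{diff}$ via the two-sided estimate $\mu\le\mu^{diff}\le\mu$. The constant-rank statement for the array functions $\mathcal E_{[k]}$ (hence also for $\mathcal D_{[k]}$) is precisely Theorem~\ref{th.ranks}(1), with $r_{[k]}=km+r-\sum_{i=0}^{k-1}\theta_i$ for $k\ge 1$ and $r_{[0]}=\rank E=r$; the constant-dimension statement for the subspaces $S_{[k]}$ is Theorem~\ref{th.ranks}(4) for $k\ge 1$, and for $k=0$ it reduces to $\dim S_{[0]}=\dim S=r$, which holds by pre-regularity. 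So the only point requiring an argument is the identity $\mu^{diff}=\mu$.

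For the upper bound I would argue as follows. By Theorem~\ref{th.ranks}(1) the matrix function $\mathcal E_{[\mu]}$ has constant rank, and by Theorem~\ref{th.ranks}(3) (applied with $k=\mu$) its nullspace has the form $\ker\mathcal E_{[\mu]}=\{(z,w)\in\Real^{m}\times\Real^{\mu m}:z=0,\ H_\mu w=0\}$ with a continuous $H_\mu$. In particular $\ker\mathcal E_{[\mu]}$ projects to $\{0\}$ on the first block of $m$ coordinates; for a constant-rank array function this is exactly the condition characterizing smooth $1$-fullness (Appendix~\ref{subs.A1}; it is the same equivalence used in the proof of Proposition~\ref{p.index1}). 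Hence $\mathcal E_{[\mu]}$ is smoothly $1$-full, so the differentiation index is well-defined and $\mu^{diff}\le\mu$.

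For the lower bound, set $\nu:=\mu^{diff}$. Since $E=\mathcal E_{[0]}$ is singular, $\mathcal E_{[0]}$ cannot be $1$-full, so $\nu\ge 1$, and Proposition~\ref{p.diff}(3) applies and gives $r_{[\nu-1]}=r_{[\nu]}-m$. On the other hand, Theorem~\ref{th.ranks}(1) together with $r_{[0]}=r$ yields $r_{[\nu]}-r_{[\nu-1]}=m-\theta_{\nu-1}$ for every $\nu\ge 1$. Comparing the two relations forces $\theta_{\nu-1}=0$; and since $\theta_0\ge\cdots\ge\theta_{\mu-2}>\theta_{\mu-1}=0$ (with $\theta_k=0$ for $k\ge\mu$), we have $\theta_j=0$ exactly for $j\ge\mu-1$, whence $\nu-1\ge\mu-1$, i.e.\ $\nu\ge\mu$. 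Together with the upper bound this gives $\mu^{diff}=\nu=\mu$.

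The only non-routine point in this plan is matching the abstract notion ``smoothly $1$-full'' with an explicit description of $\ker\mathcal E_{[\mu]}$; but this is already delivered by Theorem~\ref{th.ranks}(3) (proved there via Corollary~\ref{c.Nk_-full} and the structural lemmas of Appendix~\ref{subs.A_strictly}), so no new work is needed. For $\mu=1$ the same reasoning reproduces the index-one case of Proposition~\ref{p.index1}: the lower-bound step then degenerates (the relation $\theta_{\nu-1}=0$ carries no information), and one concludes simply from $\nu\ge 1$ and the upper bound $\mu^{diff}\le 1$.
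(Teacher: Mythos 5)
Your proposal is correct and follows the same route as the paper: the paper's proof consists of the single sentence that the theorem is an immediate consequence of Theorem~\ref{th.ranks}, and your argument supplies exactly the details behind that citation (parts (1) and (4) for the constant ranks and dimensions, part (3) plus Lemma~\ref{l.app} for $1$-fullness, and Proposition~\ref{p.diff}(3) combined with $r_{[\nu]}-r_{[\nu-1]}=m-\theta_{\nu-1}$ for the lower bound $\mu^{diff}\ge\mu$). All steps check out, including the degenerate case $\mu=1$.
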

\begin{proof}
 This is an immediate consequence of Proposition \ref{th.ranks}.
\end{proof}
%%%%%%%%%%
In contrast to our basic index notion in Section \ref{s.regular}, the differentiation index allows for certain rank changes which is often particularly emphasized\footnote{There have been repeated scientific disputes about this.},e.g., \cite{BCP89,KuMe2006}.
In the special Examples \ref{e.1},  \ref{e.2},  \ref{e.7} in Section \ref{s.examples} below the rank of the leading matrix function  $E(t)$  changes,  nevertheless the differentiation index is well-defined. In Example \ref{e.5}  $E(t)$ has constant rank, but $r_{[1]}$ varies, but the DAE has differentiation index three on the entire given interval.
However,
it may well happen that a DAE having on $\mathcal I$ a well-defined differentiation index  features a different differentiation index on a subinterval. We consider the points where the rank changes to be \emph{critical points} for good reason.
\begin{remark}\label{r.generalform}
 The formation of the differentiation index approach is closely related to the search of a general form for solvable linear DAEs (in the sence of Definition \ref{d.solvableDAE}) with time-varying coefficients from the very beginning \cite{CamPet1983,Campbell87}. We quote \cite[Theorems 2.4.4 and 2.4.5]{BCP89} and a result from \cite{BergerIlchmann} for coefficients $E,F:\mathcal I\rightarrow \Real^m$.
 \begin{itemize}
  \item Suppose that $E, F$ are real analytic. Then \eqref{1.DAE} is solvable if and only if it is equivalent to a system in standard canonical (SCF) form \eqref{1.SCF} using real analytic coordinate changes\cite[Theorems 2.4.4]{BCP89}.
  \item Suppose that the DAE \eqref{1.DAE} is solvable on the compact interval $\mathcal I$. Then it is equivalent to the DAE in Campbell canonical form\footnote{This appreciatory name is introduced in \cite{KuMe2024}.}
 \begin{align*}
  \begin{bmatrix}
   I_d&G\\0&N
  \end{bmatrix}z'+
  \begin{bmatrix}
   0&0\\0&I_{m-d}
  \end{bmatrix}z=
  \begin{bmatrix}
   g\\h
  \end{bmatrix}
 \end{align*}
where  $Nz_2'+z_2=h$ has only one solution for each function $h$. Furthermore, there exists a countable family\footnote{As we understand it, this set is not necessarily countable, see Theorem \ref{t.M}.} of disjoint open intervals $\mathcal I^{\ell}$ such that $\cup \mathcal I^{\ell}$ is dense in $\mathcal I$ and on each $\mathcal I^{\ell}$, the system $Nz_2'+z_2=h$ is equivalent to one in standard canonical form  of the form $Mw'+w=f$ with $M$ structurally nilpotent\footnote{A square matrix $A$ is structurally nilpotent if and only if there is a permutation matrix $P$ such that $PAP^{-1}$ is strictly triangular, see \cite[Theorem 2.3.6]{BCP89}.} \cite[Theorems 2.4.5]{BCP89}.
\item Suppose an open interval $\mathcal I$. Then every system transferable into SCF with $\mathcal C^m$-coefficients is solvable.
 \end{itemize}
\end{remark}

Reviewing our examples in Section \ref{s.examples} we observe the following:
If the pair $\{E,F\}$ has on the interval $\mathcal I$ the differentiation index $\mu^{diff}$, then on each subinterval $\mathcal I_{sub}\in \mathcal I$ the differentiation index $\mu_{sub}^{diff}$ is also  well-defined, which can, however, be smaller than $\mu^{diff}$, which has an impact on the input-output behavior of the system.
Our next theorem captures the previous observations and generalizes them. 

Recall that we know from  Proposition \ref{p.index1} that a DAE having differentiation index one is regular with index one in the sense of Definition \ref{d.2} and vice versa. We are interested in what happens in the higher-index cases. The following assertion says that, for any DAE with well-defined differentiation index $\mu^{diff}$ on a compact interval $\mathcal I$, the subset of regular points $\mathcal I_{reg}$ is dense in $\mathcal I$ with uniform degree of freedom, but there might be subintervals on which the DAE features a strictly smaller differentiation index than $\mu^{diff}$.
\begin{theorem}\label{t.diffinterval}
 Let the pair $\{E,F\}$ and the DAE \eqref{1.DAE} be given on the compact interval $\mathcal I$ and have there the  differentiation index $\nu=\mu^{diff}\geq 2$.
 
 Then there is a partition of the interval $\mathcal I$ by a collection $\mathfrak S$ of open, non-overlapping subintervals\footnote{We apply Theorem \ref{t.M}  according to which the set of rank discontinuity points can also be over-countable. This is why we use the name \emph{collection} in contrast to a countable family.} such that
 \begin{align*}
  \overline{\bigcup_{\ell \in \mathfrak S}\mathcal I^{\ell}}=\mathcal I,\quad  \mathcal I^{\ell} \text{ open },\quad \mathcal I^{\ell_i}\cap \mathcal I^{\ell_j}=\emptyset \quad\text{for}\quad \ell_i\neq \ell_j,\quad \ell_i, \ell_j \in\mathfrak S,
 \end{align*}
 and the pair $\{E,F\}$ and the DAE \eqref{1.DAE} restricted to any subinterval $\mathcal I^{\ell}$ are regular in the sense of Definition \ref{d.2} with individual characteristics, 
 \[\mu^{\ell}\leq \mu^{diff},\; r^{\ell},\; \theta_0^{\ell}\geq \cdots \geq \theta^{\ell}_{\mu^{\ell}-2}>\theta^{\ell}_{\mu^{\ell}-1}=0,\quad 
\ell \in \mathfrak S,
\]
but necessarily with uniform degree of freedom $d$, which means 
 \begin{align*}
  d= d^{\ell}=r^{\ell}-\sum_{i=0}^{\mu^{\ell}-2}\theta_i^{\ell},\quad \ell \in\mathfrak S.
 \end{align*}
Furthermore, it holds that $\mu^{diff}=\max\{\mu^{\ell}: \ell \in\mathfrak S\}$.
\end{theorem}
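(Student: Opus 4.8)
The plan is to exploit the constancy properties of the rank functions $r_{[k]}(t)=\rank\mathcal E_{[k]}(t)$, $k=0,\dots,\nu$, together with the structure theory already developed. Since $\nu=\mu^{diff}$ is well-defined on the compact interval $\mathcal I$, Proposition \ref{p.diff} tells us that $\mathcal E_{[\nu]}$ and $\mathcal E_{[\nu-1]}$ have constant ranks with $r_{[\nu-1]}=r_{[\nu]}-m$, that $S_{[\nu-1]}=S_{[\nu]}=S_{can}$ with constant dimension $d$, and that $\ker E\cap S_{can}=\{0\}$ everywhere on $\mathcal I$. The crucial point is that the lower-order rank functions $r_{[0]},\dots,r_{[\nu-2]}$ need \emph{not} be constant; each of them is, however, upper semicontinuous (a rank function never drops in a neighborhood, only possibly jumps down at isolated-type bad points). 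First I would invoke Theorem \ref{t.M} (referenced in the footnote) to describe, for each $k$, the set $\mathcal D_k\subseteq\mathcal I$ of discontinuity points of $r_{[k]}$: its complement $\mathcal I\setminus\mathcal D_k$ is open and dense in $\mathcal I$. Taking $\mathcal R:=\mathcal I\setminus\bigcup_{k=0}^{\nu-1}\mathcal D_k$, this is a finite intersection of open dense sets, hence open and dense in $\mathcal I$; write $\mathcal R=\bigcup_{\ell\in\mathfrak S}\mathcal I^\ell$ as the (possibly over-countable) collection of its connected components, which are open, pairwise disjoint, and whose closure is all of $\mathcal I$ by density.

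Next I would show that on each such component $\mathcal I^\ell$ the pair $\{E,F\}$ is regular in the sense of Definition \ref{d.2}. On $\mathcal I^\ell$ all of $r_{[0]},\dots,r_{[\nu]}$ are constant, and by the inclusions \eqref{1.subspace2} together with the representation \eqref{eq:Represrank} all subspaces $S_{[k]}(t)$ have constant dimension there. In particular $r:=r_{[0]}$ is constant and $\im[\mathcal E_{[k]}\,\mathcal F_{[k]}]=\Real^{mk+m}$ holds by Proposition \ref{p.diff}(2), so the full-rank condition \eqref{eq:fullrank} is met. Now I would run the reduction procedure of Section \ref{s.regular} on $\mathcal I^\ell$: constancy of $r_{[1]}=\rank\mathcal E_{[1]}$ together with Lemma \ref{l.R1} gives pre-regularity of $\{E,F\}$ (so $\theta_0^\ell$ is constant); one then proceeds inductively, using that constancy of all the $r_{[k]}$ forces constancy of the intersection dimensions $\dim(\ker E_i\cap S_i)=\theta_i^\ell$ at each reduction level — this is exactly the content of the relations in Remark \ref{r.a2} and Lemma \ref{l.rk} transported through the reduction. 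The procedure terminates with a nonsingular (or zero) leading coefficient, yielding an index $\mu^\ell\in\Natu$ and characteristics $r^\ell,\theta_0^\ell\ge\cdots\ge\theta^\ell_{\mu^\ell-2}>\theta^\ell_{\mu^\ell-1}=0$ on $\mathcal I^\ell$; by Theorem \ref{t.regulardiff} applied to this restriction, the differentiation index of the restricted DAE equals $\mu^\ell$, and since differentiation indices only decrease on subintervals (more $1$-fullness is automatic once it holds on the larger interval), $\mu^\ell\le\nu$.

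For the uniform degree of freedom: on $\mathcal I^\ell$ we have $d^\ell=\dim S_{can}$ computed as $r^\ell-\sum_{i=0}^{\mu^\ell-2}\theta_i^\ell$ by Corollary \ref{c.degree}, and also $d^\ell=\dim S_{[\nu]}(t)=r_{[\nu]}(t)-\nu m$ by Proposition \ref{p.diff}(4) and \eqref{eq:Represrank}. But $r_{[\nu]}$ is constant on \emph{all} of $\mathcal I$, so the latter expression is independent of $\ell$; hence $d^\ell=d:=r_{[\nu]}-\nu m$ for every $\ell\in\mathfrak S$. Finally, $\mu^{diff}=\max\{\mu^\ell:\ell\in\mathfrak S\}$: on one hand each $\mu^\ell\le\nu$ as shown; on the other, if all $\mu^\ell$ were $\le\nu-1$, then $\mathcal E_{[\nu-1]}$ would be smoothly $1$-full on each $\mathcal I^\ell$, hence on the dense open set $\mathcal R$, and by constancy of $r_{[\nu-1]}$ on $\mathcal I$ together with a continuity/closure argument the $1$-fullness transfer matrix $\mathcal T$ extends continuously to $\mathcal I$, contradicting minimality of $\nu$. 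The main obstacle I anticipate is precisely this last continuity-extension step — propagating smooth $1$-fullness from the dense union of regularity intervals to the whole compact interval — since one must check that the constructed transformation matrices $\mathcal T_\ell$ on the pieces glue into a globally continuous nonsingular matrix function; here the constancy of $r_{[\nu-1]}$ and $r_{[\nu]}$ on all of $\mathcal I$ (not just on $\mathcal R$) is what makes the Moore–Penrose inverses $\mathcal E_{[\nu-1]}^+$, $\mathcal E_{[\nu]}^+$ globally continuous and saves the argument.
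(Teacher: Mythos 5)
Your argument is correct in substance and reaches all three conclusions, but it departs from the paper's proof at two points. For the partition, the paper simply invokes \cite[Corollary 3.26]{KuMe2006} (itself based on Theorem \ref{t.M}) to obtain open, non-overlapping subintervals on which the \emph{regular strangeness index} is well defined, and then passes to regularity via Theorem \ref{t.equivalence}; you instead build the partition directly from the rank-discontinuity sets of $r_{[0]},\dots,r_{[\nu-1]}$ and a Baire-type density argument, which is equally legitimate (note only that rank of a continuous matrix function is \emph{lower}, not upper, semicontinuous). On each component, your hand-rolled induction ``constancy of all $r_{[k]}$ forces constancy of the $\theta_i$ at each reduction level'' is precisely the nontrivial implication (4)$\Rightarrow$(1) of Theorem \ref{t.equivalence_array} (via Theorem \ref{t.rdiff}(1)); you should cite that equivalence rather than re-derive it, since the inductive step you gesture at is exactly what that theorem proves. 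The uniform-$d$ argument coincides with the paper's (both rest on the constancy of $r_{[\nu-1]}$ resp.\ $r_{[\nu]}$ on all of $\mathcal I$ from Proposition \ref{p.diff}). For the final claim $\mu^{diff}=\max_\ell\mu^\ell$, the paper sets $\kappa=\max_\ell\mu^\ell$ and shows $r_{[\kappa]}=\kappa m+d$ on all of $\mathcal I$ by combining semicontinuity of rank with the inclusion $S_{[\kappa]}\supseteq S_{can}$ and $\dim S_{can}=d$, then concludes $\ker E\cap S_{[\kappa]}=\{0\}$ and hence $\kappa=\nu$ by minimality; your route via propagating $1$-fullness from the dense set $\mathcal R$ to all of $\mathcal I$ also works, and the ``obstacle'' you flag is not really one: by Lemma \ref{l.app}(4), smooth $1$-fullness of $\mathcal E_{[\nu-1]}$ with its (globally) constant rank is equivalent to $T_{[\nu-1]}\ker\mathcal E_{[\nu-1]}=\{0\}$, i.e.\ $T_{[\nu-1]}Q_{[\nu-1]}=0$ for the continuous orthoprojector $Q_{[\nu-1]}$ onto $\ker\mathcal E_{[\nu-1]}$, and this identity extends from the dense set to $\mathcal I$ by continuity alone --- no gluing of the transformation matrices $\mathcal T_\ell$ is required. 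With these two citations made explicit, your proof is complete and somewhat more self-contained than the paper's, at the price of redoing work that \cite{KuMe2006} already supplies.
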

\begin{proof}
Owing to \cite[Corollary 3.26]{KuMe2006} which is based on Theorem \ref{t.M} there is a decomposition of the compact interval $\mathcal I$ by open non-overlapping subintervals $\mathcal I^{\ell}$,  $\ell \in\mathfrak S$, such that the interval $\mathcal I$ is the closure of $\cup_{ \ell \in\mathfrak S}\mathcal I^{\ell}$, and the DAE has a  well-defined regular strangeness index  on each subinterval $\mathcal I^{\ell}$. In turn, by Theorem \ref{t.equivalence}, the DAE 
is regular on each subinterval $\mathcal I^{\ell}$  in the sense of Definition \ref{d.2}  with individual index $\mu^{\ell}$ and characteristics
\begin{align*}
 r^{\ell}, \quad\theta^{\ell}_0\geq \theta^{\ell}_1\geq\cdots\geq\theta^{\ell}_{\mu^{\ell}-2}>\theta^{\ell}_{\mu^{\ell}-1}=0, \quad d^{\ell}=r^{\ell}-\sum_{l=0}^{\mu^{\ell}-2}\theta^{\ell}_l.
\end{align*}
 As in Proposition \ref{th.ranks} we set $\theta^{\ell}_j=0$ for $j>\mu^{\ell}-1$.
Since the matrix functions $\mathcal E_{[\nu]}$ is pointwise $1$-full and  has constant rank $r_{[\nu]}$ on the overall $\mathcal I$, owing to Proposition \ref{th.ranks} we have $\mu_{\ell} \leq \nu$ on each subinterval $\mathcal I^{\ell}$  and 
\begin{align*}
 r_{[\nu-1]}&=(\nu-1)m+ r^{\ell}-\theta_0^{\ell} -\theta_1^{\ell}-\cdots-\theta_{\nu-2}^{\ell}-\theta_{\nu-1}^{\ell} = (\nu-1)m+d^{\ell},\\
 r_{[j]}&=j m+ d^{\ell},\quad j\geq \nu-1.
\end{align*}
Therefore, the values $d^{\ell}$ are equal on all subintervals, $d^{\ell}=d$.  
Denote $\kappa=\max\{\mu^{\ell}:{\ell}\in \mathfrak S\}$, $\kappa\leq\nu$, and observe that
$r_{[\kappa]}=\kappa m+d$ on each subinterval $\mathcal I^{\ell}$ , thus $r_{[\kappa]}\leq\kappa m+d$ on all $\mathcal I$.

Owing to Proposition \ref{p.diff}, on all $\mathcal I$ it holds that $S_{[\nu]}=S_{can}$, 
$\dim S_{[can]}=\dim S_{\nu}=r_{[\nu]}-\nu m=d$. The inclusion $S_{[\kappa]}(t)\supseteq S_{can}(t)$ which is given for all $t\in\mathcal I$, implies $r_{[\kappa]}-\kappa m \geq d$ on all $\mathcal I$. This leads to $r_{[\kappa]}=\kappa m+d$ on all $\mathcal I$ and  $S_{[\kappa]}=S_{can}$ as well. Finally, $\mathcal E_{[\kappa]}$ has constant rank on the whole intervall, and additionally, regarding again Proposition \ref{p.diff}, it results that $\ker E\cap S_{[\kappa]}=\ker E\cap S_{can}=\{0\}$.
This implies  $\kappa=\nu$, since $\nu$ is the smallest such integer.
\end{proof}
\begin{remark}\label{r.Chist}
To a large extend similar results are developed in the monographs \cite[Chapter 3]{Chist1996} and \cite[Chapter 2]{ChistShch} using operator theory. To the given DAE that is represented as operator equation of order one, $\Lambda_{1}x=Ex'+Fx=q$  a left regularization operator $\Lambda_{\nu}=A_{\nu}\frac{d^{\nu}}{dt^{\nu}}+\cdots +A_{1}\frac{d^{1}}{dt^{1}}+A_{0}$ is constructed, if possible, by evaluating  derivative arrays $\mathcal D_{{k}}$ as introduced in Subsection \ref{subs.Preliminaries} above and using generalized inverses, such that
$ \Lambda_{\nu}\circ\Lambda_{1}x=x'+ Bx$. The minimal possible number $\nu$ is called (\cite[p.\ 85]{Chist1996}) \emph{non-resolvedness index}, and this is the same as the differentiation index. In \cite{Chist1996,ChistShch} the SCF is renamed to central canonical form. Instead of solvable systems in the sense of Definition \ref{d.solvableDAE}, DAEs that have a \emph{general Cauchy-type solution} now form the background, see \cite[p.\ 110]{Chist1996}.
\end{remark}

%%%%%%%%%%%%%%%%%%%%%
\subsection{Regular differentiation index by geometric approaches}\label{subs.qdiff}
%%%%%%%%%%
In concepts that assume certain continuous projector-valued functions, especially where geometric ideas play a role,  one finds a somewhat restricted or qualified by additional rank conditions index understanding. In \cite{Griep92}, based on the rank theorem, a modified version of the differentiation index is given, which is closely related to the differential-geometric concepts in \cite{Reich,RaRh}. Indeed, the presentation and index definition in \cite{Griep92} is a more analytical notation of the differential-geometric concept in \cite{Reich}, and this version fits well with the rest of our presentation. As before, we are dealing with linear DAEs.

Basically, the derivative array functions $\mathcal E_{[k]}$ introduced in Section \ref{subs.Preliminaries} are assumed to feature constant ranks $r_{[k]}$ for all $k$. Due to the rank theorem there are smooth pointwise nonsingular matrix functions $U_{[k]}, V_{[k]}:\mathcal I\rightarrow \Real^{(km+m)\times(km+m)}$ providing the factorization
\begin{align*}
 \mathcal E_{[k]}= U_{[k]}\bar{P}_{[k]}V_{[k]},\quad \bar{P}_{[k]}:=\diag( I_{r_{[k]}},0,\ldots, 0 )\in \Real^{(km+m)\times(km+m)}.
\end{align*}
Then, letting $\bar{Q}_{[k]}=I-\bar{P}_{[k]}$ we form the projector functions
\begin{align*}
 R_{[k]}&=U_{[k]}\bar{P}_{[k]}U_{[k]}^{-1}\quad \text{onto}\quad \im \mathcal E_{[k]},\\
 W_{[k]}&=U_{[k]}\bar{Q}_{[k]}U_{[k]}^{-1}\quad \text{along}\quad \im \mathcal E_{[k]},\\
 Q_{[k]}&=V_{[k]}^{-1}\bar{Q}_{[k]}V_{[k]}\quad \text{onto}\quad \ker\mathcal E_{[k]},\\
 P_{[k]}&=V_{[k]}^{-1}\bar{P}_{[k]}V_{[k]}\quad \text{along}\quad \ker\mathcal E_{[k]},
\end{align*}
and turn to the equation
\begin{align}\label{G1}
 \mathcal E_{[k]}x'_{[k]}+\mathcal F_{[k]}x=q_{[k]},
\end{align}
which is divided into the two parts,
\begin{align}
 \mathcal E_{[k]}x'_{[k]}+R_{[k]}\mathcal F_{[k]}x&=R_{[k]}q_{[k]},\label{G2}\\
 W_{[k]}\mathcal F_{[k]}x&=W_{[k]}q_{[k]}.\label{G3}
\end{align}
Applying the factorization one obtains the reformulation of \eqref{G2} to
\begin{align}\label{G4}
V_{[k]}^{-1}\bar{P}_{[k]}V_{[k]}x'_{[k]}+  V_{[k]}^{-1}\bar{P}_{[k]}U_{[k]}^{-1}(\mathcal F_{[k]}x-q_{[k]}) =0.
\end{align}
Regarding \eqref{G3} one has $\mathcal F_{[k]}x-q_{[k]}= U_{[k]}\bar{P}_{[k]}U_{[k]}^{-1} (\mathcal F_{[k]}x-q_{[k]})$, 
thus  $V_{[k]}^{-1}U_{[k]}^{-1} (\mathcal F_{[k]}x-q_{[k]})= V_{[k]}^{-1}\bar{P}_{[k]}U_{[k]}^{-1} (\mathcal F_{[k]}x-q_{[k]})$, 
and \eqref{G4} becomes 
\begin{align}
V_{[k]}^{-1}\bar{P}_{[k]}V_{[k]}x'_{[k]}= -  V_{[k]}^{-1}U_{[k]}^{-1}(\mathcal F_{[k]}x-q_{[k]}),\nonumber\\
x'_{[k]}=V_{[k]}^{-1}\bar{Q}_{[k]}V_{[k]}x'_{[k]}- V_{[k]}^{-1}U_{[k]}^{-1}(\mathcal F_{[k]}x-q_{[k]}). \label{G5}
\end{align}
Coming from \eqref{G1} we deal now with the equation
\begin{align}\label{G6}
 \mathcal E_{[k]}(t)y+\mathcal F_{[k]}(t)x=q_{[k]}(t),\quad t\in \mathcal I,
\end{align}
where $y\in\Real^{(k+1)m}$ and $x\in R^m$ are placeholders for $x'_{[k]}(t)$ and $x(t)$.

Denote by $\tilde C_{[k]}$ the so-called \emph{constraint manifold of order} $k$, which contains exactly all pairs $(t,x)$ for which equation \eqref{G6} is solvable with respect to $y$, that is
\begin{align*}
\tilde C_{[k]} &=\{(x,t)\in \Real^m\times\mathcal I: W_{[k]}(t)(\mathcal F_{[k]}(t)x-q_{[k]}(t))=0\}\\
&=\{(x,t)\in \Real^m\times\mathcal I: W_{[k]}(t)\mathcal F_{[k]}(t)x =W_{[k]}(t)q_{[k]}(t))\}\\
&=\{(x,t)\in \Real^m\times\mathcal I: x\in C_{[k]}(t)\},
\end{align*}
with $C_{[k]}(t)$ from \eqref{1.consitent1} which represent  the fibres at $t$ of the constraint manifold $\tilde C_{[k]}$. The inclusion chain
\begin{align*}
 \tilde C_{[0]}\supseteq \tilde C_{[1]}\supseteq\cdots \supseteq\tilde C_{[k]}
\end{align*}
is obviously valid.
For each $(x,t)\in \tilde C_{[k]}$ we form the manifold $M_{[k]}(x,t)\subseteq \Real^{(k+1)m}$
of all $y\in\Real^{m+km}$ solving the equation \eqref{G6}. Regarding the representation \eqref{G5} we know that $M_{[k]}(x,t)$ is an affine subspace parallel to $\ker \mathcal E_{[k]}(t)$ and it depends linearly on $x$:
\begin{align}
 M_{[k]}(x,t)&=\{y\in\Real^{m+km}: y=z- (U_{[k]}V_{[k]})^{-1}(t)(\mathcal F_{[k]}(t)x-q_{[k]}(t)), z\in\ker \mathcal E_{[k]}(t)\}\nonumber\\
 &=\ker \mathcal E_{[k]}(t)+ \{- (U_{[k]}V_{[k]})^{-1}(t)(\mathcal F_{[k]}(t)x-q_{[k]}(t))\}.\label{G7}
\end{align}
Using the truncation matrices
\begin{align*}
 \hat{T}_{[k]}&=[I_{km}\; 0]\in\Real^{km\times(m+km)},\\
 {T}_{[k]}&= \hat{T}_{[1]}\cdots \hat{T}_{[k]}=[I_{m}\; 0]\in\Real^{m\times(m+km)},
\end{align*}
the inclusions
\begin{align*}
 M_{[0]}(x,t)&\supseteq T_{[1]}M_{[1]}(x,t)\supseteq\cdots\supseteq T_{[k]}M_{[k]}(x,t),\\
 \ker E(t)= \ker \mathcal E_{[0]}(t)&\supseteq T_{[1]}\ker \mathcal E_{[1]}(t)\supseteq\cdots\supseteq T_{[k]}\ker \mathcal E_{[k]}(t),
\end{align*}
are provided in \cite{Griep92}. 
Each DAE solution proceeds within the constraint manifolds of order $k\geq 0$, and we have
\begin{align*}
 x(t)\in C_{[k]}(t),\quad x'_{[k]}(t)\in M_{[k]}(x(t),t),\quad t\in \mathcal I,\quad k\geq 0.
\end{align*}
The corresponding index definition from \cite[Section 3]{Griep92} reads:
\begin{definition}\label{d.G1}
The equation \eqref{1.DAE} is called a  DAE  with \emph{regular differentiation  index $\nu$} if all $\mathcal E_{[j]}$ feature constant ranks, $T_{[\nu]} M_{[\nu]}(x,t)$ is a singleton for all $(x,t)\in \tilde C_{[\nu]}$, and $\nu$ is the smallest integer with these properties.
We then indicate the regular differentiation index by $\nu=:\mu^{rdiff}$.
\end{definition}
From representation \eqref{G7} it follows that $T_{[\nu]} M_{[\nu]}(x,t)$ is a singleton exactly if $T_{[\nu]} \ker \mathcal E_{[\nu]}=\{0\}$, thus $T_{[\nu]}Q_{[\nu]}=0$.
\medskip

With the resulting vector field $v(x,t):=- T_{[\nu]}(U_{[\nu]}V_{[\nu]})^{-1}(t)(\mathcal F_{[\nu]}(t)x-q_{[\nu]}(t) $,
the DAE \eqref{1.DAE} having the regular differentiation  index $\nu$ may be seen as vector field on a manifold, that is,
\begin{align*}
   x'(t)=v(x(t),t),\quad (x(t),t)\in \tilde C_{[\nu]}.
\end{align*}
%%%

It must be added here  that in early works like \cite{Griep92,Reich} no special epithet was given to the index term. It was a matter of specifying the idea formulated in \cite{Gear88} that the index of a DAE is determined as the smallest number of differentiations necessary to filter out from the inflated system a well-defined explicit ODE.
In particular, in \cite{Griep92} there is only talk about an \emph{index-$\nu$ DAE}, without the epithet \emph{regular}, but in \cite{Reich} regularity is central and  the characterization of the DAE as \emph{regular} is particularly emphasized, and so we added here the label \emph{regular differentiation} to differ from other notions, specifically also from the differentiation index in Subsection \ref{subs.diff}. 
Based on closely related index concepts, some variants of index transformation\footnote{Also called index reduction.} are discussed  in \cite{Griep92,Reich}, i.e., for a given DAE, a new DAE with an index lower by one is constructed. 
We pick out the respective basic idea 
from \cite{Reich,Griep92} which is in turn closely related to the geometric reduction in \cite{RaRh}. 

Given is the  DAE \eqref{1.DAE} with a  pair $\{E,F\}$ featuring the regular differentiation index $\mu^{rdiff}=\nu$. Then $\{E,F\}$ is pre-regular with $r=r_{[0]}$ and $\theta= m+r-r_{[1]}$, see Lemma \ref{l.R1}.  Let $W$ and $P_S$ be the ortho-projector functions with $\ker W=\im E$ and $\im P_S=\ker WF=S$. We represent $P_{S}=I-(WF)^+WF$. 

Differentiating the derivative-free part $WFx-Wq=0$ leads to $(WF)'x-(Wq)'=-WFx'$, and in turn to
$x'=P_{S}x'+(WF)^+WFx'=P_{S}x'-(WF)^+((WF)'x-(Wq)')$. Inserting this into the DAE \eqref{1.DAE} yields
\begin{align*}
 EP_Sx'+(F-E(WF)^+(WF)'x=q-E(WF)^+(Wq)'.
\end{align*}
Regarding that $P_{S}'=-{(WF)^{+}}'WF-(WF)^+(WF)'$ and $WFx=Wq$ we arrive at 
\begin{align}\label{R1}
 EP_Sx'+(F+EP'_S)x=q-E((WF)^+Wq)'.
\end{align}
We quote \cite[Theorem 12]{Griep92}: The transfer from the DAE \eqref{1.DAE} to the DAE \eqref{R1} reduces the (regular differentiation) index by 1.

Next we show the close connection to the basic reduction step described in Section \ref{s.regular}. By means of a smooth basis $C$ of the subspace  $S$  we represent the above projector function $P_{S}$ by  $P_S=CC^+$, \\$C^+=(CC^*)^{-1}C^*$, and rewrite \eqref{R1} as 
\begin{align}\label{R2}
 EC(C^+x)'+(F+EC'C^+)x=q-E((WF)^+Wq)'.
\end{align}
Letting $y=C^+x$, so that $x=P_Sx=CC^+x=Cy$, we obtain
\begin{align*}
 ECy'+(FC+EC')y=q-E((WF)^+Wq)',
\end{align*}
and finally, using a basis $Y$ of $\im E$ as in Section \ref{s.regular},
\begin{align}\label{R3}
 Y^*ECy'+Y^*(FC+EC')y=Y^*(q-E((WF)^+Wq)'),
\end{align}
which illuminates the consistency of \eqref{R1} with the basic reduction step in \cite{RaRh} and  Section \ref{s.regular}.
 %%%%%%%%%%%%%%%%%%%%%%
%  

\begin{theorem}\label{t.rdiff} The following assertions are valid:
\begin{description}
 \item[\textrm{(1)}] If the DAE \eqref{1.DAE} is regular with index $\mu\geq 1$ in the sense of Definition \ref{d.2} then it has also the  regular differentiation index $\mu^{rdiff}=\mu$, and vice versa, and the characteristic values are related by
 \begin{align*}
  r_{[0]}&= r,\\
 r_{[1]}&=m+ r-\theta_0,\\
 r_{[2]}&=2m+ r-\theta_0 -\theta_1,\\
 \cdots\\
 r_{[\nu-2]}&=(\nu-2)m+ r-\theta_0 -\theta_1-\ldots-\theta_{\nu-3},\\
 r_{[\nu-1]}&=(\nu-1)m+ r-\theta_0-\theta_1-\ldots-\theta_{\nu-2},\\
 r_{[j]}&=j m+ d,\quad j\geq \mu-1.
 \end{align*}
 \item[\textrm{(2)}] If the DAE has regular differentiation index $\mu^{rdiff}$ then it
 has also the differentiation index $\mu^{diff}=\mu^{rdiff}$.
 \item[\textrm{(3)}] If the DAE has regular differentiation index $\mu^{rdiff}$ on the interval $\mathcal I$ then, on each subinterval $\mathcal I_{sub}\subset\mathcal I$,  it shows the same 
 regular differentiation index $\mu^{rdiff}$.
\end{description} 
\end{theorem}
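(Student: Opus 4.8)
The plan is to dispatch the three parts in turn, extracting parts (1) and (2) almost directly from the rank theory of derivative arrays developed in Section~\ref{subs.SCFarrays} and the $1$-fullness facts of Appendix~\ref{subs.A1}, and getting part (3) from part (1) together with the subinterval invariance already recorded in Theorem~\ref{t.Scan}. For the forward half of (1), assume $\{E,F\}$ is regular with index $\mu$ in the sense of Definition~\ref{d.2}. Theorem~\ref{th.ranks} then says that every array $\mathcal E_{[k]}$ has the constant rank $r_{[k]}=km+r-\sum_{i=0}^{k-1}\theta_i$ --- which is exactly the list of values asserted in the theorem --- and that for $k\geq\mu$ one has $\ker\mathcal E_{[k]}=\{(z,w):z=0,\ H_kw=0\}$, so $T_{[k]}\ker\mathcal E_{[k]}=\{0\}$; by \eqref{G7} and the remark following Definition~\ref{d.G1} this means $T_{[k]}M_{[k]}(x,t)$ is a singleton for every $(x,t)\in\tilde C_{[k]}$. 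Hence $\mu^{rdiff}$ exists and is at most $\mu$. For minimality I would invoke Theorem~\ref{t.regulardiff}: the differentiation index equals $\mu$, so $\mathcal E_{[\mu-1]}$ is not smoothly $1$-full, and since it has constant rank it is not $1$-full at all, i.e. $T_{[\mu-1]}\ker\mathcal E_{[\mu-1]}\neq\{0\}$ and $T_{[\mu-1]}M_{[\mu-1]}$ is not a singleton; for $\mu=1$ one uses instead that $r<m$ forces $E$ singular, so $\nu=0$ is excluded. Thus $\mu^{rdiff}=\mu$.

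For the converse half of (1), assume $\mu^{rdiff}=\nu$. Then $E=\mathcal E_{[0]}$ has constant rank $r$, $\mathcal E_{[1]}$ has constant rank, and --- using that a DAE with a regular differentiation index is solvable for sufficiently smooth $q$ (its solutions being the integral curves of the vector field on $\tilde C_{[\nu]}$), so that the necessary condition of Remark~\ref{r.a1} applies --- $\im[E\ F]=\Real^m$. Lemma~\ref{l.R1} then gives that $\theta_0=m+r-r_{[1]}$ is constant and that $\{E,F\}$ is pre-regular. Next I would use the index reduction of \cite[Theorem~12]{Griep92}, which passes from \eqref{1.DAE} to \eqref{R1} lowering the regular differentiation index by one; by the computation \eqref{R2}--\eqref{R3} the DAE \eqref{R1} coincides, up to the inessential embedding into $\Real^m$, with the first pair $\{E_1,F_1\}$ produced by the basic reduction of Section~\ref{s.regular}. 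Iterating this, the basic reduction chain is well defined, each intermediate pair is pre-regular and carries regular differentiation index one less than its predecessor, and after $\nu-1$ steps one reaches a pair with regular differentiation index $1$, which by Proposition~\ref{p.index1} is regular with index $1$. Hence $\{E,F\}$ is regular with index $\nu$ in the sense of Definition~\ref{d.2}, and the characteristic-value table then follows from Theorem~\ref{th.ranks} applied in the forward direction.

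Part (2) I would settle by noting that $1$-fullness propagates upward: the top $k+1$ block rows of $\mathcal E_{[k+1]}$, restricted to the first $k+1$ block columns, form exactly $\mathcal E_{[k]}$, and the last block column vanishes there, so $(v_0,\dots,v_{k+1})\in\ker\mathcal E_{[k+1]}$ forces $(v_0,\dots,v_k)\in\ker\mathcal E_{[k]}$; hence $\mathcal E_{[k]}$ being $1$-full implies $\mathcal E_{[k+1]}$ is. Consequently $\nu=\mu^{rdiff}$ is the smallest index with $\mathcal E_{[\nu]}$ $1$-full, while $\mathcal E_{[j]}$ fails to be $1$-full for $j<\nu$; since all arrays have constant rank, ``$1$-full'' and ``smoothly $1$-full'' coincide (Appendix~\ref{subs.A1}), so $\nu$ is also the smallest index for which $\mathcal E_{[\nu]}$ has constant rank and is smoothly $1$-full, i.e. $\mu^{diff}=\nu=\mu^{rdiff}$. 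For part (3): by (1) the DAE on $\mathcal I$ is regular with index $\mu^{rdiff}$ in the sense of Definition~\ref{d.2}; by Theorem~\ref{t.Scan}(2) its restriction to any subinterval $\mathcal I_{sub}$ is regular with the same index and characteristics; applying (1) once more on $\mathcal I_{sub}$ returns the same regular differentiation index.

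The main obstacle is the converse half of (1). The delicate points are, first, that the hypothesis ``regular differentiation index $\nu$'' must be transported to each reduced pair --- which is precisely what \cite[Theorem~12]{Griep92} together with the identification \eqref{R2}--\eqref{R3} provides --- and, second, that each reduced pair must again be shown pre-regular, which forces one to carry along the full-row-rank condition $\im[\mathcal E_{[k]}\ \mathcal F_{[k]}]=\Real^{(k+1)m}$ for the successive DAEs; this has to be read off from solvability (Remark~\ref{r.a1}) rather than from the constant-rank assumptions alone, since constant ranks of the arrays do not by themselves rule out hidden consistency constraints.
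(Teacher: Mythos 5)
Your proof is correct and follows essentially the same route as the paper: the forward half of (1) and the minimality via Theorem \ref{th.ranks} and Lemma \ref{l.app}, the converse via Grigorieff's index-reduction theorem identified with the basic reduction step through \eqref{R2}--\eqref{R3}, and (2) via Lemma \ref{l.app}. The only (harmless) deviation is part (3), where you argue through (1) and the subinterval-invariance of regularity rather than the paper's direct "by construction" remark --- which is if anything the more careful way to handle the minimality condition on subintervals.
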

\begin{proof}
{\textrm (1):} Owing to Proposition \ref{p.index1} there is nothing to do in the index-$1$ case.  If the DAE is regular with index $\mu\geq 2$ in the sense of Definition \ref{d.2} then it has regular differentiation index $\mu^{rdiff}=\mu$ as an immediate consequence of Proposition \ref{th.ranks} and Lemma \ref{l.app}.

Contrariwise, let the DAE \eqref{1.DAE} have regular differentiation index $\mu^{rdiff}=\nu\geq 2$. Then the pair $\{E,F\}$ is pre-regular, and by \cite[Theorem 12]{Griep92} the DAE 
\begin{align}\label{R4}
 EP_Sx'+(F+EP'_S)x=p,\quad p:=q-E((WF)^+Wq)',
\end{align}
has regular differentiation index $\nu-1$.
Using smooth bases $Y, Z, C$, and $D$ of $\im E, (\im E)^{\perp}, \ker Z^*F$, and $ (\ker Z^*F)^{\perp}$, respectively,
we form the pointwise nonsingular matrix functions
\begin{align*}
 K=\begin{bmatrix}
    C&D
   \end{bmatrix}, \; L=\begin{bmatrix}
    Y^*\\(Z^*FD)^{-1}Z^*
   \end{bmatrix},
\end{align*}
scale the DAE \eqref{R4} by $L$ and transform $x=K\tilde x=: C\tilde x_C+D\tilde x_D$, which leads to an equivalent DAE of the form $\tilde E\tilde x'+\tilde F\tilde x=Lp$ with coefficients
\begin{align*}
\tilde E&=LEP_{S}K=\begin{bmatrix}
    Y^*EC&0\\0&0
   \end{bmatrix},\\
   \tilde F&=L(F+EP_{S}')K+LEP_{S}K'=LFK+LE(P_{S}K)'=\begin{bmatrix}
    Y^*FC+Y^*EC'&Y^*FD\\0&I
   \end{bmatrix}.
\end{align*}
The resulting DAE reads in detail
\begin{align}
 Y^*EC\tilde x'_C+(Y^*FC+Y^*EC')\tilde x_C +Y^*FD\tilde x_D&=Y^*p, \label{R5}\\
 \tilde x_D&=(Z^*FD)^{-1}Z^*q.\label{R6}
\end{align}
As a DAE featuring  regular differentiation index $\nu-1$, the DAE \eqref{R5}, \eqref{R6} is pre-regular. 
Observe that
\begin{align*}
 \ker \tilde E\cap\ker \tilde W\tilde F=\left\{\begin{bmatrix}
                                           u\\v
                                          \end{bmatrix}\in\Real^m: u\in \ker Y^*EC, (Y^*FC+Y^*EC')u\in \im Y^*EC, v=0\right\},
\end{align*}
which allows to restrict the further investigation to the inherent part
\begin{align}\label{R7}
 \underbrace{Y^*EC}_{=E_1}\tilde x'_C+\underbrace{(Y^*FC+Y^*EC')}_{=F_1}\tilde x_C &=Y^*p-Y^*FD (Z^*FD)^{-1}Z^*q, 
\end{align}
which has also regular differentiation index $\nu-1$, and $\dim \ker E_1\cap \ker Z^*_1F_1=  \dim  \ker \tilde E\cap\ker \tilde W\tilde F$.
If $\nu=2$ we are done.
If $\nu>2$ then we repeat the whole procedure and provide this way a basic sequence of pairs $\{E_k,F_k\}$.

{\textrm (2):} Lemma \ref{l.app} makes this evident.

{\textrm (3):} This is given by the construction.
\end{proof}
We observe that the regular differentiation index is well-defined, if and only if the (standard) differentiation index is well-defined, and, additionally, all preceding $\mathcal E_{[j]}$ have constant ranks. 
\begin{remark}\label{r.diff}
If $\{E, F\}$ has differentiation index $\mu^{diff}=:\nu$ the matrix function  $\mathcal E_{[\nu]}$ has constant rank, and, due to  Lemma \ref{l.app}, it holds that $T_{[\nu]}Q_{[\nu]}=0$ such that the above formula \eqref{G5} immediately provides an underlying  ODE in the form
\begin{align*}
 x'= T_{[\nu]}x'_{[\nu]}=- T_{[\nu]}V_{[\nu]}^{-1}U_{[\nu]}^{-1}(\mathcal F_{[\nu]}x-q_{[\nu]}),
\end{align*}
without the predecessors $\mathcal E_{[j]}$ having to have constant rank and without the background of geometric reduction. 
\end{remark}
%
%%%%%%%%%%%%%%%%%
 
\subsection{Projector based differentiation index for initialization}\label{subs.pbdiff}

The index concept developed in \cite{EstLam2016Decoupling,EstLamNewApproach2018,EstLamDecoupling2020} 
 has its origin in the computation of consistent initial values and was initially intended as a reinterpretation of the differentiation index.
Although it has therefore not yet had an own name,
in this article we will denote this index concept the \textit{projector based differentiation index}.  Roughly speaking the projector based differentiation index is reached, as soon as by differentiation we have found sufficient (hidden) constraints. For an appropriate description, orthogonal projectors are used to decouple different components of $x$.
\medskip

Let $P=E^{+}E$, $Q=I-P$, and $W_{0}=I-EE^{+}$ be the orthoprojector functions onto $(\ker E)^{\bot}$, $\ker E$, and $(\im E)^{\bot}$. Given that $E(t)$ has constant rank on $\mathcal I$, these projector functions are as smooth as $E$ is itself. Then we decompose the unknown $x=Px+Qx$ and rewrite the DAE as proposed in \cite{GM86},
\begin{align}\label{1.modDAE}
 E(Px)'+(F-EP')x=q.
\end{align}
All solutions of the homogeneous DAE with $q=0$ reside within 
the timevarying subspace of $\Real^{m}$
\begin{align}\label{1.S0}
 S_{0}=\{z\in \Real^{m}:Fz\in \im E\}=\ker W_{0}F =S_{[0]}.
\end{align}
The  DAE \eqref{1.modDAE}  splits into the following two equations:
\begin{align}
 P(Px)'+E^{+}(F-EP')(Px+Qx)&=E^{+}q,\label{1.ODE}\\
 W_{0}FQx&=-W_{0}FPx+W_{0}q.\label{1.alg}
\end{align}
Obviously, if the second equation \eqref{1.alg} uniquely determines $Qx$ in terms of $Px$ and $q$, then replacing $Qx$ in \eqref{1.ODE} by the expression resulting from \eqref{1.alg} yields an explicit ODE for $Px$. This actually happens on condition that $S_{0}\cap\ker E=\{0\}$ is given, 
which indicates regular index-1 DAEs as it is well-known \cite{GM86,CRR}. For higher-index DAEs this condition  is no longer met.
In the context of the \textit{projector based differentiation index} one aims for extracting the needed information concerning $Qx$ from the inflated system.

Thereby, the further matrix functions $\mathcal B_{[k]}:\mathcal I\rightarrow \Real^{(mk+m)\times(mk+m)}$,
\begin{align}\label{1.Bk}
\mathcal{B}_{[k]} =
\begin{bmatrix}
	  P & 0 \\
	\mathcal F_{[k-1]}& \mathcal E_{[k-1]}
\end{bmatrix}.
\end{align}
plays its role. For $k\geq 1$ we evaluate the inflated system 
\begin{align*}
 \mathcal E_{[k-1]}x'_{[k-1]}+ \mathcal F_{[k-1]}x= q_{[k-1]}.
\end{align*}

Introducing the variable $\omega=Qx$ and decomposing $x=Qx+Px=\omega+Px$ yields the system
\begin{align*}
P\omega&=0,\\
 \mathcal E_{[k-1]} x'_{[k-1]}+\mathcal F_{[k-1]}\omega &= q_{[k-1]}-\mathcal F_{[k-1]}Px,
\end{align*}
that is,
\begin{align}\label{1.B}
 \mathcal B_{[k]}\begin{bmatrix}
                   \omega\\x'_{[k-1]}
                  \end{bmatrix}=
\begin{bmatrix}
                   0\\q_{[k-1]}-\mathcal F_{[k-1]}Px
                  \end{bmatrix}.
\end{align}
If $\mathcal B_{[k]}$ is smoothly $1$-full,
\begin{align*}
 \mathcal T_{\mathcal B} \mathcal B_{[k]}=\begin{bmatrix}
                                 I&\\
                                 0&H_{\mathcal B}
                                \end{bmatrix},
\end{align*}
then the first block-row of system \eqref{1.B} multiplied by $\mathcal T_{\mathfrak B}$ reads
\begin{align*}
 \omega=\left(\mathcal T_{\mathcal B}\begin{bmatrix}
                   0\\q_{[k-1]}-\mathcal F_{[k-1]}Px
                  \end{bmatrix} \right)_{1},
\end{align*}
which is actually a representation of $Qx=\omega$ in terms of $Px$ and $q_{[k-1]}$ we are looking for. Inserting this expression into equation \eqref{1.ODE} leads to the explicit ODE for the component $Px$,
\begin{align*}
 (Px)'-P'Px+E^{+}(F-EP')(Px+\omega)=E^{+}q,
\end{align*}
and, supposing a consistent initial value for $Px$, eventually to a solution  $x=Px+Qx$ of the DAE.

\begin{remark}
Recall that the regular differentiation index focuses on the 1-fullness condition of $\mathcal E_{[k]}$ for its first $m$ rows corresponding to $x'$. In contrast, we want to emphasize that the projector based differentiation index focuses on the on the 1-fullness condition of $\mathcal B_{[k]}$ for its first $m$ rows, that correspond to $x$. 
\end{remark}

To get an idea about the rank of $\mathcal B_{[k]}(t)$ we point out its  connection to the matrix $\mathcal D_{[k]}(t)$ defined in \eqref{1.Dk}: 
\begin{align}
 \ker \mathcal B_{[k]}(t) &= \ker \mathcal D_{[k]} \nonumber \\
&=\left\{\begin{bmatrix}
            z\\w
     \end{bmatrix}\in \Real^{m}\times\Real^{mk}: z\in\ker E\cap S_{[k-1]}, \mathcal E_{[k-1]}^{+}\mathcal E_{[k-1]}w=-\mathcal E_{[k-1]}^{+}\mathcal F_{[k-1]}z
\right\},\label{1.kerBk}
\end{align}
for  the subspace $S_{[k-1]}$ defined in \eqref{1.subspace1}, and consequently,
\begin{align}
%  \dim \ker \mathcal B_{[k]}&=\dim(\ker E\cap S_{[k-1]})+\dim \ker \mathcal E_{[k-1]},\\
 \rank \mathcal B_{[k]}&= m-\dim(\ker E\cap S_{[k-1]})+r_{[k-1]} . \label{1.rankBk}
\end{align}
In addition, regarding the inclusions \eqref{1.subspace2}, 
we recognize immediately the inclusions
\begin{align*}
 S_{0}\cap\ker E=S_{[0]}\cap\ker E\supseteq S_{[1]}\cap\ker E\supseteq\cdots\supseteq S_{[k-1]}\cap\ker E\supseteq S_{[k]}\cap\ker E,
 \end{align*}
that for the projector $\mathcal W_{[k]}$ from \eqref{eq:WGR} and
\begin{align*}
 \rho_{k}:=\rank\begin{bmatrix}
                         P\\\mathcal W_{[k]}\mathcal F_{[k]}
                        \end{bmatrix}=\rank\begin{bmatrix}
                         E\\\mathcal W_{[k]}\mathcal F_{[k]}
                        \end{bmatrix}=
                       m-\dim (\ker E\cap S_{[k]})
\end{align*}
obviously yield the inequalities
 \begin{align*}
\rho_{0}\leq \rho_{1}\leq\cdots\leq \rho_{k-1}\leq\rho_{k}\leq m.
\end{align*}

\begin{definition}[\cite{EstLam2016Decoupling}]\label{d.pbdiffA}
 If there is a number $\nu\in\Natu$ such that the matrix functions $\mathcal E_{[0]},\ldots,\mathcal E_{[\nu-1]}$  have constant ranks, the rank functions $\rho_{0},\ldots,\rho_{\nu-1}$ are constant, too, and
 \begin{align*}
  \rho_{\nu-2}<\rho_{\nu-1}=m,
 \end{align*}
then $\nu$
 is called the \emph{projector based differentiation index} of the pair $\{E,F\}$ and the DAE \eqref{1.DAE}, respectively. We indicate it by $\nu=:\mu^{pbdiff}$.
\end{definition}

Having in mind the known index-1 criterion $S_{[0]}\cap\ker E=\{0\}$ 
we recognize  the condition $S_{[\mu^{pbdiff}-1]}\cap\ker E=\{0\}$ to characterize the projector based differentiation index in general.

%%%%%%%%%%%%%%%%%%%%%
If the index $\mu^{pbdiff}$ is well-defined, then owing to Lemma \ref{l.app}, the matrix function  $\mathcal B_{[\mu^{pbdiff}]}$ is smoothly $1$-full and has constant rank $r_{\mathcal B}=m+r_{[\mu^{pbdiff}-1]}$. Additionally, then $E$ and $\mathcal B_{[i]}$, $i=1,\ldots,\mu^{pbdiff}$, have  constant ranks, too.
Conversely, if  there is a $\nu\in\Natu$ such that $E$ and $\mathcal B_{[i]}$, $i=1,\ldots,\nu$, have  constant ranks, and $\mathcal B_{[\nu]}$ is $1-$full, and $\nu$ is the smallest such integer, then the rank functions 
$\rho_i=\rank \mathcal B_{[i+1]}-r_{[i]}$, $i=0,\ldots,\nu-1$, are constant, and $\rho_{\nu-2}<\rho_{\nu-1}=m$.

This makes it obvious that
the following alternative definition, that  is equivalent to Definition \ref{d.pbdiffA}, may also be considered:

\begin{definition}\label{d.pbdiffB}
 If there is a $\nu\in\Natu$  such that the matrix functions $E$,  $\mathcal B_{[1]},\ldots,\mathcal B_{[\nu]}$  have constant ranks $r^{\mathcal B}_{[0]}:=r, r^{\mathcal B}_{[1]},\ldots,r^{\mathcal B}_{[\nu]}$, respectively, and $ \nu$ is the smallest number  for which the matrix
function $\mathcal B_{[\nu]}$ is smoothly 1-full, 
then $\nu$
 is called the \emph{projector based differentiation index} of the pair $\{E,F\}$ and the DAE \eqref{1.DAE}, respectively. Again, we use the notation $\nu=:\mu^{pbdiff}$.
\end{definition}
Regarding again Lemma \ref{l.app}, this means precisely that
\begin{align*}
 r^{\mathcal B}_{[i]}< r_{[i-1]}+m, \; i=1,\ldots, \nu-1,\;  r^{\mathcal B}_{[\nu]}= r_{[\nu-1]}+m.
\end{align*}

%%%

\begin{remark}
With regard to the computation of consistent initial values, if the index is $\mu^{pbdiff}$, then for $k=\mu^{pbdiff}$ according to \cite{EstLamNewApproach2018} a uniquely determined consistent initial value $x_0 \in S_{[k-1]}(t_0)$ can be computed as solution of
\begin{eqnarray*}
 \mbox{ minimize } && \left\| P(t_0) (x_0 - \alpha)\right\|_2 \\
\mbox{ subject to } && \mathcal W_{[k-1]} \mathcal F_{[k-1]}(t_0)x_0  =   \mathcal W_{[k-1]}  q_{[k-1]}(t_0),
\label{eq:RegSysConsInit}
\end{eqnarray*}
for a given guess $\alpha$. This solution can also be computed as solution $x_0$ of the optimization problem
\begin{eqnarray*}
\mbox{ minimize } && \left\| P(t_0) (x_0 - \alpha)\right\|_2 \\
\mbox{ subject to } &&\mathcal F_{[k-1]}(t_0)x_0 + \mathcal E_{[k-1]}(t_0)w =    q_{[k-1]}(t_0)
\label{eq:OptimizationConsInit}
\end{eqnarray*}
 for  a vector $w \in \Real^{km}$ that is not uniquely determined, cf. \eqref{1.kerBk} and the results for the solvability from \cite{EstLamNewApproach2018}. There, the convenience of considering the orthogonal projector $P$ instead of the matrix $E$ for the objective function is discussed, that led to the consideration of $\mathcal B_{[k]}$ instead of $\mathcal D_{[k]}$. Moreover, for $k> \mu^{pbdiff}$, the last optimization problem permits the additional computation of consistent Taylor coefficients as parts of $w$.
\end{remark}

\begin{proposition}\label{p.pbdiff}
The projector based differentiation index remains invariant under sufficiently smooth equivalence transformations.
\end{proposition}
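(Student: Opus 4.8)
The plan is to reduce the invariance of the projector based differentiation index to two facts already available in the excerpt: first, that an equivalence transformation acts on the inflated matrix functions $\mathcal E_{[k]},\mathcal F_{[k]}$ by the block-triangular congruence described in Proposition \ref{p.equivalenc}; second, that $\ker\mathcal B_{[k]}=\ker\mathcal D_{[k]}$ and hence $\rank\mathcal B_{[k]}$ is governed by the rank of $\mathcal E_{[k-1]}$ and the dimension $\dim(\ker E\cap S_{[k-1]})$ via formula \eqref{1.rankBk}. Since Proposition \ref{p.equivalenc} already establishes that $\rank\mathcal E_{[k]}$, $\dim S_{[k]}$, and $\dim(\ker E\cap S_{[k]})$ are all invariant under equivalence, it follows at once that the rank functions $r_{[k]}$ and $\rho_k$ (using $\rho_k=m-\dim(\ker E\cap S_{[k]})$) are invariant. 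Working with Definition \ref{d.pbdiffA}, this immediately gives that the conditions ``$\mathcal E_{[0]},\ldots,\mathcal E_{[\nu-1]}$ have constant rank'', ``$\rho_0,\ldots,\rho_{\nu-1}$ constant'', and ``$\rho_{\nu-2}<\rho_{\nu-1}=m$'' are preserved, so $\mu^{pbdiff}$ is the same for $\{E,F\}$ and $\{\tilde E,\tilde F\}$.

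First I would record the transformation law for $\mathcal B_{[k]}$ directly, to make the argument self-contained. From $\tilde E=LEK$ one gets $\tilde P=\tilde E^+\tilde E=K^{-1}PK$ only up to the subtlety that $\tilde P$ is the \emph{orthogonal} projector onto $(\ker\tilde E)^\perp$, which is generally not $K^{-1}PK$ unless $K$ is orthogonal; so I would instead avoid referring to $\tilde P$ explicitly and argue purely via $\ker\mathcal B_{[k]}=\ker\mathcal D_{[k]}$ together with the $\mathcal D_{[k]}$ analogue of Proposition \ref{p.equivalenc}. Concretely, $\mathcal D_{[k]}=\begin{bmatrix}E&0\\\mathcal F_{[k-1]}&\mathcal E_{[k-1]}\end{bmatrix}$ transforms as $\tilde{\mathcal D}_{[k]}=\operatorname{diag}(L,\mathcal L_{[k-1]})\,\mathcal D_{[k]}\,\begin{bmatrix}K&0\\ *&\mathcal K_{[k-1]}\end{bmatrix}$ for suitable pointwise nonsingular block-triangular factors built from $L,K$ and their derivatives — this is the same bookkeeping as in the proof of Proposition \ref{p.equivalenc}, just with the extra top block-row $[E\ 0]$. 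Hence $\rank\tilde{\mathcal D}_{[k]}=\rank\mathcal D_{[k]}$, so $\rank\tilde{\mathcal B}_{[k]}=\rank\mathcal B_{[k]}$ by $\ker\mathcal B_{[k]}=\ker\mathcal D_{[k]}$ and \eqref{1.rankBk}.

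With this in hand the proof proper is short. By Proposition \ref{p.equivalenc}, $r_{[k]}=\rank\mathcal E_{[k]}$ and $\dim(\ker E\cap S_{[k]})$ are invariant, hence so is $\rho_k=m-\dim(\ker E\cap S_{[k]})$; by the previous paragraph $\rank\mathcal B_{[k]}$ is invariant as well (it also equals $m-\dim(\ker E\cap S_{[k-1]})+r_{[k-1]}$, consistent with \eqref{1.rankBk}). Therefore each of the defining clauses of Definition \ref{d.pbdiffA} holds for $\{E,F\}$ if and only if it holds for $\{\tilde E,\tilde F\}$, and the smallest $\nu$ satisfying them coincides; that is $\mu^{pbdiff}(E,F)=\mu^{pbdiff}(\tilde E,\tilde F)$. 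Equivalently, one may invoke Definition \ref{d.pbdiffB}: constancy of $\rank E$ and of $\rank\mathcal B_{[i]}$ is invariant, and smooth $1$-fullness of $\mathcal B_{[\nu]}$ is equivalent to the rank identity $r^{\mathcal B}_{[\nu]}=r_{[\nu-1]}+m$ of Lemma \ref{l.app}, which is again rank data and thus invariant.

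The main obstacle — and the only place needing care — is the $1$-fullness step if one chooses to argue through Definition \ref{d.pbdiffB} rather than through the rank functions $\rho_k$: smooth $1$-fullness of a matrix function is \emph{not} obviously preserved under arbitrary congruence, because multiplying $\mathcal B_{[k]}$ on the right by $\begin{bmatrix}K&0\\ *&\mathcal K_{[k-1]}\end{bmatrix}$ mixes the first $m$ columns (the ``$x$'' columns, via $K$) with the others. However, since $K$ is nonsingular this mixing is reversible and confined to the first block, so one can mimic the computation in the proof of Proposition \ref{p.diffind}: if $\mathcal T_{\mathcal B}\mathcal B_{[k]}=\operatorname{diag}(I_m,H_{\mathcal B})$, then a suitable $\tilde{\mathcal T}_{\mathcal B}$ of the form $\begin{bmatrix}K^{-1}&0\\ *&I\end{bmatrix}\mathcal T_{\mathcal B}\operatorname{diag}(L,\mathcal L_{[k-1]})^{-1}$ brings $\tilde{\mathcal B}_{[k]}$ to $\operatorname{diag}(I_m,\tilde H_{\mathcal B})$ with $\tilde H_{\mathcal B}$ continuous and nonsingular-rank-equal to $H_{\mathcal B}$. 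I would lift this block-elimination verbatim from Proposition \ref{p.diffind}, noting only the difference that here the top block-column is indexed by $x$ rather than $x'$, so the relevant correction block is $-\tilde H_{\mathcal B}\,(\ast)\,K^{-1}$ with $(\ast)$ the lower-left block of $\mathcal K_{[k-1]}$-type factor. Routine verification then closes the argument; I would present the cleaner $\rho_k$-based version in the main text and relegate this $1$-fullness remark to a sentence.

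\begin{proof}
By Proposition \ref{p.equivalenc}, the ranks $r_{[k]}=\rank\mathcal E_{[k]}$, the dimensions $\dim S_{[k]}$, and the intersection dimensions $\dim(\ker E\cap S_{[k]})$ are invariant under sufficiently smooth equivalence transformations. Consequently the rank functions $\rho_k=m-\dim(\ker E\cap S_{[k]})$ introduced above Definition \ref{d.pbdiffA} are invariant as well. Moreover, for the array $\mathcal D_{[k]}=\begin{bmatrix}E&0\\\mathcal F_{[k-1]}&\mathcal E_{[k-1]}\end{bmatrix}$ the same block-triangular bookkeeping as in the proof of Proposition \ref{p.equivalenc}, applied with the extra leading block-row $[E\ 0]$, yields pointwise nonsingular matrix functions $\widehat{\mathcal L}_{[k]},\widehat{\mathcal K}_{[k]}$, built from $L,K$ and their derivatives, with $\tilde{\mathcal D}_{[k]}=\widehat{\mathcal L}_{[k]}\mathcal D_{[k]}\widehat{\mathcal K}_{[k]}$; hence $\rank\mathcal D_{[k]}$ is invariant, and by $\ker\mathcal B_{[k]}=\ker\mathcal D_{[k]}$ and \eqref{1.rankBk} so is $\rank\mathcal B_{[k]}=m-\dim(\ker E\cap S_{[k-1]})+r_{[k-1]}$.

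Now let $\{E,F\}$ have projector based differentiation index $\mu^{pbdiff}=\nu$ in the sense of Definition \ref{d.pbdiffA}. Then $\mathcal E_{[0]},\ldots,\mathcal E_{[\nu-1]}$ have constant ranks, $\rho_0,\ldots,\rho_{\nu-1}$ are constant, and $\rho_{\nu-2}<\rho_{\nu-1}=m$. By the invariances just recorded, the transformed matrix functions $\tilde{\mathcal E}_{[0]},\ldots,\tilde{\mathcal E}_{[\nu-1]}$ have constant ranks as well, the transformed rank functions $\tilde\rho_0,\ldots,\tilde\rho_{\nu-1}$ are constant, and $\tilde\rho_{\nu-2}=\rho_{\nu-2}<m=\rho_{\nu-1}=\tilde\rho_{\nu-1}$. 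Hence $\nu$ satisfies, for $\{\tilde E,\tilde F\}$, all the requirements of Definition \ref{d.pbdiffA}; since no smaller integer does so (again by invariance, now applied to $\{\tilde E,\tilde F\}$ and $\{E,F\}$ with the roles exchanged), it follows that $\mu^{pbdiff}(\tilde E,\tilde F)=\nu=\mu^{pbdiff}(E,F)$.

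For completeness we note that one may argue equally through the equivalent Definition \ref{d.pbdiffB}: constancy of $\rank E$ and of $\rank\mathcal B_{[i]}$ is invariant by the above, and smooth $1$-fullness of $\mathcal B_{[\nu]}$ is equivalent, by Lemma \ref{l.app}, to the rank identity $r^{\mathcal B}_{[\nu]}=r_{[\nu-1]}+m$, which is rank data and therefore invariant. Alternatively, a block-elimination identical in form to the one in the proof of Proposition \ref{p.diffind} produces, from a nonsingular continuous $\mathcal T_{\mathcal B}$ with $\mathcal T_{\mathcal B}\mathcal B_{[\nu]}=\begin{bmatrix}I_m&0\\0&H_{\mathcal B}\end{bmatrix}$, a nonsingular continuous $\tilde{\mathcal T}_{\mathcal B}$ with $\tilde{\mathcal T}_{\mathcal B}\tilde{\mathcal B}_{[\nu]}=\begin{bmatrix}I_m&0\\0&\tilde H_{\mathcal B}\end{bmatrix}$, so that $\mathcal B_{[\nu]}$ and $\tilde{\mathcal B}_{[\nu]}$ are smoothly $1$-full simultaneously. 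This completes the proof.
\end{proof}
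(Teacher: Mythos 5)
Your proof is correct and follows essentially the same route as the paper's: the paper's own proof simply observes that, by Proposition \ref{p.equivalenc} and formula \eqref{1.rankBk}, the rank functions $\rank\mathcal B_{[k]}$ and $\rho_k$ are invariant under equivalence transformations, which is exactly your central argument. The additional material on the transformation law for $\mathcal D_{[k]}$ and the $1$-fullness of $\mathcal B_{[\nu]}$ is sound but not needed beyond what the rank invariances already give.
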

\begin{proof}
 Owing to Proposition \ref{p.equivalenc} and \eqref{1.rankBk}, the rank functions $\rank \mathcal B_{[k]}$ and $\rho_{k}$ are invariant under sufficiently smooth equivalence transformations, which makes the assertion evident.
\end{proof}

We emphasize again that the \textit{ projector based differentiation index} focuses on a  1-full condition for a different matrix than the \textit{regular differentiation index}. 
 However, if the pair $\{E,F\}$ is regular on the interval $\mathcal I$, we can show that they turn out to be equivalent.

% %%%%%%%%%%%%
\begin{theorem}\label{t.projector_based_diff}
 Let the pair $\{E,F\}$ be regular on $\mathcal I$ with index $\mu $ and characteristic values $r$ and \\ $\theta_0\geq\cdots\geq \theta_{\mu-2}>\theta_{\mu-1}=0$. 
  Then the projector based
 differentiation index is well-defined and coincides with the regular differentiation index i.e. $\mu^{rdiff}=\mu=\mu^{pbdiff}$. Moreover, 
\[
\rank \mathcal B_{[k]} = \rank \mathcal D_{[k]} = \rank \mathcal E_{[k]}=km+r-\sum_{i=0}^{k-1}\theta_i,\quad  k\geq 1,
\]
 \begin{align*}
  \rho_k&=m-\dim (\ker E\cap S_{[k]})=m-\theta_k,\quad k=0,\ldots ,\mu-1, 
	\end{align*}
and in particular
\begin{align*}
	\rho_{\mu-1}&=m, \quad \theta_{\mu-1} = 0, \quad  \dim (\ker E\cap S_{[\mu-1]})=\left\{0\right\}.
 \end{align*}
\end{theorem}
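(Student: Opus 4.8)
The plan is to reduce everything to the rank formula for $\mathcal{D}_{[k]}$ and the known results on the regular differentiation index. The key observation is the identity $\ker\mathcal{B}_{[k]}(t)=\ker\mathcal{D}_{[k]}(t)$ established in \eqref{1.kerBk}, which immediately gives $\rank\mathcal{B}_{[k]}=\rank\mathcal{D}_{[k]}$ everywhere; so the rank chain $\rank\mathcal{B}_{[k]}=\rank\mathcal{D}_{[k]}=\rank\mathcal{E}_{[k]}=km+r-\sum_{i=0}^{k-1}\theta_i$ for $k\geq 1$ follows directly from Theorem \ref{th.ranks}(1). In particular all these matrix functions have constant rank. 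The relation $\rho_k=m-\dim(\ker E\cap S_{[k]})=m-\theta_k$ is exactly the definition of $\rho_k$ combined with Theorem \ref{th.ranks}(6), and $\rho_{\mu-1}=m$, $\theta_{\mu-1}=0$, $\ker E\cap S_{[\mu-1]}=\{0\}$ are the special case $k=\mu-1$ (using the convention $\theta_k=0$ for $k\geq\mu-1$).

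Next I would establish $\mu^{pbdiff}=\mu$. By Definition \ref{d.pbdiffA} one needs the $\mathcal{E}_{[j]}$ to have constant rank (true for $j\geq 0$ by Theorem \ref{th.ranks}(1) and Lemma \ref{l.R1} for $j=0,1$; actually all of them by the theorem), the $\rho_j$ to be constant (true since $\rho_j=m-\theta_j$ with $\theta_j$ constant on $\mathcal{I}$), and $\rho_{\nu-2}<\rho_{\nu-1}=m$. Since $\rho_j=m-\theta_j$ and $\theta_0\geq\cdots\geq\theta_{\mu-2}>\theta_{\mu-1}=0$, we get $\rho_{\mu-1}=m$ and $\rho_{\mu-2}=m-\theta_{\mu-2}<m$, so the smallest such $\nu$ is exactly $\mu$ when $\mu\geq 2$; for $\mu=1$ one has $\rho_0=m$ already, matching $\mu^{pbdiff}=1$, consistent with Proposition \ref{p.index1} and Definition \ref{d.pbdiffA} (with the convention $\rho_{-1}<\rho_0$ vacuously or reading the index-one case directly). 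Thus $\mu^{pbdiff}=\mu$. Finally $\mu^{rdiff}=\mu$ is precisely Theorem \ref{t.rdiff}(1), so $\mu^{rdiff}=\mu=\mu^{pbdiff}$.

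I would therefore write the proof as: first invoke Theorem \ref{th.ranks}(1) and \eqref{1.kerBk} and \eqref{1.rankBk} to get the rank identities and the constancy of all the $\rho_k$; then read off $\rho_k=m-\theta_k$ from Theorem \ref{th.ranks}(6); then verify the conditions of Definition \ref{d.pbdiffA} to conclude $\mu^{pbdiff}=\mu$; and finally cite Theorem \ref{t.rdiff}(1) for $\mu^{rdiff}=\mu$. The main obstacle — really the only point requiring care — is making sure the convention $\theta_k=0$ for $k\geq\mu-1$ (from Remark \ref{r.inf}) is applied consistently so that the chain $\rho_{\mu-2}<\rho_{\mu-1}=\rho_\mu=\cdots=m$ pins down $\nu=\mu$ as the \emph{smallest} index with $\rho_{\nu-1}=m$, and handling the degenerate index-one case $\mu=1$ separately via Proposition \ref{p.index1}. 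Everything else is a direct substitution into results already proved.

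\begin{proof}
Since $\{E,F\}$ is regular on $\mathcal I$ with index $\mu$, Theorem \ref{th.ranks}(1) gives that $\mathcal E_{[k]}$ and $\mathcal D_{[k]}$ have the constant rank $km+r-\sum_{i=0}^{k-1}\theta_i$ for $k\geq 1$. By \eqref{1.kerBk} we have $\ker\mathcal B_{[k]}(t)=\ker\mathcal D_{[k]}(t)$ for all $t\in\mathcal I$, hence $\rank\mathcal B_{[k]}=\rank\mathcal D_{[k]}$, and
\[
 \rank \mathcal B_{[k]} = \rank \mathcal D_{[k]} = \rank \mathcal E_{[k]}=km+r-\sum_{i=0}^{k-1}\theta_i,\quad k\geq 1.
\]
In particular all matrix functions $\mathcal E_{[k]}$, $\mathcal D_{[k]}$, $\mathcal B_{[k]}$ have constant rank on $\mathcal I$. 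From \eqref{1.rankBk} and Theorem \ref{th.ranks}(6) we obtain
\[
 \rho_k = m-\dim(\ker E\cap S_{[k]}) = m-\theta_k,\quad k=0,\ldots,\mu-1,
\]
so the functions $\rho_0,\ldots,\rho_{\mu-1}$ are constant, and, using $\theta_{\mu-1}=0$,
\[
 \rho_{\mu-1}=m,\quad \theta_{\mu-1}=0,\quad \dim(\ker E\cap S_{[\mu-1]})=0.
\]

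It remains to identify the indices. If $\mu=1$, then by Proposition \ref{p.index1} the DAE has differentiation index one and $S_{[0]}\cap\ker E=\{0\}$, whence $\rho_0=m$ and $\mu^{pbdiff}=1$; moreover $\mu^{rdiff}=1$ by Theorem \ref{t.rdiff}(1). Assume now $\mu\geq 2$. Because $\theta_0\geq\cdots\geq\theta_{\mu-2}>\theta_{\mu-1}=0$, we have $\rho_{\mu-2}=m-\theta_{\mu-2}<m=\rho_{\mu-1}$, while $\rho_j<m$ for all $j\leq\mu-2$. Since in addition the matrix functions $\mathcal E_{[0]},\ldots,\mathcal E_{[\mu-1]}$ have constant ranks and $\rho_0,\ldots,\rho_{\mu-1}$ are constant, Definition \ref{d.pbdiffA} yields $\mu^{pbdiff}=\mu$. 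Finally, Theorem \ref{t.rdiff}(1) gives $\mu^{rdiff}=\mu$. Hence $\mu^{rdiff}=\mu=\mu^{pbdiff}$, which completes the proof.
\end{proof}
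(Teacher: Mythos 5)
Your proof is correct and follows essentially the same route as the paper: the paper's own proof is a one-line appeal to $\ker\mathcal B_{[k]}=\ker\mathcal D_{[k]}$, the definition of $\rho_k$, and Theorem \ref{th.ranks}, which is exactly what you do, just spelled out in full (including the careful verification of Definition \ref{d.pbdiffA} and the separate treatment of $\mu=1$). No gaps.
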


\begin{proof}
This follows directly from $\ker \mathcal B_{[k]} = \ker \mathcal D_{[k]}$, the definition of $\rho_k$ and Proposition \ref{th.ranks}.
\end{proof}

A closer look onto the matrix
\begin{eqnarray}
\begin{bmatrix}
	{\mathcal F}_{[k-1]}   & {\mathcal E}_{[k-1]}
\end{bmatrix}
\label{eq:DefGk}
\end{eqnarray}
and  the orthogonal projectors $Q$ and $P$ permits an orthogonal  decoupling of the different components of $x$ with further orthogonal projectors\footnote{That is why in this paper we have chosen the label \textit{projector based differentiation index} for this DAE approach.}. We briefly summarize these results from \cite{EstLam2016Decoupling} and \cite{EstLamDecoupling2020}.

\begin{itemize}

\item To decouple the $Q$-component for $k=1, \ldots, \mu$, 
 the
projector $T_{k}$ is defined as the orthogonal projector onto
\[
\ker E \cap S_{[k-1]} =
\ker \begin{bmatrix}
	  P  \\
	\mathcal W_{[k-1]} \mathcal F_{[k-1]}
\end{bmatrix} =: \im T_{k}.
\]
Consequently, $T_{k}x$ corresponds to the part of the $Q$-component that,  after $k$-1 differentiations, cannot yet be represented as a function of $(Px,t)$.

\item To characterize the different parts of the $P$-component, 
 the  matrix $\mathcal F_{[k-1]}$ is further splitted into $\mathcal F_{[k-1]}  P$ and $\mathcal F_{[k-1]}Q$, such that 
\[
\begin{bmatrix} 
\mathcal F_{[k-1]}  P && \mathcal F_{[k-1]}  Q && \mathcal E_{[k-1]}
\end{bmatrix}
\]
is considered instead of \eqref{eq:DefGk}.
With this decoupling, the orthogonal projector $\mathcal V_{[k]}$ with
\[
\ker \mathcal V_{[k-1]}= \im \begin{bmatrix} \mathcal F_{[k-1]} Q &\quad & \mathcal E_{[k-1]}\end{bmatrix}
\]
is defined, permitting finally to define the orthogonal projector $V_k$ onto
\[
\ker \begin{bmatrix}
	  Q  \\
	 \mathcal V_{[k-1]} \mathcal F_{[k-1]}
\end{bmatrix} =: \im V_k.
\]
By definition, $V_k x$ represents the part of P-component that is not determined by the constraints resulting after $k$-1 differentiations, such that $d=\rank V_{\mu} = \rank V_{\mu-1} $ holds. 

\end{itemize}

To determine the rank of $T_k$ and $V_k$ we will use the fact that $\rank \mathcal W_{[k-1]} \mathcal F_{[k-1]} $  is the number of explicit and hidden constraints resulting after $k-1$ differentiations, and that with \eqref{eq:WRGL}, \eqref{eq:Represrank} and Theorem \ref{th.ranks} it holds
\begin{align}
\rank \mathcal W_{[k-1]} \mathcal F_{[k-1]}  = \rank \mathcal W_{[k-1]} =m -\dim S_{[k-1]} = m-r + \sum_{i=0}^{k-2} \theta_i. \label{eq:rankW_[k-1]}
\end{align}

\begin{proposition} \label{rankTkVk}
For every regular pair $\{E, F\}$ on $\mathcal I$ with index $\mu$ it holds
\[
\rank T_{k} = \theta_{k-1}, \quad \rank V_k = r-\sum_{i=0}^{k-1} \theta_i.
\] 
\end{proposition}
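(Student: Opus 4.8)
The plan is to compute the two ranks by reading off the dimensions of the subspaces $\im T_k$ and $\im V_k$ directly from the characterizations given just above the statement, using Theorem \ref{th.ranks} and the rank identity \eqref{eq:rankW_[k-1]}. Since the pair $\{E,F\}$ is regular on $\mathcal I$ with index $\mu$, Theorem \ref{th.ranks}(6) gives $\dim(\ker E\cap S_{[k]})=\theta_k$ for all $k\ge 0$ (with the convention $\theta_k=0$ for $k\ge\mu$). By definition $T_k$ is the orthogonal projector onto $\ker E\cap S_{[k-1]}$, hence
\[
\rank T_k=\dim(\ker E\cap S_{[k-1]})=\theta_{k-1},\qquad k=1,\ldots,\mu,
\]
which is the first claimed formula; in particular $\rank T_\mu=\theta_{\mu-1}=0$, consistent with the fact that the projector based differentiation index equals $\mu$.

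For $V_k$ the plan is to first identify $\im\begin{bmatrix}Q\\ \mathcal V_{[k-1]}\mathcal F_{[k-1]}\end{bmatrix}^{\perp}$, i.e. $\im V_k$, as the part of the $P$-component not fixed by the constraints after $k-1$ differentiations, and then to count. Concretely, $\rank V_k = m-\rank\begin{bmatrix}Q\\ \mathcal V_{[k-1]}\mathcal F_{[k-1]}\end{bmatrix}$. Since $Q$ is the orthoprojector onto $\ker E$, which has dimension $m-r$, and $\mathcal V_{[k-1]}$ is the orthoprojector along $\im[\mathcal F_{[k-1]}Q\ \ \mathcal E_{[k-1]}]$, the rows of $\mathcal V_{[k-1]}\mathcal F_{[k-1]}$ only see the $P$-part of $x$ and encode exactly the constraints on the $P$-component. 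The number of such independent constraints is $\rank\mathcal W_{[k-1]}\mathcal F_{[k-1]}$ minus the number of constraints already absorbed into the $Q$-component; the decoupling construction from \cite{EstLam2016Decoupling,EstLamDecoupling2020} is designed so that the two blocks $Q$ and $\mathcal V_{[k-1]}\mathcal F_{[k-1]}$ have complementary (orthogonal) row spaces, giving
\[
\rank\begin{bmatrix}Q\\ \mathcal V_{[k-1]}\mathcal F_{[k-1]}\end{bmatrix}
=(m-r)+\Big(\rank\mathcal W_{[k-1]}\mathcal F_{[k-1]}-(m-r)\Big)+\text{(overlap correction)}.
\]
Cleaner is to argue via dimension counting on $S_{[k-1]}$ itself: the constraints on the $P$-part cut $\im P$ down to a space of dimension $\dim S_{[k-1]}-\dim(\ker E\cap S_{[k-1]})$, since $S_{[k-1]}=(S_{[k-1]}\cap\ker E)\oplus(\text{its }P\text{-image})$ in the appropriate sense. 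Using Theorem \ref{th.ranks}(4), $\dim S_{[k-1]}=r-\sum_{i=0}^{k-2}\theta_i$, and Theorem \ref{th.ranks}(6), $\dim(\ker E\cap S_{[k-1]})=\theta_{k-1}$, this would give $\dim(P\text{-part of }S_{[k-1]})=r-\sum_{i=0}^{k-2}\theta_i-\theta_{k-1}=r-\sum_{i=0}^{k-1}\theta_i$, and hence $\rank V_k=r-\sum_{i=0}^{k-1}\theta_i$. In particular $\rank V_\mu=\rank V_{\mu-1}=r-\sum_{i=0}^{\mu-2}\theta_i=d$, matching Corollary \ref{c.degree}.

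The main obstacle I anticipate is making the second step fully rigorous: one must verify that the orthogonal projector $\mathcal V_{[k-1]}$, which is built to annihilate $\im[\mathcal F_{[k-1]}Q\ \ \mathcal E_{[k-1]}]$, indeed produces a constraint block $\mathcal V_{[k-1]}\mathcal F_{[k-1]}P$ whose row rank is precisely $\dim S_{[k-1]}-\theta_{k-1}$ minus nothing extra — i.e. that passing from $\mathcal W_{[k-1]}\mathcal F_{[k-1]}$ (all constraints, of rank $m-r+\sum_{i=0}^{k-2}\theta_i$ by \eqref{eq:rankW_[k-1]}) to $\mathcal V_{[k-1]}\mathcal F_{[k-1]}$ (constraints on the $P$-part only) exactly removes the $m-r-\theta_{k-1}$ "algebraic" constraints that determine the representable portion of $Qx$, and nothing is double-counted with the $Q$-block. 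This is a linear-algebra bookkeeping argument using $\ker E=\im Q$, $\ker\mathcal W_{[k-1]}=\im\mathcal E_{[k-1]}$, the inclusions \eqref{1.subspace2}, and the splitting $S_{[k-1]}\cap\ker E=\im T_k$; once that is in place, both rank formulas follow by substituting the values of $\dim S_{[k-1]}$ and $\dim(\ker E\cap S_{[k-1]})$ supplied by Theorem \ref{th.ranks}. I would present the $T_k$ computation as an immediate one-liner and devote the bulk of the write-up to the $V_k$ count, invoking the decoupling identities of \cite{EstLam2016Decoupling,EstLamDecoupling2020} for the orthogonality of the two row blocks.
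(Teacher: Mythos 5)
Your computation of $\rank T_k$ is correct and coincides with the paper's: $T_k$ is by definition the orthoprojector onto $\ker E\cap S_{[k-1]}$, whose dimension is $\theta_{k-1}$ by Theorem~\ref{th.ranks}(6).

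For $\rank V_k$ your "cleaner" dimension count is the right idea and a genuinely different route from the paper's, but as written it stops exactly where the work is. The first half of your discussion (the display with the unspecified "overlap correction") should be discarded, and the second half asserts that the constraints cut $\im P$ down to a space of dimension $\dim S_{[k-1]}-\theta_{k-1}$ "in the appropriate sense" while explicitly deferring the justification — that deferred step is the proof. The missing link is the identification of $\im V_k$, and it is short: by definition $\im V_k=\ker Q\cap\ker\mathcal V_{[k-1]}\mathcal F_{[k-1]}$, and since $\ker\mathcal V_{[k-1]}=\im[\mathcal F_{[k-1]}Q\ \ \mathcal E_{[k-1]}]=\mathcal F_{[k-1]}(\ker E)+\im\mathcal E_{[k-1]}$, one has $\mathcal V_{[k-1]}\mathcal F_{[k-1]}z=0$ iff $z-Qw\in S_{[k-1]}$ for some $w$, i.e.\ iff $z\in S_{[k-1]}+\ker E$. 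Hence $\im V_k=\im P\cap\bigl(S_{[k-1]}+\ker E\bigr)=P(S_{[k-1]})$, and because $\ker P=\ker E$,
\[
\rank V_k=\dim S_{[k-1]}-\dim\bigl(S_{[k-1]}\cap\ker E\bigr)=\Bigl(r-\sum_{i=0}^{k-2}\theta_i\Bigr)-\theta_{k-1}=r-\sum_{i=0}^{k-1}\theta_i
\]
by Theorem~\ref{th.ranks}(4) and (6). With this inserted your argument is complete. For comparison, the paper never identifies $\im V_k$ explicitly; it instead writes $\ker\mathcal W_{[k-1]}\mathcal F_{[k-1]}$ as the kernel of $\diag(I_m,\mathcal W_{[k-1]}\mathcal F_{[k-1]})\begin{bmatrix}Q&P\\P&Q\end{bmatrix}$ and decomposes that kernel fiberwise to get $\dim\ker\mathcal W_{[k-1]}\mathcal F_{[k-1]}=\rank V_k+\rank T_k$, then solves for $\rank V_k$ via \eqref{eq:rankW_[k-1]}. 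Your route, once closed as above, is more transparent in that it exhibits $\im V_k$ as the orthogonal projection of $S_{[k-1]}$ onto $(\ker E)^{\perp}$; the paper's avoids characterizing $\ker\mathcal V_{[k-1]}\mathcal F_{[k-1]}$ at the price of the block-matrix manipulation.
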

\begin{proof} 
For $T_k$, the assertions follows directly from the definition. For $ \rank V_k$, we use \eqref{eq:rankW_[k-1]} to obtain
\begin{align*}
 r- \sum_{i=0}^{k-2} \theta_i &= \dim \ker \mathcal W_{[k-1]} \mathcal F_{[k-1]} =  \dim \ker \begin{bmatrix}
I_m&  0 \\
0	 & \mathcal W_{[k-1]} \mathcal F_{[k-1]}
 \end{bmatrix} \begin{bmatrix}
	 Q & P \\
	P & Q
 \end{bmatrix} \\
&= \dim \ker  \begin{bmatrix}
	 Q & P \\
\mathcal	W_{[k-1]} \mathcal F_{[k-1]}P  & \mathcal W_{[k-1]} \mathcal F_{[k-1]}Q 
 \end{bmatrix}
\end{align*}
and have a closer look to the nullspace of the last matrix
\begin{align*}
& \left\{\begin{bmatrix}
                          z_1\\z_2
                         \end{bmatrix}\in \Real^{2m}: \quad Qz_1=0, \quad  Pz_2=0, \quad  \mathcal W_{[k-1]} \mathcal F_{[k-1]}Pz_1 +\mathcal W_{[k-1]} \mathcal F_{[k-1]}Qz_2=0
\right\}\nonumber\\
&= \left\{\begin{bmatrix}
                          z_1\\z_2
                         \end{bmatrix}\in \Real^{2m}: \quad \begin{matrix}
												 z_1 \in \ker Q \cap \ker \mathcal V_{[k-1]} \mathcal F_{[k-1]}, \\
												 Pz_2=0, \quad \mathcal W_{[k-1]} \mathcal F_{[k-1]}z_2 =- \mathcal W_{[k-1]} \mathcal F_{[k-1]}Pz_1
												\end{matrix}
\right\} .
\end{align*}
Consequently, it holds
\[
\dim \ker \mathcal W_{[k-1]} \mathcal F_{[k-1]} = \dim \left\{\begin{bmatrix}
                          z_1\\z_2
                         \end{bmatrix}\in \Real^{2m}: \quad \begin{matrix}
												 z_1 \in \ker Q \cap \ker \mathcal V_{[k-1]} \mathcal F_{[k-1]} = \im V_k, \\
												 z_2 \in \ker P \cap  \ker \mathcal W_{[k-1]} \mathcal F_{[k-1]}=\im T_k												\end{matrix}
\right\},
\]
leading to
\[
r- \sum_{i=0}^{k-2} \theta_i = \rank V_k + \rank T_k, 
\]
i.e.
\[
\rank V_k = r- \sum_{i=0}^{k-2} \theta_i -  \rank T_k =r- \sum_{i=0}^{k-1} \theta_i.
\]
\end{proof}

This means that the $m-r + \sum_{i=0}^{k-2} \theta_i$ linearly independent constraints from
\[
\mathcal W_{[k-1]} (t)\mathcal F_{[k-1]}(t)x  =   \mathcal W_{[k-1]}(t)  q_{[k-1]}(t)
\]
uniquely determine $(I-T_k-V_k)x$ as a function of $(V_kx,t)$.  \\

For the orthogonal projector  $\Pi:= V_{\mu}= V_{\mu-1} $
with $\rank \Pi = d = r-\sum_{i=0}^{\mu-2} \theta_i $,  $\Pi x$ represents the part of $P$-components that is not determined by the constraints and can be used to formulate an orthogonally projected explicit ODE
\begin{eqnarray}
(\Pi x)'-\Pi'(\Pi x) + \Pi C(t) (\Pi x) = \Pi c(t)
\label{eq:Proj_Pi_ODELinDAE}
\end{eqnarray}
for suitable $C(t)$ and $c(t)$, cf.\ \cite{EstLamDecoupling2020}. The remaining components $(I-\Pi)x$ can then be computed accordingly with the constraints. Note that $\Pi$ is not orthogonal to $S_{[\mu-1]}$ in general, since $\mathcal V_{[\mu-1]}$ does not coincide with $\mathcal W_{[\mu-1]}$ in general.
\medskip

Note further that, by definition,  $T_k=QT_k=T_kQ$ as well as
\[
T_{k+1}T_k = T_k T_{k+1}= T_{k+1}, \quad V_{k+1}V_k = V_k V_{k+1}= V_{k+1}, \quad T_{k_1} V_{k_2} =0
\]
holds, cf.\ \cite{EstLamDecoupling2020}. Therefore $(V_k - V_{k+1})$ is an orthogonal projector as well, fulfilling $\rank (V_k - V_{k+1}) = \theta_k$ and $(V_k - V_{k+1})=P(V_k - V_{k+1})=(V_k - V_{k+1})P$. 

\begin{remark}\label{r.mod1}
It is opportune to mention that $\rank\mathcal B_{[k]}$ serves as proven monitor for indicating singular points by means of the  algorithms from \cite{EstLamInitDAE}. In  \cite{ELMRoboticArm2020} several simple examples are discussed and in  \cite{EstLam2017Discovering} the well-known nonlinear benchmark robotic arm is analyzed in detail.\\
Indeed, for many applications, the projector $T_k$ is constant. If this is not the case, the changes in $T_{k}$  may provide an indication of which entries of $E$ or $F$ lead to a change of $\theta_{k-1}$. Comparing the obtained projectors at a regular and a singular point, critical parameter combinations or model errors may be identified, cf. Example \ref{e.5}, Example \ref{e.robotic_arm} and \cite{ELMRoboticArm2020}.
\end{remark}

%%%%%%%%%%%%%%%%%%%%
%%%%%%%%%%%%%%%%%%%%
\subsection{Strangeness index via  derivative array}\label{subs.Hyp}
 %%%%%%%%%%%%%%%%%%%%%%%%%%
DAEs with differentiation index zero are (possibly implicit) regular ODEs,  they are well-understood and of no interest in our context here. Further, a DAE showing differentiation index one is a priori\footnote{See Proposition \ref{p.index1}} a regular DAE with index $\mu=1$ in the sense of Definition \ref{d.2}, and  it is rather unreasonable\footnote{In view of different properties such as stability behavior and numerical handleability.} here to change to an  underlying ODE. So the question arises as to whether one should even look for an index-1 DAE instead of a regular ODE in general.
 \medskip
 
 The aim is now to filter out a regular index-one DAE,  more precisely, a strangeness-free DAE from an inflated system instead of the underlying ODE in the context of the differentiation-index.
Again we consider the DAE
\begin{align*}
 Ex'+Fx=q
\end{align*}
and try to find an associated new DAE with the same unknown function $x$ in the partitioned form 
\begin{align}
 \hat E_1x'+\hat F_1x&=\hat q_1,\label{5.5.1}\\
 \hat F_2x&=\hat q_2,\label{5.5.2}
\end{align}
 which is strangeness-free resp.\ regular with index zero or index one in the sense of Definition \ref{d.2}.
 We assume the given DAE to have a well-defined differentiation index, say $\nu:=\mu^{diff}$. By means of Proposition \ref{p.diff} we obtain the constant numbers $d=r_{[\nu]}-\nu m=\dim S_{can}$ and $a=m-d$ such that
 \begin{align*}
 r_{[\nu-1]}=\rank \mathcal E_{[\nu-1]}&=(\nu-1) m+d= \nu m-a,\\
 \dim\ker \mathcal E_{[\nu-1]}&=a,\\
 \im [ \mathcal E_{[\nu-1]}  \mathcal F_{[\nu-1]}]&=\Real^{\nu m}.
 \end{align*}
Then we form a smooth full-column-rank function $Z:\mathcal I\rightarrow \Real^{\nu m\times a}$ such that $Z^*\mathcal E_{[\nu-1]}=0$ and thus $\ker Z^*\mathcal F_{[\nu-1]}= S_{[\nu-1]}$ has constant dimension $m-a=d$. Recall that $S_{[\nu-1]}=S_{can}$. 
Let $\mathcal C:\mathcal I\rightarrow \Real^{m\times a}$ define a smooth basis of the subspace $S_{[\nu-1]}$, so that $Z^*\mathcal F_{[\nu-1]}C=0$. The matrix function $EC$ has full column-rank $d$ due to Proposition \ref{p.diff}. Therefore, 
with any matrix function $Y:\mathcal I\rightarrow\Real^{m\times a}$ forming a basis of $\im EC$ we obtain a nonsingular product $Y^*EC$.

Letting in \eqref{5.5.1}, \eqref{5.5.2}
\begin{align*}
 \hat E_1=Y^*E,\quad &\hat F_1=Y^*F, \quad \hat q_1=Y^*q,\\
 &\hat F_2=Z^*\mathcal F_{[\nu-1]}, \quad \hat q_2=Z^*q_{[\nu-1]},
\end{align*}
we receive  $\hat{\theta}_0=\dim (\ker \hat E_1\cap \ker \hat F_2) =0$ since
\begin{align*}
 \ker \hat E_1\cap \ker \hat F_2&=\{z\in\Real^m: Y^*Ez=0, z\in S_{[\nu-1]}\}\\
 &=\{z\in\Real^m: z=Cw, Y^*ECw=0\}=\{0\},
\end{align*}
and hence we are done.

This matter was developed as part of the index concept and formally tied into a so-called hypothesis. We quote \cite[Hypothesis 3.48]{KuMe2006} in a form adapted to our notation.

\begin{hypothesis}[\textbf{Strangeness-Free-Hypothesis (SF-Hypothesis)}]\label{SHyp}

Given are sufficiently smooth matrix functions $E,F:\mathcal I\rightarrow\Real^{m\times m}$.

There exist integers $\hat\mu, \hat a,$ and $\hat d=m-\hat a$ such that the inflated pair $\{\mathcal E_{[\hat\mu]},\mathcal F_{[\hat\mu]}\}$ associated with the given pair $\{E, F\}$ has the following properties:
\begin{description}
 \item[\textrm{(1)}] $\rank \mathcal E_{[\hat\mu]}(t)=(\hat\mu+1)m-\hat a,\; t\in \mathcal I$,  
 such that there is a smooth matrix function $Z:\mathcal I\rightarrow\Real^{(\hat\mu m+m)\times\hat a}$ 
 with full column-rank  $\hat a$  
 on $\mathcal I$ 
 and $Z^*\mathcal E_{[\hat\mu]}=0$.
\item[\textrm{(2)}] For $\hat F_2:= Z^* \mathcal F_{[\hat\mu]}$ one has $\rank \hat F_2(t)=\hat a, t\in \mathcal I$, such that there is a smooth matrix function \\
$C:\mathcal I\rightarrow\Real^{m\times\hat d}$  with constant rank $\hat d$ on $\mathcal I$ and $\hat F_2C=0$.
\item[\textrm{(3)}] $\rank E(t)C(t)=\hat d,\;t\in \mathcal I$,  such that there is a smooth matrix function $Y:\mathcal I\rightarrow\Real^{m\times\hat d}$ with constant rank $\hat d$ on $\mathcal I$, and, for $\hat E_1:= Y^*E$, one has $\rank \hat E_1(t)=\hat d,\;t\in\mathcal I$.
\end{description}
\end{hypothesis}
The next definition simplifies \cite[Definition 4.4]{KuMe2006} for linear DAEs.
\begin{definition}\label{d.HypStrangeness}
Given are matrix functions $E,F:\mathcal I\rightarrow\Real^{m\times m}$. The smallest value of $\bar \mu$ such that the SF-Hypothesis \ref{SHyp} is satisfied is called the \emph{strangeness index} of the pair $\{E,F\}$ and of the DAE \eqref{1.DAE}. If $\bar \mu=0$ then the DAE is called \emph{strangeness-free}.
\end{definition}
Obviously, since the SF-Hypothesis is satisfied for DAEs having a well-defined differentiation index, it is even more satisfied for regular DAEs in the sense of Definition \ref{d.2} which also cover all DAEs featuring the regular strangeness index from Section \ref{subs.strangeness}.

\begin{remark}\label{r.HypStrange}
 Also the notion \emph{regular strangeness index} is sometimes used in the context of the SF-Hypothesis, e.g., \cite[p.\ 1261]{Baum}, \cite[p.\ 154]{KuMe2006} with the reasoning that a differential-algebraic operator  somehow (see \cite[Section 3.4]{KuMe2006})  associated to the strangeness-free reduced system \eqref{5.5.1}, \eqref{5.5.2} is a continuous bijection.
 Unfortunately, this is not a viable argument, because it says far too little about the nature of the original DAE and its associated operator\footnote{We refer to \cite{HaMae2023} and the references therein for basics on differential-algebraic operators.} $Tx=Ex'+Fx$. All differentiations are analytically assumed in advance and available from the derivative array. 
 
 In addition, the term \emph{regular strangeness index} is already used for the regular case of the original strangeness index (see Definition \ref{d.strangeness} and footnote).
 
The SF-Hypothesis is associated  with the fact that a DAE with differentiation index one always contains a regular index-1 DAE, cf. Proposition \ref{p.index1}.
\end{remark}

It is claimed in \cite[Theorem 3.50]{KuMe2006} that, if the pair $\{E, F\}$ has differentiation index $\mu^{diff}\geq1$ on a compact interval then the SF-Hypothesis is satisfied with $\hat\mu=\mu^{diff}-1$, $\hat a=a$, and $\hat d=d$. 

Conversely, according to \cite[Corollary 3.53]{KuMe2006}, if the pair $\{E, F\}$ satisfies  the SF-Hypothesis then it features a well-defined differentiation index, and  
$\mu^{diff}=\hat\mu+1$ applies if $\hat\mu$ is minimal.

 %%%%%%%%%%%%%%%%%%%%%%%%
\subsection{Equivalence issues}\label{subs.equivalence}
%%%%%%%%%%%%%%%%%%%%%%
Let us summarize the most relevant results of the present section concerning the equivalence. We start with a well-known fact. 
\begin{theorem}\label{t.diff2}
Let  $E,F:\mathcal I\rightarrow \Real^{m\times m}$ be sufficiently smooth on the compact interval $\mathcal I$. The following two assertion are equivalent:
\begin{description}
 \item[\textrm{(1)}] The differentiation index $\mu^{diff}$ of the pair  $\{E,F\}$ on the interval $\mathcal I$ is well-defined according to Definition \ref{d.diff}.
  \item[\textrm{(2)}] The pair  $\{E,F\}$ satisfies the Strangeness Hypothesis \ref{SHyp} on the interval $\mathcal I$ with  strangeness index  $\hat{\mu}$ according to Definition \ref{d.HypStrangeness}.
\end{description}
If these statements are valid, then  $\mu^{diff}=\hat{\mu}+1$ and $\hat d=d=\dim S_{can}$ and $\dim \ker \mathcal E_{[\hat \mu]}=\hat a=a=m-d$.
\end{theorem}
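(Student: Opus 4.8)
The plan is to prove the two implications separately and then read off the numerical relations, relying heavily on the material already assembled in Subsection \ref{subs.Hyp} together with the cited results \cite[Theorem 3.50]{KuMe2006} and \cite[Corollary 3.53]{KuMe2006}, which are quoted verbatim at the end of that subsection.

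First I would prove (1) $\Rightarrow$ (2). Assume the differentiation index $\mu^{diff}$ is well-defined on $\mathcal I$. By Definition \ref{d.diff} the matrix function $\mathcal E_{[\mu^{diff}]}$ has constant rank and is smoothly $1$-full, and by Proposition \ref{p.diff}(3) the predecessor $\mathcal E_{[\mu^{diff}-1]}$ also has constant rank $r_{[\mu^{diff}-1]}=r_{[\mu^{diff}]}-m$. Setting $\hat\mu:=\mu^{diff}-1$, I would verify the three items of the SF-Hypothesis \ref{SHyp} in turn, which is exactly the construction carried out in the running text just before Hypothesis \ref{SHyp}: item (1) follows from $\rank\mathcal E_{[\hat\mu]}=(\hat\mu+1)m-a$ with $a=m-d$, $d=r_{[\mu^{diff}]}-\mu^{diff}m=\dim S_{can}$ (using Proposition \ref{p.diff}(4), namely $S_{[\mu^{diff}-1]}=S_{[\mu^{diff}]}=S_{can}$ and formula \eqref{eq:Represrank}); item (2) follows because $\ker Z^*\mathcal F_{[\hat\mu]}=S_{[\hat\mu]}=S_{can}$ has constant dimension $d$, so a smooth basis $C$ of $S_{can}$ exists with $\hat F_2 C=0$ and $\rank\hat F_2=a$; item (3) follows from Proposition \ref{p.diff}(5), $\ker E\cap S_{can}=\{0\}$, which forces $EC$ to have full column rank $d$. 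This is precisely the content of \cite[Theorem 3.50]{KuMe2006}, which may simply be invoked. Minimality of $\hat\mu$ as the strangeness index then gives $\hat\mu=\mu^{diff}-1$.

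For (2) $\Rightarrow$ (1), I would invoke \cite[Corollary 3.53]{KuMe2006} directly: if $\{E,F\}$ satisfies the SF-Hypothesis with minimal $\hat\mu$, then the differentiation index is well-defined and $\mu^{diff}=\hat\mu+1$. Since the paper quotes this result explicitly, the implication and the index relation come for free; the only thing to note is that the SF-Hypothesis produces the strangeness-free reduced pair $\{\hat E_1,\hat F_2\}$ with $\hat\theta_0=0$, and combining the chain of inflated systems shows that the first block line of $\mathcal T\mathcal E_{[\hat\mu+1]}$, after scaling, yields an explicit ODE for $x$, i.e.\ $\mathcal E_{[\hat\mu+1]}$ is smoothly $1$-full. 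The final numerical identities $\mu^{diff}=\hat\mu+1$, $\hat d=d=\dim S_{can}$, and $\dim\ker\mathcal E_{[\hat\mu]}=\hat a=a=m-d$ are then read off: the first from the two implications, the second and third from Proposition \ref{p.diff}(4) identifying $S_{[\hat\mu]}=S_{can}$ together with \eqref{eq:Represrank} giving $\dim S_{[\hat\mu]}=r_{[\hat\mu]}-\hat\mu m=d$, hence $\dim\ker\mathcal E_{[\hat\mu]}=(\hat\mu+1)m-r_{[\hat\mu]}=m-d=a$.

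The proof is essentially a bookkeeping exercise once the two cited theorems from \cite{KuMe2006} are in hand, so there is no genuine obstacle; the one point deserving care is making sure the $Z$, $C$, $Y$ constructed in item (1)$\Rightarrow$(2) are globally smooth on all of $\mathcal I$, which is where the constant-rank conclusions of Proposition \ref{p.diff} (and the remark after \eqref{1.subspace2} on smoothness of Moore--Penrose inverses under constant rank) are indispensable — without constant rank of $\mathcal E_{[\hat\mu]}$ one cannot choose $Z$ smoothly. I would write the proof as: "Owing to \cite[Theorem 3.50]{KuMe2006}, (1) implies (2) with $\hat\mu=\mu^{diff}-1$; owing to \cite[Corollary 3.53]{KuMe2006}, (2) implies (1) with $\mu^{diff}=\hat\mu+1$. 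The values $\hat d=d$ and $\hat a=a$ follow from Proposition \ref{p.diff} and formula \eqref{eq:Represrank}."

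\begin{proof}
Both directions are covered by results already quoted. If the differentiation index $\mu^{diff}$ is well-defined, then by Proposition \ref{p.diff} the matrix functions $\mathcal E_{[\mu^{diff}-1]}$ and $\mathcal E_{[\mu^{diff}]}$ have constant ranks $r_{[\mu^{diff}-1]}=r_{[\mu^{diff}]}-m$, the subspaces satisfy $S_{[\mu^{diff}-1]}=S_{[\mu^{diff}]}=S_{can}$ with $\dim S_{can}=d=r_{[\mu^{diff}]}-\mu^{diff}m$, and $\ker E\cap S_{can}=\{0\}$. Setting $\hat\mu:=\mu^{diff}-1$, $\hat a:=m-d$, $\hat d:=d$, one checks the three items of the SF-Hypothesis \ref{SHyp} exactly as in the construction preceding that hypothesis: constancy of $\rank\mathcal E_{[\hat\mu]}=(\hat\mu+1)m-\hat a$ allows a smooth full-column-rank $Z$ with $Z^*\mathcal E_{[\hat\mu]}=0$; since $\ker Z^*\mathcal F_{[\hat\mu]}=S_{[\hat\mu]}=S_{can}$ has constant dimension $\hat d$, a smooth basis $C$ of $S_{can}$ satisfies $\hat F_2 C=0$ with $\hat F_2:=Z^*\mathcal F_{[\hat\mu]}$ of constant rank $\hat a$; and $\ker E\cap S_{can}=\{0\}$ forces $EC$ to have full column rank $\hat d$. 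This is the statement of \cite[Theorem 3.50]{KuMe2006}, and minimality shows $\hat\mu$ is the strangeness index, hence $\hat\mu=\mu^{diff}-1$.

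Conversely, if $\{E,F\}$ satisfies the SF-Hypothesis \ref{SHyp} with minimal $\hat\mu$, then by \cite[Corollary 3.53]{KuMe2006} the differentiation index is well-defined and $\mu^{diff}=\hat\mu+1$. Thus (1) and (2) are equivalent and $\mu^{diff}=\hat\mu+1$.

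Finally, when these statements hold we have $S_{[\hat\mu]}=S_{[\mu^{diff}-1]}=S_{can}$ by Proposition \ref{p.diff}(4), so $\hat d=\dim S_{can}=d$ by item (2) of the hypothesis, and by formula \eqref{eq:Represrank},
\begin{align*}
 \dim\ker\mathcal E_{[\hat\mu]}=(\hat\mu+1)m-r_{[\hat\mu]}=(\hat\mu+1)m-\big(\hat\mu m+d\big)=m-d=a=\hat a.
\end{align*}
\end{proof}
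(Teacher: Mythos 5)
Your proposal is correct and follows essentially the same route as the paper: the forward direction is exactly the construction carried out in Section \ref{subs.Hyp} before Hypothesis \ref{SHyp} (i.e.\ \cite[Theorem 3.50]{KuMe2006}, resting on Proposition \ref{p.diff}), and the converse is delegated to \cite[Corollary 3.53]{KuMe2006}, just as the paper does. The only difference is that you spell out the verification of the three items of the SF-Hypothesis and the bookkeeping for $\hat a$, $\hat d$, which the paper leaves implicit.
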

\begin{proof} 
The direction (1) $\Rightarrow$ (2) immediately results from Section \ref{subs.Hyp} for an arbitrary interval. For the more complicated proof of (1) $\Leftarrow$ (2) we refer to \cite[Corollary 3.53]{KuMe2006}, cf.\ Section \ref{subs.Hyp}.
\end{proof}
The other index concepts considered in the present section require additional rank conditions. It turns out that they are equivalent among each other and comprise just the regular DAEs in the sense of the basic Definition \ref{d.2}.
\begin{theorem} \label{t.equivalence_array}
Let $E, F:\mathcal I\rightarrow\Real^{m\times m}$ be sufficiently smooth, $\mu\in\Natu$.

The following assertions are equivalent in the sense that the individual characteristic values of each two of the variants are mutually uniquely determined.
\begin{description}
\item[\textrm{(1)}] 
The  pair $\{E,F\}$ is regular on $\mathcal I$ with index $\mu\in \Natu$, according to Definition \ref{d.2}.
\item[\textrm{(2)}] The DAE \eqref{1.DAE} has regular differentiation index $\mu^{rdiff}=\mu$.
\item[\textrm{(3)}] The DAE \eqref{1.DAE} has projector-based differentiation index $\mu^{pbdiff}=\mu$.
\item[\textrm{(4)}] The DAE has differentiation index $\mu^{diff}=\mu$ and, additionally, the rank functions $r_{[k]},\ k<\mu^{diff}$, are constant.
\item[\textrm{(5)}] The DAE  fulfills the Hypothesis \ref{SHyp} with  $\hat{\mu}=\mu-1$ and, additionally, the rank functions $r_{[k]},\ k<\hat{\mu}$, are constant.
\end{description}
\end{theorem}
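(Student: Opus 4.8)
The plan is to establish the chain of equivalences by exploiting the machinery already assembled in the preceding subsections, so that most implications reduce to bookkeeping on rank functions. First I would settle the equivalence (1)$\Leftrightarrow$(2): the forward direction is exactly Theorem \ref{t.rdiff}(1), and the reverse direction is also part of Theorem \ref{t.rdiff}(1) together with the rank table there, which simultaneously pins down the mutual determination of characteristic values ($r_{[k]}$ versus $r$ and the $\theta_i$). Next, (2)$\Leftrightarrow$(3): Theorem \ref{t.projector\_based\_diff} already gives (1)$\Rightarrow$(3) with $\mu^{pbdiff}=\mu=\mu^{rdiff}$; for (3)$\Rightarrow$(1) I would argue directly from Definition \ref{d.pbdiffB}, using the identity $\ker\mathcal B_{[k]}=\ker\mathcal D_{[k]}$ and formula \eqref{1.rankBk} to see that constancy of $\rank\mathcal B_{[k]}$ for $k\le\nu$ forces constancy of $r_{[k-1]}$ and of $\dim(\ker E\cap S_{[k-1]})$, hence pre-regularity at every level, and then invoke the reduction in Section \ref{subs.qdiff} (equations \eqref{R1}--\eqref{R3}) which shows the pbdiff-reduction coincides with the basic reduction step; termination with $\rho_{\nu-1}=m$ translates into $\ker E\cap S_{can}=\{0\}$, i.e.\ regularity with index $\nu$.

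For (1)$\Leftrightarrow$(4): the implication (1)$\Rightarrow$(4) is Theorem \ref{t.regulardiff} (which gives $\mu^{diff}=\mu$ and constancy of all $r_{[k]}$). For (4)$\Rightarrow$(1) I would combine Proposition \ref{p.diff} (so $\mathcal E_{[\mu^{diff}]}$ and $\mathcal E_{[\mu^{diff}-1]}$ have the stated constant ranks, $S_{[\mu^{diff}-1]}=S_{[\mu^{diff}]}=S_{can}$, and $\ker E\cap S_{can}=\{0\}$) with the extra hypothesis that $r_{[k]}$ is constant for $k<\mu^{diff}$. The point is that constancy of every $r_{[k]}$ is precisely the extra ingredient distinguishing the differentiation index from the regular differentiation index, so under (4) Definition \ref{d.G1} is satisfied and we land in case (2), which we have already shown equivalent to (1). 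Alternatively — and this is cleaner — I would note that Theorem \ref{th.ranks} and Lemma \ref{l.app} identify "$\mathcal E_{[k]}$ of constant rank for all $k\le\mu^{diff}$ plus smooth $1$-fullness of $\mathcal E_{[\mu^{diff}]}$" with "$T_{[\mu^{diff}]}\ker\mathcal E_{[\mu^{diff}]}=\{0\}$ at constant rank", i.e.\ with the regular differentiation index, closing the loop. Finally, (4)$\Leftrightarrow$(5) follows from Theorem \ref{t.diff2} (which gives $\mu^{diff}=\hat\mu+1$ with no rank assumptions) by simply adding the same constancy-of-$r_{[k]}$ hypothesis to both sides; since that hypothesis is phrased identically in (4) and (5) up to the index shift $k<\mu^{diff}=\hat\mu+1$ versus $k<\hat\mu$, one must check only the single extra level $k=\hat\mu$, which is controlled by Proposition \ref{p.diff}(1),(3) stating $r_{[\hat\mu]}$ and $r_{[\hat\mu+1]}$ are already constant whenever the differentiation index is well-defined.

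The main obstacle I anticipate is the reverse directions (3)$\Rightarrow$(1) and (4)$\Rightarrow$(1): one must show that level-by-level constancy of the array ranks is not merely necessary but genuinely forces pre-regularity of every reduced pair $\{E_i,F_i\}$ in the basic reduction, and that the reduction does not terminate prematurely or run past index $\mu$. Here the identities $\dim(\ker E\cap S_{[k]})=\theta_k$ and $\dim S_{[k]}=r-\sum_{i<k}\theta_i$ from Lemma \ref{l.R1} and the representations in Remark \ref{r.a2} are the workhorses, but one has to apply them inductively to the reduced DAEs \eqref{R7}, checking that the derivative-array rank functions of the reduced pair are controlled by those of the original pair (this is where Proposition \ref{p.equivalenc} on invariance under equivalence transformations and the structure of \eqref{R3} enter). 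I would organize this as a single induction on the reduction level, stated once and reused for (3)$\Rightarrow$(1) and (4)$\Rightarrow$(1), rather than repeating the argument. Everything else is routine substitution into the rank formulas of Proposition \ref{th.ranks} and Theorem \ref{t.rdiff}(1).
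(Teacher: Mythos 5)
Your overall architecture matches the paper's for most links: (1)$\Leftrightarrow$(2) via Theorem \ref{t.rdiff}(1), (2)$\Leftrightarrow$(4) via Lemma \ref{l.app} (your ``cleaner alternative'' is exactly what the paper does), (4)$\Leftrightarrow$(5) via Theorem \ref{t.diff2}, and (1)$\Rightarrow$(3) via Theorem \ref{t.projector\_based\_diff}. The one place you genuinely diverge is how the loop is closed back from statement (3), and that is also where your proposal has a gap.

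The paper does \emph{not} attempt (3)$\Rightarrow$(1) by re-running the basic reduction. It closes the cycle with (3)$\Rightarrow$(5): assuming the projector-based differentiation index $\mu$, one sets $\hat\mu=\mu-1$, $\hat a=\mu m-r_{[\mu-1]}$, $\hat d=m-\hat a$, and verifies the three conditions of the SF-Hypothesis directly — a smooth left-annihilator $Z$ of $\mathcal E_{[\mu-1]}$ exists because $r_{[\mu-1]}$ is constant; $\rank Z^*\mathcal F_{[\mu-1]}=\hat a$ follows from the full-row-rank condition \eqref{eq:fullrank}; and $EC$ has full column rank $\hat d$ because $\ker E\cap S_{[\mu-1]}=\{0\}$ is exactly what $\rho_{\mu-1}=m$ says. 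Everything needed is available at the single level $\hat\mu$, with no induction. Your proposed (3)$\Rightarrow$(1), by contrast, rests on two claims that are not justified as stated. First, the identities $\dim(\ker E\cap S_{[k]})=\theta_k$ and $\dim S_{[k]}=r-\sum_{i<k}\theta_i$ that you call the workhorses are proved in the paper (Theorem \ref{th.ranks}) only \emph{under the assumption of regularity}, so invoking them to derive regularity is circular unless you re-establish them from the pbdiff hypotheses alone. Second, the reduction step \eqref{R1}--\eqref{R3} and the statement that it lowers the index by one are taken from the regular-differentiation-index framework (\cite[Theorem 12]{Griep92}); condition (3) does not obviously supply the hypotheses of that theorem (it controls $\ker E\cap S_{[k]}$ and the ranks of $\mathcal B_{[k]}$, not the $1$-fullness of $\mathcal E_{[\nu]}$ nor the constancy of $\rank\mathcal E_{[\nu]}$), so the claim that ``the pbdiff-reduction coincides with the basic reduction step'' would itself need proof. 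Note also that constancy of $\rank\mathcal B_{[k]}$ alone fixes only the combination $m-\dim(\ker E\cap S_{[k-1]})+r_{[k-1]}$ via \eqref{1.rankBk}, not the two summands separately; you need to lean on Definition \ref{d.pbdiffA}, which assumes both constancies, rather than on Definition \ref{d.pbdiffB}. Replacing your (3)$\Rightarrow$(1) induction by the paper's short (3)$\Rightarrow$(5) verification repairs the argument and shortens it considerably.
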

\begin{proof}
The equivalence of \textrm{(1)} and \textrm{(2)} has been shown in Theorem \ref{t.rdiff}  \textrm{(1)}.

The equivalence of \textrm{(2)} and \textrm{(4)} follows from the equivalence of \textrm{(1)} and \textrm{(4)} in  Lemma \ref{l.app}, since in both cases all ranks are assumed to be constant.

The equivalence of \textrm{(4)} and \textrm{(5)} is a consequence of Theorem \ref{t.diff2}.

\textrm{(1)} implies \textrm{(3)} by Theorem \ref{t.projector_based_diff}. 

For the last step of the proof of equivalences we verify that \textrm{(3)} implies \textrm{(5)}. Let  \textrm{(3)} be given, so that $r_{[i]}=\rank \mathcal E_{[i]}$ is constant, $i=0,\ldots,\mu-1$, $\ker E\cap S_{[\mu-1]}=\{0\}$, and the necessary  solvability condition \eqref{eq:fullrank} is satisfied.
We set $\hat{\mu}:=\mu-1$, $\hat a:=\mu m-r_{[\mu-1]}$, $\hat d:=m-\hat a$ and show that the Hypothesis \ref{SHyp} with these values is satisfied.

First,  it results that $\dim (\im \mathcal E_{[\mu-1]})^{\perp}=\hat a$ and there is a smooth basis  $Z:\mathcal I\rightarrow \Real^{\mu m\times\hat a}$ of $(\im \mathcal E_{[\mu-1]})^{\perp}$
such that $Z^*\mathcal E_{[\mu-1]}=0$.

Next, regarding the condition \eqref{eq:fullrank} we evaluate
\[
 \rank Z^*\mathcal F_{[\mu-1]}=m-\dim \ker Z^*\mathcal F_{[\mu-1]}=m-\dim S_{[\mu-1]}=\mu m-r_{[\mu-1]}=\hat a.
\]
Finally, since $\dim S_{[\mu-1]}=\hat d$, with a smooth matrix function $C:\mathcal I\rightarrow \Real^{m\times\hat d}$ forming a basis of $\dim S_{[\mu-1]}$, we obtain a product $EC$ that feature full column-rank $\hat d$. Namely, it holds
\[
 \ker EC=\{z\in\Real^{\hat d}:Cz\in\ker E\}=\{z\in\Real^{\hat d}:Cz\in\ker E\cap S_{[\mu-1]}\}=\{0\},
\]
and the Hypothesis \eqref{SHyp} is satisfied, and thus statement \textrm{(5)}. 
\end{proof}

We now indicate how the individual characteristic values depend on those of the base concept. Because of the equivalence, this allows to determine the relationships between the values of any two concepts providing regular DAEs.
\begin{theorem}\label{t.theta_relation_array}
 Let the  pair $\{E,F\}$ be regular on $\mathcal I$ with index $\mu\in \Natu$ and  characteristics $r<m$,
$\theta_0=0$ if $\mu=1$, and, for $\mu>1$,
\begin{align*}
r<m,\quad \theta_0\geq\cdots\geq\theta_{\mu-2}>\theta_{\mu-1}=0,\quad d=r-\sum_{j=0}^{\mu-2}\theta_{j}.
\end{align*}
Then the three array functions $\mathcal E_{[k]},\mathcal D_{[k]}$, and $\mathcal B_{[k]}$ feature shared constant ranks,
\[
r_{[k]}=\rank \mathcal E_{[k]} = \rank \mathcal D_{[k]} = \rank \mathcal B_{[k]},
\]
and the following relations concerning the characteristic values arise:
\begin{align*}
 r_{[k]} &=km+r-\sum_{j=0}^{k-1}\theta_{j},\quad k=1,\ldots, 
 \end{align*}
 in particular,
 \begin{align*}
 r_{[\mu-1]} &= (\mu-1)m+r-\sum_{j=0}^{\mu-2}\theta_{j}=(\mu-1)m+d,&&
 r_{[\mu]} = \mu m+r-\sum_{j=0}^{\mu-1}\theta_{j}=\mu m+d,
\end{align*}
and, moreover,
 \begin{align*}
  \rho_k=m-\dim (\ker E\cap S_{[k]})=m-\theta_k,\quad &k=0,1, \ldots \ ,\\
  \rank T_{k} = \theta_{k-1},\quad  \rank V_k = r-\sum_{j=0}^{k-1} \theta_{j},\quad &k=0,1, \ldots .
	\end{align*}
and, conversely,
\begin{align*}
 r&=r_{[0]},\\
 \theta_0&=m+r_{[0]}-r_{[1]},\\
 &\cdots \\
 \theta_{\mu-2}&=m+r_{[\mu-2]}-r_{[\mu-1]},\\
  \theta_{\mu-1}&=m+r_{[\mu-1]}-r_{[\mu]},\\
  d&=r_{[\mu-1]}-(\mu-1)m=r_{[\mu]}-\mu m.
\end{align*}
\end{theorem}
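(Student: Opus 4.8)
The plan is to reduce everything to Proposition~\ref{th.ranks}, which already does the heavy lifting. The statement of Theorem~\ref{t.theta_relation_array} asserts three things: (i) the three array functions $\mathcal E_{[k]}$, $\mathcal D_{[k]}$, $\mathcal B_{[k]}$ share a common constant rank $r_{[k]}$; (ii) explicit formulas for $r_{[k]}$, $\rho_k$, $\rank T_k$, $\rank V_k$ in terms of the $\theta_j$; and (iii) the inverse formulas expressing $r$, $\theta_j$, $d$ in terms of the $r_{[k]}$. None of these is genuinely new; they are assembled from results proved earlier in the excerpt. So the proof is essentially a bookkeeping exercise, and I would present it as such.

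First I would invoke Proposition~\ref{th.ranks} directly: items (1) and (2) there give $r_{[k]}=\rank\mathcal E_{[k]}=\rank\mathcal D_{[k]}=km+r-\sum_{j=0}^{k-1}\theta_j$ for $k\ge1$ (with the convention $\theta_k=0$ for $k\ge\mu$ already fixed in the hypothesis), and item (6) gives $\dim(\ker E\cap S_{[k]})=\theta_k$. For the equality $\rank\mathcal B_{[k]}=\rank\mathcal D_{[k]}$ I would cite the identity $\ker\mathcal B_{[k]}=\ker\mathcal D_{[k]}$ from \eqref{1.kerBk} (both matrices have the same number of columns $m+km$), so their ranks coincide; alternatively this is exactly what Theorem~\ref{t.projector_based_diff} records. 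The specializations at $k=\mu-1$ and $k=\mu$ then follow by substituting $\sum_{j=0}^{\mu-2}\theta_j=r-d$ and $\theta_{\mu-1}=0$ into the general formula. The formula $\rho_k=m-\dim(\ker E\cap S_{[k]})=m-\theta_k$ is the definition of $\rho_k$ combined with item (6) of Proposition~\ref{th.ranks}. The rank formulas for $T_k$ and $V_k$ are precisely the content of Proposition~\ref{rankTkVk}, so I would just quote it.

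For the inverse relations, I would simply solve the (already established) forward relations. From $r_{[0]}=\rank E=r$ we get $r=r_{[0]}$. From $r_{[k]}=km+r-\sum_{j=0}^{k-1}\theta_j$ and $r_{[k-1]}=(k-1)m+r-\sum_{j=0}^{k-2}\theta_j$ we subtract to obtain $r_{[k]}-r_{[k-1]}=m-\theta_{k-1}$, i.e.\ $\theta_{k-1}=m+r_{[k-1]}-r_{[k]}$ for $k=1,\dots,\mu$; writing this out for $k=1,\dots,\mu$ produces the displayed list, including $\theta_{\mu-1}=m+r_{[\mu-1]}-r_{[\mu]}$ (which indeed equals $0$ since $r_{[\mu]}=\mu m+d$ and $r_{[\mu-1]}=(\mu-1)m+d$). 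Finally $d=r-\sum_{j=0}^{\mu-2}\theta_j$ together with the same computation gives $d=r_{[\mu-1]}-(\mu-1)m=r_{[\mu]}-\mu m$.

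There is no real obstacle here — the theorem is a consolidation statement, and the only thing one has to be a little careful about is that $\ker\mathcal B_{[k]}=\ker\mathcal D_{[k]}$ requires the two matrices to have equal column count (they do, both $m+km$), so that equal nullspace forces equal rank; and that Proposition~\ref{th.ranks} already presupposes the $\theta_k=0$ convention for $k\ge\mu$, which matches the hypothesis of the present theorem. A short proof would therefore read roughly as follows.

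\begin{proof}
All rank and dimension formulas for $\mathcal E_{[k]}$ and $\mathcal D_{[k]}$, as well as $\dim(\ker E\cap S_{[k]})=\theta_k$, are immediate from Proposition~\ref{th.ranks}, items (1), (2) and (6), with the convention $\theta_k=0$ for $k\ge\mu$. Since $\ker\mathcal B_{[k]}=\ker\mathcal D_{[k]}$ by \eqref{1.kerBk} and both matrix functions have $m+km$ columns, we get $\rank\mathcal B_{[k]}=\rank\mathcal D_{[k]}=r_{[k]}$ as well; see also Theorem~\ref{t.projector_based_diff}. The specializations at $k=\mu-1$ and $k=\mu$ follow by inserting $\sum_{j=0}^{\mu-2}\theta_j=r-d$ and $\theta_{\mu-1}=0$. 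The identity $\rho_k=m-\dim(\ker E\cap S_{[k]})=m-\theta_k$ is the definition of $\rho_k$ combined with item (6) above, and the formulas $\rank T_k=\theta_{k-1}$, $\rank V_k=r-\sum_{j=0}^{k-1}\theta_j$ are exactly Proposition~\ref{rankTkVk}. For the inverse relations, $r=r_{[0]}=\rank E$ is clear; subtracting the forward formulas for consecutive indices gives $r_{[k]}-r_{[k-1]}=m-\theta_{k-1}$, i.e.\ $\theta_{k-1}=m+r_{[k-1]}-r_{[k]}$ for $k=1,\dots,\mu$, which is the displayed list. Finally $d=r-\sum_{j=0}^{\mu-2}\theta_j=r_{[\mu-1]}-(\mu-1)m=r_{[\mu]}-\mu m$ by the same computation.
\end{proof}
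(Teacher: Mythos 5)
Your proof is correct and follows essentially the same route as the paper, which simply cites Theorems \ref{t.rdiff} and \ref{t.projector_based_diff} (both of which in turn rest on Proposition \ref{th.ranks} and the kernel identity $\ker\mathcal B_{[k]}=\ker\mathcal D_{[k]}$ that you use directly). If anything, you are slightly more complete: the paper's one-line proof omits to mention Proposition \ref{rankTkVk}, which is needed for the $\rank T_k$ and $\rank V_k$ formulas and which you cite explicitly.
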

\begin{proof}
The relations for the characteristic values follow from Theorems \ref{t.rdiff} and \ref{t.projector_based_diff}. 
\end{proof}
\begin{corollary}\label{c.crit}
Regular and critical points in the sense of the Definition \ref{d.regpoint} are independent of the specific approach. 
\end{corollary}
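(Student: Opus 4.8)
The plan is to exploit that Definition~\ref{d.regpoint} invokes regularity only \emph{locally}, on a neighbourhood of $t_*$, so that whether a point counts as regular or critical is decided entirely by the behaviour of $\{E,F\}$ on arbitrarily small subintervals. First I would record that every other index concept $X$ discussed above (elimination, dissection, tractability, regular strangeness, regular differentiation, projector based differentiation) carries an analogous notion: $t_*$ is an $X$-regular point if $\{E,F\}$ restricted to $\mathcal I\cap\mathcal U$ is regular in the sense of the $X$-approach for some open neighbourhood $\mathcal U\ni t_*$, and an $X$-critical point otherwise. The claim to be proved is that this dichotomy does not depend on $X$.

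Next I would fix an open subinterval $\mathcal J\subseteq\mathcal I$ and, if convenient, pass to a compact sub-subinterval using Theorem~\ref{t.Scan}(2), which guarantees that regularity on $\mathcal J$ carries over unchanged to every subinterval. On $\mathcal J$ the coefficients $E,F$ remain sufficiently smooth, so Theorems~\ref{t.equivalence} and \ref{t.equivalence_array}, together with the characteristic-value dictionaries in Theorems~\ref{t.indexrelation} and \ref{t.theta_relation_array}, apply verbatim after restriction and assert that $\{E,F\}|_{\mathcal J}$ is regular in the sense of Definition~\ref{d.2} if and only if it is regular in the sense of any one of the concepts $X$, with all corresponding characteristic values mutually uniquely determined. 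The essential point is that these equivalence theorems were proved for an arbitrary interval and therefore survive restriction to $\mathcal J$.

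From this I would conclude, for every $t_*\in\mathcal I$, that a neighbourhood $\mathcal U$ witnessing local regularity in the sense of Definition~\ref{d.2} is simultaneously a witness of local regularity in the sense of every concept $X$, and conversely; hence the set $\mathcal I_{reg}$ of regular points, and therefore its complement — the set of critical (singular) points — is one and the same for all the equivalent regularity notions, and the characteristic values attached to any regular point agree under the established bijections.

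The only mild obstacle I anticipate is the bookkeeping around the interval on which the equivalences are invoked: one must check that restricting $E,F$ to a subinterval preserves the required smoothness and that the equivalence results used (in particular the $\mathcal E_{[k]}$-rank constancy appearing in Theorem~\ref{t.equivalence_array}) are either stated without a compactness hypothesis or can be reduced to the compact case via Theorem~\ref{t.Scan}(2). This is routine, since regularity on a subinterval propagates to every sub-subinterval, so no genuine difficulty arises.
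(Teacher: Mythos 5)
Your argument is correct and matches the paper's intent: the corollary is stated without a separate proof precisely because it follows immediately from the locality of Definition \ref{d.regpoint} together with the equivalence theorems (Theorems \ref{t.equivalence} and \ref{t.equivalence_array}), which hold on arbitrary intervals and hence survive restriction to any neighbourhood $\mathcal I\cap\mathcal U$. Your additional bookkeeping remark about smoothness and compactness under restriction is sound but not needed beyond what you already observe.
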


We emphasize that, for the approaches gathered in Theorems \ref{t.equivalence_array} and \ref{t.theta_relation_array} that capture regular DAEs,  constant $r$ and $\theta_i$ are mandatory. In contrast, the two concepts recorded in Theorem \ref{t.diff2} allow changes of $r$ as well as the $\theta_i$, as long as $\mu$ and $d$ remain constant. This motivates the following two definitions.

\begin{definition} \label{d.harmless}
Given are $E,F:\mathcal I\rightarrow\Real^{m\times m}$. 
 The critical\footnote{See Definition \ref{d.regpoint}.} point $t_*\in\mathcal I$ is said to be a \emph{ harmless critical point}\footnote{Note that this definition is consistent with  that in \cite{Dokchan2011,CRR,RR2008} which is formulated in more specific terms of the projector-based analysis.} of the pair the pair $\{E,F\}$ and the associated DAE \eqref{1.DAE}, if there is an open neighborhood  $\mathcal U\ni t_*$ such that the DAE  restricted to $\mathcal I\cap \mathcal U$  is solvable in the sense of Definition \ref{d.solvableDAE}. 
\end{definition}
Harmless critical points only become apparent with less smooth problems and in the input/output behavior of the systems.
For smooth problems, we quote from \cite[p. 180]{RR2008}: \textit{the local behavior around a harmless critical point is entirely analogous to the one near a regular point}. That is precisely why they are called that. The Examples \ref{e.2}, \ref{e.5} and \ref{e.7} in the next section are to confirm this, see also \cite[Section 2.9]{CRR}.

By Theorem \ref{t.diffinterval}, for a DAE featuring a well-defined differentiation index on a compact interval, the set of regular points is dense and all critical points are harmless.

\begin{definition} \label{d.almost-reg}
Given are $E,F:\mathcal I\rightarrow\Real^{m\times m}$. The  pair $\{E,F\}$, and the associated DAE \eqref{1.DAE} are \emph{ almost regular} if all points of  $ \mathcal I$ are regular points in the sense of Definition \ref{d.regpoint} or harmless critical points in the sense of Definition \ref{d.harmless} and the regular points are dense in $\mathcal I$.
\end{definition}

%%%%%%%%%%%%%%%%%%%%%%%%%%%%%%%
\section{A selection of simple examples to illustrate possible critical points}\label{s.examples}

We use a few simple examples to illustrate several critical points that can arise with DAEs.
For a deeper insight we refer to \cite{RR2008}.

\subsection{Serious singularities}

\begin{example}[$r$ constant, $\theta_0$ changes]\label{e.degree_cont}
 Recall Example \ref{e.degree} from Section \ref{subs.degree}
  \begin{align*}
  E(t)=\begin{bmatrix}
        1&-t\\1&-t
       \end{bmatrix},\quad 
F(t)=\begin{bmatrix}
        2&0\\0&2
       \end{bmatrix},\quad t\in \Real.
 \end{align*}
 We know already that the homogeneous DAE has the nontrivial solution
 \begin{align*}
 x(t)=\gamma (1-t)^2\begin{bmatrix}
                     1\\1
                    \end{bmatrix},\; t\in \Real, \quad \text{with }\; \gamma\in \Real,
\end{align*}
and $t_\star = 1$ is obviously a singular point of the flow, because of
\begin{align*}
 \ker E(t) \cap S_0(t) = \{z \in \Real^2: z_1-t z_2=0, z_1 = z_2\},\text{ i.e., } \theta_0(t) = \begin{cases}
                                                                                                 0 &t \neq 1\\
                                                                                                 1 &t = 1
                                                                                                \end{cases}
\end{align*}
\begin{itemize}
 \item The framework of the tractability index with
 \begin{align*}
  A:=E, D=\begin{bmatrix}
        1&-t\\0&0
       \end{bmatrix},G_0=A D = E, Q_0=\begin{bmatrix}
                            0 & t\\0 & 1
                           \end{bmatrix},B_0=F-A D' = \begin{bmatrix}
                                                          2 & -1\\0 & 1
                                                        \end{bmatrix}
 \end{align*}
leads to
\begin{align*}
 G_1=G_0 + B_0 Q_0 = \begin{bmatrix}
                        1 & t-1\\
                        1 & -t+1
                      \end{bmatrix}, \det G_1 = 2(1-t), r_1(t) = \begin{cases}
                                                                  2 & t \neq 1\\
                                                                  1 & t = 1
                                                                 \end{cases}.
\end{align*}
This indicates $t_\star = 1$ as critical point.
\item By Lemma \ref{l.R1} we obtain $r_{[1]}(t) = m+r - \theta_0(t) = 3 -\theta_0(t)$ . 
\end{itemize}
\end{example}
%%%%%%%%%%%%%%%%%%%%%%%%%%%%%%%%%%%%%%

\begin{example}[$r$ changes, $\theta_0$ constant]\label{e.rank_drop_E} This example is a special case of  \cite[Example 2.69]{CRR}. We consider the pair 
\[
E=\begin{bmatrix}0 & \alpha\\
                 0&0
                 \end{bmatrix},  \quad 
								 F=\begin{bmatrix}
                   \beta & \gamma\\
                    1    &  1
                  \end{bmatrix}, \quad m=2,
	\]
and $\alpha, \beta, \gamma: \mathcal{I} \rightarrow \Real$ are smooth functions. $\alpha^2 + (\beta - \gamma)^2 > 0$ ensures that $\rank [E(t), F(t)] \equiv 2$ and $\alpha(t) = 0$ requires $\beta(t) \neq \gamma(t)$.\\
We have $r(t) = \begin{cases}
                 1 & \alpha(t) \neq 0\\
                 0 & \alpha(t) = 0
                \end{cases}\quad 
$,  $\ker E(t) = \begin{cases}
                 \im \begin{bmatrix}
                      1 \\ 0
                     \end{bmatrix}
& \alpha(t) \neq 0\\
                 \Real^2 & \alpha(t) = 0
                \end{cases}$
and\\
$S_0(t) =\{z \in \Real^m: F(t)z \in \im E(t)\} =\begin{cases}
                 \im \begin{bmatrix}
                      1 \\ -1
                     \end{bmatrix}
& \alpha(t) \neq 0,\\
                 \{0\} & \alpha(t) = 0.
                \end{cases}$\\
Surprisingly we obtain for $\theta_0(t) = \dim (\ker E(t) \cap S_0(t)) = 0$ for all $t$.\\
A simple reformulation  of  $Ex'+Fx = q$ shows the equations
\begin{align*}
 \alpha x_2' + (\gamma-\beta) x_2 &= q_1-\beta q_2,\\
 x_1 &= -x_2 + q_2.
\end{align*}
We consider the particular case that $\gamma(t)-\beta(t) \equiv M \neq 0$ constant, $q \equiv 0$ and $\alpha(t)=t$, which leads to the singular scalar homogeneous ODE for $x_2$
\begin{align}\label{solution_x2}
 t x_2'(t) + M x_2(t) =0.
\end{align}
The solution is $x_2(t) = c\, t^M$ with an arbitrary real constant $c$, see Figure \ref{fig:solM}.
\begin{figure}
\includegraphics[width=6.5cm]{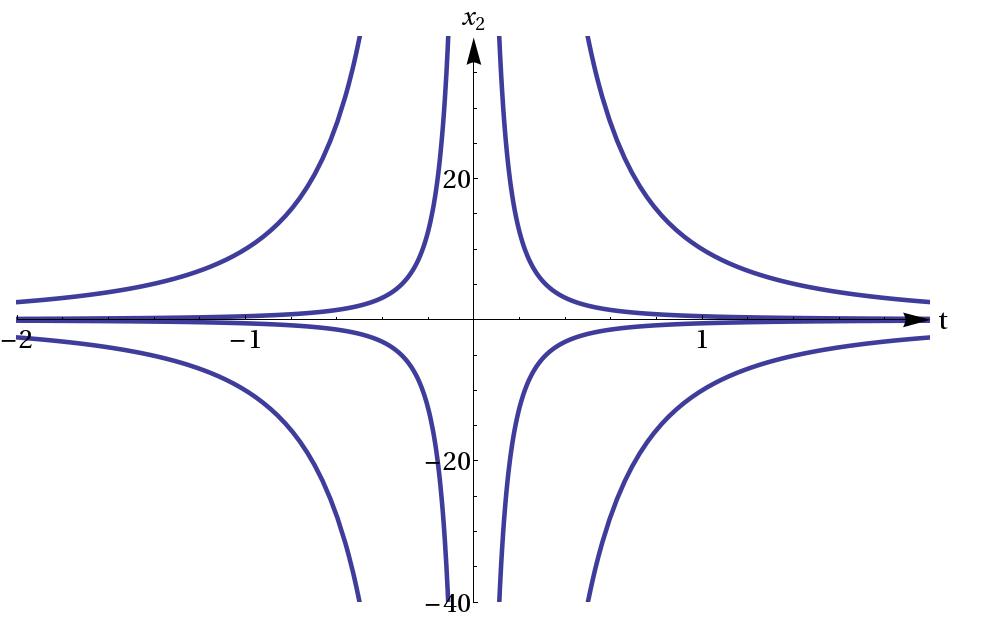}
\includegraphics[width=6.5cm]{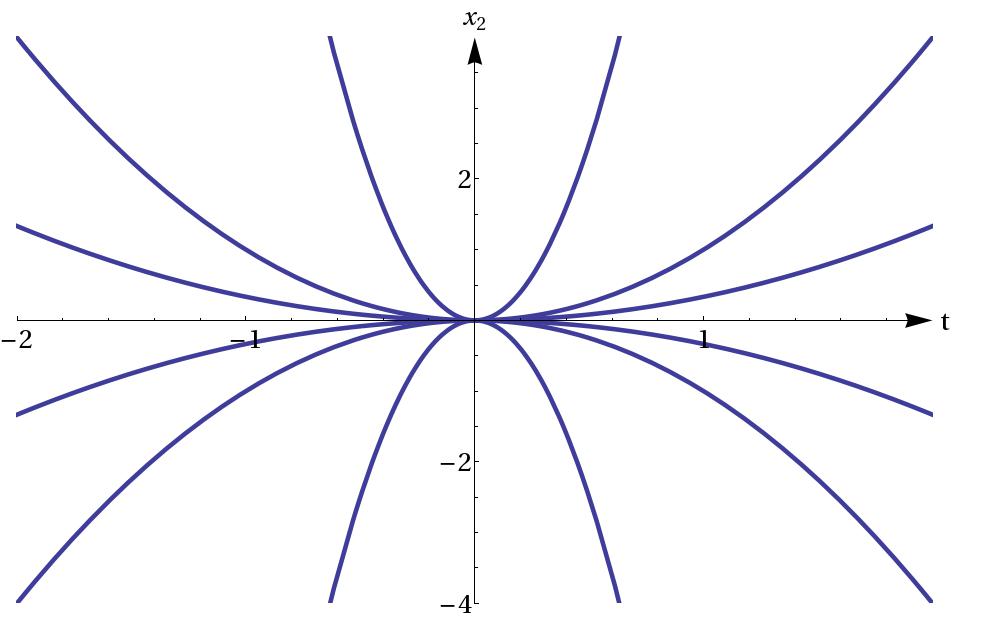}
\caption{Solution of \eqref{solution_x2} for $M>0$ and $M<0$}
\label{fig:solM}
\end{figure} 
\end{example}
%%%%%%%%%%%%%%%%%%% e.6
%
\begin{example}[$r$ constant, $\theta_0$ constant, $\theta_1$ changes]\label{e.6}
Given a smooth function $\beta:\mathcal I\rightarrow\Real$, we investigate the pair $\{E,F\}$,
 \begin{align*}
 E(t)=\begin{bmatrix}
    1&0&0\\0&1&0\\0&0&0
   \end{bmatrix},\quad
   F(t)=\begin{bmatrix}
    0&0&\beta(t)\\1&1&0\\1&0&0
   \end{bmatrix}.
\end{align*}
 A look at the associated DAE brightens the type of singularity. The DAE reads
\begin{align*}
 x_1'+\beta x_3&=q_1,\\
 x_2'+x_1+x_2&=q_2,\\
 x_1&=q_3,
\end{align*}
which can be rearranged to
\begin{align*}
 x_1&=q_3,\\
 x_2'+x_2&=q_2-q_3,\\
 \beta x_3&=q_1-q_3'.
\end{align*}
It is now evident that, if $\beta$ has zeros, then the DAE is no longer solvable for all sufficiently smooth rigt-hand sides $q$. 

From a more general point of view,
 the pair $\{E,F\}$ is pre-regular with $m=3$. $r=2$ and $\theta_0=1$ and the singularity can be detected by different approaches. 
 
 We start with the basic approach, letting
  \begin{align*}
 Y_0(t)=\begin{bmatrix}
    1&0\\0&1\\0&0
   \end{bmatrix},\quad
   C_0(t)=\begin{bmatrix}
    0&0\\1&0\\0&1
   \end{bmatrix},
\end{align*}
 the first step in the basic reduction procedure leads to the new pair
 \begin{align*}
 E_1(t)=\begin{bmatrix}
    0&0\\1&0
   \end{bmatrix},\quad
   F_1(t)=\begin{bmatrix}
    0&\beta(t)\\1&0
   \end{bmatrix}.
\end{align*}
The new pair $\{E_1,F_1\}$ is pre-regular if and only if the function $\beta$ has no zeros, and then one has $\theta_1=0$, and hence the pair $\{E,F\}$ is regular with index two.

In contrast, if    $\beta(t_*)=0$ at a point $t_*\in\mathcal I$, we are confronted with $\theta_1(t_*)=1$ and $\rank[E_1(t_*) F_1(t_*)]=1<m$, and the pair $\{E_1,F_1\}$ fails to be pre-regular. In turn, the original pair $\{E,F\}$ is no longer regular.

The tractability framework \eqref{2.Gi} leads to 
\begin{align*}
 G_0=E, B_0=F, Q_0=\begin{bmatrix}
                    0&&\\
                    &0&\\
                    &&1
                   \end{bmatrix}, G_1=\begin{bmatrix}
                                      1& 0 & \beta\\
                                      0& 1& 0\\
                                      0 & 0 & 0
                                      \end{bmatrix},
\end{align*}
which immediately indicates zeros of $\beta$ as critical point, too, because of
\begin{align*}
 N_1 \cap N_0 =\{z \in \Real^3: z_1=0, z_2=0, \beta z_3 =0\},
\end{align*}
i.e., $u_1^T(t)= \begin{cases}
                  0 & \beta \neq 0\\
                  1 & \beta = 0
                 \end{cases}
$,\quad which is called "B-singularity`` in \cite[Definition 2.75]{CRR} and \cite[p.\ 144]{RR2008}.

Next we consider the first array functions
 \begin{align*}
 \mathcal E_{[1]}(t)=\begin{bmatrix}
    1&0&0&0&0&0\\0&1&0&0&0&0\\0&0&0&0&0&0\\0&0&\beta(t)&1&0&0\\1&1&0&0&1&0\\1&0&0&0&0&0
   \end{bmatrix},\quad
   \mathcal F_{[1]}(t)=\begin{bmatrix}
    0&0&\beta(t)\\1&1&0\\1&0&0\\0&0&\beta'(t)\\0&0&0\\0&0&0
   \end{bmatrix}.
\end{align*}
and compute $\rank \mathcal E_{[1]}(t)=4=m+r-\theta_0$ independently of how the function $\beta$ behaves. However, the necessary solvability requirement $\rank[\mathcal E_{[1]}(t), \mathcal F_{[1]}(t)]=6$ is  satisfied only if $\beta(t)\neq 0$, but otherwise one has  $\rank[\mathcal E_{[1]}(t_*), \mathcal F_{[1]}(t_*)]=5$. 

\end{example}
%

%%%%%%%%%%%%%%%%%%%%%%%%%%%%%%%%%%%%%%% e.3
\begin{example}[$r$ constant, $\theta_0$ constant, $\theta_1$ changes]\label{e.3}
 Given is the pair $\{E, F\}$ with $m=2$,
 \begin{align*}
  E(t)=\begin{bmatrix}
        1&-1\\1&-1
       \end{bmatrix},\quad 
  F(t)=\begin{bmatrix}
        2&0\\0&t+2
       \end{bmatrix}, \quad t\in \mathcal I=[-1,1],      
 \end{align*}
such that $E(t)$ has constant rank $r=1$ and $\rank [E(t), F(t)]=m$. Following the basic reduction procedure in Section \ref{s.regular} we choose and find 
\begin{align*}
 Z_0(t)=\begin{bmatrix}
      1\\-1
     \end{bmatrix},\;
Y_0(t)=\begin{bmatrix}
      1\\1
     \end{bmatrix},\;
     C_0(t)=\begin{bmatrix}
      \frac{t+2}{2}\\1
     \end{bmatrix},\\ 
     \ker E(t)\cap\ker (Z^*_0F)(t)=\{z\in\Real^2: z_1=z_2, tz_2=0\},
\end{align*}
and 
\begin{align*}
  E_1(t)=(Y^*EC_0)(t)=t,\; F_1(t)=(Y^*FC_0)(t)+(Y^*EC'_0)(t)= 2t+5.
\end{align*}
The  pair $\{E, F\}$ fails to be pre-regular on $\mathcal I$ because of 
\begin{align*}
 \theta_0(t)=\Bigg\lbrace \quad\begin{matrix}
                               0& \text{ if }t\neq 0, \\ 
                               1& \text{ if }t= 0\;,
                                     \end{matrix}
\end{align*}
however, it is pre-regular and regular with index one on the subintervals $[-1,0)$ and $(0,1]$. The corresponding DAE,
\begin{align*}
 x_1'-x_2'+2x_1&=q_1,\\ x_1'-x_2'+(t+2)x_2&=q_2,
\end{align*}
reads in slightly rearranged form as
\begin{align*}
-tx_2+2(x_1-x_2)&=q_1-q_2,\\
 (x_1-x_2)'+\frac{2}{t}(t+2)(x_1-x_2)&=q_2-\frac{1}{t}(t+2)(q_1-q_2).
\end{align*}
Having a solution of the ODE for the difference $x_1-x_2$ we find the original solution components by $x_1=\frac{1}{2}(q_1-(x_1-x_2)')$ and $x_2=x_1-(x_1-x_2)$.
No doubt, $t_*=0$ is a critical point causing a singular inherent ODE of the DAE.
We refer to \cite{Koch}, where this example also comes from, for the specification of bounded solutions.

Inspecting the array functions
\begin{align*}
 \mathcal E_{[1]}=\begin{bmatrix}
                   1&-1&0&0\\1&-1&0&0\\2&0&1&-1\\0&t+2&1&-1
                  \end{bmatrix},\quad
   \mathcal E_{[2]}=\begin{bmatrix}
                   1&-1&0&0&0&0\\1&-1&0&0&0&0\\2&0&1&-1&0&0\\0&t+2&1&-1&0&0\\
       0&0&2&0&            1&-1\\0&2&0&t+2&1&-1
                  \end{bmatrix},                
\end{align*}
we see that 
\begin{align*}
  \rank\mathcal E_{[1]}(t)&=\Bigg\lbrace\quad\begin{matrix}
                                 2m-1=3& \text{if }t\neq 0 \\ 
                                 2m-2=2& \text{if }t= 0\\
                                     \end{matrix}\\ 
 \rank\mathcal E_{[2]}(t)&=\Bigg\lbrace\quad\begin{matrix}
                                 3m-1=5& \text{if }t\neq 0 \\ 
                                 3m-2=4& \text{if }t= 0,\;\\
                                    \end{matrix}
\end{align*}
and the rank of the array functions also indicates this point $t_*=0$ as critical.
\end{example}
%%%%%%%%%%%%%%%%%%%%%%%%%%%%%%%%%%%%%%%%%%%%%%%%%%%%%%%%%%%%%%

\begin{example}[$r$ and $\theta_0$ change]\label{e.4}
Given a smooth function $\alpha:\mathcal I\rightarrow\Real$, we investigate the pair $\{E,F\}$,
 \begin{align*}
 E(t)=\begin{bmatrix}
    0&\alpha(t)&0\\0&0&1\\0&0&0
   \end{bmatrix},\quad
   F(t)=\begin{bmatrix}
    -6&0&0\\0&1&0\\1&0&1
   \end{bmatrix},\quad t\in\mathcal I,
\end{align*}
 and the associated DAE living in $\Real^m$, $m=3$,
 \begin{align*}
  \alpha x_2'-6x_1&=q_1,\\
  x_3'+x_2&=q_2,\\
  x_1+x_3&=q_3.
 \end{align*}
Rearranging the DAE as
\begin{align*}
  \alpha x_2'+6x_3&=q_1+6q_3,\\
  x_3'+x_2&=q_2,\\
  x_1+x_3&=q_3,
 \end{align*}
 we immediately know that the fact whether the function $\alpha$ has zeros or even disappears on subintervals is essential. Note that $\rank \, [E(t) F(t)]=m=3$ for all $t\in \mathcal I$, but $\rank E(t)=2$ if $\alpha(t)\neq 0$ and otherwise $\rank E(t)=1$. Obviously, 
 on subintervals where $\alpha(t)$ does not vanish, we see a regular index-one DAE with characteristics $r=2, \theta_0=0$ and $d=2$.  In contrast, if the function $\alpha$ vanishes on a subinterval then there a regular index-two DAE results with $r=1, \theta_0=1, \theta_1=0$ and $d=0$. There is no doubt that points with zero crossings of $\alpha$ are critical and may cause singularities of the solution flow, see Figure \ref{fig:solMATHEMATICA}. 
 It should be emphasized that the rank conditions associated with regularity Definition \ref{d.2} exclude such critical points on regularity intervals and the corresponding reduction procedure reliably recognizes them.
 
 Investigating the corresponding low level array functions, $\mathcal E_{[0]}=E$, $\mathcal F_{[0]}=F$, 
 \begin{align*}
 \mathcal E_{[1]}=\begin{bmatrix}
                   0&\alpha&0&0&0&0\\0&0&
1&0&0&0\\0&0&0&0&0&0\\-6&\alpha'&0&0&\alpha&0 \\0&1&0&0&0&1 \\1&0&1&0&0&0                \end{bmatrix},\quad
  \mathcal F_{[1]}=\begin{bmatrix}
                    -6&0&0\\0&1&0\\1&0&1\\0&0&0\\0&0&0\\0&0&0
                  \end{bmatrix}                
 \end{align*}
 \begin{align*}
 \mathcal E_{[2]}=\begin{bmatrix}
                   0&\alpha&0&0&0&0&0&0&0\\0&0&
1&0&0&0&0&0&0\\0&0&0&0&0&0&0&0&0\\-6&\alpha'&0&0&\alpha&0 &0&0&0\\0&1&0&0&0&1 &0&0&0\\1&0&1&0&0&0 &0&0&0\\0&\alpha''&0&-6&2\alpha'&0&0&\alpha&0 \\0&0&0&0&1&0&0&0&1\\0&0&0&1&0&1&0&0&0              \end{bmatrix},\quad
  \mathcal F_{[2]}=\begin{bmatrix}
                    -6&0&0\\0&1&0\\1&0&1\\0&0&0\\0&0&0\\0&0&0\\0&0&0\\0&0&0\\0&0&0
                  \end{bmatrix},               
 \end{align*}
 we find that the ranks of the derivative arrays also indicate those critical points, namely
 \begin{align*}
   \rank\mathcal E_{[0]}(t)&= \begin{cases}
                               m-1=  2& \text{if }\alpha(t)\neq 0 \\ 
                               m-2=  1& \text{if }\alpha(t)= 0
                                     \end{cases}\\                                     
   \rank\mathcal E_{[1]}(t)&=\begin{cases}
                                 2m-1=5& \text{if }\alpha(t)\neq 0 \\ 
                                 2m-2=4& \text{if }\alpha(t)= 0,\alpha'(t)\neq 0\\
                                 2m-3=3& \text{if }\alpha(t)= 0,\alpha'(t)=0, \alpha''(t)\neq 0
                                     \end{cases}\\ 
 \rank\mathcal E_{[2]}(t)&=\begin{cases}
                                 3m-1=8& \text{if }\alpha(t)\neq 0 \\ 
                                 3m-2=7& \text{if }\alpha(t)= 0,\alpha'(t)\neq 0\\
     3m-3=6& \text{if }\alpha(t)= 0,\alpha'(t)=0, \alpha''(t)\neq 0, \alpha''(t)\neq 6\\
     3m-3=6& \text{if }\alpha(t)= 0,\alpha'(t)=0, \alpha''(t)= 0\\
     3m-4=5& \text{if }\alpha(t)= 0,\alpha'(t)=0, \alpha''(t)= 6 \;.
                                    \end{cases}
  \end{align*}
	
	\begin{figure}
 \includegraphics[width=6.5cm]{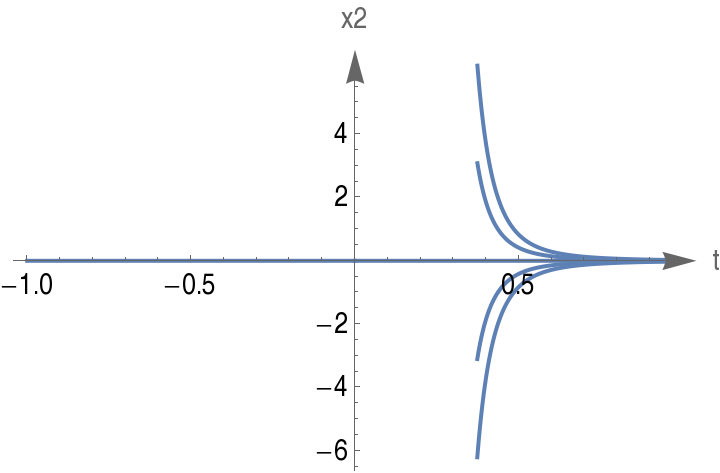}% 
\includegraphics[width=6.5cm]{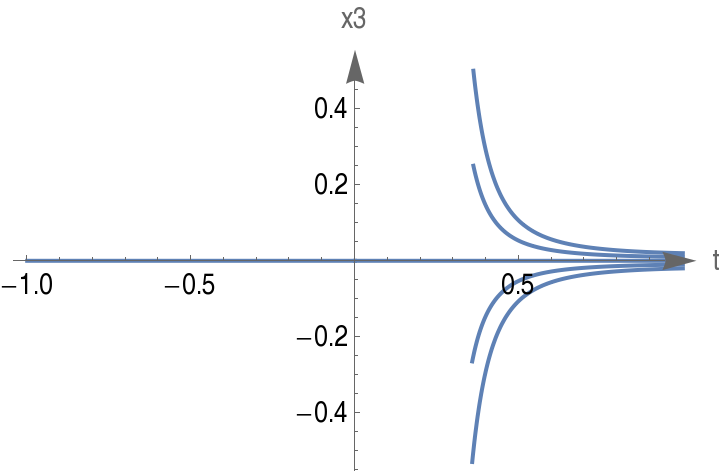}

\caption{Solutions for $x_2$ and $x_3$ computed with MATHEMATICA, Version 13 for \\ $\alpha(t)=\begin{cases}
                              0&\text{ for }t\in (-\infty,0]\\
                             t^{4}&\text{ for }t\in (0,\infty)
                              \end{cases}$, $q_1=q_2=q_3=0$ in Example \ref{e.4}. The difficulty to plot the solution around $0$ is due to the singularity.}
\label{fig:solMATHEMATICA}
\end{figure}

\end{example}
\subsection{Harmless critical points}
%%%%%%%%%%%%%%%%%%%%%%%%%%%%%%%%%%%5 e.5
The first example of the present section, that shows an almost regular DAE, is of particular interest because $r$ and $d$ are constant, while $\theta_0$ and $\theta_1$ change. To our knowledge such an circumstance was not discussed in literature before, since harmless critical points were usually tight to rank changes of $E$. Therefore, for this example we illustrate in detail how four different approaches identify critical points.\\
The three other examples of the section are classical cases discussed in the literature showing rank changes of $E$.
\begin{example}[$r$ constant, $\theta_0$ and $\theta_1$ change]\label{e.5}
 Given is the pair $\{E, F\}$ with $m=4$, and smooth functions $\alpha, \beta:\mathcal I\rightarrow\Real$,
 \begin{align*}
  E(t)=\begin{bmatrix}
        0&1&\alpha(t)&0\\0&0&0&\beta(t)\\0&0&0&1\\0&0&0&0
       \end{bmatrix},\quad
  F(t)=\begin{bmatrix}
        1&0&0&0\\0&1&0&0\\0&0&1&0\\0&0&0&1
       \end{bmatrix}, \quad t\in \mathcal I,    
 \end{align*}
such that $E(t)$ has constant rank $r=2$ and $\rank \, [E(t) F(t)]=m$.

The associated DAE reads
\begin{align*}
 x_2'+\alpha x_3'+x_1&=q_1,\\
 \beta x_4'+x_2&=q_2,\\
 x_4'+x_3&=q_3,\\
 x_4&=q_4.
\end{align*}
For each sufficiently smooth right-hand side this DAE possesses the unique solution,
\begin{align*}
 x_1&=q_1-q_2'-\alpha q_3'+\beta'q_4'+(\alpha+\beta)q_4'',\\
 x_2&=q_2-\beta q_4',\\
 x_3&=q_3- q_4',\\
 x_4&=q_4,
\end{align*}
that is, the DAE is a solvable system in the sense of Definition \ref{d.solvableDAE} with zero dynamical degree of freedom. 
It can be checked immediately that in the sense of Definition \ref{d.2} the DAE is regular with index $\mu=3$ and $d=0$ on all subintervals  where $\alpha+\beta$ has no zeros, and it is regular with index $\mu=2$ and $d=0$ on all subintervals  where $\alpha+\beta$ vanishes identically.
\medskip

We analyse this example with four different approaches. All of them lead to the same values for the characteristics $\theta$:
\begin{align}\label{e5.theta}
\begin{array}{cccc}
                     & \theta_0  & \theta_1  &\theta_2\\ 
 \alpha+\beta \neq 0 &    1  & 1 & 0\\
 \alpha+\beta = 0 &    2  & 0&
\end{array}
\end{align}
\medskip

 \textbf{Basic reduction procedure}, cf.\ \eqref{basic_reduction}. We have $E_0=E$ and $F_0=I$. We obtain for a basis of $\ker E_0^\star$  $Z_0 = \begin{bmatrix}
                                                0 & 0\\
                                                1 & 0\\
                                                -\beta & 0\\
                                                0 & 1                                                                                                                                                                    \end{bmatrix}$ and a basis of $\im E_0$ $Y_0 = \begin{bmatrix}
                                                                                               1 & 0\\
                                                                                               0 & \beta\\
                                                                                               0 & 1\\
                                                                                               0 & 0
                                                                                              \end{bmatrix}$.
$S_0 = \ker Z_0^\star F_0 = \{z \in \Real^4: z_2 = \beta z_3, z_4 = 0 \}$. $C_0 = \begin{bmatrix}
                                                                                1 & 0\\
                                                                                0 & \beta\\
                                                                                0 & 1\\
                                                                                0 & 0
                                                                               \end{bmatrix}
$ forms a basis of $S_0$.\\
The next reduction step serves $E_1 = Y_0^\star E_0 C_0 = \begin{bmatrix}
                                                           0 & \alpha + \beta \\
                                                           0 & 0
                                                          \end{bmatrix}$ and 
$F_1 = Y_0^\star F_0 C_0 +  Y_0^\star E_0 C'_0 = \begin{bmatrix}
                                                  1 & \beta' \\
                                                  0 & 1 + \beta^2
                                                 \end{bmatrix}$.\\
To determine $\theta_0$ we investigate $\ker E_0 \cap \ker Z_0^\star F_0 = \{ z \in \Real^4: (\alpha + \beta) z_3 = 0, z_4 = 0, z_2 = \alpha z_3\}$.\\
We have now to continue with two different cases.
\begin{itemize}
 \item $\alpha+\beta \neq 0$:
 It results that $\ker E_0 \cap \ker Z_0^\star F_0 = \{ z \in \Real^4: z_3 = 0, z_4 = 0, z_2 = 0\} = \im \begin{bmatrix}
                                                                                                          1\\0\\0\\0
                                                                                                         \end{bmatrix}$
and therefore $\theta_0 =1$.\\
The new pair $[E_1, F_1]$ is pre-regular. $Z_1 = \begin{bmatrix}
                                                  0\\
                                                  1
                                                 \end{bmatrix}
$, $Y_1 = \begin{bmatrix}
                                                  1\\
                                                  0
                                                 \end{bmatrix}
$, and $C_1 = \begin{bmatrix}
               1\\
               0
              \end{bmatrix}
$, 
i.e., $E_2 = Y_1^\star E_1 C_1 = 0$ and $F_2 = Y_1^\star F_1 C_1 = 1$.\\
$\ker E_1 \cap \ker Z_1^\star F_1 =\{z \in \Real^2: z_2 = 0\}$, which means that $\theta_1 = 1$.\\
The last reduction step delivers the pre-regular pair $E_2 = 0$ and $F_2 = 1$, which leads to $\theta_2 = 0$ and therefore $\mu = 3$.
 \item $\alpha+\beta = 0$:
 In this case we obtain for $\ker E_0 \cap \ker Z_0^\star F_0 = \{ z \in \Real^4: z_4 = 0, z_2 = \alpha z_3\} = \im \begin{bmatrix}
            1 & 0\\
            0 & \alpha\\
            0 & 1\\
            0 & 0
  \end{bmatrix}
$ and therefore $\theta_0 = 2$.\\
The next matrix pair is $E_1 = 0$ and the nonsingular $F_1 = \begin{bmatrix}
                                                           1 & \beta'\\
                                                           0 & 1 + \beta^2
                                                          \end{bmatrix}
$. $[E_1, F_1]$ is pre-regular and we obtain $\theta_1 = 0$ and therefore $\mu = 2$.
\end{itemize}
\medskip

%%%%%%%%%%%%%%%%%%%%%%%%%% 

\textbf{ Projector based analysis (tractability index) with the related matrix chain, cf.\ \eqref{2.Gi}}.
\begin{equation*}
 G_0 = E,\quad B_0 = F - E D',\quad Q_0 =
 \begin{bmatrix}
  1 & 0                      & 0                    & 0\\
  0 & 0 & -\alpha & 0\\
  0 & 0  & 1      & 0\\
  0 &                       0&                     0& 0
 \end{bmatrix},\quad D = P_0 = I-Q_0.
\end{equation*}

\begin{equation*}
 G_1=G_0+B_0 Q_0 =
 \begin{bmatrix}
  1 & 1                      & \alpha - \alpha'                   & 0\\
  0 & 0 & -\alpha & \beta\\
  0 & 0  & 1      & 1\\
  0 &                       0&                     0& 0
 \end{bmatrix}.
 \end{equation*}
 To determine an admissible nullspace projector $Q_1$ we analyse $\ker G_1$.
 \begin{align*}
  \ker G_1 &= \{z \in \Real^4: z_1+z_2+(\alpha - \alpha')z_3 = 0,-\alpha z_3+\beta z_4 = 0,z_3+z_4 = 0\}\\
           &= \{z \in \Real^4: z_1+z_2+(\alpha - \alpha')z_3 = 0,(\alpha +\beta) z_4 = 0,z_3+z_4 = 0\}
 \end{align*}
Also here we have to continue with two different cases.
\begin{itemize}
 \item $\alpha+\beta \neq 0$:
 
 We obtain that $\ker G_1 =\{z \in \Real^4: z_4 = 0, z_3 = 0, z_1+z_2 = 0\} = \im \begin{bmatrix}
                                                                                   -1 \\1\\0\\0
                                                                                  \end{bmatrix}
$ and $\rank G_1 = 3$. An admissible nullspace projector is
 $Q_1=
 \begin{bmatrix}
  0 & -1                      & -\alpha                    & 0\\
  0 & 1  & \alpha & 0\\
  0 & 0 &  0      & 0\\
  0 &                       0&                     0& 0
 \end{bmatrix}$
and $\Pi_1 = P_0P_1 = P_0(I-Q_1)=  \begin{bmatrix}
                                    0&&&\\
                                    &0&&\\
                                    &&0&\\
                                    &&&1
                                   \end{bmatrix}
$.\\
The next matrix chain level starts with $B_1 = B_0P_0 -G_1D^-(D\Pi_1D^-)'D\Pi_0 $ $=B_0P_0 -G_1D^-\Pi_1'D\Pi_0 =  B_0P_0$ and we obtain
\begin{align*}
G_2&=G_1+B_1 Q_1=
\begin{bmatrix}
        1&1&\alpha - \alpha'&0\\
        0&1&         0&\beta\\
        0&0&         1&1\\
        0&0&         0&0
       \end{bmatrix},\\
\ker G_2 &= \{z \in \Real^4: z_1+z_2 + (\alpha-\alpha')z_3=0, z_2+\beta z_4 = 0, z_3+z_4 = 0\} \\
&= \im \begin{bmatrix}
  \alpha-\alpha'+\beta\\
    -\beta\\
      -1\\
      1
\end{bmatrix} \text{ and } \rank G_2 = 3.
\end{align*}
 As an admissible nullspace projector we choose
$Q_2 =
\begin{bmatrix}
  0&0&0 &\alpha -\alpha'+ \beta\\
        0&0&         0&-\beta\\
        0&0&         0&-1\\
        0&0&         0&1
\end{bmatrix}$
and \\ $\Pi_2 = \Pi_1(I-Q_2) = 0$.  The nonsingular matrix 
\begin{align*}
 G_3 = G_2+B_2 Q_2= 
 \begin{bmatrix}
        1&1&\alpha - \alpha'&0\\
        0&1&         0&\beta\\
        0&0&         1&1\\
        0&0&         0&1
       \end{bmatrix},
\end{align*}
which indicates that the index is 3.
\item $\alpha+\beta = 0$:
For this case  we obtain the nullspace\\
$\ker G_1 = \{z \in \Real^4: z_1+z_2+(\alpha+\alpha')z_3=0, z_3+z_4=0 \} = \im \begin{bmatrix}
                                             \alpha+\alpha' & -1\\                                                                                                                         
                                             0 & 1\\
                                             -1 & 0\\
                                             1 & 0
                                             \end{bmatrix}
$ and $\rank G_1 = 2$. As an admissible nullspace projector we can choose
\begin{equation*}
 Q_1=
 \begin{bmatrix}
  0 & -1                      & -\alpha                   & \alpha'\\
  0 & 1  & \alpha         & \alpha\\
  0 & 0  &  0    & -1\\
  0 &                       0&                     0& 1
 \end{bmatrix}
\end{equation*}
and obtain because of $\Pi_1 = 0$ as the next matrix chain element the nonsingular matrix 
\begin{align*}
 G_2 = 
 \begin{bmatrix}
        1&1&\alpha-\alpha' &0\\
        0&1&         0&\beta\\
        0&0&         1&1\\
        0&0&         0&1
       \end{bmatrix},
\end{align*}
which indicates an index 2 DAE.\\
\end{itemize}
For the characteristics we have, cf.\ \eqref{trac}, $\theta_{i-1} = m- \rank G_i$, 
leading to \eqref{e5.theta}.
%%%%%%%%%%%%%%%%%%%%%%%%%%%%%%%%%%%
\medskip

\textbf{Differentiation index concept}.
 Inspecting the array functions
 \setcounter{MaxMatrixCols}{15}
\begin{align*}
 \mathcal E_{[1]}=\begin{bmatrix}
                   0&1&\alpha&0&0&0&0&0\\0&0&0&\beta&0&0&0&0\\0&0&0&1&0&0&0&0\\0&0&0&0&0&0&0&0\\
                    1&0&\alpha'&0&0&1&\alpha&0\\0&1&0&\beta'&0&0&0&\beta\\0&0&1&0&0&0&0&1\\0&0&0&1&0&0&0&0
                  \end{bmatrix},\\
   \mathcal E_{[2]}=\begin{bmatrix}
    0&1&\alpha&0&0&0&0&0&0&0&0&0\\
    0&0&0&\beta&0&0&0&0&0&0&0&0\\
    0&0&0&1&0&0&0&0&0&0&0&0\\
    0&0&0&0&0&0&0&0&0&0&0&0\\
                    1&0&\alpha'&0&0&1&\alpha&0&0&0&0&0\\
                    0&1&0&\beta'&0&0&0&\beta&0&0&0&0\\
                    0&0&1&0&0&0&1&0&0&0&0&0\\
                    0&0&0&1&0&0&0&0&0&0&0&0\\
  0&0&\alpha''&0&1&0&2\alpha'&0&0&1&\alpha&0\\
                    0&0&0&\beta''&0&1&0&2\beta'&0&0&0&\beta\\
                    0&0&0&0&0&0&1&0&0&0&0&1\\
                    0&0&0&0&0&0&0&1&0&0&0&0                  
                  \end{bmatrix},                
\end{align*}
we find
\begin{align*}
 r_{[1]}= \rank\mathcal E_{[1]}(t)&=\Bigg\lbrace\quad\begin{matrix}
                                 5& \text{if }\alpha(t)+\beta(t)\neq 0 \\ 
                                 4& \text{if }\alpha(t)+\beta(t)= 0,\\
                                     \end{matrix}
\end{align*}
 but, in contrast,  $\mathcal E_{[2]}(t)$ does not undergo any rank changes,
 \begin{align*}
 \dim \ker \mathcal E_{[2]}(t)= 4,\quad \rank\mathcal E_{[2]}(t)= 3m-4=8.
\end{align*}

Using for $\theta_i = m + r_{[i]}-r_{[i+1]}$, (cf.\ Theorem \ref{t.theta_relation_array}), the characteristic values $\theta$ are the same as in \eqref{e5.theta}.
Nevertheless, this DAE has a well-defined differentiation index being equal three. We add that the DAE according to \cite[Chapter 9]{CRR} is quasi-regular with an index less than or equal to three.
%%%%%%%%%%%%%%%%%%%%
\medskip

\textbf{Projector based differentiation concept}. Starting from
\begin{align*}
\ker E(t)= \im \begin{bmatrix}
	1 & 0 \\0 & -\alpha \\ 0 &  1 \\0 & 0
\end{bmatrix}, \quad Q=\begin{bmatrix}
	1 & 0 & 0 & 0 \\
	0 & \frac{\alpha^2}{1+\alpha^2 } & \frac{-\alpha}{1+\alpha^2 } & 0 \\
	0 & \frac{-\alpha}{1+\alpha^2 } & \frac{1}{1+\alpha^2 } & 0 \\
	0 & 0 & 0 & 0
\end{bmatrix}, \quad  P=\begin{bmatrix}
	0 & 0 & 0 & 0 \\
	0 & \frac{1}{1+\alpha^2 } & \frac{\alpha}{1+\alpha^2 } & 0 \\
	0 & \frac{\alpha}{1+\alpha^2 } & \frac{\alpha^2}{1+\alpha^2 } & 0 \\
	0 & 0 & 0 & 1
\end{bmatrix}
\end{align*}
we recognize
\begin{align*}
\ker Q= \ker \begin{bmatrix}
	1 & 0 & 0 & 0\\
	0 & -\alpha & 1 & 0
\end{bmatrix}, \quad \ker P = \ker \begin{bmatrix}
	0 & 0 & 0 & 1\\
	0 & 1 & \alpha & 0
\end{bmatrix}.
\end{align*}

On the one hand, we have
\[
\im \begin{bmatrix} \mathcal F_{[0]} Q &\quad & \mathcal E_{[0]}\end{bmatrix} = \im \begin{bmatrix}  Q &\quad & E \end{bmatrix} = \im \begin{bmatrix}
1 & 0& 0\\
0 & -\alpha & \beta \\
0 & 1 & 1 \\
0 & 0 & 0	
\end{bmatrix},
\]
such that $\mathcal V_{[0]}$ and its rank depend on whether $\alpha = -\beta$ is given or not. On the other hand the explicit constraints are
\begin{align*}
x_2-\beta(t) x_3 &= q_2-\beta(t) q_3, \\
x_4 &= q_4,
\end{align*}
such that
\begin{align*}
\ker E \cap S_{[0]} = \ker \begin{bmatrix}
	P \\
\mathcal W_{[0]} \mathcal F_{[0]}
\end{bmatrix} = \ker \begin{bmatrix}
	0 & 0 & 0 & 1\\
	0 & 1 & \alpha & 0  \\ \hline
	0 & 1 & -\beta & 0  \\
	0 & 0 & 0 & 1
\end{bmatrix}.
\end{align*}
Again, the dimension of this space depends on $\alpha+\beta$.
Therefore, we consider two cases:
\begin{itemize}
	\item $\alpha +\beta \neq 0 $: Then
	\[
	\ker E \cap S_{[0]} = \ker \begin{bmatrix}
	P \\
\mathcal W_{[0]} \mathcal F_{[0]}
\end{bmatrix} = \im \begin{bmatrix}
	1 \\
	0 \\
	0 \\
	0 \\
\end{bmatrix}, \quad T_1= \begin{bmatrix}
	1 & 0 & 0 & 0 \\
	0 & 0 & 0 & 0 \\
	0 & 0 & 0 & 0 \\
	0 & 0 & 0 & 0 \\
\end{bmatrix},
	\]
	and
	\[
	\ker \begin{bmatrix}
	Q \\
\mathcal V_{[0]} \mathcal F_{[0]}
\end{bmatrix} = \ker \begin{bmatrix}
1 & 0 & 0 & 0\\
	0 & -\alpha & 1 & 0 \\ \hline
	0 & 0 & 0 & 1 
\end{bmatrix}, \quad V_1= \begin{bmatrix}
	0 & 0 & 0 & 0 \\
	0 & \frac{1}{1+\alpha^2 } & \frac{\alpha}{1+\alpha^2 } & 0 \\
	0 & \frac{\alpha}{1+\alpha^2 } & \frac{\alpha^2}{1+\alpha^2 } & 0 \\
	0 & 0 & 0 & 0
\end{bmatrix}.
\]
The next steps lead to $V_2=0$, $T_2=T_1$, $V_3=0$,  $T_3=0$, such that
\[
r=2,\quad \theta_0=1, \quad \theta_1=1, \quad \theta_2=0, \quad \mu=3, \quad d=0.
\]
	\item $\alpha+\beta=0$: Then
	\[
	\ker E \cap S_{[0]} = \ker \begin{bmatrix}
	P \\
\mathcal W_{[0]} \mathcal F_{[0]}
\end{bmatrix} = \im \begin{bmatrix}
	1 & 0 \\
	0 & -\alpha \\
	0 &  1 \\
	0 &  0 \\
\end{bmatrix}, \quad T_1= Q,
\]
\end{itemize}
and
	\[
	\ker \begin{bmatrix}
	Q \\
\mathcal V_{[0]} \mathcal F_{[0]}
\end{bmatrix} = \ker \begin{bmatrix}
1 & 0 & 0 & 0\\
	0 & -\alpha & 1 & 0 \\ \hline
	0 & 1 & \alpha & 0\\
	0 & 0 & 0 & 1 
\end{bmatrix}, \quad V_1= 0.
\]
The next step leads to $V_2=0$, $T_2=0$, such that
\[
r=2, \quad \theta_0=2, \quad \theta_1=0, \quad \mu=2, \quad d=0.
\]

%%%%%%%%%%%%%%%%%%%%
In summary, all approaches lead to the same characteristic values \eqref{e5.theta} and reveal the same critical points at the zeros of $\alpha+\beta$.

We realize that we have a solvable DAE here, although all the procedures show critical points and in particular not all derivative-array functions have constant rank.  
By Definition \ref{d.harmless}, these crical points are harmless.
From the solution representation we recognize the precise dependence of the solution on the derivatives of the right-hand side $q$. 
Accordingly, a sharp perturbation index three is only valid on the subintervals where $\alpha+\beta$ has no zeros, and perturbation index two on subintervals where $\alpha+\beta$  is identically zero. 
This is very important when it comes to minimal smoothness and the input-output behavior from a functional analysis perspective.
\end{example}
%%%%%%%%%%%%%%%%%%%%%%%%%%%%%%%%%%%%%%%%%%%%%%%%%% e.1

\begin{example}[\cite{BCP89}, $d=1$, $r$ and $\theta_0$ change, in SCF]\label{e.1}\hfill
Given the function $\alpha(t)=\begin{cases}
                              0&\text{ for }t\in[-1,0)\\
                             t^{3}&\text{ for }t\in [0,1]
                              \end{cases}$\quad
we consider the pair $\{E,F\}$,
\begin{align*}
 E(t)=\begin{bmatrix}
    1&0&0\\0&0&\alpha(t)\\0&0&0
   \end{bmatrix},\quad
   F(t)=\begin{bmatrix}
    1&0&0\\0&1&0\\0&0&1
   \end{bmatrix},\quad t\in\mathcal I=[-1,1],
\end{align*}
and the associated DAE \eqref{1.DAE},
\begin{align*}
 x_1'+x_1&=q_1,\\
 \alpha x_3'+x_2&=q_2,\\
 x_3&=q_3.
\end{align*}

By straightforward  evaluations we know that the DAE has differentiation index two on the entire interval $[-1,1]$, but differentiation index one on the subinterval $[-1,0]$. 

Obviously the DAE forms a solvable system in the sense of Definition \ref{d.solvableDAE} with dynamical degree of freedom $d=1$ on the entire interval $[-1,1]$. 

The DAE is regular with index two in the sense of Definition \ref{d.2} on the subinterval $(0,1]$, and regular with index one on $[-1,0]$. Similarly, the perturbation index equals one on $[-1,0]$, but two on each closed subinterval of $(0,1]$.

Observe  that $\mathcal E_{[0]}(t)=E(t)$ changes the rank at $t=0$, but $\rank \mathcal E_{[1]}(t)=4$, $\mathcal E_{[2]}(t)=7$, $t\in [-1,1]$, and $r(t)-\theta(t)=d=1$ is constant.
\end{example}

%%%%

\begin{example}[\cite{BCP89} Example 2.4.3, $d$ constant, $r$ and $\theta_0$ change, not transferable into SCF]\label{e.2}\hfill
For the functions 
\[
\alpha(t)=\begin{cases}
                                0& \text{ for } t\in [-1,0)\\
                                t^{3}&\text{ for } t\in [0,1]
                               \end{cases}
 \text{ and }\quad 
 \beta(t)=\begin{cases}
            t^{3}&\text{ for } t\in [-1,0)\\
            0&\text{ for }t\in [0,1]
           \end{cases}
\]           
we
consider the DAE \eqref{1.DAE} with the coefficients
\begin{align*}
 E(t)=\begin{bmatrix}
   0&\alpha(t)\\\beta(t)&0
   \end{bmatrix},\quad
   F(t)=\begin{bmatrix}
    1&0\\0&1
   \end{bmatrix},\quad t\in\mathcal I=[-1,1].
\end{align*}
To each arbitrary smooth right-hand side $q$, the DAE has the unique solution 
\begin{align*}
 x_1&=q_1-\alpha q_2',\\
 x_2&=q_2-\beta q_1',
\end{align*}
so that it is solvable with zero dynamical degree of freedom. The DAE has obviously perturbation index two.

Observe  that $\mathcal E_{[0]}(t)=E(t)$ has a rank drop at $t=0$, but $\rank \mathcal E_{[1]}(t)=2$, $\rank \mathcal E_{[2]}(t)=4$, $t\in [-1,1]$.
The DAE has differentiation index two on the entire interval $[-1,1]$ and also on each subinterval.

In contrast, the basic reduction procedure from Section \ref{s.regular} indicates the point $t=0$ as critical. The DAE is regular with index two and $r=1, \theta_0=1, \theta_1=0, d=0$ on both subintervals $[-1,0)$ and $(0,1]$.
\end{example}

\begin{example}[$d=0$, $r$ changes, index 1 or 3, in SCF]\label{e.7}
With this example in SCF we illustrate that a change of $r$ leads to an in- or decrease the local index on subintervals that differ more than one.
Given the function\\
$\alpha(t)=\begin{cases}
                               0 &\text{ for } t \in [-1,0)\\
                               t^{3} &\text{ for } t\in [0,1]
                              \end{cases}$\quad
 we
consider the pair $\{E,F\}$,
\begin{align*}
 E(t)=\begin{bmatrix}
    0&\alpha(t)&0\\0&0&\alpha(t)\\0&0&0
   \end{bmatrix},\quad
   F(t)=\begin{bmatrix}
    1&0&0\\0&1&0\\0&0&1
   \end{bmatrix},\quad t\in\mathcal I=[-1,1],
\end{align*}
and the associated DAE \eqref{1.DAE},
\begin{align*}
 \alpha x_2'+x_1&=q_1,\\
 \alpha x_3'+x_2&=q_2,\\
 x_3&=q_3.
\end{align*}

By straightforward  evaluations we know that the DAE has differentiation index three on the entire interval $[-1,1]$, but differentiation index one on the subinterval $[-1,0]$. 

The DAE forms a solvable system in the sense of Definition \ref{d.solvableDAE} with zero dynamical degree of freedom $d=0$ on the entire interval $[-1,1]$. 

The DAE is regular with index three, $r=2, \theta_0=1, \theta_1=1, \theta_2=0$, and $d=0$ in the sense of Definition \ref{d.2} on the subinterval $(0,1]$, and it is regular with index one, $r=0, \theta_0=0$, and $d=0$ on $[-1,0]$. Similarly, the perturbation index equals one on $[-1,0]$, but three on each closed subinterval of $(0,1]$.

The rank of  $\mathcal E_{[0]}(t)$ and $\mathcal E_{[1]}(t)$ changes  at $t=0$ but $\mathcal E_{[2]}(t)$, $\mathcal E_{[3]}(t)$ have constant rank each. More precisely, we have 
\begin{align*}
\dim \ker\mathcal E_{[0]}(t)&=\Bigg\lbrace\quad\begin{matrix}
                                 1& \text{if }\alpha(t)\neq 0 \\ 
                                 3& \text{if }\alpha(t)= 0,\\
                                     \end{matrix}\\
 \dim \ker\mathcal E_{[1]}(t)&=\Bigg\lbrace\quad\begin{matrix}
                                 2& \text{if }\alpha(t)\neq 0 \\ 
                                 3& \text{if }\alpha(t)= 0,\\
                                     \end{matrix}\\
          \dim \ker\mathcal E_{[2]}(t) =  \dim \ker\mathcal E_{[3]}(t)&=3, \quad t\in\mathcal I=[-1,1].
\end{align*}
\end{example}

\subsection{A case study}\label{e.struc}
%%%%%%%%%%%%%%%%%%%%%%%%%%%%%%%%5e.struct
With the following case study we emphasize that monitoring the index of  DAEs is not sufficiently informative. For a deeper understanding of their properties, all characteristic values $r$ and $\theta_i$ should be considered.\\

For identity matrices $I_i\in \Real^{m_i \times m_i}$, $i=1,2,3$ and constant strictly upper triangular matrices $N_2 \in \Real^{m_2 \times m_2}$, $N_3 \in \Real^{m_3 \times m_3}$ of the special form
\[
	N_i=\begin{bmatrix}
		0 & 1 & \cdots & 0\\
		0 & 0 & \ddots \\
		\vdots &  & \ddots & 1\\
		0 & 0 & \cdots  & 0
	\end{bmatrix}, \quad i=2,3,
\]
 let us consider DAEs of the form
\[
\begin{bmatrix}
\alpha_1(t) I_d & 0 & 0\\
 0 & 	\alpha_2(t) N_1 & 0 \\
	0 & 0 & \alpha_3(t) N_2 
\end{bmatrix}\begin{bmatrix}
	x_1' \\
	x_2'\\
	x_3' 
\end{bmatrix} + 
\begin{bmatrix}
\beta_1(t) I_1 & 0 & 0\\
 0 & 	\beta_2(t) I_2 & 0 \\
	0 & 0 & \beta_3(t) I_3 
\end{bmatrix}\begin{bmatrix}
	x_1 \\
	x_2\\
	x_3 
\end{bmatrix} =\begin{bmatrix}
	q_1 \\
	q_2\\
	q_3 
\end{bmatrix},
\]
with $m=m_1+m_2+m_3$ and smooth functions $\alpha_i, \beta_i: \mathcal I \rightarrow \Real$. 
\begin{itemize} 
	\item We focus first on the functions $\beta_i$:
\begin{itemize}
\item Zeros of $\beta_1(t)$ are not critical at all.
	\item If $\beta_2(t_*)$ or $\beta_3(t_*)$ are zero for $t_* \in \mathcal I$, then $\left[ E(t_*) \ F(t_*)\right]$ has a trivial row. Then $\left\{E,F\right\}$ is not qualified on $\mathcal I$, cf. Definition \ref{d.qualified} and the necessary solvability condition \eqref{eq:fullrank} is violated as in Example \ref{e.6}.
	\end{itemize}
\item Let us suppose now that $\beta_2$ and $\beta_3$ have no zeros and focus on $\alpha_1$:
\begin{itemize}
	\item Zeros of $\alpha_1(t)$ obviously cause a singular ODE $\alpha_1(t)x_1' + \beta_1(t)x_1=q_1$.
	\item If $\alpha_1$ has no zeros, then the degree of freedom is constant $d=m_1$ regardless of whether the $\alpha_i$, $i=2,3$, have zeros or not.
	\end{itemize}
\item Let us suppose now that $\alpha_1$, $\beta_2$ and $\beta_3$ have no zeros and focus on $\alpha_2$, $\alpha_3$:
\begin{itemize}
	\item For $\alpha_2(t)\neq 0$, $\alpha_3(t) \neq 0$ for all $ t \in \mathcal I$, the DAE is regular  with index  $\mu=\max \left\{m_2, m_3  \right\}$.
	\item For $m_2\geq m_3$ and $\alpha_2(t)\neq 0$  for all $ t \in \mathcal I$, the DAE has differentiation index $\mu=m_2$ and all points $t_*$ such that $\alpha_3(t_*)=0$, but $\alpha_3$ does not identically vanish in a neighborhood of $t_*$, are harmless critical points.
	\item For $m_2 > m_3$, all points $t_*$ such that $\alpha_2(t_*)=0$, but $\alpha_2$ does not identically vanish in a neighborhood of $t_*$, are harmless critical points. 
	
\item  For $m_2 > m_3$,	if $\alpha_2$ vanishes identically on a subinterval $\mathcal I_*\subset\mathcal I$, then  the DAE restricted to this subinterval has differentiation index $\mu\leq m_3<m_2$. 
\item In general it may happen, if both $\alpha_2$ and $\alpha_3$ vanish on a subinterval, that there the index reduces to one.
\end{itemize}
\end{itemize}

In general, for $\alpha_i(t)\neq0$, $\beta_j(t)\neq 0$ for $i=1,2,3$ and $j=2,3$, by construction it holds
\begin{align*}
r&=m_1+(m_2-1)+(m_3-1),\\
\mu&=\max \{m_2,m_3\}, \\
\theta_i &=
\begin{cases}
2 & \text{ for } i \leq \min\{m_2-2,m_3-2\}, \\
1 &\text{ for }  \min\{m_2-2,m_3-2\} < i \leq \mu-2, \\
0 & \text{ else},
\end{cases}\\
d&= m_1. 
\end{align*} 
For instance, for $m_1=4, m_2=2, m_3=3$ this means
\begin{align*}
r&=7,\ \quad 
&\mu&=3, \ \quad\
&\theta_0 &= 2,\\
\theta_1 &= 1,\ \quad
&\theta_2 &=0,\ \quad
&d&= 4.
\end{align*} 
In general, zeros of $\alpha_i$ or $\beta_j$ imply a change of these characteristic values.\\

For general linear DAEs, the components are intertwined in a complex manner, such that it is not possible to guess directly whether zeros of some coefficients in $\{E,F\}$ are critical or not. However, monitoring these characteristic values may provide crucial indications.

%%
%%%%%%%%%%%%%%%%%%

%%%%%%%%%%%%%%%%%%%%%%%%%%%%%%%%%%%%%%%%%%%%%%%%%%%%
%%%%%%%%%%%%%%%%%%%%%%%%%%%%%%%%%%%%%%%%%%%%%%%%%%%%

%%%%%%%%%%
\section{Main equivalence theorem concerning regular DAEs and further comments on regularity and index notions}\label{s.Notions}
%%%%%%%%%%%
\subsection{Main equivalence theorem concerning regular DAEs}\label{sec:MainTheorem}

We finally summarize the main equivalence results of the preceding sections in  statements for regular pairs $\{E,F\}$ and associated DAEs \eqref{DAE0} on the interval $\mathcal I$. Each of the involved equivalent DAE frameworks is based on a number of specific characteristics. The  characteristic values correspond to requirements for constant dimensions of subspaces or ranks of matrix functions. 
Recall that we always assume for regularity that the matrix functions $E, F : \mathcal I\rightarrow \Real^{m\times m}$ are sufficiently smooth and $E$ has constant rank $r$. 
\medskip

In the previous chapters, we precisely described the relationships between the individual characteristics of each concept and the $\theta$-values \eqref{theta},
\[
  \theta_{0}\geq\cdots\geq\theta_{\mu-2}>\theta_{\mu-1}=0,
\]
introduced in our basic regularity Definition \ref{d.2}.
The following main theorem emphasizes the universality of the $\theta$-characteristic, which is why we take the liberty of calling them and $r$ {\it canonical characteristics}, which for regular DAEs in particular indicate  the dynamical degree of freedom $d=\dim S_{can}$  and the inner structure of the canonical subspace $N_{can}$, cf.\ Remark \ref{r.pencil}. We claim and highlight that the characteristics $\theta_i$ can be found regardless of which DAE concept we start from.
\medskip

 Let us briefly consider the easier cases $\mu=0,1$:
\begin{itemize}
	\item For constant $r=m$ the regular DAE is a regular implicit ODE, and the index is $\mu=0$. We can interpret this as $\theta_i=0$ for all $i$ and  $d=m$.
  \item For constant $r<m$ the condition $\theta_0=0$ is equivalent to $\mu=1$. We interpret this as  $\theta_i=0$ for all $i$ and $d=r<m$. Then we have a regular index-one DAE and a strangeness-free DAE, respectively.
\end{itemize}

Owing to Proposition \ref{p.index1} we know that the DAE associated to the pair $\{E, F\}$ has differentiation index one if and only if the pair is  regular with index one in the sense of Definition \ref{d.2}. Moreover, all  further index-1 concepts are consistent with each other, too, which is well-known. To enable simpler formulations, we now turn to the case that the index is higher than or equal to two.

For regular DAEs with index $\mu\geq 2$ we have seen that $\theta_0>0$ follows by definition, as will be specified in the following. For the sake of uniformity, in general for $i>\mu-2$ we set $\theta_i=0$, cf.\ Remark \ref{r.inf}.
 
%%%%%%%%%%%%%%%%%%%%%%
%%%%%%%%%%%%%%%%%%%%%%% 

\begin{theorem}[Main Equivalence Theorem]\label{t.Sum_equivalence}
Let $E, F:\mathcal I\rightarrow\Real^{m\times m}$ be sufficiently smooth, and $\mu\in \Natu$, $\mu\geq2$.

\textbf{Part A (Equivalence):}
The following 13 assertions are equivalent in the sense that the characteristic values (constants) of each of the statements can be uniquely reproduced by those of any other statement:
\begin{description}
\item[\textrm{(0)}] 
The pair $\{E,F\}$ is regular on $\mathcal I$ with index $\mu\in \Natu$ according to Definition \ref{d.2}, with the associated characteristics 
\[
 r=\rank E,\ \theta_{0}\geq\cdots\geq\theta_{\mu-2}>\theta_{\mu-1}=0,\quad \theta_i:=\dim (\ker E_i\cap S_i).
\]
\item[\textrm{(1)}] 
 The  pair $\{E,F\}$ is regular on $\mathcal I$ with index $\mu\in \Natu$ according to Definition \ref{d.2a}, with the  associated characteristics  
\[
r=r_0>r_1> \ldots > r_{\mu-1}=r_{\mu}, \quad r_i:=\rank E_i.
\]
\item[\textrm{(2)}] 
The  pair $\{E,F\}$ is regular on $\mathcal I$ with elimination index $\mu^E=\mu$ according to Definition \ref{d.Elim}, with the associated characteristics
\[
r=r^E_0>r^E_1> \ldots >r^E_{\mu-1}=r^E_{\mu},\quad r^E_i:=\rank E^E_i.
\]
\item[\textrm{(3)}] The pair $\{E,F\}$ is regular on $\mathcal I$ with dissection  index $\mu^D=\mu $ according to Definition \ref{d.diss}, with the associated characteristics
\begin{align*}
 r=r^D_0\leq r^D_1\leq \cdots \leq r^D_{\mu-1}<r^D_{\mu} =m,\quad r^D_i:=\rank E^D_i=r^D_{i-1}+a^D_{i-1}.
\end{align*}
 \item[\textrm{(4)}] The  pair $\{E,F\}$ is regular on $\mathcal I$ with strangeness index $\mu^S=\mu-1$ according to Definition \ref{d.strangeness}, with associated characteristic tripels 
 \[
  (r^S_i,a^S_i,s^S_i),\ i=0,\ldots,\mu-1,\;r_0^S=r,\, s^S_{\mu-1}=0.
 \]
\item[\textrm{(5)}] The pair $\{E,F\}$ is regular on $\mathcal I$ with tractability index $\mu^T=\mu$ according to Definition \ref{d.trac}, with associated characteristics 
\[
 r=r^T_0\leq r^T_1\leq\dots\leq r^T_{\mu-1}<r^T_{\mu}=m,\quad r^T_i=\rank G_i.
\]
\item[\textrm{(6)}]
The  pair $\{E,F\}$ can be equivalently transformed to block-structured Standard Canonical Form \eqref{blockstructure}, in which the nilpotent matrix function $N(t)=(\tilde N_{ij}(t))^{\mu}_{i,j=1}, \,\tilde N_{i,j}:\mathcal I\rightarrow\Real^{\tilde l_i\times \tilde l_j}$, is block-upper-triangular, has nilpotency index $\mu$ on $\mathcal I$,
 and has full row-rank blocks on the secondary block diagonal, with characteristics
\[ 1\leq\tilde l_1\leq\cdots\leq\tilde l_{\mu},\; \tilde l_i=\rank \tilde N_{i,i+1}, \,i=1,\ldots,\mu-1,\, \tilde l_{\mu}=m-r, r:=\rank E.
\]
\item[\textrm{(7)}]
The  pair $\{E,F\}$ can be equivalently transformed to block-structured Standard Canonical Form \eqref{blockstructure}, in which the nilpotent matrix function $N(t)=(N_{ij}(t))^{\mu}_{i,j=1}, \, N_{i,j}:\mathcal I\rightarrow\Real^{l_i\times l_j}$, is block-upper-triangular, has nilpotency index $\mu$ on $\mathcal I$,
 and has full column-rank blocks on the secondary block diagonal, with characteristics
\[ l_1\geq\cdots\geq l_{\mu},\; l_{1}=m-r,\,  l_{i+1}=\rank  N_{i,i+1}, \,i=1,\ldots,\mu-1, r:=\rank E.
\]
\item[\textrm{(8)}] For the  pair $\{E,F\}$ the DAE \eqref{1.DAE} has regular differentiation index $\mu^{rdiff}=\mu$ on $\mathcal I$ according to Definition \ref{d.G1}, with associated characteristics 
\[
 r_{[0]}=r,\, r_{[i]}< r_{[i-1]}+m,\, i=1,\ldots, \mu-2,\, r_{[\mu-1]}= r_{[\mu]}+m,\: r_{[i]}:=\rank \mathcal E_{[i]}.
\]
\item[\textrm{(9)}] For the  pair $\{E,F\}$ the DAE \eqref{1.DAE} has projector-based differentiation index $\mu^{pbdiff}=\mu$ on $\mathcal I$ according to Definition \ref{d.pbdiffA}, with associated characteristics  
\[
 r_{[0]}=r,\, r_{[i]}< r_{[i-1]}+m,\, i=1,\ldots, \mu-2,\,\: r_{[i]}:=\rank \mathcal E_{[i]},
\]
\[
 \rho_{0}\leq\cdots\leq \rho_{\mu-2}<\rho_{\mu-1}=m, \;\rho_{i}:=m-\dim \ker E\cap S_{[i]}.
\]

\item[\textrm{(10)}] For the  pair $\{E,F\}$ the DAE \eqref{1.DAE} has projector-based differentiation index $\mu^{pbdiff}=\mu$ on $\mathcal I$ according to Definition \ref{d.pbdiffB}, with associated characteristics
\[
 r^{\mathcal B}_{[0]}=r,\, r^{\mathcal B}_{[i]}< r^{\mathcal B}_{[i-1]}+m,\, i=1,\ldots, \mu-2,\, r^{\mathcal B}_{[\mu-1]}= r^{\mathcal B}_{[\mu]}+m,\: r^{\mathcal B}_{[i]}:=\rank \mathcal B_{[i]}.
\]
 
\item[\textrm{(11)}] For the  pair $\{E,F\}$ the DAE \eqref{1.DAE} has differentiation index $\mu^{diff}=\mu$ on $\mathcal I$ according to Definition \ref{d.diff}
 and, addionally, also the matrix functions $\mathcal E_{[i]}$, $i<\mu$,  feature constant on $\mathcal I$, so that  the characteristics are
 \[
 r_{[0]}=r,\, r_{[i]}< r_{[i-1]}+m,\, i=1,\ldots, \mu-2,\, r_{[\mu-1]}= r_{[\mu]}+m,\: r_{[i]}:=\rank \mathcal E_{[i]}.
\]
 \item[\textrm{(12)}] For the  pair $\{E,F\}$ the DAE \eqref{1.DAE} satisfies the Strangeness-Free-Hypothesis \eqref{SHyp} on $\mathcal I$ with $\hat{\mu}=\mu-1$   with associated characteristics $\hat a$ and $\hat d=m-\hat a$,
 and, addionally,
 also the matrix functions $\mathcal E_{[i]}$, $i<\hat{\mu}$,  feature constant on $\mathcal I$, so that  the characteristics are
 \[
 r_{[0]}=r,\, r_{[i]}< r_{[i-1]}+m,\, i=1,\ldots, \mu-2,\, r_{[\mu-1]}= \mu m+\hat a,\: r_{[i]}:=\rank \mathcal E_{[i]}.
\]
\end{description}
\textbf{Part B (Relations between characteristics):}

 Let the DAE \eqref{1.DAE} be regular with index $\mu$ in the sense of Definition \ref{d.2} or one of the equivalent statements of \textbf{Part A}. Then the following relations concerning the diverse characteristic values  are valid:
\begin{align*}
r_0 &= r_{0}^E=r_0^S=r_0^D=r_0^T=r,\\
 l_{1}&= m-r, \quad   \quad \tilde{l}_{\mu}=m-r,
\end{align*}
and for $i=1, \ldots, \mu-1$
\begin{align*}
r_i&= r_{i}^E=r_i^S=r-\sum_{j=0}^{i-1} \theta_j, \\
r_i^D&=r_i^T=\rho_{i-1}=m-\theta_{i-1},\\
s_i^S&= \theta_i, \quad 
a_i^S=m-r+\sum_{j=0}^{i-1} \theta_j-\theta_i,\\
l_{i+1}&=\theta_{i-2}, \quad \tilde{l}_i= \theta_{\mu-i-1},\\
r_{[i]}&=\rank \mathcal B_{[i]} = \rank \mathcal D_{[i]} = \rank \mathcal E_{[i]}=km+r-\sum_{j=0}^{i-1}\theta_{j}.
\end{align*}
Conversely, for $i=0, \ldots, \mu-1$,  we obtain
\begin{align*}
\theta_i&=r_i-r_{i+1}=s_i^S=r_i^S-r_{i+1}^S=r_i^E-r_{i+1}^E=m-r_{i+1}^T=m-r_{i+1}^D \\
 &=m-\rho_i =r_{[i]}-r_{[i+1]}+m,
\end{align*}
which leads to  $\theta_{\mu-1}=0$, and
\begin{align*}
\theta_i &=l_{i+2}=\tilde{l}_{\mu-i-1}, \quad \mbox{for} \quad i=0, \ldots, \mu-2. 
\end{align*}
\textbf{Part C (Description of resulting main features):}

Let the DAE \eqref{1.DAE} be regular with index $\mu$ in the sense of Definition \ref{d.2} or one of the equivalent statements of \textbf{Part A}. Then the following descriptions concerning the dynamical degree of freedom $d$ of the DAE and the number of constraints $a$ are given:
\begin{align*}
 d&=r-\sum_{i=0}^{\mu-2} \theta_i,\\
 a&=m-d=m-r+\sum_{i=0}^{\mu-2} \theta_i,\\
 a&=\hat a=\mu m-r_{[\mu-1]}=\sum_{i=1}^{\mu}l_i=\sum_{i=1}^{\mu}\tilde{l_i},\\ d&=\hat d=m-\hat a = r_{[\mu-1]}-(\mu-1)m=r_{[\mu]}-\mu m.
\end{align*}
\end{theorem}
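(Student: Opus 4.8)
The plan is to assemble the theorem from the equivalence results already established in Sections \ref{s.Solvab} and \ref{s.notions} rather than re-proving anything from scratch; the work is one of organisation and bookkeeping. For \textbf{Part A}, I would proceed by linking each of the thirteen statements back to statement \textrm{(0)} (regularity with index $\mu$ in the sense of Definition \ref{d.2}), since Definition \ref{d.2} is the hub through which everything has been routed. First I would dispose of the two trivially equivalent reformulations: \textrm{(1)} is equivalent to \textrm{(0)} because Definition \ref{d.2a} and Definition \ref{d.2} were already shown to coincide (the relation $r_{i+1}=r_i-\theta_i$ translates the $r_i$-characteristics into the $\theta_i$-characteristics and back), and \textrm{(2)} is equivalent via the discussion in Subsection \ref{subs.elimination}, in particular the observation \eqref{elimchar} that $\mu^E=\mu$ and $r_j^E=r-\sum_{i=0}^{j-1}\theta_i$. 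The next block, \textrm{(3)}, \textrm{(4)}, \textrm{(5)}, is exactly Theorem \ref{t.equivalence} together with the explicit characteristic-value formulas from Theorem \ref{t.indexrelation}, so I would simply cite those. Statements \textrm{(6)} and \textrm{(7)} are the two structured SCFs: \textrm{(7)} follows from Theorem \ref{t.SCF} (regular $\Rightarrow$ full-column-rank structured SCF) and its converse via Proposition \ref{p.STform}(1), while \textrm{(6)} follows from Proposition \ref{p.STform}(2) in one direction and Corollary \ref{c.SCT}(1) (which converts a full-row-rank structured form into a full-column-rank one by an equivalence transformation) in the other. Statements \textrm{(8)}--\textrm{(12)} are precisely the content of Theorem \ref{t.equivalence_array}, so for these I would invoke that theorem, noting that \textrm{(9)} and \textrm{(10)} are the two equivalent formulations of the projector-based differentiation index discussed around Definitions \ref{d.pbdiffA} and \ref{d.pbdiffB}.

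For \textbf{Part B}, the strategy is to read off every listed identity from the formulas compiled in Theorems \ref{t.indexrelation}, \ref{t.rdiff}, \ref{t.projector_based_diff}, \ref{t.theta_relation_array}, and Corollary \ref{c.SCT}. Concretely: the strangeness-index relations $s_i^S=\theta_i$, $a_i^S=m-r+\sum_{j=0}^{i-1}\theta_j-\theta_i$, $r_i^S=r-\sum_{j=0}^{i-1}\theta_j$ come from Theorem \ref{t.indexrelation}(1); the tractability and dissection relations $r_i^T=r_i^D=m-\theta_{i-1}$ from Theorem \ref{t.indexrelation}(2),(3); the block sizes $l_{i+1}=\theta_{i-1}$, $l_1=m-r$ and the row-rank counterparts $\tilde l_i=\theta_{\mu-i-1}$, $\tilde l_\mu=m-r$ from Theorem \ref{t.SCF}, Proposition \ref{p.STform} and Corollary \ref{c.SCT}; the array ranks $r_{[i]}=\rank\mathcal B_{[i]}=\rank\mathcal D_{[i]}=\rank\mathcal E_{[i]}=im+r-\sum_{j=0}^{i-1}\theta_j$ and the dimensions $\rho_i=m-\theta_i$ from Theorems \ref{t.theta_relation_array}, \ref{th.ranks} and \ref{t.projector_based_diff}. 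The converse formulas expressing $\theta_i$ in terms of each family are then just inversions of these linear relations, exactly as displayed in Theorem \ref{t.theta_relation_array}. One small point of care: in the printed statement the block-size indices must match the convention used in Proposition \ref{p.STform} and Corollary \ref{c.SCT} — I would double-check that the shift (whether $l_{i+1}=\theta_{i-1}$ or $l_{i+2}=\theta_i$) is written consistently between Part B and Part C, since the two parts display it differently and a reader will expect them to agree.

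For \textbf{Part C}, the degree-of-freedom identities $d=r-\sum_{i=0}^{\mu-2}\theta_i$ and $a=m-d$ are Corollary \ref{c.degree} and Theorem \ref{t.solvability}(2); the identification $a=\hat a=\mu m-r_{[\mu-1]}$ and $d=\hat d=r_{[\mu-1]}-(\mu-1)m=r_{[\mu]}-\mu m$ comes from Theorem \ref{t.diff2} combined with the constant-rank formula $r_{[\mu-1]}=(\mu-1)m+d$ of Theorem \ref{th.ranks}(2); and $a=\sum_{i=1}^{\mu}l_i=\sum_{i=1}^{\mu}\tilde l_i$ follows because both the $l_i$ and the $\tilde l_i$ are just the $\theta$-values (plus $m-r$) summed, i.e.\ $\sum l_i=(m-r)+\sum_{i=0}^{\mu-2}\theta_i=a$, and likewise for $\tilde l_i$.

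The main obstacle I anticipate is not mathematical depth but consistency of indexing and notation across the many cited results: each cited source (Definition \ref{d.2a}, the elimination section, the strangeness/tractability/dissection sections, the two structured SCFs, the three derivative-array concepts) uses its own labelling of characteristic values, and the theorem asserts that all thirteen are \emph{mutually} determined. Establishing this cleanly requires that, for each pair of statements, one exhibits the explicit bijection on characteristic data — which in every case factors through the $\theta_i$ and $r$. I would therefore structure the proof as: (i) show each statement is equivalent to \textrm{(0)} with an explicit dictionary to $\{r,\theta_0,\dots,\theta_{\mu-2}\}$, citing the relevant earlier theorem for each; (ii) observe that mutual determinability then follows by composing dictionaries; (iii) collect the dictionaries into the displays of Parts B and C. The only genuinely new verification — and it is routine — is checking that the "additionally constant rank" clauses in \textrm{(11)} and \textrm{(12)} are automatically satisfied for regular DAEs, which is Theorem \ref{th.ranks}(1), and that no information is lost in the two $\mu^{pbdiff}$ formulations, which was already argued in Subsection \ref{subs.pbdiff}.
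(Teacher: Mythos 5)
Your plan coincides with the paper's own proof: Part A is assembled by routing each statement through \textrm{(0)} via Theorem \ref{t.equivalence}, Theorem \ref{t.equivalence_array}, Theorem \ref{t.SCF} with Proposition \ref{p.STform} and Corollary \ref{c.SCT}, the remark after Definition \ref{d.2a}, and the discussion following Definitions \ref{d.pbdiffA} and \ref{d.pbdiffB}, while Parts B and C are read off from Theorem \ref{t.indexrelation}, Theorem \ref{t.theta_relation_array}, Corollary \ref{c.SCT}, and Corollary \ref{c.degree}, exactly as you propose. Your side remark about the index shift in the $l_{i+1}$ versus $l_{i+2}$ formulas is a fair editorial catch but does not change the argument.
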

\begin{proof}

\textbf{Part A:}
 
 \begin{itemize}
	 \item  The equivalence of two of the five statements \textrm{(0)}, \textrm{(2)}, \textrm{(3)}, \textrm{(4)},  and \textrm{(5)}, is given by Theorem \ref{t.equivalence}. As main means of achieving this serve \cite[Theorem 4.3]{HaMae2023} and Proposition \ref{p.STform}.

\item The equivalence of two of the statements  \textrm{(0)},  \textrm{(8)}, \textrm{(9)},  \textrm{(11)},  \textrm{(12)} has been provided by Theorem \ref{t.equivalence_array}. An important step for this proof is the equivalence of  \textrm{(0)} and  \textrm{(8)} that has been verified by Theorem \ref{t.rdiff} \textrm{(1)}.
 
\item The equivalence of  \textrm{(0)} and \textrm{(1)} has been  verified in Section \ref{s.regular} right after Definition \ref{d.2a}.

\item  The equivalence of  \textrm{(0)} and \textrm{(6)} as well as that of \textrm{(0)} and \textrm{(7)}
are implications of Theorem \ref{t.SCF} and Proposition \ref{p.STform}.

\item  The equivalence of  \textrm{(9)} and  \textrm{(10)} has been shown in Section \ref{subs.pbdiff} right after Definition \ref{d.pbdiffA}.

 \end{itemize}
 
 \textbf{Part B and Part C:}
 These are straightforward summaries of the related findings of Theorem \ref{t.indexrelation}, Corollary \ref{c.SCT}, Theorem \ref{t.theta_relation_array}, and Corollary \ref{c.degree}.
\end{proof}

Obviously, each one of the equivalent statement \textrm{(0)}-\textrm{(12)} from Theorem \ref{t.Sum_equivalence} implies both statements \textrm{(1)} and \textrm{(2)} from Theorem \ref{t.diff2}, but  the reverse direction does not apply, cf.\ discussion in Section \ref{subs.equivalence} and examples in Section \ref{s.examples}.

Indeed, \textrm{(1)}-\textrm{(2)} from Theorem \ref{t.diff2} only require  constant $r_{[\mu]}$ and $d$. 
\bigskip

In case of regularity the dynamical degree of freedom $d=r-\sum_{i=0}^{\mu-2} \theta_i$ is precisely  the dimension 
of the flow-subspace of the DAE $S_{can}$.  This basic canonical subspace is characterized in different manners in  literature, whereas  we emphasized
\begin{itemize}
	\item $S_{can} = \im C$ (see Section \ref{s.regular}),
	\item $S_{can} = \im \Pi_{can}$ (see Section \ref{subs.tractability}),
	\item $S_{can} =S_{[\mu-1]}$ (see Section \ref{subs.SCFarrays}).
\end{itemize} 
The second canonical subspace $N_{can}$ is defined to be the so-called canonical complement to the flow-subspace,
\[
  S_{can}(t)\oplus N_{can}(t)=\Real,\quad t\in \mathcal I.
\]
Actually $N_{can}$ accommodates important information about the structure of the DAE and the necessary differentiations. This is closely related to the perturbation index. Only an in a way  uniform inner structure of that part of the DAE  which resides in the subspace  $N_{can}$  ensures a uniform perturbation index over the given interval. We know the representations:
\begin{itemize}
	\item $N_{can} = \ker C^*_{adj}E$ (see Proposition \ref{p.adjoint}),
	\item $N_{can} = \ker \Pi_{can}=\ker \Pi_{\mu-1}= N_0+N_1+\cdots+N_{\mu-1}$ (see Section \ref{subs.tractability}).
\end{itemize} 
If the DAE is in standard canonical form
\begin{align}\label{SCFDAEneu}
 \begin{bmatrix}
  I_d&0\\0&N(t)
 \end{bmatrix} x'(t)+
\begin{bmatrix}
  \Omega(t)&0\\0&I_a
 \end{bmatrix} x(t)=q(t),\quad t\in\mathcal I,
\end{align}
where $N$ is strictly upper triangular, then the canonical subspaces are simply
\begin{align*}
 S_{can}=\im \begin{bmatrix}
              I_d\\0
             \end{bmatrix},\quad
N_{can}=\im \begin{bmatrix}
              0\\I_a
             \end{bmatrix},
\end{align*}
and the behaviour of the particular solution components proceeding in $N_{can}$ is  governed by the properties of the matrix function $N$. Clearly, if $N$ is constant, which means that the DAE is in strong standard canoncal form, then the DAE is regular
with index $\mu$ and characteristics $\theta_0\geq\cdots\geq\theta_{\mu-2}>\theta_{\mu-1}=0$, $\mu$ is the nilpotency index of the matrix $N$ and the Jordan normal form of the matrix $N$ shows\footnote{Compare also Remark \ref{r.pencil} and Section \ref{sec:SCF}.}
 \begin{align*}
  &\theta_0 \quad \text{Jordan blocks of order } \geq 2,\\
  &\theta_1 \quad \text{Jordan blocks of order } \geq 3,\quad\\
	&...\\
   &\theta_{\mu-3}\quad \text{Jordan blocks of order }  \geq \mu-1,\\
  &\theta_{\mu-2}\quad \text{Jordan blocks of order } \mu.
 \end{align*}
Eventually, each DAE being transformable into strong standard canonical form is regular, and its characteistics are determined by the structure of the nilpotent matrix $N$. 
This gives the characteristic  values
\[
 r  \text{ and }\quad \theta_0\geq\cdots\geq \theta_{\mu-2}>\theta_{\mu-1}=0
\]

a rather phenomenological background apart from special approaches and the further justification to call them canonical. The latter is all the more important if our following conjecture proves correct.
\medskip

\begin{conjecture}\label{conjecture} Let $E, F:\mathcal I\rightarrow\Real^{m\times m}$ be sufficiently smooth. If the pair $\{E,F\}$ is regular with index $\mu\geq2$ then it is transformable into strong standard canonical form and the Jordan normal form of the matrix $N$ is exactly made of
 \begin{align*}
  &m-r-\theta_0 \quad \text{Jordan blocks of order } 1,\\
  &\theta_0-\theta_1 \quad \text{Jordan blocks of order } 2,\\
  &\theta_1-\theta_2 \quad \text{Jordan blocks of order } 3,\quad \\
	&...\\
   &\theta_{\mu-3}-\theta_{\mu-2}\quad \text{Jordan blocks of order } \mu-1,\\
  &\theta_{\mu-2}\quad \text{Jordan blocks of order } \mu.
 \end{align*}
 \end{conjecture}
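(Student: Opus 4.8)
\textbf{Proof proposal for Conjecture \ref{conjecture}.}

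The plan is to reduce the conjecture to the constant-coefficient structure theory already summarized in Remark \ref{r.pencil} and the Weierstra{\ss}--Kronecker form, via the block-structured SCF of Theorem \ref{t.SCF}. By Theorem \ref{t.Sum_equivalence}, regularity with index $\mu\geq 2$ and canonical characteristics $r$ and $\theta_0\geq\cdots\geq\theta_{\mu-2}>\theta_{\mu-1}=0$ is equivalent to transformability into the block-structured SCF \eqref{blockstructureSCF} with $\kappa=\mu$, where $N=(N_{ij})_{i,j=1}^{\mu}$ is strictly block upper triangular with secondary-diagonal blocks $N_{i,i+1}$ of full column rank $l_{i+1}$, and $l_1=m-r$, $l_{i+1}=\theta_{i-1}$ for $i=1,\ldots,\mu-1$. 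The combinatorial bookkeeping then reads: the number of Jordan blocks of size exactly $j$ in the constant Weierstra{\ss} case would be $\#\{\text{size}=j\}=(l_j-l_{j+1})$ with the convention $l_{\mu+1}=0$, i.e. $m-r-\theta_0$ blocks of size $1$, $\theta_{i-1}-\theta_i$ blocks of size $i+1$ for $i=1,\ldots,\mu-2$, and $\theta_{\mu-2}$ blocks of size $\mu$. So \textbf{Part C} of the conjecture is exactly the counting that the $l_i$ and $\tilde l_i$ already encode; what must actually be proved is that the \emph{variable-coefficient} strictly-upper-triangular $N$ in \eqref{blockstructureSCF} is equivalent --- in the DAE sense \eqref{1.Equivalence} --- to a \emph{constant} nilpotent matrix with that Jordan structure.

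First I would isolate the core lemma: a matrix function $N:\mathcal I\to\Real^{l\times l}$ that is pointwise strictly block upper triangular with full-column-rank secondary-diagonal blocks $N_{i,i+1}$ of constant sizes $l_i\times l_{i+1}$ (so $l_1\geq\cdots\geq l_\mu\geq 1$, $l=\sum l_i$) admits pointwise nonsingular, sufficiently smooth $L,K:\mathcal I\to\Real^{l\times l}$ with $LNK=\hat N$, $LK+LNK'=I_l$, where $\hat N$ is the \emph{constant} nilpotent matrix in Jordan form having $l_j-l_{j+1}$ Jordan blocks of size $j$ ($j=1,\ldots,\mu$, $l_{\mu+1}:=0$). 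This is the heart of the matter. The strategy for the lemma is an explicit column-by-column ``straightening'' of the Jordan chains, carried out by induction on $\mu$ paralleling the reduction already used in the proof of Proposition \ref{p.STform}(1): at the top level, because $N_{12}$ has full column rank, one can choose a smooth basis that turns the first two block-rows into the standard shift pattern $\hat N_{12}=[\,0\;\;I_{l_2}\,]^{\!*}$-type block together with the required solid $I$'s appearing off the diagonal, absorbing the $K'$-correction term $LNK'$ into the lower-order part exactly as in the computation that produced $N_1=(I+\mathring N_1')^{-1}\mathring N_1$ there; then one iterates on the reduced $(l-l_1)$-dimensional pair. Lemma \ref{l.SUT1} (already invoked in that proof) guarantees the reduced object keeps strictly-upper-triangular form with the inherited secondary blocks, so the induction closes. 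Once every secondary block is normalized to an identity-type block and the spurious entries above are cleared by a final smooth triangular transformation (again using only that one is solving triangular linear systems with invertible diagonal, hence smoothly solvable), what remains is the constant Jordan matrix $\hat N$, and its block-size census is the stated one.

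With the lemma in hand, the assembly is short: apply it to the $N$ appearing in \eqref{blockstructureSCF}, extend the transformation by $\operatorname{diag}(I_d,\cdot)$ exactly as in the proof of Corollary \ref{c.SCT}(2) to handle the $I_d$/$\Omega$ block (and, if one wants $\Omega$ removed too, conjugate the $x_1$-block by the fundamental matrix of $u'+\Omega u=0$ as in Remark \ref{r.Scanform}, which is a smooth pointwise-nonsingular $K_{11}$), and read off that $\{E,F\}$ is equivalent to the strong SCF $\operatorname{diag}(I_d,\hat N)\,x' + \operatorname{diag}(\Omega\text{ or }0,I_a)\,x=q$ with $\hat N$ constant. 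The Jordan block counts of $\hat N$ are then precisely $m-r-\theta_0$ of size $1$, $\theta_{i-1}-\theta_i$ of size $i+1$ for $i=1,\ldots,\mu-2$, and $\theta_{\mu-2}$ of size $\mu$, which is the claim; $\mu$ is the nilpotency index since $\theta_{\mu-2}>0$.

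\textbf{Expected main obstacle.} The delicate point is the smoothness and global (over all of $\mathcal I$) solvability of the straightening transformation in the lemma. Pointwise, choosing bases that normalize each full-rank secondary block is routine; but one must produce $L,K$ that are as smooth as the coefficients (only $\mathcal C^m$ / $\mathcal C^\mu$ is assumed, not $\mathcal C^\infty$ or real-analytic) and nonsingular \emph{throughout} $\mathcal I$, and one must make sure the $LNK'$ term --- which is what distinguishes DAE equivalence \eqref{1.Equivalence} from mere pointwise similarity --- can genuinely be absorbed without destroying the achieved normal form. In the constant-rank setting this is handled by the same mechanism as in Proposition \ref{p.STform}: the corrections always amount to multiplying by $(I+(\text{strictly triangular smooth}))^{-1}$, which is smooth and pointwise invertible; so the obstacle is really one of careful, honest bookkeeping of these corrections through the induction rather than a conceptual gap. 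A secondary subtlety is that the clearing of the above-secondary-diagonal entries must be shown not to reintroduce rank drops in the secondary blocks --- again controlled by triangularity --- so the proof must keep track of the block structure invariantly, which is exactly the role Lemma \ref{l.SUT1} plays.
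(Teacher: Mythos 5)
First, a point of context: the paper does not prove this statement — it is explicitly left open as Conjecture \ref{conjecture} (``...if our following conjecture proves correct''). So there is no proof in the paper to compare against; what matters is whether your argument actually closes the question.

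Your reduction of the problem is the right one, and the easy parts are correct: Theorem \ref{t.SCF} gives the block-structured SCF with $l_1=m-r$, $l_{i+1}=\theta_{i-1}$ and full-column-rank secondary blocks; the block-diagonal extension of Corollary \ref{c.SCT}(2) handles the $I_d/\Omega$ part; and the Jordan census $l_j-l_{j+1}$ blocks of size $j$ follows from $\dim\ker \hat N^{k}=l_1+\cdots+l_k$ (Lemma \ref{l.Ncol}). The genuine gap is in your ``core lemma'', which is where the entire content of the conjecture sits: you must produce smooth, pointwise nonsingular $L,K$ on all of $\mathcal I$ with $LNK=\hat N$ \emph{and} $LK+LNK'=I_l$ simultaneously, and you justify this only by saying it parallels the induction in the proof of Proposition \ref{p.STform}(1). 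That analogy does not carry the weight: the procedure in Proposition \ref{p.STform} is a dimension-\emph{reducing} compression ($E_1=Y^*EC$ with rectangular $Y,C$), producing a sequence of ever smaller pairs, whereas here you need a single full-size equivalence transformation of $\{N,I_l\}$; the mechanism by which the correction $(I+\mathring N_1')^{-1}$ is absorbed there says nothing about whether the coupled algebraic-differential system for $(L,K)$ is globally solvable. Asserting that the remaining difficulty ``is really one of careful, honest bookkeeping ... rather than a conceptual gap'' is not a proof of the one step the authors could not supply.

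What would close the gap is the following, which your sketch gestures at but does not carry out: eliminate $L=(K+NK')^{-1}$ and solve $NK=(K+NK')\hat N$ for a block upper triangular $K$. Equating blocks, the sub-secondary equations force $K_{ij}=0$ for $i>j$; the equations on the secondary diagonal give the recursion ``first $l_{i+1}$ columns of $K_{ii}$ equal $N_{i,i+1}K_{i+1,i+1}$'' (solvable with a smooth nonsingular completion, since $N_{i,i+1}K_{i+1,i+1}$ has constant full column rank $l_{i+1}$ and smooth complements of constant-rank subspaces exist globally on an interval); and each equation above the secondary diagonal prescribes only the first $l_j$ columns of a single strictly-upper block $K_{i,j-1}$ in terms of already-determined blocks and their derivatives, so the system is triangular and consistent when processed column by column from the right. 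Writing this out (and tracking the loss of one derivative per level against the standing $\mathcal C^{m}$ smoothness assumption) would turn your proposal into an actual proof; as it stands, the decisive step is asserted rather than proved.
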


\subsection{What is regularity supposed to mean?}\label{subs.regularity}
%%%%%%%%%%%%%%
As we have seen, our formal basic definition of regularity in Section \ref{s.regular} agrees with the view of many other authors. We keep in mind that the characteristic values in all corresponding concepts are derived from specific rank functions and represent constant rank requirements.
The equivalence results then allows the simultaneous application of all the corresponding different concepts. 

We associate regularity of a linear DAE $Ex'+Fx=q$, $E,F:\mathcal I\rightarrow\Real^{m}$, with the following five criteria ensuring a regular flow combined with a homogenous dependency on the input function $q$ on the given interval :
\begin{description}
\item[\textrm{(1)}] The homogenous DAE $Ex'+Fx=0$ has a finite-dimensional solution space $S_{can}(t), t\in\mathcal I,$ with constant dimension $d=\dim S_{can}(t), t\in\mathcal I$, which serves as dynamical degree of freedom of the system.
\item[\textrm{(2)}] The DAE possesses a solution at least for each $q\in \mathcal C^{m}(\mathcal I,\Real^{m})$.
\item[\textrm{(3)}] Regularity, the index $\mu$, and the canonical characteristic values 
\begin{align}\label{char}
 r<m,\; \theta_{0}\geq\cdots\geq\theta_{\mu-2}>\theta_{\mu-1}=0,\; d=r-\sum_{k=0}^{\mu-2}\theta_k,
\end{align}
persist under equivalence transformations.
 \item[\textrm{(4)}] Regularity generalizes the Kronecker structure of regular matrix pencils.
 \item[\textrm{(5)}] If the DAE is regular with canonical characteristic values \eqref{char} on the given interval $\mathcal I$, then its restriction to any subinterval of $\mathcal I$ is regular with the same characteristics.
\end{description}
In other words, a regular linear DAE is characterized by its two canonical subspaces $S_{can}$ and $N_{can}$, both featuring a homogenous structure on the given interval.
\medskip

A point $t_*\in\mathcal I$ is called a \emph{regular point} of the DAE, if there is an open interval $\mathcal  I_*$ containing $t_*$ such that the DAE is regular on $\mathcal I_*\cap\mathcal I$. Otherwise the point $t_*$ is called a \emph{critical point}. 
If the DAE is regular on the interval $\mathcal I$, then all points of  $\mathcal I$ are regular points.
It should not go unmentioned that there are
also other names for this, including \emph{singular point} in \cite[Chapter 4]{RR2008} and \emph{exceptional point} in \cite[Page 80]{KuMe2006}. We prefer the term \emph{critical} because in our opinion it leaves it open whether there are strong singularities in the flow or harmless critical points in solvable systems that only become apparent at rigorously low smoothness properties as pointed out, e.g., in \cite[Section 2.9]{CRR}, see also Definition \ref{d.harmless}. We refer to \cite[Chapter 4]{RR2008} for a careful investigation and classification of points failing to be regular.
\medskip

The \emph{class of regular linear DAEs} comprises exactly all those DAEs that can be transformed equivalently into a structured SCF from Theorem \ref{t.SCF}, that is, the inner algebraic  structure of the subspace $N_{can}(t),\; t\in \mathcal I$, which represents the pointwise canonical complement to the flow-subspace  $S_{can}(t)$, does not vary with time.
Correspondingly, there is a homogenous structure on the whole interval $\mathcal I$  of how the solutions depend on derivatives of the right-hand side $q$. In particular, the perturbation index does not change if one turns to subintervals.
\bigskip

The \emph{class of almost regular linear DAEs} established in Definition \ref{d.almost-reg} coincides with the class of solvable DAEs by Definition \ref{d.solvableDAE}, see also Remark \ref{r.generalform}, and it 
is actually more capacious than the class of regular linear DAEs, which are all solvable, of course. Almost regular systems possess a well-defined differentiation index $\mu^{diff}$ and they satisfy the SF-Hypothesis with $\hat{\mu}=\mu^{diff}-1$. 
They feature a dense set of regular points.  More precisely, they satisfy the above regularity issues {\textrm (1)}, {\textrm (2)}, and {\textrm (4)}. Issue {\textrm (5)} is not valid.  Almost regular  systems allow for so-called harmless critical points which do not affect the flow in sufficiently smooth problems.
Instead of Issue {\textrm (3)} one has merely a flow-subspace $S_{can}(t)$ of constant dimension $d$ and a pointwise canonical complement  $N_{can}(t)$ of constant dimension $a=m-d$, but the inner algebraic structure of the latter is no longer constant. The perturbation index may vary on subintervals.

%%%%%%
\subsection{Regularity, accurately stated initial condition, well-posedness, and ill-posedness}\label{subs.posedness}
%%%%%%%%%%%%%%%
Let $\{E,F\}$, $E,F:\mathcal I=[a,b]\rightarrow\Real^m$, be a regular pair with index $\mu$ and canonical characteristics $r$ and $\theta_0\geq\cdots\geq\theta_{\mu-2}>\theta_{\mu-1}=0$. The DAE has the dynamical degree of freedom $d$. Obviously, for fixing a special solution from the flow one needs precisely $d$ scalar requirements but also a right way to frame them.
We consider the initial value problem (IVP)
\begin{align}
 Ex'+Fx&=q,\label{IVP1}\\
 Gx(a)&=\gamma,\quad \gamma\in \im G,\label{IVP2}
\end{align}
with sufficiently smooth data and the matrix $G\in\Real^{s\times m}$, $s\geq d$, $\rank G=d$.

The initial condition \eqref{IVP2} for the DAE \eqref{IVP1} is said to be \emph{accurately stated}, e.g. \cite[Section 5]{HaMae2023}, \cite[Definition 2.3]{LMW}, if there is a solution $x_*$,  each IVP with slightly perturbed initial condition,
\begin{align}
 Ex'+Fx&=q,\label{IVP3}\\
 Gx(a)&=\gamma+\varDelta\gamma,\quad \varDelta\gamma\in\im G,\label{IVP4}
\end{align}
has a unique solution $x$, and the  inequality
\begin{align*}
 \max_{t\in [a,b]}|x(t)-x_*(t)|\leq K |\varDelta\gamma|
\end{align*}
is valid with a constant $K$. As pointed out in \cite{HaMae2023}, the two canonical time-varying subspaces, the flow-subspace $S_{can}$ and its canonical complement $N_{can}$, which are for a long time established in the context of the projector based analysis, e.g., \cite{CRR}, are well-defined for regular DAEs and the initial condition \eqref{IVP2} is accurately stated, exactly if 
\begin{align}\label{IVP5}
 \ker G=N_{can}(a).
\end{align} 
This assertion is evident, if one deals with a DAE in SCF, that is,
\begin{align}
 u'+\Omega u&=f,\nonumber\\
 Nv'+v&=g,\label{IVP6}
\end{align}
and
\begin{align*}
 E=\begin{bmatrix}
    I_d&0\\0&N
   \end{bmatrix},\;
F=\begin{bmatrix}
    \Omega&0\\0&I_a
   \end{bmatrix},\; S_{can}= \im \begin{bmatrix}
    I_d\\0
   \end{bmatrix}, \; N_{can}= \im \begin{bmatrix}
    0\\I_a
   \end{bmatrix},\; G=\begin{bmatrix}
    I_d&0
   \end{bmatrix}.
\end{align*}
To achieve easier insight we suppose a constant $N$ in \eqref{IVP6}. Denoting by $P_{N^i}$ a projector matrix along the nullspace of the power $ N^i$, such that $N^i=N^iP_{N^i}$, the unique solution $v_*$ corresponding to $g$ is given by formula
\begin{align}\label{IVP7a}
 v_*=g+\sum_{i=1}^{\mu-1}(-N)^i(P_{N^i}g)^{(i)},
\end{align}
which precisely indicates all involved derivatives of the right-hand side $g$. It becomes evident that each initial requirement for $v_*(a)$ would immediately passed to consistency condition for the righthand side $g$ and its derivatives. Condition \eqref{IVP5} has to prevent this.
\bigskip

If the initial conditions are stated accurately, small changes only have a low effect on the solution. Unfortunately, this can look completely different if the right side of the DAE itself is changed. The smallest changes in $g$ can cause huge amounts in the solution. We take a closer look to the initial value problem with perturbed DAE,
\begin{align}
 Ex'+Fx&=q+\varDelta q,\label{IVP8}\\
 Gx(a)&=\gamma,\quad \gamma\in \im G,\nonumber
\end{align}
and suppose $\ker G=N_{can}$. Again we turn to the SCF, which decomposes the original problem into an IVP for a regular ODE living in $\Real^d$ and an index-$\mu$-DAE with zero-dynamical degree of freedom,
\begin{align}
 u'+\Omega u&=f+\varDelta f,\quad u(a)= \gamma, \nonumber\\
 Nv'+v&=g+\varDelta g.\label{IVP9}
\end{align}
Again, to easier understand what is going on, we suppose a constant $N$. Then the unique solution of \eqref{IVP9} reads
\begin{align}\label{IVP7b}
 v=g+\varDelta g+\sum_{i=1}^{\mu-1}(-N)^i(P_{N^i}(g+\varDelta g))^{(i)}.
\end{align}
According to the traditional definition of Hadamard, an operator equation $Ty=z$ with a linear bounded operator $T$ between Banach spaces $Y$ and $Z$ is called \emph{well-posed} if $T$ is a bijection, and \emph{ill-posed} otherwise. In the well-posed case, there is a unique solution $y\in Y$ to each arbitrary $z\in Z$, and the inverse $T^{-1}$ is bounded, too,
such that, if $z$ tends  to $z_*$ in $Z$, then $y:=T^{-1}z$ tends in $Y$ necessarily to  $y_*:=T^{-1} z_*$, and 
\begin{align*}
 \lVert y-y_*\rVert_{Y} \leq \lVert T^{-1}\rVert_{Z\rightarrow Y} \; \lVert z-z_*\rVert_{Z}.
\end{align*}
Whether a problem is well-posed or ill-posed depends essentially on the choice of the function spaces Y and Z, which, however, should be practicable with respect to applications. It is of no use if errors to be investigated are simply ignored by artificial topologies, see \cite[Chapter 2]{Ma2014} for a discussion in the DAE context. 
\medskip

What about the operator $L:Y\rightarrow Z$ representing the IVP \eqref{IVP1}, \eqref{IVP2}  with 
accurately stated homogeneous initial condition, that is $\gamma=0$. It makes sense to set
\begin{align*}
 Lx&=(Ex)'-E'x+Fx,\quad x\in Y\\
 &Y=\{y\in \mathcal C([a,b],\Real^m): Ey\in \mathcal C^1([a,b],\Real^m), Gy(a)=0\},\\
 &Z= \mathcal C([a,b],\Real^m).
\end{align*}
If the DAE has index $\mu=1$, then $N_{can}=\ker G=\ker E$, in turn
\begin{align*}
 Y=\{y\in \mathcal C([a,b],\Real^m): Ey\in \mathcal C^1([a,b],\Real^m), Ey(a)=0\},
\end{align*}
and $L$ is actually a bijection, e.g., \cite{GM86,CRR,Ma2014}. A particular result is then the inequality
\begin{align}\label{IVP10}
  \max_{t\in [a,b]}|x(t)-x_*(t)|\leq M \max_{t\in [a,b]}|\varDelta q(t)|,
\end{align}
for $x_*$ and $x$ corresponding to $q$ and $q+\varDelta q$, respectively.
For higher-index cases, that is $\mu\geq 2$ and $N\neq 0$ in the SCF, the situation is more complex. 
Then the operator $L$ is by no means surjective anymore. Its range $\im L\subset Z$ is a proper, nonclosed subset so that $L$ is not a Fredholm operator either. This can be recognized by the simplified case $E=N, F=I$, $d=0$, $x=v$. Regarding that 
\begin{align*}
\rank P_{N} =\rank N=\sum_{j=0}^{\mu-1}\theta_j,\quad \rank P_{N^i} = \rank N^i=\sum_{j=i-1}^{\mu-1}\theta_j,\quad i=2,\ldots,\mu,
\end{align*}
formula \eqref{IVP7a} leads to the representation
\begin{align*}
 \im L=\{g\in\mathcal C([a,b],\Real^m): P_{N^i}g\in\mathcal C^i([a,b],\Real^m),\; i=1,\ldots , \mu-1\},
\end{align*}
which rigorously describes in detail all involved derivatives. This representation also reveals that the DAE index is only one important aspect, but that the exact structure can only be described by all the canonical characteristic values together. Moreover,
from \eqref{IVP7a}, \eqref{IVP7b} it results that 
 \begin{align*}
 v-v_*=\varDelta g+\sum_{i=1}^{\mu-1}(-N)^i(P_{N^i}(\varDelta g))^{(i)},
\end{align*}
which makes an inequality like \eqref{IVP10} impossible.
\bigskip

We emphasize that an IVP \eqref{IVP1}, \eqref{IVP2} with accurate initial conditions is a well-posed problem in this setting only in the index-one case. In the case of a DAE \eqref{IVP1} with a higher index $\mu\geq2$, an ill-posed problem generally arises. Thus, a higher-index regular DAE always has a twofold character, it is on the one hand a well-behaved dynamic system and on the other hand an ill-posed input-output problem.

To give an impression of this ill-posedness, we will mention a  
very simple example elaborated in \cite[Example 2.3]{Ma2014}, in which $E$ and $F$ are constant $5\times 5$ matrices, the matrix pencil is regular with index four, the input is $q=0$, the corresponding output is $x_*=0$, 
the perturbation $\varDelta q$  has size $\epsilon n^{-1}$ and tends in $Z$ to zero for $n$ tending to $\infty$, but the corresponding difference $x-x_*$ shows size $\epsilon n^2$, growths unboundedly for increasing $n$, and tends in $Y$  by no means to zero.

For more profound mathematical analyses, for instance in \cite{HMT} to provide instability thresholds, individual  topologies specially adapted to $\im L$ can be useful. 
Then one enforce a bounded bijection $L:Y\rightarrow \tilde Z$  by setting $\tilde Z=\im L$ and equipping $\tilde Z$ with an appropriate norm to become a Banach space, however, you need a precise description of $\im L$ for that. Of course, in this decidedly peculiar setting the problem becomes well-posed in this solely  theoretical sense \cite{Ma2014}.
\bigskip

At this point it must also be mentioned that in some areas of mathematics the question of continuous dependencies is completely ignored and yet the term \emph{formally well-posed} is used, e.g. \cite[p.\ 298]{SeilerZerz}. 
Unfortunately, this can lead to considerable misunderstandings, as the above mentioned simple example \cite[Example 2.3]{Ma2014} shows already. 

The aim of \cite{SeilerZerz} is to make results from the algebraic theory of linear systems usable for DAEs, in particular methods of symbolic computation together with the theory of Gr{\"o}bner bases to provide formally well-posed problems. 
Among others it is figured out with a size-three Hessenberg DAE that the first Gr{\"o}bner index $\gamma_1$ recovers the strangeness index $\mu^S$ and the differentiation index by $\gamma_1=\mu^{diff}-1$. 
Moreover, \cite[Proposition 5.3]{SeilerZerz} provides the relation $\mu_p\leq\gamma_1+1$ as upper bound of the perturbation index for DAEs being not under-determined. 
Clearly, it is well-known that $\mu_p=\mu=\mu^T=\mu^S+1$  for regular linear DAEs, and $\mu_p=\mu^{diff}$ for DAEs being solvable in the sense of Definition \ref{d.solvableDAE}. However, for DAEs being over-determined the differentiation index is not defined, but strangeness index and  tractability index are quite different, \cite{HM2020,CRR}. It seems to be open whether any of them is recovered by an Gr{\"o}bner index.

%%%%%%%%%%%%%%%%
\subsection{Other views on regularity that we do not adopt}\label{subs.views}
%%%%%%%%%%%%%%
In early work, when less was known about DAEs, regularity was associated with special technical requirements.
But there are also different views on the matter in current research. We pick out just a few of the different versions.

At the beginning it was assumed that the so-called local pencils $\lambda E(t)+F(t)$,  for fixed $t$, and their Kronecker index were relevant for the characterization not only of DAEs with constant coefficients. The associated term is the so-called \emph{local index}. In contrast, then the further index terms were given the suffix \emph{global}. Today it goes without saying that our index terms in this sense are of a global nature, i.e. related to an interval, and we avoid the \textit{global} suffix. 
\bigskip

\textbullet\quad In the famous monograph \cite[Page 23]{BCP89} regularity of the DAE $Ex'+Fx=q$ is considered as regularity of the associated \emph{local matrix pencils} $\lambda E(t)+F(t), t\in \mathcal I$.
The intention behind this is to obtain feasible numerical integration methods. However, it is ibidem pointed out that this regularity does not imply solvability in the sense of Definition \ref{d.solvableDAE}.
For instance, the DAE with coefficients
\begin{align*}
 E(t)=\begin{bmatrix}
       -t&t^2\\-1&t
      \end{bmatrix},\quad 
F(t)=\begin{bmatrix}
       1&0\\0&1
      \end{bmatrix},\quad t\in\mathcal I=[-1,1],
\end{align*}
features solely regular local pencils, $\det(\lambda E(t)+F(t))=1, t\in \mathcal I$. However, it can be checked that, given an arbitrary smooth function $\gamma:\mathcal I\rightarrow \Real$, 
\[
x_*(t)=\gamma(t)\begin{bmatrix}t\\1\end{bmatrix}, \; t\in\mathcal I 
\]
solves the homogenous DAE. This specific pair $\{E,F\}$ is pre-regular ($r=1, \theta=1$) but not regular in our context, since the next pair $\{E_1,F_1\}$ is no longer pre-regular, $\im[E_1 \, F_1]=\{0\}\neq \Real^r$.

Apart from the inappropriate definition of regularity, the focus at the time on numerical methods bore many fruit and, with the exclamation "Differential/algebraic equations are not ODEs", \cite{Petzold1982}, generated a great deal of interest in DAEs.
\bigskip                                                                                                                   
  
\textbullet\quad With a completely different intention, namely to obtain theoretical solvability statements, in the  monographs \cite{Boya1980,BDLCH1989} it is assumed  for regularity, among other things, that there is a number $c$ such that $cE(t)+F(t)$ is non-singular for all $t \in \mathcal I$. Even for the pair
\begin{align*}
 E(t)=\begin{bmatrix}
       1&t\\0&0
      \end{bmatrix},\quad 
F(t)=\begin{bmatrix}
       0&0\\1&t
      \end{bmatrix},
\end{align*}
that we recognize today as regular with index two and characteristic $r=1, \theta_0=1, \theta_1=0$, this requirement is obviously not met. Nonetheless, in \cite[Chapter 5]{Boya1980}, \cite[Chapter 2]{BDLCH1989} devoted to this kind of regular system  some statements about solvability are provided. However, these results are very restricted by the then current working tools such as the Drazin inverse and the like.
\bigskip

\textbullet\quad In \cite{BergerIlchmann} regularity is understood to mean equivalent transformability to SCF. The class of DAEs that can be transformed into SCF is more comprehensive than the regular DAEs defined by Definition \ref{d.2}, and also includes DAEs with harmless critical points, which we have excluded for good reason. On the other hand, the class of DAEs being transformable into an SCF is only a subclass of almost regular DAEs according to the Definition \ref{d.almost-reg}, which contains DAEs featuring further harmless critical points.
\bigskip

\textbullet\quad We quote from \cite[p.\ 154]{KuMe2006}, in which Hypothesis 3.48 is the local name for the SF-Hypothesis \ref{SHyp}:  

``...Hypothesis 3.48 is the correct way to define a regular differential-algebraic equation. Regularity here is to be understood as follows.  A linear problem satisfying Hypothesis 3.48 fixes a strangeness-free differential-algebraic equation.... The underlying differential-algebraic operator ... with appropriately chosen spaces, is invertible and the inverse is continuous.''

This definition de facto declares the solvable DAEs to be regular, against what we already argued in the Subsection \ref{subs.regularity}. What is more, the attempt to justify this is somewhat confusing. Surely, strangeness-free DAEs, i.e. index-zero or index-one problems, are well-posed operator equations in natural spaces, which is well known at least through \cite{GM86,CRR,Ma2014}, see also Subsection \ref{subs.posedness} above. In detail, the original DAE $Ex'+Fx=q$ which satisfies the SF-Hypothesis\footnote{See Subsection \ref{subs.Hyp}} is remodelled to
\begin{align*}
 Y^*Ex'+Y^*Fx&=Y^*q,\\
 Z^*\mathcal F_{[\nu-1]}x&=Z^*q_{[\nu-1]}=:p.
\end{align*}
But to analyze the original equation, perturbations of the right-hand side $q+\varDelta q$ are appropriate, which leads to $p+Z^*\varDelta q_{[\nu-1]}$ within the transformed DAE. This fact is not recognized and only continuous functions $\varDelta p$ are applied, which actually hides the DAE structure.
In the context of local statements on nonlinear DAEs, e.g. \cite[p.\ 164]{KuMe2006} this seems to be even more critically.
\bigskip

\textbullet\quad  Recently, in the textbook \cite{KuMe2024}, that is the second edition of \cite{KuMe2006}, the following regularity definition (that we adapted to our notation) is emphasized as central notion.\\

\cite[Definition 3.3.1]{KuMe2024}: The pair $\{E,F\}$ and the corresponding DAE $Ex'+Fx=q$ are called regular if
\begin{enumerate}
	\item the DAE is solvable for every sufficiently
 smooth $q$ ,
 \item  the solution is unique for every $t_0 $ in a compact interval $\mathcal I$ and every consistent initial condition
 $x_0 \in \R^m$\footnote{ In \cite{KuMe2024} $x_0 \in \C^m$ is considered.}  given at $t_0\in \mathcal I$,
 \item the solution depends smoothly on $q$ , $t_0$, and $x_0$.
\end{enumerate}
 
The items (1) and (2) capture the solvable systems, see Definition \ref{d.solvableDAE}, and almost regular DAEs, see Definition \ref{d.almost-reg}, since harmless critical points are allowed. However, item (3) is inappropriate:
\begin{itemize}
\item On the one hand, for DAEs continuous dependency or even smooth dependency on consistent values $x_0$ cannot be assumed in general, not even for index-one DAEs. For arbitrary perturbations $\Delta_0 \in \R^m$, the vector $x_0+\Delta_0$  is not necessarily consistent. Indeed, the perturbation $\Delta_0$ has to be restricted accordingly such that $\Delta_0 \in S_{can}(t_0)$.  
\item On the other hand, as sketched in Section \ref{subs.posedness}, starting from corresponding spaces of continuous and differentiable functions, continuous dependence of the DAE solutions on the inhomogeneity $q$ is exclusively given for index-one problems (e.g.\ \cite{GM86}, \cite{CRR}, \cite{Ma2014}). For higher-index DAEs, surjectivity needs sophisticated, strongly problem-specific function spaces, see. \cite{Ma2014}.  
\end{itemize}

\textbullet\quad In the monograph devoted to algebro-differential operators having finite-dimensional nullspaces \cite{Chist1996} DAEs are organized in the framework of a ring $\mathcal M_
A$  of linear differential and integral operators acting in $\mathcal C^{\infty}$. In this context, the first order operator $\Lambda_{1}x=Ex'+Fx$ is called \emph{regular} if $E(t)=I$ on the given interval. If it exists, the minimal order $\nu$ of a differential operator $\Lambda_{\nu}$ belonging to the ring $\mathcal M_A$,  which serves as \emph{left regularization operator} for $\Lambda_1$  such that $\Lambda_{\nu}\circ \Lambda_{1}$ is regular, is called \emph{non-resolvedness index} of the operator $\Lambda_{1}$, see also \ref{r.Chist}. This is nothing else than the differentiation index.

%%%%%%%%%%%%%%%%%%%%%%%%%%%%%%%%%%%%%%%%%%%%%%%%%%%%
%%%%%%%%%%%%%%%%%%%%%%%%%%%%%%%%%%%%%%%%%%%%%%%%%%%%

% 
\section{About nonlinear DAEs}\label{s.nonlinearDAEs}
%%%%%%%%%%
There are numerous important studies of nonlinear DAEs that are based on special structural requirements, in particular, such as for simulating multibody system dynamics and circuit modeling, 
which are not to be reflected here in detail. We refer to \cite{Arnold,Riaza} for overviews.
Here we look solely at general unstructured non-autonomous DAEs
\begin{align}\label{N.0}
 f(t,x(t),x'(t))=0,\quad t\in \mathcal I,
\end{align}
given by a sufficiently smooth function $f:\mathcal I_f\times \mathcal D_{f}\times \Real^m\rightarrow \Real^m $,\; $\mathcal I_f\times \mathcal D_f\subseteq \Real\times\Real^m $ open,
 and ask about their general properties.
 The only exception to the restriction of the structure, which is allowed in some cases below, is the assumption that  the subspace $\ker D_yf(t,x,y)$ is independent of the variables $y$ or $(x,y)$. If circumstances require, a change to the augmented form 
 \begin{align*}
   f(t,x(t),y(t))=0,\; y(t)-x'(t)=0 \quad t\in \mathcal I.
 \end{align*}
 can be made\footnote{However, this is accompanied by an increase in the index!}.
 
As we have already seen with linear DAEs, certain subspaces play a crucial role, which will also be the case here. However,
 while in the linear case the subspaces only depend on one parameter, namely $t\in\Real $, there are now the parameters $(t,x)\in \Real^{m+1}$ and more. These subspaces are handled by means of both projector functions and base functions. 
 If such a subspace depends only on $t$ varying on a given interval $\mathcal I$ and has there constant dimension, then both smooth basic functions and smooth projector functions defined on the entire interval are available. 
 In contrast, if a subspace depends on a variable $z\in \Real^n$, $n\geq 2$, and has constant dimension, then there are globally defined smooth projector functions, but smooth base functions only exist locally, e.g., \cite[Sections A3, A4]{CRR}.
 This must be taken into account. It facilitates the use of projector functions, which are usually more difficult to determine in practice, and it makes the practically often easier way of 
 dealing with bases in theory more complicated.
 \bigskip
 
It should be recalled that, in the case of nonlinear ODEs, the uniqueness of the solutions is no longer guaranteed with merely continuous vector fields.   A  vector field of class $\mathcal C^1$ is locally Lipschitz and hence ensures uniqueness.
The same applies to vector fields on smooth manifolds.
This must be taken into account when it comes to  a regularity notion, which should include uniqueness of solutions to the corresponding initial value problems.

We can only give a rough sketch of the approaches for nonlinear DAE and confine ourselves to the rank conditions used in each case. 
\medskip

In the present section, in order to achieve better clarity in all of the very different following approaches, we use the descriptions $g'(t)$ but also $\frac{\rm d}{\rm dt}g(t)$ for the derivatives of a function $g(t)$ of the independent variable $t\in \Real$.
For the partial derivatives of a function $g(u,v,t)$ depending on several independent variables $u\in\Real^n,v\in\Real^k,t\in\Real$ we apply the description $g_u(u,v,t)$, $g_v(u,v,t)$, $g_t(u,v,t)$, but also 
$D_ug(u,v,t)$, $D_vg(u,v,t)$, $D_tg(u,v,t)$.

%%%%%%%
\subsection{Approaches by means of derivative arrays}\label{subs.arrays}
%%%%%%%%%%%%%
The approaches via derivative arrays and geometric reduction procedures have been developed for nonlinear DAEs almost from the beginning \cite{Gear88,BCP89,Griep92,Reich,RaRh,KuMe2006,ChistShch,EstLam2016Decoupling}.  To treat the DAE
\begin{align}\label{N.1}
 f(t,x(t),x'(t))=0,\quad t\in \mathcal I,
\end{align}
on $\mathcal I\times \mathcal D\times \Real^m \subseteq \mathcal I_f\times \mathcal D_{f}\times \Real^m$ 
, one forms the derivative array functions  \cite{Gear88,BCP89,Griep92,KuMe2006,ChistShch}
\begin{align}\label{N.2}
 \mathfrak F_{[k]}(t,x,\underbrace{x^1,\cdots,x^{k+1}}_{y_{[k]}})=\begin{bmatrix}
                      f_{[0]}(t,x,x^1)\\f_{[1]}(t,x,x^1,x^2)\\ \vdots \\f_{[k]}(t,x,x^1,\cdots,x^{k+1})
                     \end{bmatrix}\in \Real^{(k+1)m}, \\
 \text{for} \quad (t,x)\in \mathcal I\times \mathcal D,\quad
  \begin{bmatrix}
     x^1\\\vdots\\x^{k+1}
    \end{bmatrix}=:y_{[k]}
                 \in \Real^{km+m},\quad   k\geq 0,  \nonumber
\end{align}
in which
 \begin{align*}                    
  f_{[0]}(t,x,x^1)&=f(t,x,x^1),\\
  f_{[j]}(t,x,\underbrace{x^1,\cdots,x^{j+1}}_{y_{[j]}})&={f_{[j-1]}}'_t(t,x,\underbrace{x^1,\cdots,x^{j}}_{y_{[j-1]}})+{f_{[j-1]}}'_x(t,x,y_{[j-1]})x^1+\sum_{i=1}^{j}{f_{[j-1]}}'_{x^i}(t,x,y_{[j-1]})x^{i+1},
\end{align*}
such that, for each arbitrary smooth reference function $x_*:\mathcal I\rightarrow \Real^m$ whose graph runs in  $\mathcal I\times \mathcal D$ it results that 
\begin{align*}
 \mathfrak F_{[k]}(t,x_*(t),\underbrace{x_*^{(1)}(t),\cdots,x_*^{(k+1)}(t)}_{x_{*[k]}'})&=\begin{bmatrix}
                       f(t,x_*(t),x_*'(t))\\\frac{{\rm d}}{{\rm d}t}f(t,x_*(t),x_*'(t))\\ \vdots \\\frac{{\rm d}^{k}}{{\rm d}t^{k}} f(t,x_*(t),x_*'(t))
                     \end{bmatrix}.
\end{align*}
By construction the sets $\mathfrak L_{[{k}]}$, 
\begin{align}\label{N.L}
 \mathfrak L_{[{k}]}=\{(t,x,y_{[k]})\in \mathcal I\times \mathcal D\times\Real^{(k+1)m}:\mathfrak F_{[k]}(t,x,y_{[k]})=0\},\quad k\geq 0,
\end{align}
house the extended graphs of all smooth solutions $x_{*}:\mathcal I\rightarrow \mathcal D$ of the DAE \eqref{N.1}.
The partial Jacobians of the matrix function $\mathfrak F_{[k]}$ with respect to $y_{[k]}$ and to $x$ show the  structure, 
\begin{align*}
 \mathcal E_{[k]}= 
 D_{y_{[k]}}\mathfrak F_{[k]}
 =\begin{bmatrix}
                                                  f_{x^1}&&&\\
                                                  *&f_{x^1}&&\\
                                                  &&\ddots&\\
                                                  *&\cdots&*&f_{x^1}
                                                 \end{bmatrix},\quad
 \mathcal F_{[k]}= D_{x}{\mathfrak F_{[k]}} =\begin{bmatrix}
                                                  f_{x}\\
                                                  *\\
                                                  \vdots\\
                                                  *
                                                 \end{bmatrix}.
\end{align*}
Using again the arbitrary reference function $x_*$ being not necessarily a solution and denoting 
\begin{align*}
 E_*(t)=f_{x^1}(t,x_*(t),x_*'(t)),\;  F_*(t)=f_{x}(t,x_*(t),x_*'(t)),\; t\in\mathcal I, 
\end{align*}
one arrives at  matrix functions as introduced by \eqref{1.GkLR} in Section \ref{subs.Preliminaries}, namely
\begin{align*}
 \mathcal E_{*[k]}(t)&=\mathcal E_{[k]}(t,x_*(t),x'_{*[k]}(t))=\begin{bmatrix}
                                                  E_*(t)&&&\\
                                                  *&E_*(t)&&\\
                                                  &&\ddots&\\
                                                  *&\cdots&*&E_*(t)                                                 \end{bmatrix}, \\
  \mathcal F_{*[k]}(t)&=\mathcal F_{[k]}(t,x_*(t),x'_{*[k]}(t))=\begin{bmatrix}
                                                  F_*(t)\\
                                                  *\\
                                                  \vdots\\
                                                  *                                                 \end{bmatrix}.                                                 
\end{align*}
This opens up the option of using linearization for handling and tracing back questions to the linear case.
Here too, as in the linear case, there are different views on rank conditions for the Jacobians. 
As we will see below, again, in the concepts of the (standard) differentiation index and of the strangeness index there is no need for the Jacobians $\mathcal E_{[k]}$ with lower $k$ to have constant rank. 
In contrast, for the regular differentiation index and projector based differentiation index, each of these Jacobians is explicitly supposed to have constant rank which allows to use parts of the geometric operation equipment such as manifolds, tangent bundles etc.
\bigskip

\subsubsection{Differentiation index}

What we now call the differentiation index was originally simply called the index and was introduced into the discussion in \cite[p.\ 39]{Gear88} as follows:
\textit{Consider  \eqref{N.2} as a system of equations  in the \emph{separate} dependent variables $x^1,\ldots, x^{k+1}$, and solve for these variables as functions of $x$ and $t$ considered as \emph{independent} variables. 
If it is possible to solve  for $x^1$ for some finite $k$, then the index, $\mu$, is defined as smallest $k$ for which \eqref{N.2} can be solved for $x^1(x,t)$.} 
We quote the corresponding definition \cite[Definition 2.5.1]{BCP89}  that incorporates this idea:
\begin{definition}\label{d.N1}
 The \emph{index} $\nu$ of the DAE \eqref{N.1} is the smallest integer $\nu$ such that $\mathfrak F_{[\nu]}$ uniquely determines the variable $x^1$ as a continuous function of $(x,t)$.
\end{definition}
Unfortunately, this definition is rather vague, which triggered a lively discussion at the time.
There were subsequently a series of attempts at a more precise definition partly with a variety of new terms, see e.g. \cite{CaGear95}. We will come back to this below when dealing with the perturbation index, see also Example \ref{e.simeon}. 
It should also not go unmentioned that what we call \emph{regular} differentiation index below was also simply called index in \cite{Griep92}, and it was explicitly intended as an adjustment of the index notion in \cite{Gear88}.

We underline that in \cite{BCP89} the above definition \cite[Definition 2.5.1]{BCP89} is immediately followed by a proposition \cite[Proposition 2.5.1]{BCP89} from which we learn that the matter of the standard differentiation index of nonlinear DAEs can be traced back to properties of the partial Jacobians as follows:
\begin{proposition}\label{p.N1}\cite{BCP89}
 Sufficient conditions for
 \begin{align}\label{N.3}
 \mathfrak F_{[k]}(t,x,y_{[k]})=0
 \end{align}
to uniquely determine $x^1$ as a continuous function of $(x,t)$ are that the Jacobian matrix of $ \mathfrak F_{[k]}$ with respect to  $y_{[k]}$  is $1-$full with constant rank and \eqref{N.3} is consistent.
\end{proposition}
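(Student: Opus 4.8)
The statement to prove is Proposition~\ref{p.N1}: if the partial Jacobian $\mathcal E_{[k]}=D_{y_{[k]}}\mathfrak F_{[k]}$ is $1$-full with constant rank along the relevant set, and the array equation $\mathfrak F_{[k]}(t,x,y_{[k]})=0$ is consistent, then $x^1$ is uniquely determined as a continuous function of $(x,t)$. The natural tool here is the implicit function theorem applied to the array system, combined with the linear-algebraic meaning of $1$-fullness from the Appendix (cf.\ Lemma~\ref{l.app}). First I would fix a point $(t_0,x_0,y_{[k],0})$ in $\mathfrak L_{[k]}$, which exists by the consistency hypothesis. The $1$-fullness of $\mathcal E_{[k]}$ with constant rank means, by the structural characterization already used for the linear case, that there is a pointwise nonsingular continuous matrix function $\mathcal T$ such that $\mathcal T\mathcal E_{[k]}=\begin{bmatrix} I_m & 0\\ 0 & H\end{bmatrix}$; here it is the dependence of $\mathcal T$ on $(t,x,y_{[k]})$ that must be handled carefully, and the rank theorem (as invoked in Section~\ref{subs.qdiff}) guarantees that, the rank being constant, $\mathcal T$ can be chosen as smooth as $\mathfrak F_{[k]}$ itself in a neighborhood.

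Next I would multiply the array equation $\mathfrak F_{[k]}=0$ by $\mathcal T(t,x,y_{[k]})$. The first $m$ rows then have the form $x^1 + R(t,x,y_{[k]})=0$ where the Jacobian with respect to $(x^2,\dots,x^{k+1})$ of this block vanishes (that is exactly what $1$-fullness buys us: the first $m$ columns of $\mathcal T\mathcal E_{[k]}$ form $\begin{bmatrix}I_m\\0\end{bmatrix}$ and the remaining columns have zero top block). Hence the first $m$ scaled equations read $x^1 = \varphi(t,x)$ for a continuous function $\varphi$ not depending on $x^2,\dots,x^{k+1}$ — at least locally and provided one can eliminate $x^2,\dots,x^{k+1}$ from the lower blocks. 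The remaining blocks, together with the constant-rank hypothesis, allow one to split off those redundant variables: one applies the implicit function theorem (or its constant-rank version) to solve the lower-block equations for a suitable subset of the $y$-variables in terms of the rest, and substitution shows that the value of $x^1$ produced by the first block is independent of the choices made. I would then argue uniqueness directly: if $(x^1,y')$ and $(\tilde x^1,\tilde y')$ both satisfy $\mathfrak F_{[k]}=0$ at the same $(t,x)$, subtracting and using $1$-fullness of $\mathcal E_{[k]}$ along a segment joining the two points (mean value / integral form of the difference) forces $x^1=\tilde x^1$.

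\textbf{Main obstacle.} The delicate point is the interplay between the \emph{global-in-$(t,x)$} statement the definition asks for and the essentially \emph{local} nature of the implicit function theorem: $1$-fullness and constant rank are hypotheses on the Jacobian evaluated along $\mathfrak L_{[k]}$, and one has to make sure that the locally constructed function $\varphi(t,x)$ patches together into a single continuous function on the whole domain, and that the set $\mathfrak L_{[k]}$ projects onto (an open subset of) $\mathcal I\times\mathcal D$ in a way compatible with consistency. In practice this is exactly the content of \cite[Proposition~2.5.1]{BCP89}, and I would lean on the appendix lemma on $1$-fullness (Lemma~\ref{l.app}) to reduce the structural bookkeeping, then invoke the constant-rank theorem to handle the elimination of $x^2,\dots,x^{k+1}$; the continuity of $\varphi$ follows because $\mathcal T$ and the solved-for variables are continuous. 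I expect the write-up to be short once these two cited tools are in place, with the only real care needed in stating precisely on which set $x^1=\varphi(t,x)$ holds and that the consistency hypothesis is what makes that set nonempty.
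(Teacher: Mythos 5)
The paper does not prove this statement at all: Proposition~\ref{p.N1} is imported verbatim from \cite[Proposition 2.5.1]{BCP89} and is used purely as a citation to justify Definition~\ref{d.N1a}. So there is no in-paper proof to compare against; your proposal has to stand on its own.

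Your overall strategy is the right one and is essentially how the cited source argues: for fixed $(t,x)$ the constant-rank hypothesis makes the fiber $M_{[k]}(t,x)=\{y_{[k]}:\mathfrak F_{[k]}(t,x,y_{[k]})=0\}$ locally a manifold with tangent space $\ker\mathcal E_{[k]}$, and $1$-fullness is exactly the statement $T_{[k]}\ker\mathcal E_{[k]}=\{0\}$ (Lemma~\ref{l.app}(4)), so the $x^1$-component is locally constant along the fiber; continuity in $(t,x)$ then comes from the implicit-function/constant-rank theorem in the remaining variables. Two steps in your write-up are not sound as stated, though. First, after scaling by $\mathcal T(t,x,y_{[k]})$ the top block of $\mathcal T\mathfrak F_{[k]}$ is \emph{not} of the form $x^1+R(t,x)$: since $\mathcal T$ depends on $y_{[k]}$, the product rule gives $D_{y_{[k]}}(\mathcal T\mathfrak F_{[k]})=\mathcal T\mathcal E_{[k]}+(D_{y_{[k]}}\mathcal T)\mathfrak F_{[k]}$, and the second term vanishes only \emph{on} the solution set $\mathfrak L_{[k]}$. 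You therefore cannot read off an exact algebraic identity $x^1=\varphi(t,x)$ from the scaled equations; you only get that the restriction of the top block to the fiber has vanishing derivative in the directions $x^2,\dots,x^{k+1}$, which is weaker and forces you back to the local manifold argument. Second, the uniqueness argument via the integral mean value form fails: writing $0=\int_0^1\mathcal E_{[k]}(t,x,y+s(\tilde y-y))\,ds\,(\tilde y-y)$ does not let you conclude $x^1=\tilde x^1$, because an average of $1$-full matrices need not be $1$-full ($1$-fullness is not convex). What your argument actually delivers is uniqueness of $x^1$ on each connected component of the fiber; distinguishing components, or adding a connectedness assumption, is needed for the global claim — and this is precisely the point that \cite{BCP89} also treats only locally. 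If you make the local character of the conclusion explicit and replace the mean-value step by the constant-rank parametrization of the fiber, the proof is complete.
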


In the meantime, this has become established as one of the possible definitions being at the same time the straightforward generalization of Definition \ref{d.diff}:
\begin{definition}\label{d.N1a}
 The \emph{index} $\nu$ of the DAE \eqref{N.1} is the smallest integer $\nu$ such that $\mathcal E_{[\nu]}$ is $1-$full with constant rank and \eqref{N.3}  is consistent.
\end{definition}
\bigskip

\subsubsection{Strangeness index}

There is also a straightforward generalization of the SF-Hypothesis \ref{SHyp} with the strangeness index.
We adapt \cite[Hypothesis 4.2]{KuMe2006} and \cite[Definition 4.4]{KuMe2006} to our notation.

\begin{hypothesis}[\textbf{Strangeness-Free-Hypothesis for nonlinear DAEs}]\label{SHypN}
There exist integers $\hat \mu, \hat a$, and $\hat d=m-\hat a$ such that the set
\begin{align*}
 \mathfrak L_{[\hat{\mu}]}=\{(t,x,y_{[\hat{\mu}]})\in \mathcal I\times \mathcal D\times\Real^{(\hat{\mu}+1)m}:\mathfrak F_{[\hat{\mu}]}(t,x,y_{[\hat{\mu}]})=0\}
\end{align*}
associated with $f$ is nonempty and such that for every $(t,x,y_{[\hat{\mu}]})\in \mathfrak L_{[\hat{\mu}]}$ there exist a (sufficiently small) 
neighborhood in $\mathcal I_f\times \mathcal D_f\times\Real^{(\hat{\mu}+1)m}$ in which the following properties hold:
\begin{description}
 \item[\textrm{(1)}] We have $\rank \mathcal E_{[\hat\mu]}(t,x,y_{[\hat\mu]})=(\hat\mu+1)m-\hat a$ on $\mathfrak L_{[\hat{\mu}]}$
 such that there is a smooth matrix function $Z$ of size $((\hat\mu+1)m)\times\hat a$ and pointwise maximal rank, satisfying
 $Z^*\mathcal E_{[\hat\mu]}=0$ on $\mathfrak L_{[\hat{\mu}]}$.
\item[\textrm{(2)}] We have $\rank Z^*(t,x,y_{[\hat\mu]})\mathcal F_{[\hat\mu]}(t,x,y_{[\hat\mu]})=\hat a$ such that there exists a smooth matrix function $C$ of size $m\times\hat d$, 
and pointwise maximal rank, satisfying $Z^*\mathcal E_{[\hat\mu]}C=0$.
\item[\textrm{(3)}] We have $\rank f'_{x^1}(t,x,x^1)C(t,x,y_{[\hat\mu]})=\hat d$ such that there exists  a smooth matrix function $Y$ of size $m\times\hat d$ and pointwise maximal rank, satisfying  $\rank Y^* f'_{x^1}C=\hat d$.
\end{description}
\end{hypothesis}

\begin{definition}\label{d.HypStrangenessN}
Given a DAE as in \eqref{N.1}, the smallest value of $\bar \mu$ such that $f$ satisfies the  SF-Hypothesis \ref{SHypN} is satisfied is called the \emph{strangeness index} of \eqref{N.1}. 
If $\bar \mu=0$ then the DAE is called strangeness-free.
\end{definition}
\bigskip

\subsubsection{Regular differentiation index}
Next we follow the  ideas of \cite{Griep92} to a nonlinear version of the regular differentiation index discribed for linear DAEs in Subsection \ref{subs.qdiff}, 
which will turn out to be closely related to the geometric reduction and thus to the geometric index.
Suppose that the partial Jacobians $\mathcal E_{[k]}$  have constant rank for all $k\geq 0$. The set
\begin{align*}
 \tilde C_{[k]}=\{ (t,x)\in \mathcal I\times\mathcal D: \exists  y_{[k]}\in\Real^{km+m},\mathfrak F_{[k]}(t,x,y_{[k]})=0 \}
\end{align*}
is called \emph{constraint manifold of order $k$}, and to each $(t,x)\in \tilde C_{[k]}$ one obtains the manifold $M_{[k]}(t,x)$ and its tangent space given by
\begin{align*}
 M_{[k]}(t,x)=\{y_{[k]}\in \Real^{km+m}: \mathfrak F_{[k]}(t,x,y_{[k]})=0 \},\; TM_{[k]}(t,x; y_{[k]})=\ker \mathcal E_{[k]}(t,x,y_{[k]}).
\end{align*}
The following generalizes Definition \ref{d.G1} via linearization.
\begin{definition}\label{d.regulardiff}
 The DAE \eqref{N.1} has \emph{regular differentiation index} $\nu$ if the partial Jacobians $\mathcal E_{[k]}$  have constant rank for $k\geq 0$,  $\tilde C_{[\nu]}$ is non-empty,  
 $T_{[\nu]} M_{[\nu]}(t,x)$\footnote{As in Section \ref{subs.qdiff} $T_{[\nu]}=[I_m 0\cdots 0]\in \Real^{m\times(m+m\nu)}$ is a truncation matrix.} is a singleton for each $(t,x)\in \tilde C_{[\nu]}$, 
 and $\nu$ is the smallest such integer.
\end{definition}
We note that, analogously to Subsection \ref{subs.qdiff}, $T_{[\nu]} M_{[\nu]}(t,x)$ is a singleton, if and only if $T_{[\nu]}\ker  \mathcal E_{[\nu]}(t,x,y_{[\nu]})=0$ for all $y_{[\nu]}\in M_{[\nu]}$.

The main intention in \cite{Griep92} is  giving index reduction procedures a rigorous background. In particular, owing to \cite[Theorem 16]{Griep92}, the transfer from the DAE \eqref{N.1} to
\begin{align}\label{N.5}
 (I-&W(t,x(t),x'(t)))f(t,x(t),x'(t)) \nonumber \\
 +&W(t,x(t),x'(t)){D_xf(t,x(t),x'(t))x'(t)+D_tf(t,x(t),x'(t))}=0,\quad t\in \mathcal I, 
\end{align}
subject to the initial restriction $f(t_0,x(t_0),x'(t_0))=0$, reduces the (regular differentiation) index by one. Thereby, $W(t,x,y)$ denotes the orthoprojector function along $\im D_yf(t,x,y)$. 
\medskip

Finally, in this segment it should be mentioned that in \cite{ChenTrenn} for autonomous quasi-linear DAEs (with $\mathcal C^{\infty}$ functions) the version of the (regular) differentiation index from \cite{Griep92}  was recast in a rigorous geometric language and shown to be consistent with the geometric index, cf. Remark \ref{r.RaRh} below.
\bigskip

\subsubsection{Projector-based differentiation index}

Next we turn to the concept associated with the projector based  differentiation index. Supposing that the nullspace $\ker f_{x^1}(t,x,x^1)$ is actually independent of the variables $(x,x^1)$ and 
does not change its dimension. With the orthoprojector $P(t)$ along $\ker f_{x^1}(t,x,x^1)$ it results that
\begin{align*}
 f(t,x,x^1)=f(t,x,P(t)x^1),\quad (t,x,x^1)\in \mathcal I\times \mathcal D\times\Real^m, 
\end{align*}
and the DAE \eqref{N.1} rewrites to
\begin{align*}
 f(t,x(t),(Px)'(t)-P(t)x(t))=0,\quad t\in \mathcal I.
\end{align*}
This makes clear that the given DAE accommodates an equation  $(Px)'(t)=\phi(x(t),t)$. The idea now is to extract only the remaining component $(I-P(t))x$ in terms of $(P(t)x,t)$ from the derivative array.
 As in the linear case, the further matrix functions $\mathcal B_{[k]}:\mathcal I\rightarrow \Real^{(mk+m)\times(mk+m)}$,
\begin{align*}
\mathcal{B}_{[k]} =
\begin{bmatrix}
	  P & 0 \\
	\mathcal F_{[k-1]}& \mathcal E_{[k-1]}
\end{bmatrix}
\end{align*}
play their role here.
\begin{definition}\label{d.pbdiff}
 The DAE \eqref{N.1} has \emph{projector-based differentiation index} $\nu$ if the matrix functions $\mathcal B_{[k]}$  have constant rank for $k\geq 0$, $\nu$ is the smallest integer such that $\mathcal B_{[\nu]}$ is $1$-full.
\end{definition}

In contrast to Definition \ref{d.N1a}, we do not assume the consistency of \eqref{N.3} in the above definition. Nevertheless, to compute consistent initial values, of course
\begin{align*}
 \mathfrak F_{[\nu-1]}(t,x,y_{[\nu-1]})=0
 \end{align*}
 has to be consistent.

\bigskip

\subsection{Geometric reduction}\label{subs.nonlinearDAEsGeo}

The geometric reduction procedures are intended right from the start for nonlinear DAEs 
\cite{Rhe84,Reich,Griep92,RR2008}. While \cite{Griep92}, see Definition \ref{d.regulardiff} above, still uses 
a rather analytical representation with the rank theorem as background, \cite{Reich} and \cite{Rhe84,RaRh} use the means of geometry more consistently. 
The  explicit rank conditions from \cite{Griep92} become inherent components of the corresponding terms, 
the rank theorem is replaced by the subimmersion theorem etc.\footnote{We recommend \cite[Section 3.3]{RR2008} for a nice roundup.} 
A clear presentation of the issue for autonomous DAEs is given in \cite{RaRh}.
Here we follow the lines of \cite{Reich} whose depiction of nonautonomous  DAEs fits best with the rest of the material in our treatise.
\medskip

The stated intention of \cite{Reich} is to elaborate a concept of regularity for general non-autonomous DAEs \eqref{N.1} considered on the open connected set $\mathcal I\times \mathcal D\times \Real^m$.

Recall some standard terminology for this aim. For a $\rho$-dimensional differentiable manifold $M$ we consider the tangent bundle $TM$ of $M$ and  the tangent space $T_zM$ of $M$ at $z\in M$.  
We deal with manifolds $M$ being embedded in $\Real\times\Real^m$, that is, 
sub-manifolds of $\Real\times\Real^m$, and we can accordingly assume that $T_zM$ has been identified with a $\rho$-dimensional linear subspace of $\Real\times\Real^m$.

Denote by $\pi_1:\Real\times\Real^m\rightarrow \Real$ the projection onto the first factor in $\Real\times\Real^m$, and let $\mathcal J$ be the open set of $\Real$ with $\pi_1(M)=\mathcal J$. 
Introduce further the restriction $\pi:M\rightarrow\mathcal J$ of $\pi_1$ to $M$, that is $\pi=\pi_1|M$.
Then the tripel $(M,\pi,\mathcal J)$ is a sub-bundle of $\Real\times\Real^m$, if $\pi(T_{(t,x)}M)=\Real$ for all $(t,x)\in M$. 
The manifolds $M(t)\subset\Real^m$ defined by $\{t\}\times M(t)=(M\cap\{t\}\times \Real^m )$  are called the fibres of $M$ at $t\in\mathcal J$.
\bigskip

The origin of the following regularity notion is \cite[Definition 2]{Reich}. We have included the explicit requirement for $\mathcal C^1$ classes to ensure the uniqueness of solutions to initial value problems. 
According to the context, we believe that this was originally intended.
\begin{definition}\label{d.regularReich}
The DAE  \eqref{N.1} is called \emph{regular} if there is a unique sub-bundle $(\mathcal C,\pi,\mathcal I)$  of $\Real\times\Real^m$ and a unique vectorfield $v:\mathcal C\rightarrow \Real^m$ on $\mathcal C$, both of class $\mathcal C^1$, such that a differentiable mapping $x:I\subset \mathcal I\rightarrow \Real^m$ is a solution of the vectorfield $v$ if and only if $x$ is a solution of the given DAE.

The manifold $\mathcal C$ is called \emph{configuration space} and $v$ the \emph{corresponding vector field}.
\end{definition}

A technique is stated in \cite{Reich} by means of which the configuration space and the corresponding vector field for a given DAE can be obtained. In more detail, one starts from the set 
\begin{align*}
  L =\{(t,x,p)\in \mathcal I\times \mathcal D\times\Real^m: f(t,x,p)=0\}.
\end{align*}
A differentiable map $x:I\subseteq\mathcal I\rightarrow \Real^m$ is a solution of the DAE if and only if 
\begin{align*}
 (t,x(t),x'(t))\in L\quad\text{for all}\;  t\in I.
\end{align*}
We form the new set
\begin{align*}
 \mathcal C_1=\pi_{1,2}(L)\subseteq \Real\times\Real^m,
\end{align*}
where $\pi_{1,2}: \Real\times \Real^m\times \Real^m\rightarrow \Real\times \Real^m$ is the projection onto the first two factors in $\Real\times \Real^m\times \Real^m$. This set reflects algebraic constraints on the solution of the DAE. 
If the triple $(\mathcal C_1,\pi,\mathcal I)$ is a differentiable sub-bundle  of $\Real\times\Real^m$, then the differentiable map $x:I\rightarrow \Real^m$ is a solution of the DAE if and only if
\begin{align*}
 (t,x(t),x'(t))\in L\cap S\mathcal C_1\subseteq L\quad\text{for all}\;  t\in I.
\end{align*}
Thereby, $S\mathcal C_1=\bigcup_{(t,x)\in \mathcal C_1}\{(t,x)\}\times S_{(t,x)}\mathcal C_1$ and $S_{(t,x)}\mathcal C_1=\{\pi\in\Real^m:(1,p)\in T_{(t,x)}\mathcal C_1\}$ denote the so-called\footnote{This notion is motivated by the fact that the variable $t$ in \eqref{N.1} satisfies the differential equation $t'=1$. 
In autonomous DAEs, the variable $t$ is absent in \eqref{N.1} and the set $\mathcal C_1$ such that one operates by the usual tangent bundle $T\mathcal C_1$ and tangent space $T_x\mathcal C_1$ at $x$.}
\emph{restricted tangent bundle} of $\mathcal C_1$ and \emph{restricted tangent space} of $\mathcal C_1$ at $(t,x)$, respectively. Then we form the next set
\begin{align}
 \mathcal C_2=\pi_{1,2}( L\cap S\mathcal C_1).
\end{align}
This procedure leads to a sequence of sub-manifolds $\mathcal C_k$ of $\Real\times\Real^m$ associated with the DAE. We quote \cite[Definition 8]{Reich} and mention the modification for linear DAEs in Section \ref{subs.degree} to compare with.
\begin{definition}\label{d.degree}
 Let $L$ be the corresponding set of the given DAE \eqref{N.1}. We define a family $\{ \mathcal C_k\}_{k=0,\ldots, s}$ of submanifolds $\mathcal C_k$ of $\Real\times\Real^m$ by the recursion
 \begin{align*}
  \mathcal C_0&=\mathcal I\times\mathcal D,\\
  \mathcal C_k&= \pi_{1,2}( L\cap S\mathcal C_{k-1}),\quad k=0,\ldots, s,
 \end{align*}
where $s$ is the largest non-negative integer such that the triples $(\mathcal C_k, p, \mathcal I)$ are differentiable sub-bundles and $\mathcal C_{s-1}\neq\mathcal C_s$. In case of $\mathcal C_1=\mathcal I\times\mathcal D$ we define $s=0$. We call the family $\{ \mathcal C_k\}_{k=0,\ldots s}$ the \emph{family of constraint manifolds} and the integer $s$ the \emph{degree of the given DAE}.
\end{definition}
It is shown that the degree, if it is well-defined, satisfies $s\leq m$. Furthermore, by \cite[Theorem 9]{Reich}, the DAE \eqref{N.1} is regular, if it has degree $s$ and, additionally, for each fixed $(t,x)\in \mathcal C_{s}$, the set
\begin{align*}
L\cap\{(t,x)\} \times S_{(t,x)}C_{s} 
\end{align*}
contains exactly one element, $L\cap SC_s$ is of class $\mathcal C^1$, and, for all $(t,x,p)\in L\cap SC_s$, $\dim C_s=\rank \pi_{1,2}T_{(t,x,p)}(L\cap SC_s)$.
Then, $\mathcal C=\mathcal C_{s}$ is the configuration space and the vectorfield is uniquely defined by
\begin{align*}
 (t,x,v(t,x))\in L\cap S\mathcal C.
\end{align*}

Owing to (\cite[Theorem 12]{Reich}) regularity of the DAE is ensured by  properties of the reduced derivative arrays given by
\begin{align}\label{N.4}
 \mathfrak G_{[k]}(t,x,p)&=\begin{bmatrix}
                      g_{[0]}(t,x,p)\\g_{[1]}(t,x,p)\\ \vdots \\g_{[k]}(t,x,p)
                     \end{bmatrix}, \quad  (t,x,p)\in \mathcal I\times \mathcal D\times\Real^m,
\end{align}
 for $k\geq 0$, with                   
 \begin{align*}                    
  g_{[0]}(t,x,p)&=f(t,x,p),\\
  g_{[j]}(t,x,p)&=W_{[j-1]}(t,x,p)(D_t g_{[j-1]}(t,x,p)+D_x g_{[j-1]}(t,x,p)p),
\end{align*}
in which $W_{[j-1]}(t,x,p)$ denotes a projector along $\im D_pg_{[j-1]}(t,x,p)$. 
Aiming for smooth projector functions the Jacobian $ D_pg_{[j-1]}(t,x,p)$ is supposed to have constant rank. In contrast to the array functions introduced in Section \ref{subs.Preliminaries}, not all equations are differentiated, but only those that are actually needed, namely the so-called derivative-free equations on each level. This leads to overdetermined systems of equations with regard to the variables $(x,p)$ which then have to be consistent.
 This tool is often used in structured DAEs, e.g., in multibody dynamics. A comparison with the index transformation from \eqref{N.1} to \eqref{N.5} shows that this makes sense in general.
\medskip

Using the sets 
\begin{align*}
 L_k=\{(t,x,p)\in\mathcal I\times\mathcal D\times\Real^m: \mathfrak G_{[k]}(t,x,p)=0 \}, \; k\geq 0,
\end{align*}
the above constrained manifolds $\mathcal C_k$ can be represented by
\begin{align*}
 \mathcal C_{k}=\pi_{1,2}(L_k)=\{(t,x)\in \mathcal I\times\mathcal D: \exists  p\in\Real^m,\mathfrak G_{[k-1]}(t,x,p)=0 \},
\end{align*}
which is verified in \cite{Reich}.

Owing to \cite[Theorem 12]{Reich} the DAE \eqref{N.1} is regular if there is a non-negative integer $\nu$ such that the matrix functions ${D_{p}\mathfrak G_{[k]}}$ and $[{D_{x}\mathfrak G_{[k]}} \; {D_{p}\mathfrak G_{[k]}} ]$ have constant ranks for $0\leq k<\nu$, and the row echelon form of ${D_{p}\mathfrak G_{[\nu]}}$
is $\begin{bmatrix}
     I_{m}\\0
    \end{bmatrix}$ independent of $(t,x,p)\in \mathcal J\times\mathcal D\times \Real^m$.
Then the DAE \eqref{N.1} has regular differentiation index $\nu$ if this is the smallest such non-negative integer.

\begin{remark}
 Let us briefly turn to the linear DAE 
 \begin{align}\label{lin}
  Ex'+Fx=0.
 \end{align}
 We have here
 \begin{align*}
  L&=\{(t,x,p)\in \mathcal I\times\Real^m\times\Real^m:E(t)p+F(t)x=0\},\\
  \mathcal C_1&=\{(t,x)\in \mathcal I\times\Real^m:F(t)x\in \im E(t)\},\\
   \mathcal C_1(t)&=\{x\in \Real^m:F(t)x\in \im E(t)\}=: S(t),\\
   S_{(t,x)}\mathcal C_1 &=\{p\in\Real^m: p=P_S(t)p+D_tP_S(t)x\},\\
   S\mathcal C_1&=\{(t,x,p)\in\mathcal I\times\Real^m\times\Real^m:E(t)p+F(t)x=0, p\in S_{(t,x)}\mathcal C_1\},
 \end{align*}
in which $P_S(t):=I-(WF)^+WF$ denotes the same projector function onto $S(t)=\mathcal C_1(t)$  as used in Subsection \ref{subs.qdiff} since the set $S(t)=\ker W(t)F(t)$ from Subsection \ref{subs.qdiff} obviously coincides with $C_1(t)$.

If $x:\mathcal I\rightarrow\Real^m$ is a solution of the DAE, then it holds that
\begin{align*}
 x(t)&=P_S(t)x(t),\\
 x'(t)&=D_tP_S(t)x(t)+P_S(t)x'(t),\\
 (t,x(t),x'(t))&=(t,x(t),D_tP_S(t)x(t)+P_S(t)x'(t))\in L\cap \mathcal C_1, \quad t\in \mathcal I.
\end{align*}
This leads to the new DAE 
\begin{align*}
 EP_Sx'+(F+EP_S')x=0,
\end{align*}
and this procedure is shown in \cite{Reich} to reduce the degree by one. We underline that the same
procedure is applied in Subsection \ref{subs.qdiff} for obtaining \eqref{R1}. Furthermore, as pointed out in Subsection \ref{subs.qdiff}, there is a close relationship with our basic reduction in Section \ref{s.regular}, see formulas \eqref{R2} and 
\eqref{R3}.
Supposing the DAE \eqref{lin} to be pre-regular in the sense of Definition \ref{d.prereg} we find that $\rank EP_S= r-\theta$ which sheds further light on the connection to the basic reduction in Section \ref{s.regular}.
\end{remark}
\begin{remark}\label{r.RaRh}
 In \cite[Chapter IV]{RaRh} the geometric reduction of quasilinear autonomous DAEs (but class $\mathcal C^{\infty}$),
 \begin{align}\label{RR0}
  E(x)x'+h(x)=0,
 \end{align}
given by functions $E\in \mathcal C^{\infty}(\mathcal D,\Real^m\times\Real^m),\;
 h\in \mathcal C^{\infty}(\mathcal D,\Real^m)$, $\mathcal D\subseteq \Real^m$ open, 
is best revealingly developed in the spirit of reduction of manifolds. Among other things, the corresponding  dimensions are also specified, which more clearly emphasizes the connection with our basic reduction procedure in section \ref{s.regular} that has its antetype in \cite[Chapter II]{RaRh}.
 
 First, a general procedure to specify the associated configuration space is created. Starting from a smooth $\bar r_0$-dimensional submanifold $\mathcal C_0$ of  $\Real^n$, $T\mathcal C_0$  becomes a smooth $2\bar r_0$- dimensional  submanifold of $T\Real^n=\Real^n\times\Real^n$. For any given functions $E_0\in \mathcal C^{\infty}(\mathcal C_0,\Real^n\times\Real^m),\;
 h_0\in \mathcal C^{\infty}(\mathcal C_0,\Real^m)$, set
 \begin{align*}
   f_0(x,p)&=E_0(x)p+h_0(x)\in \Real^m,\quad (x,p)\in T\mathcal C_0,
 \end{align*}
and  form 
 \begin{align*}
  L_0&=\{(x,p)\in T\mathcal C_0: f_0(x,p)=0\},\\
  \mathcal C_1&=\pi(L_0)=\pi|_{L_0}(L_0),
 \end{align*}
 with the projection $\pi:\Real^m\times\Real^m\rightarrow\Real^m$ onto the first factor.

The following two assumptions play a crucial role in this approach:
\begin{description}
 \item[A1:] The set $L_0$ is a smooth $\bar r_0$-dimensional submanifold of $T\mathcal C_0$ with tangent space $T_{(x,p)}L_0=\ker T_{(x,p)}f_0$ for every $(x,p)\in L_0$.
  \item[A2:] There exists a nonnegative integer $\bar r_1\leq \bar r_0$ such that $\rank E_0(x)|_{T_x\mathcal C_0}=\bar r_1$ for all $x\in \mathcal C_0$.
\end{description}
In particular, these two assumption ensure that $\pi|_{L_0}:L_0\rightarrow \Real ^m$ is a subimmersion with rank $\bar r_1$ and an open mapping into $\mathcal C_1$. 
In turn, $\mathcal C_1$ is a smooth $\bar r_1$-dimensional submanifold of both  $\mathcal C_0$ and $\Real^m$.

This shows how  manifolds $\mathcal C_i$ and $L_i$, $i\geq0$, can be defined inductively. We introduce 
\begin{align*}
   f_1(x,p)&=E_0(x)p+h_0(x)\in \Real^m,\quad (x,p)\in T\mathcal C_1
 \end{align*}
and form
\begin{align*}
  L_1&=\{(x,p)\in T\mathcal C_1: f_1(x,p)=0\}= T\mathcal C_1\cap L_0,\\
  \mathcal C_2&=\pi(L_1)=\pi|_{L_1}(L_1),
 \end{align*}
and so on. If the requirements of the above two assumptions hold at each step,  one obtains sequences of manifolds $L_0\supset L_1\supset\cdots\supset L_i\supset\dots $ and  $\mathcal C_0\supset \mathcal C_1\supset\cdots\supset \mathcal C_i\supset\dots $, where $L_{i+1}$ is an $\bar r_{i+1}$-dimensional submanifold of  $L_{i}$ and $\mathcal C_{i+1}$ is an $\bar r_{i+1}$-dimensional submanifold of  $T\mathcal C_{i}$, satisfying the relation 
\begin{align*}
 \mathcal C_{i+1}=\pi(L_i),\quad L_{i+1}=T\mathcal C_{i+1}\cap L_{i}.
\end{align*}
The pair of manifolds $(\mathcal C_0, L_0)$ is said to be \emph{completely reducible}  if the sequence is well-defined up to infinity, and hence $\bar r_0\geq \bar r_1\geq\cdots\geq \bar r_i\geq \cdots $.
The nonincreasing sequence of integers must eventually stabilize. Owing to \cite[Theorem]{RaRh}, if $L_{\nu}\neq\emptyset$ and $\bar r_{\nu}=\bar r_{\nu+1}$ for some integer $\nu\geq 0$, then $L_j=L_{\nu}$, $\bar r_j=\bar r_{\nu}$, for $j\geq \nu$, and $\mathcal C_j=\mathcal C_{\nu+1}$ for $j\geq \nu+1$.
\medskip

The described reduction applies to the DAE \eqref{RR0} by letting 
\begin{align*}
 E_{0}(x)=E(x), \; h_0(x)=h(x), \; \mathcal C_0=\mathcal D,\; \bar r_0=m,\; L_0= f_0^{-1}(0).
\end{align*}
By \cite[Definition 24.1]{RaRh} the quasilinear DAE \eqref{RR0}
has \emph{(geometric)\footnote{In \cite{RaRh} the suffix \emph{geometric} is still missing, it was added later in \cite[Section 3.4.1]{RR2008} to distinguish it from other terms.} index} $\nu$,   $0\leq\nu\leq m$, if  the pair $(\mathcal C_0, L_0)$  
is completely reducible and has index $\nu$ with $L_{\nu}\neq\emptyset$.

 A DAE \eqref{RR0} with well defined geometric index features locally existing and unique solutions \cite[Theorem 24.1]{RaRh}, and hence regularity.
\medskip

 At this point, it makes sense to compare once again with linear DAEs. In \cite[Chapter II]{RaRh} the DAE $Ex'+Fx=q$ is called \emph{completely reducible} in the given interval, if 
 our basic reduction procedure described in Section \ref{s.regular} and starting from $E_0=E, F_0=F$ is well-defined up to infinity, with constants $r_{-1}:=m$, $r_j:=\rank E_j$, $j\geq 0$.
 The smallest integer $0\leq\nu\leq m$ such that $r_{\nu-1}=r_{\nu}$ is the \emph{(geometric) index} of the DAE. Then $E_{\nu}$ remains nonsingular, and $\rank [E_j\;F_j]= r_{j-1}$, $j\geq 0$. This means that complete reducibility is the same as regularity in the sense of Definition \ref{d.2a}. On the other hand, if $E$ and $F$ are constant matrices, then this becomes a special case of the above autonomous geometric reduction with $r_j=\bar r_{j-1}$ for all $j\geq 0$.
\end{remark}
%

%%%%%%%%%%%%%%%%%%%%%%%
\subsection{Direct approaches without using array functions}
%%%%%%%%%%%%%%%%
Direct concepts without recourse to derivative arrays should be possible 
starting from the fact that derivatives of a function can not contain information, which is not already present in the function itself. Derivative arrays or their restricted versions are not longer used here. Instead, sequences of special matrix functions including several  projector functions are pointwise build on the given function and their domain. 

In the context of the dissection and tractability index  more general equations,
\begin{align}\label{NT1}
 g(t,x(t),\frac{\rm d}{\rm dt}\varphi(t,x(t)))=0,
\end{align}
given by the two  functions $g:\mathcal I_g\times\mathcal D_g\times\Real^n\rightarrow\Real^m$ and $\varphi:\mathcal I_g\times\mathcal D_g\rightarrow\Real^n$, $n\leq m$, are investigated.
This has advantages both in terms of an extended solution concept and corresponding strict solvability statements with lower smoothness \cite{CRR,Jansen2014}. Since we deal with smoothness more generously here and assume $C^1$ solutions, this equation can also be written in standard form
\begin{align}\label{NT2}
 f(t,x(t),x'(t)):=g(t,x(t),\varphi_t(t,x(t))+\varphi_x(t,x(t))x'(t))=0.
\end{align}
For each smooth reference function $x_*:\mathcal I\rightarrow\Real^m$ not necessarily being a solution, but residing in the  definition domain $\mathcal I_g\times \mathcal D_g$  it results that
\begin{align*}
 f(t,x_*(t),x'_*(t))&=f(t,x_*(t),\varphi_t(t,x_*(t))+\varphi_x(t,x_*(t))x'_*(t))\\
 &=g(t,x_*(t),\frac{\rm d}{\rm dt}\varphi(t,x_*(t))).
\end{align*}
Below, linear DAEs 
\begin{align}\label{NT3}
 A_*(t)(D_*x)'(t)+B_*(t)x(t)=q(t), \quad t\in \mathcal I,
\end{align}
with coefficients
\begin{align*}
 A_*(t)=g_y(t,x_*(t),\frac{\rm d}{\rm dt}\varphi(t,x_*(t))),\; D_*(t)=\varphi_x(t,x_*(t)),\;
 B_*(t)=g_x(t,x_*(t),\frac{\rm d}{\rm dt}\varphi(t,x_*(t))),
\end{align*}
which arise from linearizations of the nonlinear DAE \eqref{NT1} along the given reference function, play an important role. Roughly speaking, we will decompose the domain $\mathcal I_g\times\mathcal D_g$ into certain so-called regularity regions so that all linearizations along smooth reference functions residing in one and the same region are regular with uniform index and  characteristic values. Then the borders of a maximal regularity region are critical points.
\medskip

The DAE \eqref{NT1} has a so-called \emph{properly involved derivative}, if the decomposition
\begin{align}\label{NT5}
 \ker g_y(t,x,y)\oplus\im \varphi_x(t,x)=\Real^n,\quad t\in \mathcal I_g,\;x\in \mathcal D_g,\; y\in \Real^n,
\end{align}
is valid and both matrix function $g_y$ and $\varphi_x$ feature constant rank $r$. 

At this place it is worth mentioning that there are weaker versions, namely the quasi-properly involved derivative in \cite[Chapter 9 ]{CRR} admitting certain rank drops of $g_y$ and the semi-properly involved derivative in \cite[]{Jansen2014} requiring constant ranks but merely $\im g_y=\im g_y \varphi_x$ instead of \eqref{NT5}.

The simplest version already applied in \cite{GM86} starts from the standard form DAE
\[
 f(t,x(t),x'(t))=0
\]
and supposes that the partial Jacobian $f_{x^1}(t,x,x^1)$ has constant rank $r$ and $\ker f_{x^1}(t,x,x^1)=N(t)$ is independent of the variables $x$ and $x^1$. 
Using a smooth projector function $P:\mathcal I\rightarrow \Real^{m\times m}$ such that $\ker P(t)=N(t)$
we set $n=m$, $\varphi(t,x)=P(t)x$. and $g(t,x, y)=f(t,x,y-P'(t)x)$. 
Then one has $\varphi_x(t,x)=P(t)$, $g_y(t,x, y)=f_{x^1}(t,x,y-P'(t)x)$, and $\ker g_y(t,x, y)=N(t)$, and hence we arrive at a DAE with properly involved derivative. 
\medskip

We turn back to the general case \eqref{NT1} and suppose a properly involved derivative. Analogously to  Section \ref{subs.tractability} for linear DAEs we associate to the DAE \eqref{NT1} a sequence of matrix functions built pointwise now for $t\in\mathcal I_g$,  $x\in\mathcal D_g$, $x^1\in\Real^m$. It will provide relevant information about the DAE, quite comparable to the array functions above.  We start letting
\begin{align*}
 D(t,x)&:=\varphi_x(t,x),\\
A(t,x,x^1)&:= g_y(t,x,\varphi_t(t,x)+D(t,x)x^1),\\
G_0(t,x,x^1)&:=A(t,x,x^1)D(t,x),\\
B_0(t,x,x^1)&:=g_x(t,x,\varphi_t(t,x)+D(t,x)x^1).
\end{align*}
Let $P_0(t.x)\in \Real^{m\times m}$ denote a smooth projector such that $\ker P_0(t,x)=\ker D(t,x)=:N_0(t,x)$ and 
\begin{align}\label{NT4}
 Q_0(t,x)=I -P_0(t,x),\quad \pPi_0(t,x)=P_0(t,x),
\end{align}
and introduce the generalized inverse $D(t,x,x^1)^-$ being uniquely determined by the four relations
\begin{align*}
D(t,x,x^1)^-D(t,x)D(t,x,x^1)^-&=D(t,x,x^1)^-,\\
D(t,x)D(t,x,x^1)^-D(t,x)&=D(t,x),\\
 D(t,x,x^1)^-D(t,x)&=P_0(t,x),\\
 \ker D(t,x)D(t,x,x^1)^-&=\ker A(t,x,x^1).
\end{align*}
Since the derivative is properly involved, it holds that $\ker G_0(t,x,x^1)=\ker D(t,x)=N_0(t,x)$. We form
\begin{align*}
 G_1(t,x,x^1)&;=G_0(t,x,x^1)+B_0(t,x,x^1)Q_0(t,x),\\
 N_1(t,x,x^1)&:=\ker G_1(t,x,x^1),\\
 \widehat{N_1}(t,x,x^1)&:=N_1(t,x,x^1)\cap N_0(t,x),
\end{align*}
and choose projector functions $Q_1, P_1, \pPi_1: \mathcal I_g\times \mathcal D_g \times\Real^{m\times m}$ such that pointwise
\begin{align*}
 \im Q_1&=N_1,\quad \ker Q_1\supseteq X_1, \quad \text{with any complement}\; X_1\subseteq N_0, \; N_0=\widehat N_1\oplus X_1,\\
 P_1&=I-Q_1,\; \pPi_1=\pPi_0P_1.
\end{align*}
We are interested in a smooth matrix function $G_1$ and require constant rank. From the case of linear DAEs we know of the necessity to incorporate the derivative of $D\pPi_1D^-$ into the next expressions. Instead of the time derivative $(D\pPi_1D^-)'$ in the linear case, we now use the total derivative  in jet variables $[D\pPi_1D^-]'$ given by
\begin{align*}
 [D\pPi_1D^-]'(t,x,x^1,x^2)&:=(D\pPi_1D^-)_t(t,x,x^1)+(D\pPi_1D^-)_x(t,x,x^1)x^1\\
 &+(D\pPi_1D^-)_{x^1}(t,x,x^1)x^2.
\end{align*}
The subsequent matrix function $B_1= B_0P_0-G_1D^-[D\pPi_1D^-]'D\pPi_0$ depends now on the variables $t,x,x^1$, and $x^2$. On each following level of the sequence  a new variable comes in owing to the involved total derivative. 
 
Now we are ready to adapt \cite[Definition 3.21]{CRR} and \cite[Definition 3.28]{CRR}   concerning admissible matrix function sequences and regularity. Both are straightforward generalizations of the linear case discussed in Section \ref{subs.tractability} above.
\begin{definition}\label{d.NTadmissible}
 Let $\mathfrak G\subseteq \mathcal I_g\times\mathcal D_g$ be open  and connected set. For given level $\kappa\in \Natu$, we call the sequence $G_0,\ldots,G_{\kappa}$ an \emph{admissible matrix function sequence} associated with the DAE \eqref{NT1} on the set  $\mathfrak G$, if it is built by the rule:
 \begin{align*}
  G_i&=G_{i-1}+B_{i-1}Q_{i-1}:\mathfrak G\times\Real^{im}\rightarrow\Real^m,\quad r_i^T=\rank G_i,\\
  B_{i}&=B_{i-1}P_{i-1}-G_{i}D^-[D\pPi_{i}D^-]'D\pPi_{i-1}:\mathfrak G\times\Real^{(i+1)m}\rightarrow\Real^m,\\
  &\quad N_{i}=\ker G_{i},\quad \widehat{N_{i}}=(N_0+\cdots+N_{i-1})\cap N_{i},\quad u_{i}^T=\dim \widehat{N_i},\\
  & \text{fix a complement}\; X_{i}\;\text{ such that}\; N_0+\cdots+N_{i-1}=\widehat{N_{i}}\oplus X_{i},\\
  &\text{choose a smooth projector function}\; Q_{i}\;\text{such that}\; \im Q_{i}=N_{i},\;\ker Q_{i}\supseteq X_{i},\\
  &\text{set}\; P_{i}=I-Q_{i},\; \pPi_{i}=\pPi_{i-1}P_{i},\\
  i&=1,\ldots,\kappa-1,\\
  G_{\kappa}&=G_{\kappa-1}+B_{\kappa-1}Q_{\kappa-1}:\mathfrak G\times\Real^{\kappa m}\rightarrow\Real^m,\quad r_{\kappa}^T=\rank G_{\kappa},
 \end{align*}
 and, additionally, all the involved functions $r_i$ and $u_i$ are constant.
\end{definition}
The total derivative used here reads in detail:
\begin{align*}
 [D\pPi_{i}D^-]'(t,x,x^1,\ldots,x^{i+1})&=(D\pPi_{i}D^-)_t(t,x,x^1,\ldots,x^{i})+(D\pPi_{i}D^-)_x(t,x,x^1,\ldots,x^{i})x^1\\
 &+\sum_{j=1}^{i}(D\pPi_{i}D^-)_{x^j}(t,x,x^1,\ldots,x^{i})x^{j+1}.
\end{align*}

At this point, the general agreement of this work on the smoothness of the given data ensures also  the existency of these derivatives.
Then the required smooth projector functions actually exist due to the demanded constancy of the ranks $r_i^T$ and the dimensions $u_i^T$. 

The inclusions 
\begin{align*}
 \im G_i\subseteq \im G_{i+1},\quad r_i\leq r_{i+1},
\end{align*}
are meant point by point and result immediately by the construction.
We refer to \cite[Section 3.2]{CRR} for further useful properties. In particular, it is possible to determine the projector functions $Q_i$ in such a way that the $\pPi_{i}$ and $\pPi_{i-1}Q_i$ are pointwise symmetric projector functions \cite[p.\ 205]{CRR}. 
\begin{definition}\label{d.NTregularity}
 Let $\mathfrak G\subseteq \mathcal I_g\times\mathcal D_g$ be open  and connected set.
 The DAE \eqref{NT1} is said to be \emph{regular on $\mathfrak G$  with tractability index $\mu\in\Natu$} if there is an admissible matrix function sequence reaching a pointwise nonsingular matrix function $G_{\mu}$ and $r_{\mu-1}^T<r_{\mu}^T=m $. 
 The rank values
 \begin{align}\label{NT6}
  r=r_0^T\leq\cdots\leq  r_{\mu-1}^T<r_{\mu}^T=m
 \end{align}
are said to be \emph{characteristic values} of the DAE. 

The set $\mathfrak G$ is called \emph{regularity region} of the DAE with associated index $\mu$ and characteristics \eqref{NT6}.\footnote{One can also understand $\mathfrak G^{[\mu]}=\mathfrak G\times \Real^{\mu m}$ as a regularity region. We refer to \cite[Section 3.8]{CRR} for a relevant refinement of the definition.}

If $\mathfrak G$ has the structure $\mathfrak G = \mathcal I \times \mathcal G $, $\mathcal I , \mathcal G$ open, then simply $\mathcal G$ is called regularity region, too.
\end{definition}
\begin{definition}\label{d.NTregpoint}
The point $(\bar t,\bar x)\in \mathcal I_g\times\mathcal D_g$ is called a \emph{regular point} of the DAE, if there is an open neighborhood $\mathfrak G\ni (\bar t,\bar x)$,  $\mathfrak G\subseteq  \mathcal I_g\times\mathcal D_g$  being a regularity region.\footnote{In case of  a regularity region $\mathfrak G^{[\mu]}=\mathfrak G\times \Real^{\mu m}$ we speak of \emph{regular jets} $(\bar t,\bar x, \bar x^1,\ldots,\bar x^{\mu})$.} Otherwise, the point is called a \emph{critical point} of the DAE.

If this $\mathfrak G$ has the structure $\mathfrak G = \mathcal I \times \mathcal G $, $\mathcal I , \mathcal G$ open, then simply $\bar{x}$ is called regular point, too.
\end{definition}
Regularity goes along with $u_i^T=0$ for all $i\geq 0$.
It is important to add that both the index and the characteristic values do not depend on the particular choice of projector functions in the admissible sequence of matrix functions. They are also invariant with respect to regular transformations, cf.\ \cite{CRR}.

The main result in the framework of the projector-based analysis of nonlinear DAEs is given by \cite[Theorem 3.33]{CRR} that claims:
\begin{description}
 \item[\textbullet] The DAE \eqref{NT1} is regular on $\mathfrak G$ if all linearizations \eqref{NT3} along smooth reference functions residing in $\mathfrak G$ are regular DAEs, and vice versa.
\item[\textbullet] If the nonlinear DAE is regular on $\mathfrak G$ with  index $\mu$ and the characteristics \eqref{NT6}, all linearizations built from reference functions residing in $\mathfrak G$ inherit this.
\item[\textbullet] If all linearizations built from reference functions residing in $\mathfrak G$ are regular, then they feature a uniform index $\mu$ and uniform characteristics \eqref{NT3}. The nonlinear DAE has then the same index and characteristics.
\end{description}
This allows to trace back questions concerning the DAE properties to the linearizations.

\bigskip
We underline that the concept of regularity regions does not assume the existence of solutions. However, if a solutions resides in a regularity region, then for $d>0$ it is part of a regular flow with the canonical characteristics from the regularity region. In any case, also for $d=0$, the solution has no critical points. 
\bigskip

In the  dissection index concept in \cite{Jansen2014} similar results are reproduced by using smooth basis functions instead of the projector functions. For the linear parts the decomposition described in Section \ref{subs.dissection} above is applied, and this is combined with rules of the tractability framework to construct a matrix function sequence emulating that from the tractability concept.
This is theoretically much more intricately but maybe useful in practical realizations. However, when using basis functions instead of projector functions, it must also be taken into account that there are not necessarily global bases in the multidimensional case, e.g., \cite[Remark A.16]{CRR}.

Regarding linear DAEs, the dissection concept in Section \ref{subs.dissection} and the regular strangeness concept in Section \ref{subs.strangeness} are closely related in turn. 
In contrast to the basic reduction for linear DAEs in Section \ref{s.regular}, which encompasses the elimination of variables and a reduction in dimension, the original dimension is retained and all variables stay involved. This is one of the cornerstones for the adoption of the linearization concept. 
We quote from \cite[p.\ 65]{Jansen2014}: \textit{The index arises as we use the linearization concept of the Tractability Index  and the decoupling procedure of the Strangeness Index}. This way, the regular strangeness index also finds a variant for nonlinear DAEs by means of corresponding sequences of matrix functions and linearization.

%%%%%%%%%%%%%
\subsection{Regularity regions and perturbation index}\label{subs.pert}
%%%%%%%%%%%%%%%%%
%
The perturbation index of nonlinear DAEs is an immediate generalization of the version for linear DAEs.
We slightly extend \cite[Definition 5.3]{HairerWanner} to be valid also for nonautonomous DAEs, cf. also Definition \ref{d.perturbation} above:
\begin{definition}\label{d.perturbationN}
 The equation \eqref{N.0} has perturbation index $\mu_p=\nu\in \Natu$ along a solution $x_*:\mathcal [a,b]\rightarrow \Real^m$, if $\nu$ is the smallest integer such that, for all functions  $x:\mathcal [a,b]\rightarrow \Real^m$ having a defect
 \begin{align*}
  \delta(t):=f(t,x(t),x'(t)), \; t\in [a,b],
 \end{align*}
there exists an estimation
\begin{align*}
 |x(t)-x_*(t)|\leq c \{|x(a)-x_*(a)|+\max_{a\leq\tau\leq t}|\delta(\tau)|+\cdots+ \max_{a\leq\tau\leq t}|\delta^{(\mu_p-1)}(\tau)|\}, \; t\in\mathcal I,
\end{align*}
whenever the expression on the right-hand side is sufficiently small.
\end{definition}
Because of its significance, we adopt the authors' comment in \cite[p.\ 479]{HairerWanner} on this definition: \emph{We deliberately do not write ``Let $x(\cdot)$ be the solution of $f(t,x(t),x'(t))=\delta(t), t\in [a,b]$ ...'' in this definition, because the existence of such a solution for an arbitrarily given $\delta(\cdot)$ is not assured.}

Actually, we are confronted with a problem belonging to functional analysis, with mapping properties and the question of how solutions and their components, respectively, depend on perturbations and their derivatives. Some answers concerning linear DAEs are given by means of the projector-based analysis in \cite{CRR,Ma2014,HM2020}.
In the case of nonlinear DAEs, this most significant question has hardly played an adequate role to date. Among other things, it is  associated with the relationship of the differentiation index to the perturbation index and the controversies surrounding it, e.g., \cite{CaGear95,Simeon}, see also Examples \ref{e.CamGear}, \ref{e.simeon} below.
The attempts made in \cite{CaGear95} to clarify the relation between these two different index notions did not achieve the intended goal.
A number of additional index terms is introduced in  \cite{CaGear95} (so-called uniform and maximum indices), however, this is not helpful 
 because quite special solvability properties of the DAE are assumed in advance.
\medskip

The geometric reduction  approaches concentrate exclusively on the dynamic properties of unperturbed systems. The approaches via derivative arrays are based on the assumption that all derivatives are or can be calculated correctly. 
They are primarily intended to figure out and approximate a particular solution of an unperturbed DAE.

For linear DAEs, the nature of the sensitivity of the solutions with respect to perturbations $\delta$ is determined by the structure of the canonical subspace $N_{can}$, i.e., not only by the index, but just as much by the characteristic values, see Subsection \ref{subs.posedness}. 
The projector-based analysis and the decoupled system in the tractability framework allows a precise and detailled insight into the dependencies.
For regular linear index-$\mu$ DAEs, both the differentiation index and the perturbation index are equal to $\mu$, and the homogenous structure of $N_{can}$ ensures homogenous dependencies over the given interval. In contrast,
for linear almost-regular DAEs featuring differentiation index $\mu$, on subintervals the perturbation index and the differentiation index may both be lower than $\mu$. 

Of course, nonlinear DAEs are much more complicated.
As shown in the previous section, the sequences of admissible matrix functions for nonlinear DAEs allow the determination of regularity regions. All points where the required rank conditions are not fulfilled are critical points.

Regarding the related method  equivalencies  obtained for linear regular DAEs by Theorem \ref{t.Sum_equivalence}, the linearization concept of the previous section can be utilized in all these cases. It seems that on each regularity region, the perturbation index coincides with the differentiation index and the other ones as it is the case in the examples below. We emphasize again that regularity regions characterize the DAE without assuming the existence of solutions.

Naturally, the maximal possible regularity regions are bordered with critical points. Then the definition domain of the DAE may be decomposed into  maximal regularity regions. Each regularity region 
comprises solely regular points with the very same index and characteristics. But different regularity regions may feature different index and characteristics. 
Since the matrix function sequence is built from the partial Jacobians of the given data,
the same sequence evidently arises for the unperturbed and the perturbed DAE.

The solutions may reside in one of the regularity regions, but they may also cross the borders or stay there. If they cross the border of regularity regions with different index values, then the perturbation index of the related solution segments changes accordingly as in Example \ref{e.CamGear} below.

\subsection{Some further comments}\label{subs.comments}
In \cite{Mehrmann} it is pointed out that the requirements of Hypothesis \ref{SHypN} and that of  a well-defined  differentiation index are equivalent up to some (technical) smoothness requirements. Harmless critical points are not at all indicated. It may happen that the differentiation index is much lower up to one on partial segments.
For details we refer to \cite[Remark 4.29]{KuMe2006}.

A comparison of the regular differentiation index, the projector-based differentiation  index, and the geometric index shows full consistency with regard to the rank  conditions and a slight difference in regards to smoothness. All kind of critical points are excluded for regularity, but they are being detected in the course of the procedures.

%%%%%%%%%%%%%%%%%%%%%%%%%%%%%%%%%%%%%%%%%%%%%%%%%%%
\subsection{Nonlinear examples}\label{subs.Nexamples}
 We give a brief outlook for nonlinear DAEs considering some small, representative, and easy-to-follow examples and emphasize again the importance of taking account of all canonical characteristic values in addition to the index.
 
 The first two Examples \ref{e.exp} and \ref{e.sin-cos} have a positive degree of freedom and show expected singularities from a geometric point of view.
 The next Examples \ref{e.CamGear} and \ref{e.simeon}  are classics from literature and show harmless critical points as well as changing characteristics. 
 With the next Example \ref{e.Ricardo} we emphasize the fact that problems with harmless critical points do not allow the geometric reduction.
 Our last Example \ref{e.robotic_arm} shows the so-called robotic arm DAE, a problem with zero degree of freedom, which nevertheless has serious singularities.

\begin{example}[Singular index-one DAE with bifurcation and impasse points]\label{e.exp}
Consider the very simple autonomous  DAE
\begin{align}\label{ex.2}
\begin{matrix}   x_1'-\gamma x_1 &=&0,\\
(x_1)^2+(x_2)^2-1 &=& 0,
\end{matrix}\quad \Bigg\rbrace
\end{align}
their perturbed version
\begin{align}\label{ex.2_pert}
\begin{matrix}
x_1'-\gamma x_1 &=\delta_1,\\
(x_1)^2+(x_2)^2-1 &= \delta_2,
\end{matrix}\quad \Bigg\rbrace
\end{align}

and the associated functions
\begin{align*}
 f(t,x,x^1)=\begin{bmatrix}
           x^1_1-\gamma x_1-\delta_1(t)\\(x_1)^2+(x_2)^2-1-\delta_2(t)
          \end{bmatrix}, \quad  t\in \Real, x,x^1\in \Real^2,\\
  f_{x^1}(t,x,x^1)=\begin{bmatrix}
           1&0\\0&0
          \end{bmatrix},        
f_x(t,x,x^1)=\begin{bmatrix}
           -\gamma &0\\2x_1&2x_2
          \end{bmatrix},\quad \gamma\in \Real \;\text{  is a given parameter}.
\end{align*}

\begin{figure}[ht]
\includegraphics[width=6.5cm]{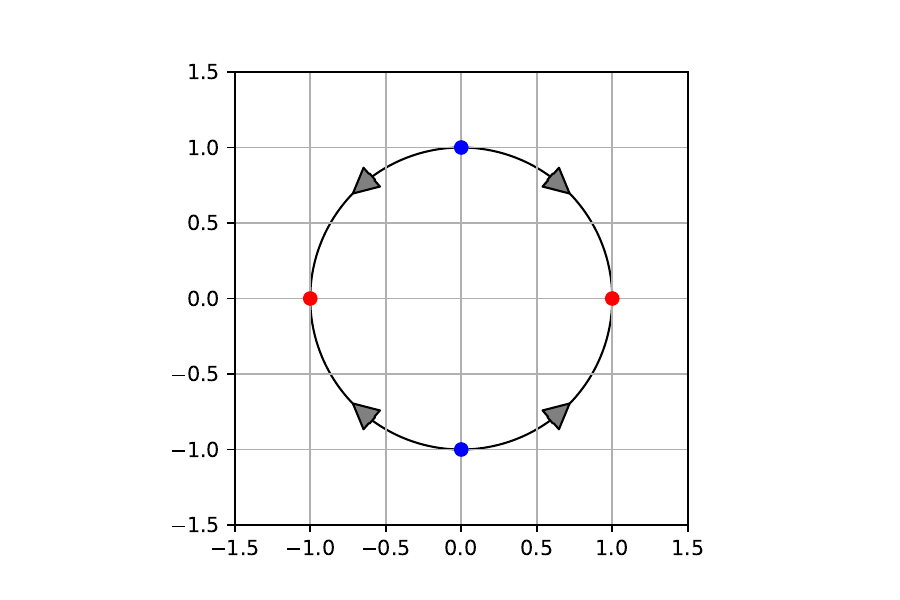}
\includegraphics[width=6.5cm]{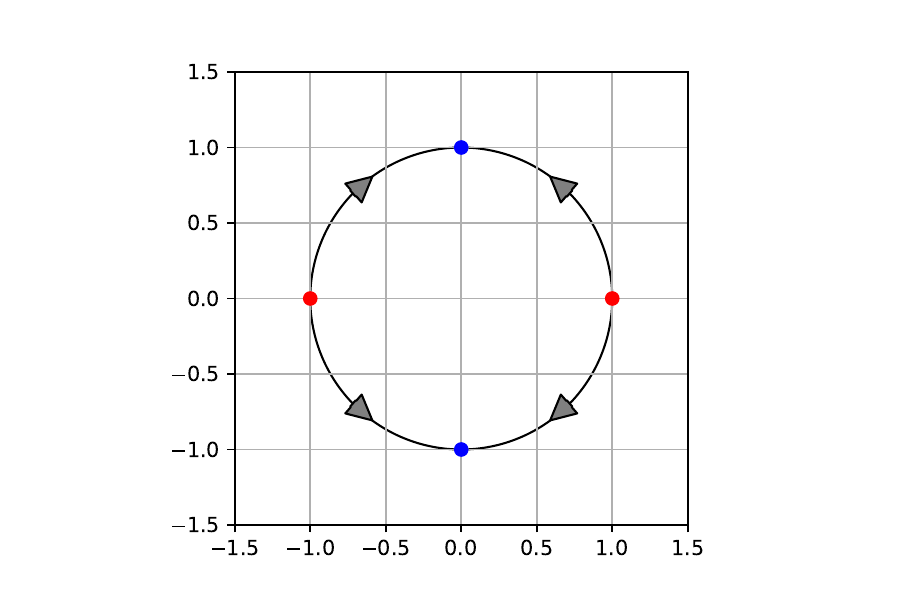}
\caption{Behavior of solutions for the DAE \eqref{ex.2} from Example \ref{e.exp}  for  $\gamma>0$ (left) or $\gamma<0$ (right), critical points (red) and stationary solutions (blue).}
\label{fig:CircleArrows-exp}
\end{figure}

Obviously, the points $\begin{bmatrix}
           0\\1
          \end{bmatrix}$ and $\begin{bmatrix}
           0\\-1
          \end{bmatrix}$ serve as stationary solutions of the autonomous DAE \eqref{ex.2}. 
Further, for each initial point $x_0\in \Real^2$ lying on the unit circle arc, except for the two points on the $x_1$-axis, there exists exactly one solution to the autonomous DAE \eqref{ex.2} passing through at $t_0=0$, namely:

\begin{itemize}
  \item if $\gamma<0$ and  $x_{0,2}>0$ then
  \begin{align*}
   x_*(t)=\begin{bmatrix}
         \exp{(\gamma t)} x_{0,1} \\
				\, \\
				\sqrt{1-\exp{(2\gamma t)}x_{0,1}^2}
          \end{bmatrix},\; t\in [0,\infty),\quad x(t)\xrightarrow{t\rightarrow \infty}\begin{bmatrix}
           0\\1
          \end{bmatrix},
  \end{align*}
\item if $\gamma<0$  and  $x_{0,2}<0$ then:
   \begin{align*}
   x_*(t)=\begin{bmatrix}
         \exp{(\gamma t)} x_{0,1} \\
				\, \\
				- \sqrt{1-\exp{(2\gamma t)}x_{0,1}^2}
          \end{bmatrix},\; t\in [0,\infty),\quad x(t)\xrightarrow{t\rightarrow \infty}\begin{bmatrix}
           0\\-1
          \end{bmatrix},
  \end{align*}
 \item if $\gamma>0$ and  $x_{0,2}>0$ then
  \begin{align*}
   x_*(t)=\begin{bmatrix}
         \exp{(\gamma t)} x_{0,1} \\ 
				\, \\
				\sqrt{1-\exp{(2\gamma t)}x_{0,1}^2}
          \end{bmatrix},\; t\in [0,t_f],
  \end{align*}
\item if $\gamma>0$  and  $x_{0,2}<0$ then: 
   \begin{align*}
   x_*(t)=\begin{bmatrix}
         \exp{(\gamma t)} x_{0,1} \\
				\, \\
				- \sqrt{1-\exp{(2\gamma t)}x_{0,1}^2}
          \end{bmatrix},\; t\in [0,t_f],
  \end{align*}
\end{itemize}
whereby, the final time $t_f$ of the existence intervals is determined by the equation $\exp(\gamma t_f)=1/|x_{0,1}|$ and
\begin{align*}
  x(t_f)=\begin{bmatrix}
          1\\0
          \end{bmatrix}\; \text{ for } x_{0,1}>0,  \;x(t_f)=\begin{bmatrix}
          -1\\0
          \end{bmatrix}\; \text {for } x_{0,1}<0.
\end{align*}
It is now evident that merely the two points  $\begin{bmatrix}
           1\\0
          \end{bmatrix}$ and $\begin{bmatrix}
           -1\\0
          \end{bmatrix}$
are critical. For $\gamma<0$, from each of these points, two solutions start, but, for  $\gamma>0$, these points are so-called impasse points, cf. Figure \ref{fig:CircleArrows-exp}.
From the geometric point of view the DAE has degree $s=1$ and the unit circle arc can be seen as the configuration space.
\medskip

In case of nontrivial perturbations $\delta_1, \delta_2$, the situation is on the one hand quite similar but on the other hand much more intricate. In particular, the configuration space becomes time-dependent, and we are confronted with different configuration spaces for different perturbations $\delta_2$, see Figures \ref{fig:Circle_growing} and \ref{fig:Circle_growing_sin}. The final time $t_f$ also depends on the perturbations and
seemingly the place of the critical points varies.  The solution representations allow the realization on correspondingly small time intervals $[a,b]$ that this is a DAE with perturbation index one apart from the critical points.

\begin{figure}[ht]
\includegraphics[width=7cm]{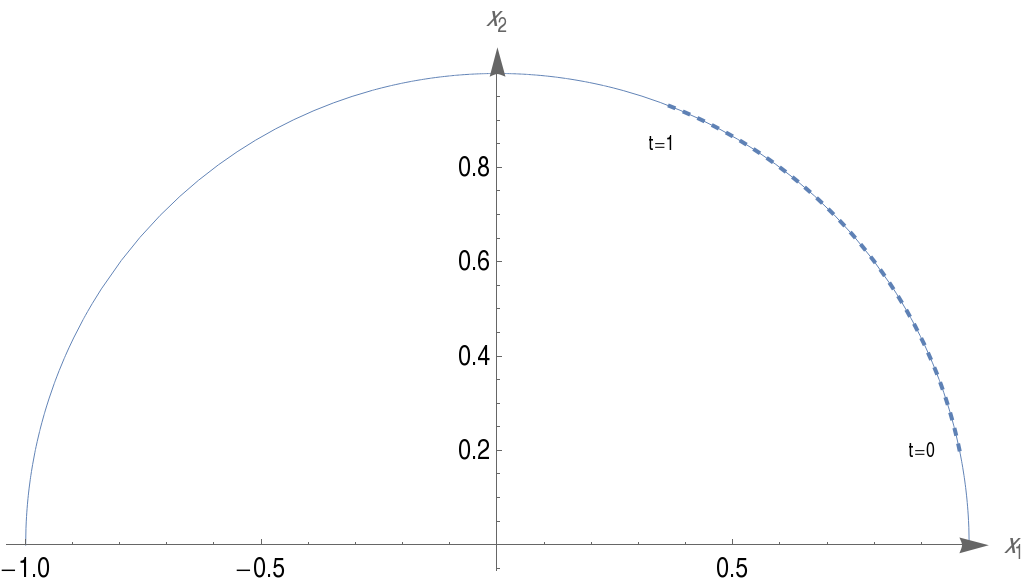} \hspace{0.3cm}
\includegraphics[width=7cm]{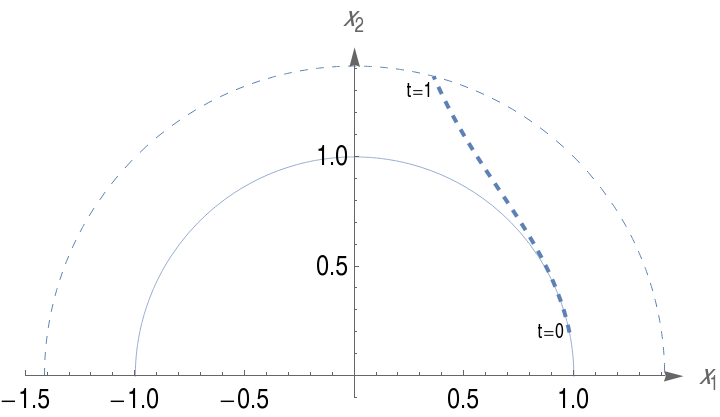}
\caption{Solution of the DAE \eqref{ex.2} from Example \ref{e.exp} for $\gamma=-1$ and initial value $x_{0,1}=0.98$  (left), as well as solution of \eqref{ex.2_pert} for $\delta_1(t)=0$, $\delta_2(t)=t^2$ (right), both for $t \in \left[0,1\right]$.}
\label{fig:Circle_growing}
\end{figure}
\begin{figure}[ht]
\includegraphics[width=7cm]{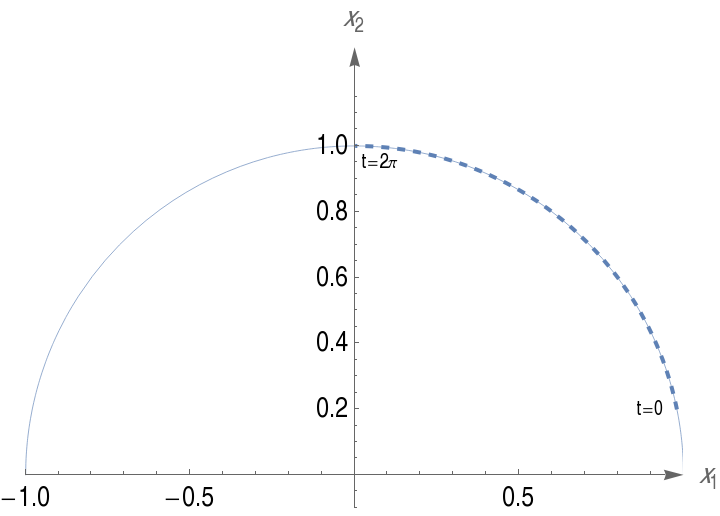}
\hspace{0.3cm}
\includegraphics[width=7cm]{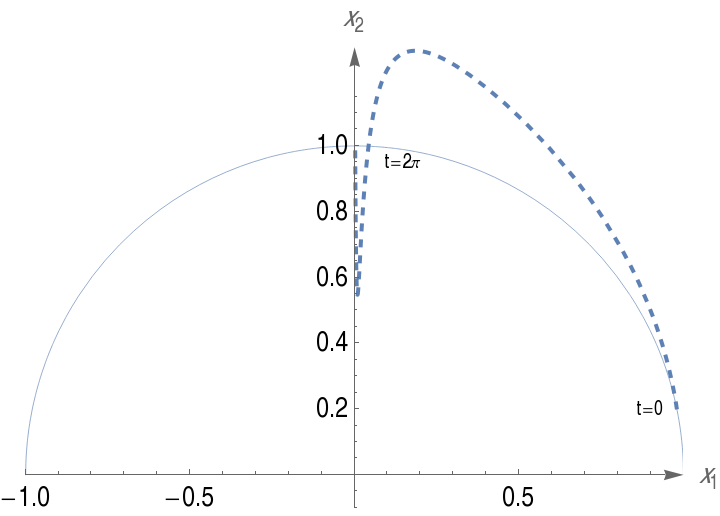}
\caption{Solution of the DAE \eqref{ex.2} from Example \ref{e.exp} for $\gamma=-1$ and initial value $x_{0,1}=0.98$  (left), as well as solution of \eqref{ex.2_pert} for $\delta_1(t)=0$,  $\delta_2(t)=0.7 \sin(t)$ (right), both for $t \in \left[0,2\pi\right]$.}
\label{fig:Circle_growing_sin}
\end{figure}

\medskip

Note that the partial derivatives $f_{x^1}$ and $f_x$ are independent of the perturbations  $\delta_1, \delta_2$.
Applying the projector-based analysis we form the matrix function
\begin{align*}
 G_1(t,x,x^1)=f_{x^1}(t,x,x^1)+f_x(t,x,x^1)Q_0=\begin{bmatrix}
                                1&0\\0&2x_2
                               \end{bmatrix}, \quad Q_0=\begin{bmatrix}0&0\\
           0&1
          \end{bmatrix}.
\end{align*}
Obviously, $G_1(t,x,x^1)$ is nonsingular if and only if $x_2\neq 0$.

On the other hand, the inflated system  $\mathfrak F_{[1]}=0$ yields the partial Jacobian  
\[
 \mathcal E_{[1]}(t,x,x^1,x^2)=\begin{bmatrix}
                   1&0&0&0\\0&0&0&0\\-\gamma&0&1&0\\2x_1&2x_2&0&0
                  \end{bmatrix},
\]
that
undergoes a rank drop from 3 for $x_2\neq 0$ to 2 for $x_2=0$. 
Now it becomes clear that $x_2=0$ indicates critical points, which splits $\Real^2$ into the two regularity regions
\[
 \mathcal G_{+}=\{x\in \Real^2:x_2>0\}\;\text{ and }\; \mathcal G_{-}=\{x\in \Real^2:x_2<0\} ,
\]
see Figure \ref{fig:RegReg1+2} (left). The border set consists of critical points,
\[
 \mathcal G_{crit}=\{x\in \Real^2:x_2=0\}.
\]
On each of these regularity regions, the DAE is said to be regular with index $\mu=\mu^T=\mu^{pbdiff}=\mu^{diff}=1$ and canonical characteristics $r=1$, $\theta_0=0$. This means that for all perturbed versions of our DAE, the intersection of the corresponding  configuration space with $\mathcal G_{crit}$ contain the singular points of the flow.

\end{example}

\begin{example}[Singular index-one DAE with critical-point-crossing solution]\label{e.sin-cos}
Given the value $\gamma=1$ or $\gamma=-1$,  let us have a closer look at the simple autonomous  DAE
\begin{align}\label{ex.1}
x_1'-\gamma x_2 &=0,\\
x_1^2+x_2^2-1 &= 0,\nonumber
\end{align}
This DAE  possesses obvious solutions, namely
\begin{itemize}
  \item if $\gamma=1$:
  \begin{align*}
   x_*(t)=\begin{bmatrix}
           \sin t\\\cos t
          \end{bmatrix},
x_{**}(t)=\begin{bmatrix}
           1\\0
          \end{bmatrix}, and \;
          x_{***}(t)=\begin{bmatrix}
           -1\\0
          \end{bmatrix}, \quad t\in [0, 2\pi],
  \end{align*}
\item if $\gamma=-1$:
  \begin{align*}
   x_*(t)=\begin{bmatrix}
           \cos t\\\sin t
          \end{bmatrix},
x_{**}(t)=\begin{bmatrix}
           1\\0
          \end{bmatrix}, and \;
          x_{***}(t)=\begin{bmatrix}
           -1\\0
          \end{bmatrix}, \quad t\in [0, 2\pi],
  \end{align*}
\end{itemize}
together with phase-shifted variants. It is evident that the first solution crosses the other ones  at $t=\frac{\pi}{2}$ and at $t=\frac{3\pi}{2}$, thus the points $\begin{bmatrix}
           1\\0
          \end{bmatrix}, \begin{bmatrix}
           -1\\0
          \end{bmatrix}$ appear to be singular, see Figure \ref{fig:CircleArrows-sin-cos}.
\medskip

Similarly as in the previous example, 
applying the projector-based analysis we form the matrix function
\begin{align*}
 G_1(t,x,x^1)=f_{x^1}(t,x,x^1)+f_x(t,x,x^1)Q_0=\begin{bmatrix}
                                1&-\gamma\\0&2x_2
                               \end{bmatrix}, \quad Q_0=\begin{bmatrix}0&0\\
           0&1
          \end{bmatrix}.
\end{align*}
Again, $G_1(x,p)$ is nonsingular if and only if $x_2\neq 0$ and,
on the other hand, the  array function 
\[
 \mathcal E_{[1]}(x)=\begin{bmatrix}
                   1&0&0&0\\0&0&0&0\\0&-\gamma&1&0\\2x_1&2x_2&0&0
                  \end{bmatrix},
\]
undergoes a rank drop from 3 for $x_2\neq 0$ to 2 for $x_2=0$.
It becomes clear that $x_2=0$ indicates critical points, which splits $\Real^2$ into the two regularity regions
\[
 \mathcal G_{+}=\{x\in \Real^2:x_2>0\}\;\text{ and }\; \mathcal G_{-}=\{x\in \Real^2:x_2<0\},
\]
see Figure \ref{fig:RegReg1+2} (left), and the border consisting of critical points
\[
  \mathcal G_{crit}=\{x\in \Real^2:x_2=0\}.
\]
From the geometric point of view the DAE has degree $s=1$ and the unit circle arc can be seen as the configuration space. 
On each of the regularity regions, the DAE is regular with index $\mu=\mu^T=\mu^{pbdiff}=\mu^{diff}=1$ and canonical characteristics $r=1$, $\theta_0=0$.
The intersection of the configuration space with $\mathcal G_{crit}$ contains the singular points of the flow.

Indeed, for instance, if we simulate the first solution $x_*$ from above,  then we start in a regularity region. However, at $t=\frac{\pi}{2}$, this solution crosses the  constant solution $x_{**}$, and at $t=\frac{3\pi}{2}$ the other constant solution $x_{***}$, which are flow singularities. In  terms of the characteristic-monitoring, at these times the canonical characteristic value $\theta_0$ changes, and  this indicates that the solution crosses the border of a regularity region.

\begin{figure}
\includegraphics[width=6.5cm]{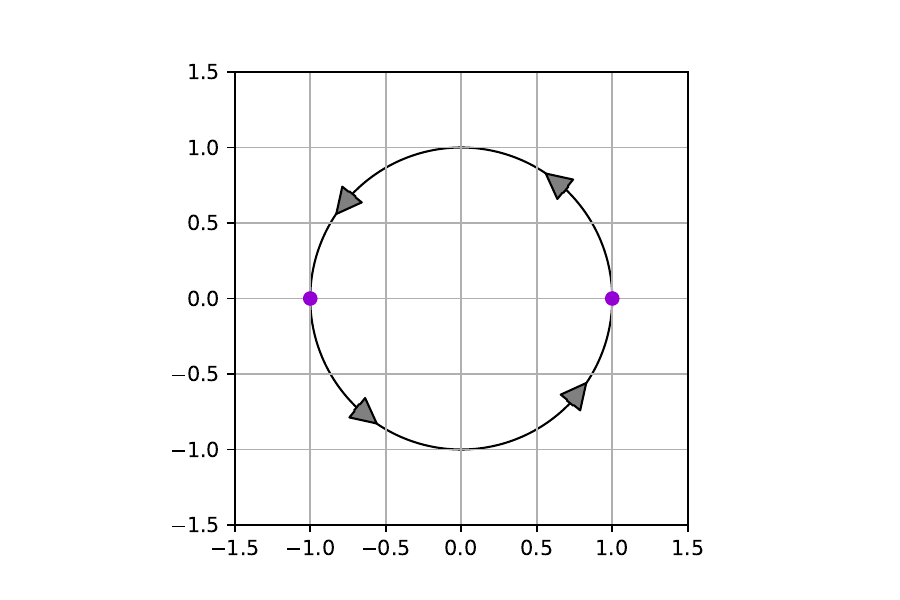}
\includegraphics[width=6.5cm]{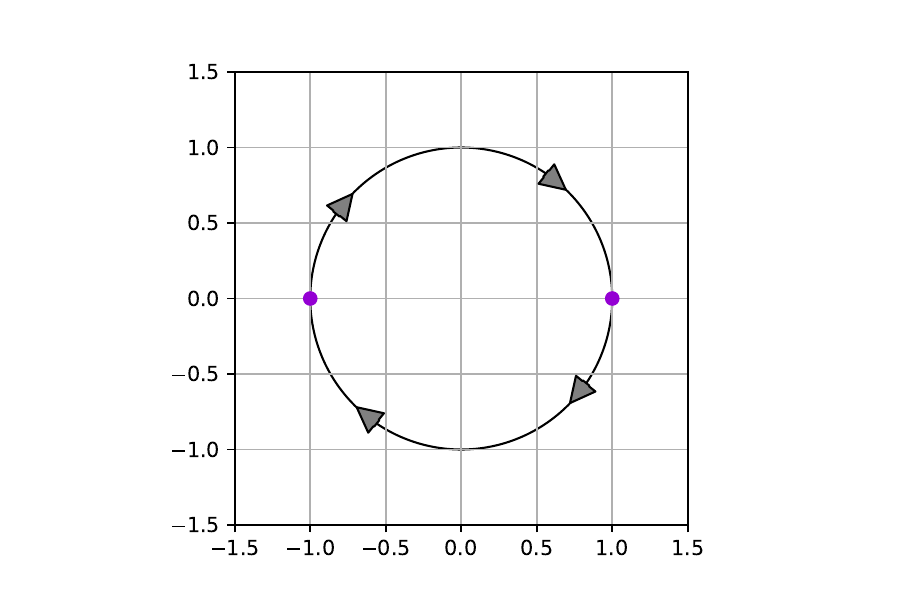}
\caption{Behavior of solutions for Example \ref{e.sin-cos} and critical points (violet) that are also stationary solutions.}
\label{fig:CircleArrows-sin-cos}
\end{figure}

\end{example}

%%%%%

\begin{figure}[h]
 %   \RawFloats
    \begin{minipage}[b]{0.45\textwidth}
        \includegraphics[width=\textwidth]{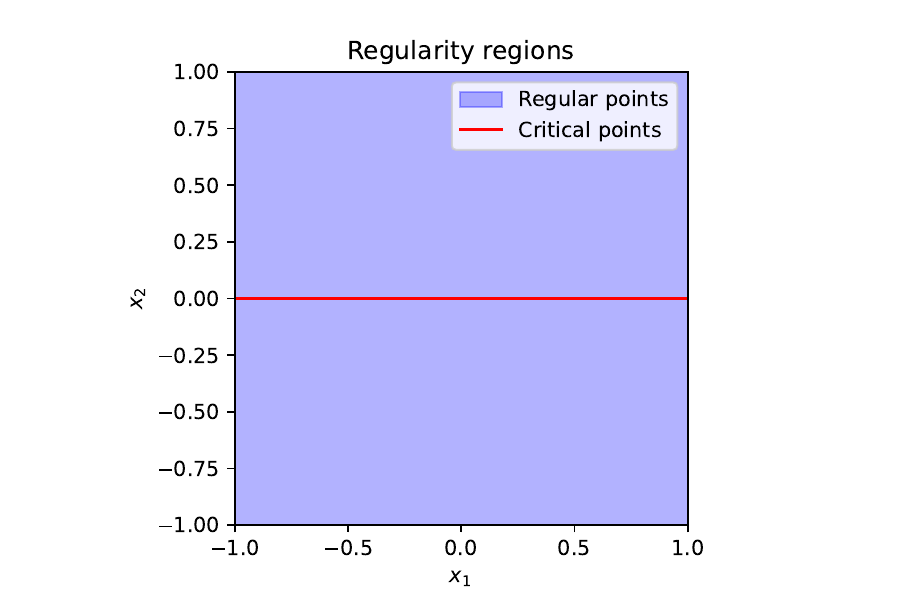}
        \caption*{Examples \ref{e.exp} and \ref{e.sin-cos}}
    \end{minipage}
    \hfill
    \begin{minipage}[b]{0.45\textwidth}
        \includegraphics[width=\textwidth]{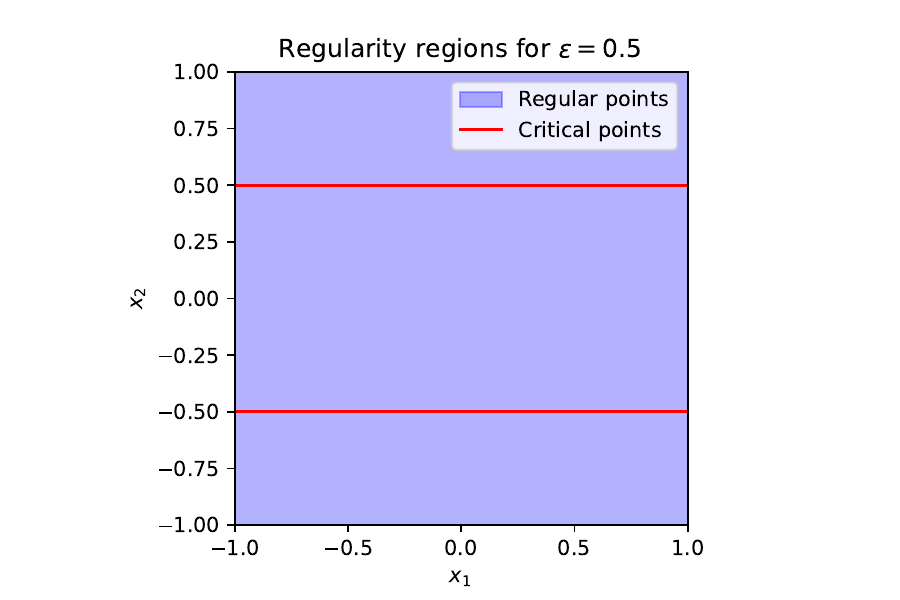}
        \caption*{Example \ref{e.CamGear}}
    \end{minipage}
		\caption{Regularity regions of three examples with $m=2$.}
    \label{fig:RegReg1+2}
\end{figure}

\begin{example}[Index change and harmless critical points]\label{e.CamGear}
We revisit now \cite[Example 12]{CaGear95} that plays its role in the early discussion concerning index notions.

For $\epsilon>0$, let $\gamma : \Real \rightarrow \Real$ be an infinitely differentiable function which has the property
\[
\begin{array}{ll}
\gamma(z)=0 & \text{ for } \left|z\right| \leq \epsilon, \\
\gamma(z)\neq 0 & \text{ else },
\end{array}
\]
and consider the DAE
\begin{align*}
\gamma(x_2)x_2'+x_1&= \delta_1, \\
x_2 &= \delta_2,
\end{align*}
by means of the associated given functions
\begin{align*}
 f(t,x,x^1)&=\begin{bmatrix}
           \gamma(x_2) x_2^1+x_1\\x_2
          \end{bmatrix}-\delta(t), \quad  t\in \Real,\; x,x^1\in \Real^2,\\
  f_{x^1}(t,x,x^1)&=\begin{bmatrix}
           0&\gamma(x_2)\\0&0
          \end{bmatrix}, \;       
f_x(t,x,x^1)=\begin{bmatrix}
           1&\gamma'(x_2)\\0&1
          \end{bmatrix}.
\end{align*}
The DAE is solvable for each arbitrary smooth perturbation $\delta$. The solutions are given by
\begin{align*}
 x_1=\delta_1-\gamma(\delta_2)\delta'_2,\\
 x_2=\delta_1,
\end{align*}
which indicates that the perturbation index is not greater than one and the dynamic degree of freedom is $d=0$.

Not surprisingly, using the projector-based analysis, we observe  three regularity regions showing different characteristics,
\[
 \mathcal G_{+}=\{x\in \Real^2:x_2>\epsilon\}\;\text{ and }\; \mathcal G_{-}=\{x\in \Real^2:x_2<-\epsilon\},
\]
and 
\[
  \mathcal G_{\epsilon}=\{x\in \Real^2:|x_2|<\epsilon\},
\]
see Figure \ref{fig:RegReg1+2} (right), and in detail
\begin{align*}
 \text{on}\quad \mathcal G_{+}:&\quad r=1, \quad  \theta_0=1,\quad  \theta_1=0,\quad \mu=2,\\
 \text{on}\quad \mathcal G_{\epsilon}:&\quad r=0, \quad  \theta_0=0,\quad \mu=1,\\
 \text{on}\quad \mathcal G_{-}:&\quad r=1, \quad  \theta_0=1, \quad  \theta_1=0,\quad \mu=2.
\end{align*}
The borders between these regularity regions consist of critical poitns. All these critical points are obviously harmless. If a solution fully resides in $\mathcal G_{+}$ or  $\mathcal G_{-}$ then the DAE has perturbation index $\mu_p=2$ along this solution. In contrast, if a solution fully resides in $\mathcal G_{\epsilon}$ then the DAE has perturbation index $\mu_p=1$ along this solution. Of course, there might be solutions crossing the borders and then the perturbation index changes accordingly along the solution.
\end{example}

%%%%%

\begin{example}[Campbell's counterexample] \label{e.simeon}
This is a special case of \cite[Example 10]{CaGear95}  which was picked out and discussed in the essay  \cite[p.\ 73]{Simeon}. It is about the relationship between the differentiation index and the perturbation index.
Consider the DAE
\begin{align*}
x_3x_2'+x_1 &=\delta_1,\\
x_3x_3' +x_2 &= \delta_2,\\
x_3&= \delta_3,
\end{align*}
and the associated functions
\begin{align*}
 f(t,x,x^1)&=\begin{bmatrix}
           x_3x_2^1+x_1\\x_3x_3^1+x_2\\x_3
          \end{bmatrix}-\delta(t), \quad  t\in \Real,\, x,x^1\in \Real^3,\\
  f_{x^1}(t,x,x^1)&=\begin{bmatrix}
           0&x_3&0\\0&0&x_3\\0&0&0
          \end{bmatrix},        
f_x(t,x,x^1)=\begin{bmatrix}
           1&0&x_2^1\\0&1&x_3^1\\0&0&1
          \end{bmatrix}.
\end{align*}
The DAE has obviously a unique solution to each arbitrary smooth perturbation $\delta$, namely
\begin{align*}
x_3&=\delta_3,\\
x_2&=\delta_2-\delta_3'\delta_3,\\
x_1&= \delta_1-\delta_3 (\delta_2-\delta_3'\delta_3)'=\delta_1-\delta_3\delta'_2+\delta_3(\delta'_3)^2+(\delta_3)^2\delta''_3,
\end{align*}
which clearly indicates perturbation index $\mu_p=3$ and zero dynamic degree of freedom.
In contrast, by Definition \ref{d.N1} (that is controversal) the unperturbed DAE with $\delta=0$ has differentiation index $\mu^{diff}=1$, with the underlying ODE $x'=0$ given on the single  point $x=0$. 
Note that the related array function $ \mathcal E_{[1]}$ in Definition \ref{d.N1a} undergoes a rank drop at $x_3=0$,
\[
 \mathcal E_{[1]}=\begin{bmatrix}
                   0&x_3&0&0&0&0\\
                   0&0&x_3&0&0&0\\
                   0&0&0&0&0&0\\
                   1&x_3^1&0&0&x_3&0\\
                   0&1&x_3^1&0&0&x_3\\
                   0&0&1&0&0&0
                  \end{bmatrix},\quad r_{[1]}=\rank \mathcal E_{[1]}=\Bigg\lbrace \begin{matrix}
                                                                4\;\text{ for } x_3\neq 0\\
                                                                 3\;\text{ for } x_3= 0
                                                               \end{matrix}\;.
\]
In particular, the rank function $r_{[1]}$ fails to be constant on each neighborhood of the origin, which would be necessary for an index-1 DAE in the sense of the precise Definition \ref{d.N1a}. 
According to our  understanding the  DAE has the regularity regions 
\begin{align*}
 \mathcal G_{+}=\{z\in \Real^3:x_3>0\},\quad  \mathcal G_{-}=\{z\in \Real^3:x_3<0\},
\end{align*}
and the critical point set
\begin{align*}
 \mathcal G_{crit}=\{z\in \Real^3:x_3=0\}.
\end{align*}
At the points of the regularity regions we form the admissible matrix functions from the projector-based framework,
\begin{align*}
A=f_{x^1},\; D=P=\begin{bmatrix}
	0 & 0 & 0 \\
	0 & 1 & 0 \\
	0 & 0 & 1 
\end{bmatrix},\;
 G_0=AD=\begin{bmatrix}
	0 & x_3 & 0 \\
	0 & 0 & x_3 \\
	0 & 0 & 0 
\end{bmatrix}, \; r_0^T=2,
 \; B_0=\begin{bmatrix}
           1&0&x_2^1\\0&1&x_3^1\\0&0&1
          \end{bmatrix},
\end{align*}
and 
\begin{align*}
Q_0&=\begin{bmatrix}
	1 & 0 & 0 \\
	0 & 0 & 0 \\
	0 & 0 & 0 
\end{bmatrix},\;
G_1=\begin{bmatrix}
	1 & x_3 & 0 \\
	0 & 0 & x_3 \\
	0 & 0 & 0 
\end{bmatrix}, \; r_1^T=2,\;
B_1=\begin{bmatrix}
           0&0&x_2^1\\0&1&x_3^1\\0&0&1
          \end{bmatrix},\\
 Q_1&=\begin{bmatrix}
	0 & -x_3 & 0 \\
	0 & 1 & 0 \\
	0 & 0 & 0 
\end{bmatrix},\;
G_2=\begin{bmatrix}
	1 & x_3 & 0 \\
	0 & 1 & x_3 \\
	0 & 0 & 0 
\end{bmatrix},\; r_2^T=2, \;
B_2=\begin{bmatrix}
           0&0&x_2^1\\0&0&x_3^1\\0&0&1
          \end{bmatrix},\\
Q_2&=\begin{bmatrix}
	0 & 0 & (x_3)^2 \\
	0 & 0 & -x_3 \\
	0 & 0 & 1 
\end{bmatrix},
G_3=\begin{bmatrix}
	1 & x_3 & 0 \\
	0 & 1 & x_3 \\
	0 & 0 & 1 
\end{bmatrix},\; r_2^T=3.
\end{align*}
Therefore, on both regularity regions, the DAE is regular with index $\mu^T=\mu_p=\mu^{diff}$ and canonical characteristics 
\begin{align*}
r=2, \quad \theta_0=1, \quad \theta_1= 1, \quad \theta_2=0, \quad d=0.
\end{align*}
All points from $\mathcal G_{crit}$ are harmless critical points as the above solution representation confirms.

It should also be added that the further array functions $\mathcal E_{[2]}$ and $\mathcal E_{[3]}$ feature constant ranks, and the differentiation index is well-defined and equal to one on the entire domain.
\end{example}

%%%%%

\begin{example}[Riaza's counterexample] \label{e.Ricardo}
The following example is part of the discussion whether problems with harmless critical points are accessible to treatment by geometric reduction from \cite{RaRh}. It is commented in \cite[p.\ 186]{RR2008} with the words: \emph{There is no way to apply the framework of \cite{RaRh} neither globally nor locally around the origin}.

We apply the perturbed version of the system \cite[(4.6), p.\ 186]{RR2008},
\begin{align*}
x_1'-\alpha(x_1,x_2, x_3) &=\delta_1, \\
x_1x_2' - x_3 &= \delta_2, \\
x_2 &= \delta_3,
\end{align*}
in which $\alpha$ denotes a smooth function.
We recognize that
\begin{align*}
x_2 &= \delta_3,\\
x_3 &= -\delta_2+x_1\delta_3', \\
x_1'-\alpha(x_1,\delta_3, -\delta_2+x_1\delta_3') &=\delta_1,
\end{align*}
such that it becomes clear that the DAE is solvable for all sufficiently smooth perturbations  $\delta$ and initial conditions for the first solution component. 

To apply the projector-based analysis we use the associated functions
\begin{align*}
 f(t,x,x^1)&=\begin{bmatrix}
           x^1_1-\alpha(x_1,x_2,x_3) \\x_1x_2^1-x_3\\x_2
          \end{bmatrix}, \quad  t\in \Real, x,x^1\in \Real^3,\\
  f_{x^1}(t,x,x^1)&=\begin{bmatrix}
           1&0&0\\0&x_1&0\\0&0&0
          \end{bmatrix},        \\
f_x(t,x,x^1)&=\begin{bmatrix}
           -\alpha_{x_1}(x_1,x_2,x_3)&  -\alpha_{x_2}(x_1,x_2,x_3)&  -\alpha_{x_3}(x_1,x_2,x_3)\\x_2^1&0&-1\\0&1&0
          \end{bmatrix}.
\end{align*}
Supposing $x_1\neq 0$ we form the admissible matrix functions. We drop the arguments of the functions whenever it is reasonable. We obtain 
\begin{align*}
Q_0=\begin{bmatrix}
     0&0&0\\0&0&0\\0&0&1
    \end{bmatrix},\;
 G_1=f_{x^1}+f_xQ_0=\begin{bmatrix}
           1&0&-\alpha_{x_3}\\0&x_1&-1\\0&0&0
          \end{bmatrix},\; r_1^T=\rank G_1=2,\;  \theta_0=1,\\
   Q_1=\begin{bmatrix}
     0&\alpha_{3}/x_1&0\\0&1&0\\0&1/x_1&0
    \end{bmatrix},\;
 G_2=G_1+B_1Q_1=\begin{bmatrix}
           1&*&*\\0&x_1&-1\\0&1&0
          \end{bmatrix}, \; r_2^T=\rank G_2=3,\; \theta_1=0.     
\end{align*}
Consequently, the DAE has the two regularity regions
\begin{align*}
 \mathcal G_{+}=\{z\in \Real^3:x_1>0\},\quad  \mathcal G_{-}=\{z\in \Real^3:x_1<0\}
\end{align*}
and the critical point set
\begin{align*}
 \mathcal G_{crit}=\{z\in \Real^3:x_1=0\}.
\end{align*}
Regarding the solvability properties we know the critical point to be harmless. The perturbation index is two around solutions residing in a regularity region. If a solution does not cross or touch the critical point set, then the perturbation index is two along this solution. Obviously, along reference functions $x_*$, with vanishing first components, the perturbation index reduces to one.
\end{example}

%%%%%
\begin{example}[DAE describing a two-link robotic arm]\label{e.robotic_arm}
The so-called robotic arm DAE is a well understood benchmark for higher-index DAEs 
on the form
\begin{align*}%\label{eq:DAEwithb}
\left(\begin{bmatrix}
			I_6 &  \\
			 & 0_2
		\end{bmatrix}x \right)'+b(x,t)=0.
\end{align*}
that is well described in literature, see \cite{CaKu2019}, \cite{ELMRoboticArm2020} and the references therein. It results from a tracking problem in robotics and presents two types of singularities. Without going into the details of the $m=8$ equations and variables with $r=6$, here we interpret them in terms of the characteristics $\theta_i$.   

The DAE describes a two-link robotic arm with an elastic joint moving on a horizontal plane, see Figure \ref{fig:RA} (left). The third variable $x_3$ of the equations corresponds to the rotation of the second link with respect to the first link.

\begin{figure}[htbp]
%    \RawFloats
    \begin{minipage}[b]{0.45\textwidth}
        \includegraphics[width=\textwidth]{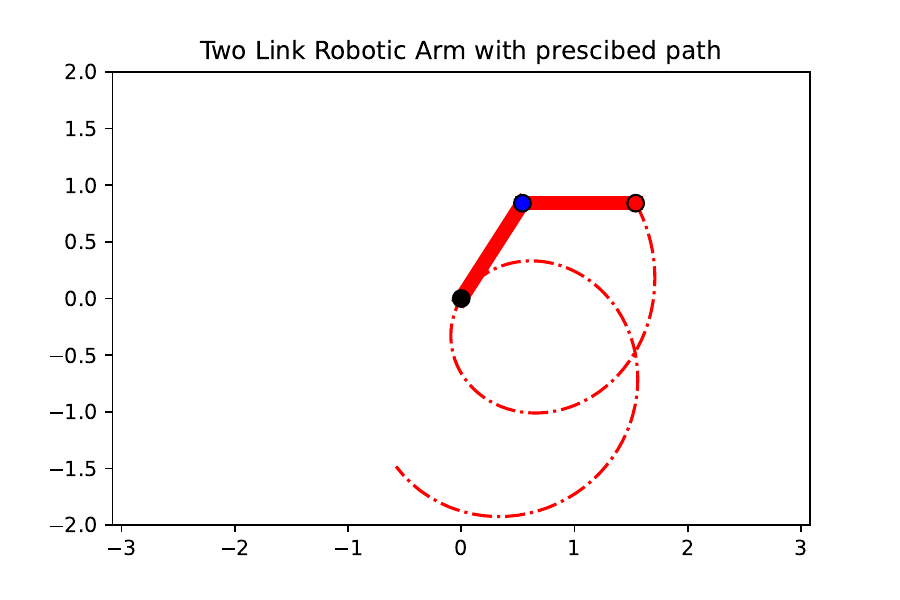}
        \caption*{The blue marker corresponds to the elastic joint of the two links. At the black marker the end of one link is fixed to the origin. The position of the red endpoint of the outer link is prescribed by a path. }
        %\label{fig:two_links_RA}
    \end{minipage}
    \hfill
    \begin{minipage}[b]{0.45\textwidth}
        \includegraphics[width=\textwidth]{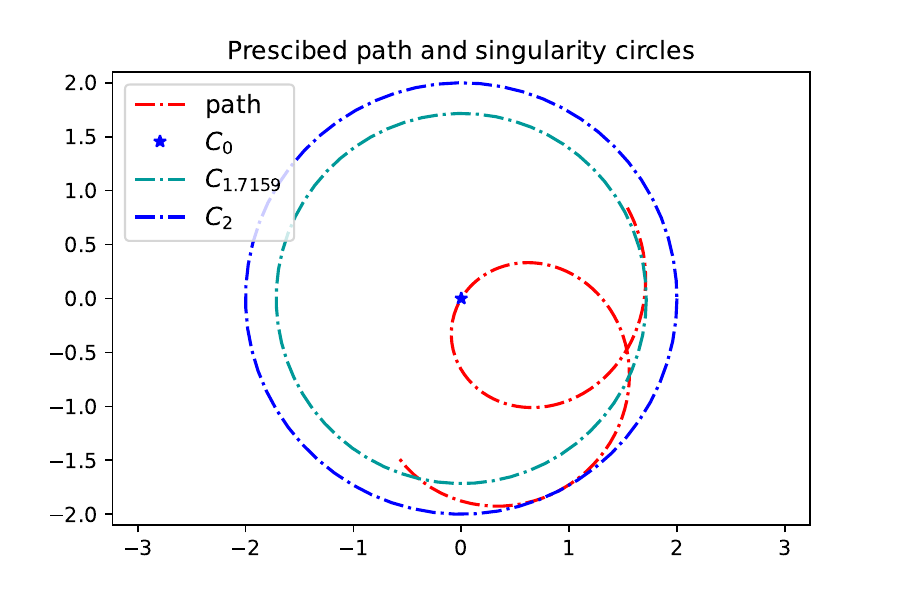}
        \caption*{If the prescribed path crossed a singularity circle $C_r$ with radius $r$, then singularities of the DAE arise.  $C_0$ and $C_2$ correspond to $\sin(x_3)=0$ and $\cos(x_3)=z_*$ leads to $C_{1.7159}$ for certain model parameters. }
        %\label{fig:Circles_RA}
    \end{minipage}
		\caption{As $x_3$ is the angle between the two links, the singularities can be interpreted geometrically, see \cite{ELMRoboticArm2020}, where also the original figures and the discussion of the parameters can be found.}
    \label{fig:RA}
\end{figure}

In \cite{ELMRoboticArm2020} it has been shown that critical points arise at
\[
\cos(x_3)=z_* \quad or \quad \sin(x_3)=0,
\]
whereas the constant value $z_*$ depends on the particular parameters of the model.
In the consequence, the original definition domain of the DAE $\mathcal I\times\mathcal D=\mathcal I\times\Real^8$ decomposes into an infinite number of regularity regions whose borders are  hyperplanes consisting of the corresponding critical points. Owing to \cite[Proposition 5.1]{ELMRoboticArm2020} the canonical characteristics are the same on all regularity regions. If the component $x_{*,3}$ of a solution $x_*$ crosses or touches such a critical hyperplane then this gives rise to a singular behavior. In case of the robotic arm, this happens
if the prescribed path crosses so-called singularity circles, see Figure \ref{fig:RA} (right).

In regularity regions, we obtain $\mu^T=\mu^{pbdiff}=5$ and
\begin{align*}
\theta_0 &=8-r_1^T=\rank T_1=2, \quad  & \theta_1 &=8-r_2^T= \rank T_2=2, \\\
\theta_2 &=8-r_3^T=\rank T_3=1, \quad  & \theta_3 &=8-r_4^T=\rank T_4=1, \\
\theta_4 &=8-r_5^T=\rank T_5=0, \quad  & d & =0.
\end{align*}
Indeed, monitoring this ranks is how the singularities $\cos(x_3)=z_*$ were detected, which to our knowledge had not been described before.
\end{example}

%%%%%%%%%%%%%%%%%%%%%%%%%%%%%%%%%%%%%%%%%%%%%%%%%%%%
%%%%%%%%%%%%%%%%%%%%%%%%%%%%%%%%%%%%%%%%%%%%%%%%%%%%

\section{Conclusions}

Until now, DAE literature has been rather heterogeneous since each approach uses quite different starting points, definitions, assumptions, leading to own results. 
The diversity of the frameworks made it difficult to compare them. Although
 a few equivalence statements were proven,  a general and rigorous framework was missing so far.

To get to our Main Theorem \ref{t.Sum_equivalence} for linear DAEs we revised and compiled many results from the literature and closed several gaps. By doing so, a characterization of regularity and almost regularity that is interpretable for all approaches resulted straight forward. 

This theorem with the canonical characteristics is, in our opinion, the common ground of all the considered approaches. For nonlinear DAEs, we worked out aspects for possible further investigations. % that extend beyond the scope of this article.

\bibliography{CommonGround_DAEs}{}
\bibliographystyle{plain}

%%%%%%%%%%%%%%%%%%%%%%%%%
\section{Appendix}\label{s.Anhang}
%%%%%%%%%%%%%%%%%%%%%%%
%%%%%%%%%%%%%%%%%%%%%%%%%%%%%%

%%%%%%%%%%%%%%%%%
\subsection{1-full matrix functions and rank estimations}\label{subs.A1}
%%%%%%%%%%%%%%%%%%%%%%%
The matrix $M\in \Real^{(s+1)m\,\times\, (s+1)m}$ which has block structure built from $m\times m$ matrices is said to be  \emph{$1$-full}, if there is a  nonsingular matrix  $T$ such that $TM=\begin{bmatrix}I_{m}&0\\0&H \end{bmatrix}$.

Let $M:\mathcal I\rightarrow \Real^{(s+1)m\,\times\, (s+1)m}$ be a continuous matrix function which has block structure built from $m\times m$ matrix functions. $M$ is said to be \emph{smoothly $1$-full}, if there is a pointwise nonsingular, continuous matrix function $T$ such that $TM=\begin{bmatrix}I_{m}&0\\0&H   \end{bmatrix}$.

\begin{lemma}\label{l.app}
Let $M:\mathcal I\rightarrow \Real^{(s+1)m\,\times\, (s+1)m}$ be a continuous matrix function which has block structure built from $m\times m$ matrix functions. 

The following assertions are equivalent:
\begin{description}
 \item[\textrm{(1)}] $M$ is smoothly $1$-full and has constant rank $r_{M}$.
 \item[\textrm{(2)}] $M$ has constant rank $r_{M}$ and  $M(t)$ is $1$-full pointwise for each $t\in \mathcal I$.
 \item[\textrm{(3)}] There is a continuous function $H:\mathcal I\rightarrow \Real^{sm\,\times\, sm}$  with constant rank $r_{H}$ such that
\begin{align}\label{a.kerM}
 \ker M=\{\begin{bmatrix}
           z\\w
          \end{bmatrix}\in \Real^{m}\times\Real^{sm}: z=0, w\in \ker H
\},
\end{align}
 \item[\textrm{(4)}] $M$ has constant rank $r_{M}$ and 
 \begin{align*}
  T_{[s]}\ker M={0},
 \end{align*}
with the truncation matrix $T_{[s]}=[I_{m}\, 0\cdots 0]\in\Real^{m\times (s+1)m}$.
\end{description}
\end{lemma}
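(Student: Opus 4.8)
The plan is to prove Lemma~\ref{l.app} by establishing the cycle of implications $\textrm{(1)}\Rightarrow\textrm{(2)}\Rightarrow\textrm{(4)}\Rightarrow\textrm{(3)}\Rightarrow\textrm{(1)}$, which covers all four equivalences. The implication $\textrm{(1)}\Rightarrow\textrm{(2)}$ is essentially immediate: if $T$ is a pointwise nonsingular continuous matrix function with $TM=\begin{bmatrix}I_m&0\\0&H\end{bmatrix}$, then for each fixed $t$ the matrix $T(t)$ witnesses $1$-fullness of $M(t)$, and since left-multiplication by a nonsingular matrix preserves rank, $M$ has the same (constant) rank as $M$ was assumed to have.

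For $\textrm{(2)}\Rightarrow\textrm{(4)}$: suppose $M(t)$ is $1$-full for each $t$, say $T(t)M(t)=\begin{bmatrix}I_m&0\\0&H(t)\end{bmatrix}$ for some nonsingular $T(t)$ (not necessarily continuous). If $\begin{bmatrix}z\\w\end{bmatrix}\in\ker M(t)$, then applying $T(t)$ gives $z=0$, hence $T_{[s]}\ker M(t)=\{0\}$; combined with the constant-rank hypothesis this is exactly $\textrm{(4)}$. For $\textrm{(4)}\Rightarrow\textrm{(3)}$: since $M$ has constant rank $r_M$, the Moore--Penrose inverse $M^+$ and the orthoprojector $W_M=I-MM^+$ are continuous. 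Write the generic kernel element via $\begin{bmatrix}z\\w\end{bmatrix}\in\ker M$ iff $M\begin{bmatrix}z\\w\end{bmatrix}=0$; partitioning $M=[M_1\;M_2]$ with $M_1$ the first block column ($(s+1)m\times m$) and $M_2$ the remaining block columns, the condition $T_{[s]}\ker M=\{0\}$ says precisely that $M_1z + M_2w=0$ forces $z=0$, i.e. $\im M_2 \cap \im M_1 = \im M_1$ is impossible unless... more carefully: it means $\{z: M_1z\in\im M_2\}=\{0\}$, so $M_1$ is injective and $\im M_1\cap\im M_2=\{0\}$. One then checks that $M_1$ has full column rank $m$, and with an appropriate continuous left inverse extracted from $M_1^+$ one produces the continuous matrix function $H$ governing the $w$-component; the constant rank of $H$ follows from $r_H = r_M - m$ being constant. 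This gives the representation \eqref{a.kerM}.

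For the closing implication $\textrm{(3)}\Rightarrow\textrm{(1)}$: given the kernel representation \eqref{a.kerM} with $H$ of constant rank, one knows $M$ has constant rank $r_M = m + r_H$ (the kernel has constant dimension $(s+1)m - m - r_H$). The task is to build a continuous nonsingular $T$ achieving the normal form. Since $M$ has constant rank, $M^+$ is continuous; the key geometric fact is that $\im M$ contains a continuously varying complement to $\{0\}\times\im(\text{something})$ reflecting the block structure, and one constructs $T$ from $M^+$ together with a continuous projector onto $(\im M)^\perp$, permuting blocks so that the first $m$ rows of $TM$ become $[I_m\;0]$. Concretely, using that the first block column $M_1$ has full column rank $m$ (which follows from \eqref{a.kerM}: if $M_1z=0$ then $\begin{bmatrix}z\\0\end{bmatrix}\in\ker M$ so $z=0$), one sets up $T$ so that its action inverts $M_1$ on its image and projects the rest. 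The continuity of all ingredients follows from the constant-rank assumption via continuity of Moore--Penrose inverses.

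The main obstacle I anticipate is the careful bookkeeping in $\textrm{(3)}\Rightarrow\textrm{(1)}$ and $\textrm{(4)}\Rightarrow\textrm{(3)}$: one must produce genuinely \emph{continuous} transformation and block matrix functions, not merely pointwise ones, and this hinges on repeatedly invoking that constant rank of a continuous matrix function yields continuous Moore--Penrose inverses and continuous associated projectors. The block-structure constraint (that $T$ and $H$ respect the $m\times m$ blocking, or at least that $H$ has size $sm\times sm$) needs a little attention; here the natural route is to first achieve the normal form with a general continuous nonsingular $T$ and then observe that the size bookkeeping ($H$ of size $sm\times sm$) is automatic from $M$ being $(s+1)m\times(s+1)m$ with the first $m$ rows normalized. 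I would present this cleanly by isolating a small sublemma: \emph{a continuous matrix function with constant rank whose first block column has full column rank $m$ is smoothly $1$-full}, proved once via the Moore--Penrose machinery, and then the four equivalences fall out quickly.
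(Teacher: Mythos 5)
Your overall organization (a cycle $\textrm{(1)}\Rightarrow\textrm{(2)}\Rightarrow\textrm{(4)}\Rightarrow\textrm{(3)}\Rightarrow\textrm{(1)}$) is a legitimate alternative to the paper's pairing of equivalences, and it has the nice side effect of making $\textrm{(2)}\Rightarrow\textrm{(1)}$ self-contained, whereas the paper outsources that direction to a cited lemma of Kunkel--Mehrmann. The trivial implications and the step $\textrm{(4)}\Rightarrow\textrm{(3)}$ are fine in substance: from $T_{[s]}\ker M=\{0\}$ and constant rank one gets $\ker M=\{(0,w):w\in\ker M_2\}$ with $M_2$ of constant rank $r_M-m$, and $H:=M_2^+M_2$ (or, as the paper does, a function built from the orthoprojector onto $\ker M$, which necessarily has the block form $\diag(0,K)$) does the job.

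There are, however, two genuine problems. First, your isolated sublemma --- ``a continuous matrix function with constant rank whose first block column has full column rank $m$ is smoothly $1$-full'' --- is false. Take $M=\begin{bmatrix}I_m&A\\0&0\end{bmatrix}$ with $A\neq0$: the first block column has full column rank and the rank is constant, but any $T$ with $TM=\begin{bmatrix}I_m&0\\0&H\end{bmatrix}$ would force $T_{11}=I_m$ and simultaneously $T_{11}A=0$. Full column rank of $M_1$ is not enough; you also need $\im M_1\cap\im M_2=\{0\}$, i.e.\ exactly the kernel condition of $\textrm{(3)}$/$\textrm{(4)}$. Second, and more importantly, the step $\textrm{(3)}\Rightarrow\textrm{(1)}$ is only gestured at, and it is the real content of the lemma: the difficulty is not normalizing the first $m$ rows but completing $M^+$ to a \emph{continuous, pointwise nonsingular} $T$. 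The paper's device is to note that $M^+M=I-Q_M=\diag(I_m,H^+H)$ already has the desired form (because $Q_M=\diag(0,I-H^+H)$ by the kernel representation), and then to add a correction term $Q_M\mathcal{C}W_M$, where $\mathcal{C}$ is assembled from constant-rank factorizations of the two orthoprojectors $Q_M$ and $W_M=I-MM^+$ so that $Q_M\mathcal{C}W_M$ maps $(\im M)^\perp$ bijectively onto $\ker M$ and vanishes on $\im M$; then $T=Q_M\mathcal{C}W_M+M^+$ is continuous, nonsingular, and satisfies $TM=\diag(I_m,H^+H)$. Some such explicit bijection between $(\im M)^\perp$ and $\ker M$ is needed; ``a continuous projector onto $(\im M)^\perp$, permuting blocks'' does not supply it.
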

\begin{proof}
{\textrm (1)}$\leftrightarrow${\textrm (2)}: The straight direction is trivial, the opposite direction  is provided, e.g., by \cite[Lemma 3.36]{KuMe2006}.

{\textrm (3)}$\leftrightarrow${\textrm (4)}: The straight direction is trivial, we immediately turn to the opposite one.
 Let $M$ have constant rank $r_{M}$ and $T_{[s]} \ker M=\{0\}$. Denote by $Q_{M}$ the  continuous orthoprojector function onto $\ker M$. Then, $Q_{M}$ must have the special form
\begin{align*}
 Q_{M}&=\begin{bmatrix}
          0&0\\0&K
         \end{bmatrix}:\mathcal I\rightarrow \Real^{(m+km)\times (m+km)},\\
         &K=K^*:\mathcal I\rightarrow \Real^{sm\times sm},\quad \rank K=\rank Q_{M}=(m+sm)-r_{M}.
\end{align*}
Let $Z:\mathcal I\rightarrow \Real^{sm \times (r_{M}-m)}$ denote a continuous basis of $(\im K)^{\perp}$ such that $\rank Z=r_{M}-m$. Then the assertion becomes true with
\begin{align*}
 H=\begin{bmatrix}
          Z^*\\0
         \end{bmatrix}:\mathcal I\rightarrow \Real^{(sm)\times (sm)},
         \quad \rank H=\rank Z^*=r_{M}-m.
\end{align*}

{\textrm (1)}$\leftrightarrow${\textrm (3)}: 
Since the straight direction is trivial again, we immediately turn to the opposite one. Let $H:\mathcal I\rightarrow \Real^{sm\,\times\, sm}$ be a continuous function with constant rank $r_{H}$ such that \eqref{a.kerM} is valid. Then $\ker M$ has dimension $\dim\ker H=sm-r_{H}$ which is constant. Therefore, $M$ has constant rank $r_{M}=(s+1)m-(sm-r_{H})=m+r_{H}$.

The projector-valued matrix functions
\begin{align*}
 W_{M}=I_{sm+m}-MM^{+} \quad \text{ and } \quad Q_{M}=I_{sm+m}-M^{+}M=\begin{bmatrix}
                                                 0&0\\0&I_{sm}-H^{+}H
                                                \end{bmatrix}
\end{align*}
are continuous and have constant rank $sm+m-r_{M}=:\rho$ both.  Then there are  continuous matrix functions $U_{W}, V_{W}, U_{Q}, V_{Q}$ being pointwise orthogonal such that (e.g. \cite[Theorem 3.9]{KuMe2006})
\begin{align*}
 Q_{M}= U_{Q}\begin{bmatrix}
                           \Sigma_{Q}&0\\0&0
                          \end{bmatrix} V^{T}_{Q},\quad
  W_{M}=U_{W}\begin{bmatrix}
                           \Sigma_{W}&0\\0&0
                          \end{bmatrix}V^{T}_{W},
\end{align*}
with nonsingular sigma blocks of size $\rho$. Set 
\begin{align*}
 \mathcal C=V_{Q}\begin{bmatrix}
                           \Sigma_{Q}^{-1}\Sigma_{W}^{-1}&0\\0&0
                          \end{bmatrix}U^{T}_{W}
\end{align*}
such that 
\begin{align*}
  Q_{M}\mathcal C W_{M}=U_{Q}\begin{bmatrix}
                           I_{\rho}&0\\0&0
                          \end{bmatrix}V^{T}_{W},\quad \im Q_{M}\mathcal CW_{M}= \im  Q_{M},\quad \ker  Q_{M}\mathcal CW_{M}=\ker  W_{M},
\end{align*}
and the matrix $ T= Q_{M}\mathcal C W_{M}+M^{+}$ is continuous and nonsingular. It follows that $TM=I_{sm+m}- Q_{M}=M^{+}M=\diag(I_{m},H^{+}H)$ which means that $M$ is smoothly $1$-full.
\end{proof}

\begin{lemma}\label{l.app2}
For $m\in \Natu$, a matrix function $E: \mathcal I\rightarrow \Real^{m\,\times \, m}$
and matrix functions
\[
\mathcal M_{[0]}(t) :=E(t), \quad  \mathcal M_{[k]}: \mathcal I\rightarrow \Real^{(k+1)m\,\times\, (k+1)m}, \quad k=1 , \ldots
\]
defined in a way that the structure
\begin{eqnarray}
\mathcal M_{[k+1]}(t) := \begin{bmatrix}
	\mathcal M_{[k]}(t) & 0 \\
	* & E(t)
\end{bmatrix}
\label{eq:Structure_M}
\end{eqnarray}
is given, for $r(t):= \rank E(t)$ and $r_{[k]}(t):=\rank \mathcal M_{[k]}(t)$ 
it holds
\[
r_{[k]}(t) + r(t) \leq r_{[k+1]}(t) \leq r_{[k]}(t) + m, \quad  t \in {\mathcal I}, \, k \geq 0.
\]
\end{lemma}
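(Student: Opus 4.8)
The statement is purely a rank inequality for block-triangular matrices of the shape \eqref{eq:Structure_M}, so the plan is to prove the two bounds separately and pointwise in $t$ (the argument $t$ can be suppressed throughout, since everything is evaluated at a fixed $t$). The right-hand inequality $r_{[k+1]}\le r_{[k]}+m$ is the easy half: the matrix $\mathcal M_{[k+1]}$ has exactly $m$ more rows (and $m$ more columns) than $\mathcal M_{[k]}$, and appending $m$ rows to any matrix can increase its rank by at most $m$. I would phrase this via the block-column decomposition
\begin{align*}
 \mathcal M_{[k+1]}=\begin{bmatrix}\mathcal M_{[k]}\\ \ast\end{bmatrix}\ \text{(first $km+m$ columns)}\quad\oplus\quad \begin{bmatrix}0\\ E\end{bmatrix}\ \text{(last $m$ columns)},
\end{align*}
so that $\rank \mathcal M_{[k+1]}\le \rank\begin{bmatrix}\mathcal M_{[k]}\\ \ast\end{bmatrix}+\rank\begin{bmatrix}0\\ E\end{bmatrix}$; the first summand is at most $(km+m)$ but more to the point equals at most $\rank\mathcal M_{[k]}+0$ after noting the added rows... actually the cleanest route is simply: deleting the last $m$ rows from $\mathcal M_{[k+1]}$ leaves $[\mathcal M_{[k]}\ 0]$, whose rank is $r_{[k]}$, and deleting $m$ rows drops the rank by at most $m$. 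That gives $r_{[k+1]}\le r_{[k]}+m$ immediately.

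For the left-hand inequality $r_{[k]}+r\le r_{[k+1]}$ I would use the block-triangular structure directly. Pick a set of $r_{[k]}$ linearly independent columns of $\mathcal M_{[k]}$; the corresponding columns of $\mathcal M_{[k+1]}$, restricted to the first $km+m$ rows, are $\mathcal M_{[k]}$'s columns (bordered below by the $\ast$ block), hence still linearly independent. Separately pick $r$ columns of $E$ in the last block-column that are linearly independent; these have zero entries in the first $km+m$ rows. Then the union of these $r_{[k]}+r$ columns of $\mathcal M_{[k+1]}$ is linearly independent: a dependence relation, projected onto the first $km+m$ coordinates, kills the contribution of the last-block columns (which vanish there) and forces the coefficients of the first $r_{[k]}$ columns to be zero by their independence in $\mathcal M_{[k]}$; then the remaining relation forces the coefficients of the $E$-columns to be zero as well. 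Hence $r_{[k+1]}\ge r_{[k]}+r$.

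The only mild subtlety — and the one place the block-\emph{triangular} (not block-diagonal) shape matters — is making sure the $\ast$ block in the lower-left does not spoil the independence argument; the projection onto the top $km+m$ rows neutralizes it, so there is no real obstacle. I would also note at the outset that $\mathcal M_{[0]}=E$, $r_{[0]}=r$, and that for $k=0$ the claimed inequalities read $r+r\le r_{[1]}\le r+m$, which is the base case of the same reasoning applied to $\mathcal M_{[1]}=\begin{bmatrix}\mathcal M_{[0]}&0\\ \ast&E\end{bmatrix}$. Since the inequalities are to hold for each $k\ge 0$ and the two arguments above are uniform in $k$, no induction is even needed — each $k$ is handled in one stroke. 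All of this holds verbatim for every fixed $t\in\mathcal I$, which is exactly the pointwise statement asserted.
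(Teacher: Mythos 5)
Your proof is correct and follows essentially the same route as the paper's (very terse) argument: the upper bound comes from the fact that $\mathcal M_{[k+1]}$ adds only $m$ rows to $[\mathcal M_{[k]}\ 0]$, and the lower bound from exhibiting $r_{[k]}+r$ independent columns using the block-triangular structure. You simply supply the standard linear-algebra details that the paper labels as obvious.
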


\begin{proof}
The structure \eqref{eq:Structure_M} obviously leads to
\[
 r_{[k+1]}(t) \geq r_{[k]}(t) + r(t).
\]
and
\[
r_{[k+1]}(t) = \dim \im \begin{bmatrix}
	\mathcal M_{[k]}(t) & 0 \\
	* & E
\end{bmatrix} \leq \dim \im \begin{bmatrix}
	\mathcal M_{[k]}(t)
\end{bmatrix}  +m= r_{[k]}(t)+m. 
\]
\end{proof}

\subsection{Continuous matrix function with rank changes}\label{subs.M}
We quote the following useful result from \cite[Proof of Theorem 10.5.2]{CaMe}:
\begin{theorem}\label{t.M}
 Let the matrix function  $M:[a, b]\rightarrow\Real^{m\times n}$ be continuous and let
 \begin{align*}
  \varphi=\{t_0\in [a, b]: \rank M(t)\;\text{ is not continuous at }\; t_0\}
 \end{align*}
denote the set of its rank-change points. 

Then the set $\varphi$ is closed and has no interior and 
there exist a collection $\mathfrak S$ of open intervals $\{(a^{\ell}, b^{\ell})\}_{{\ell}\in \mathfrak S}$, such that
 \begin{align*}
  \overline{\bigcup_{ {\ell} \in \mathfrak S}(a^{{\ell}}, b^{\ell})}=[a, b],\quad (a^{\ell_i}, b^{\ell_i})\cap \mathcal (a^{\ell_j}, b^{\ell_j})=\emptyset \quad\text{for}\quad \ell_i \neq \ell_j,
 \end{align*}
 and integers $r^{\ell}\geq 0$, ${\ell}\in \mathfrak S$, such that
 \begin{align*}
  \rank M(t)=r^{\ell} \quad \text{for all}\quad t\in (a^{\ell}, b^{\ell}),\quad {\ell} \in \mathfrak S.
 \end{align*}
\end{theorem}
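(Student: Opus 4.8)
The statement to prove is Theorem \ref{t.M}, which asserts that a continuous matrix function $M\colon[a,b]\to\Real^{m\times n}$ has a closed, nowhere-dense set $\varphi$ of rank-discontinuity points, and that the complement decomposes into a collection of disjoint open intervals on each of which the rank is constant. The plan is to establish these three facts in order: (i) $\varphi$ is closed; (ii) $\varphi$ has empty interior; (iii) the interval decomposition follows formally from (i) and (ii) together with the fact that the rank is locally constant off $\varphi$.

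First I would prove that the complement $[a,b]\setminus\varphi$ is open, equivalently that $\varphi$ is closed. The key observation is the standard lower semicontinuity of rank: since $\rank M(t)$ equals the largest size of a nonvanishing minor, and each minor is a continuous function of $t$, if some $k\times k$ minor of $M(t_0)$ is nonzero then the same minor is nonzero on a neighborhood of $t_0$; hence $\rank M(t)\ge \rank M(t_0)$ near $t_0$. Consequently $t_0\notin\varphi$ if and only if $\rank M$ is also bounded above by $\rank M(t_0)$ near $t_0$, i.e. $\rank M$ is locally constant at $t_0$. From lower semicontinuity it is routine to check that the set of points where $M$ fails to be locally constant is closed: if $t_0$ is a limit of points $s_j$ at which rank jumps, one uses that rank takes only finitely many values and that the "upper" level sets $\{t:\rank M(t)\ge k\}$ are open, to conclude $t_0$ itself is a jump point. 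This gives (i). I would also record here the precise dichotomy: for $t_0\notin\varphi$ there is an open interval around $t_0$ on which $\rank M$ is constant.

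Next, (ii): $\varphi$ has no interior. Suppose for contradiction that $\varphi$ contained an open interval $J$. On $J$, consider $\rho(t)=\rank M(t)$. By lower semicontinuity, for each value $k$ the set $\{t\in J:\rho(t)\ge k\}$ is open in $J$; let $k^{*}$ be the maximal rank attained on $J$ and let $U=\{t\in J:\rho(t)=k^{*}\}$, which is open and nonempty. But if $t_0\in U$, then near $t_0$ we have $\rho\ge k^{*}$ by lower semicontinuity and $\rho\le k^{*}$ by maximality, so $\rho$ is locally constant at $t_0$, i.e. $t_0\notin\varphi$ — contradicting $t_0\in J\subseteq\varphi$. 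This is the heart of the argument; the only subtlety is making sure "maximal rank on $J$" is legitimate, which holds because rank takes values in the finite set $\{0,1,\dots,\min(m,n)\}$. I would present exactly this short contradiction.

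Finally, (iii): assembling the interval decomposition. Since $[a,b]\setminus\varphi$ is open in $[a,b]$, it is a countable or larger union of its connected components, each of which is an interval relatively open in $[a,b]$; on each such component the rank is constant because the component is connected and the rank is locally constant there (an integer-valued locally constant function on a connected set is constant). To get the stated form with open intervals $(a^{\ell},b^{\ell})$ whose closures cover $[a,b]$, I would take $\mathfrak S$ to index these components (shrinking boundary half-open components at $a$ or $b$ to their interiors, and noting that endpoints of components lie in $\varphi$); their pairwise disjointness is automatic for distinct components, and $\overline{\bigcup_{\ell}(a^{\ell},b^{\ell})}=[a,b]$ follows because the union is $[a,b]\setminus\varphi$ up to finitely many endpoints and $\varphi$ is nowhere dense (its complement is dense, so the closure of the complement is all of $[a,b]$). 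The main obstacle — really the only nontrivial point — is step (ii), the nowhere-density of $\varphi$; everything else is a clean unwinding of lower semicontinuity of rank, continuity of minors, and elementary point-set topology, so I would reference \cite[Proof of Theorem 10.5.2]{CaMe} for the details rather than belabor them.
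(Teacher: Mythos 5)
Your proof is correct. Note first that the paper does not actually prove Theorem \ref{t.M} at all: it is quoted verbatim from \cite[Proof of Theorem 10.5.2]{CaMe}, so there is no in-paper argument to compare against. Your three steps are the standard and, as far as I can see, the intended ones: (i) continuity of minors gives lower semicontinuity of $t\mapsto\rank M(t)$, and for an integer-valued function continuity at $t_0$ is the same as local constancy, from which openness of $[a,b]\setminus\varphi$ is immediate; (ii) the maximality argument on a hypothetical open interval $J\subseteq\varphi$ (take $k^{*}=\max_{t\in J}\rank M(t)$, which exists since the rank takes finitely many values, and observe that any point realizing $k^{*}$ is a point of local constancy) is exactly the right way to get empty interior; (iii) the decomposition into connected components with constant integer rank on each, plus density of the complement of a closed nowhere-dense set, delivers the stated covering property. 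One small remark worth recording: your construction via connected components always produces a countable collection $\mathfrak S$ (each component contains a rational), whereas the paper's comment following the theorem suggests $\mathfrak S$ itself "can be over-countable" — that remark is accurate for the set $\varphi$ (which can be a Cantor-type set) but not for the family of components; your proof satisfies the theorem as stated, since only the existence of some such collection is claimed.
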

As emphasided already in \cite{CaMe}, the set  $\varphi$ and in turn collection $\mathfrak S$ can be finite, countable, and also over-countable.

%%%%%%%%%%%%%%%%%
\subsection{Strictly block upper triangular matrix functions and array functions of them}\label{subs.A_strictly}
%%%%%%%%%%%%%%%%%%%%%%%
In this part, for given integers $\nu\geq 2, l\geq\nu, l_1\geq 1,\ldots,l_{\nu}\geq1$, such that $l=l_1+\ldots+l_{\nu}$,
we denote by $SUT=SUT(l,\nu,l_1,\ldots,l_{\nu})$ the set of all strictly upper triangular  matrix functions $N:\mathcal I\rightarrow \Real^{l\times l}$ showing the block structure
\begin{align*}
N=\begin{bmatrix}
   0&N_{12}&*&\cdots&*\\
   &0&N_{23}&*&*\\
   &&\ddots&\ddots&\vdots\\
   &&&&N_{\nu-1, \nu}\\
   &&&&0
   \end{bmatrix}, \quad N_{ij}=(N)_{ij}:\mathcal I\rightarrow \Real^{l_j\times l_i},\quad N_{ij}=0 \quad\text{for}\quad i\geq j.
	\end{align*}
If $l=\nu$ and $l_i=\cdots=l_{\nu}=1$ then $N$ is strictly upper triangular in the usual sense.

The following lemma collects some rules that can be checked by straightforward computations.
\begin{lemma}\label{l.SUT1}
 $N,\hat N\in SUT$ and $N_1,\ldots,N_k\in SUT$ imply
 \begin{description}
  \item[\textrm{(1)}] $N+\hat N \in SUT$.
  \item[\textrm{(2)}] $N\hat N \in SUT$, and the entries of the secondary diagonals are  
  \begin{align*}
   (N\hat N)_{i,i+1}&=0, \quad i=1,\ldots,\nu-1,\\
   (N\hat N)_{i,i+2}&=(N)_{i,i+1}(\hat N)_{i+1,i+2},\quad i=1,\ldots,\nu-2.
  \end{align*}
\item[\textrm{(3)}] $N^{\nu}=0$ and  $N_1\cdots N_k=0$ for $k\geq\nu$.
\item[\textrm{(4)}] $I-N$ remains nonsingular and $(I-N)^{-1}=I+N+\cdots+N^{\nu-1}$.
\item[\textrm{(5)}] $(I-\hat N)^{-1}N=N+\hat NN+\cdots+(\hat N)^{\nu-2}N$ and
 \begin{align*}
   ((I-\hat N)^{-1}N)_{i,i+1}=(N)_{i,i+1}, \quad i=1,\ldots,\nu-1.
  \end{align*}
 \end{description}
\end{lemma}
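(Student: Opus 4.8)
\textbf{Proof plan for Lemma \ref{l.SUT1}.}

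The plan is to treat each of the five items by elementary block-matrix manipulations, working block-entrywise and exploiting the defining property $(N)_{ij}=0$ for $i\ge j$. For item \textrm{(1)}, the sum of two strictly block upper triangular matrix functions clearly has zero blocks wherever both summands do, so closure under addition is immediate. For item \textrm{(2)}, I would compute the $(i,j)$-block of $N\hat N$ as $\sum_{s} (N)_{is}(\hat N)_{sj}$ and observe that a nonzero contribution requires $i<s$ and $s<j$, hence $i<j-1$, i.e.\ $i\le j-2$; this simultaneously shows $N\hat N\in SUT$, that the first secondary diagonal $(N\hat N)_{i,i+1}$ vanishes, and that the second secondary diagonal reduces to the single surviving term $(N)_{i,i+1}(\hat N)_{i+1,i+2}$ (the only pair $(s)$ with $i<s<i+2$ being $s=i+1$).

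Item \textrm{(3)} follows by iterating the block-index shift from \textrm{(2)}: each multiplication by a member of $SUT$ pushes the first nonzero (block) diagonal at least one step further to the upper right, so a product of $k$ such matrices has all blocks $(i,j)$ with $j-i\le k$ equal to zero; since $j-i\le \nu-1$ always, $k\ge\nu$ forces the product to be zero, and in particular $N^\nu=0$. Item \textrm{(4)} is the standard Neumann-series identity for a nilpotent matrix function: $(I-N)(I+N+\cdots+N^{\nu-1}) = I-N^{\nu}=I$ using \textrm{(3)}, and likewise on the other side, so $I-N$ is pointwise nonsingular with the stated inverse. For item \textrm{(5)}, I would write $(I-\hat N)^{-1}N = (I+\hat N+\cdots+(\hat N)^{\nu-1})N$ by \textrm{(4)}; the term $(\hat N)^{\nu-1}N$ vanishes by \textrm{(3)} (it is a product of $\nu$ factors from $SUT$), leaving $N+\hat N N+\cdots+(\hat N)^{\nu-2}N$. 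For the secondary diagonal, the $(i,i+1)$-block of $(\hat N)^{p}N$ with $p\ge 1$ is zero because $(\hat N)^{p}\in SUT$ has vanishing first $p$ block super-diagonals (again by the shift in \textrm{(2)}--\textrm{(3)}), so only the $p=0$ term $N$ contributes, giving $((I-\hat N)^{-1}N)_{i,i+1}=(N)_{i,i+1}$.

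All of this is routine, so there is essentially no real obstacle; the only point requiring mild care is bookkeeping the block sizes, since the off-diagonal blocks $N_{ij}$ are rectangular of size $l_j\times l_i$ and one must make sure the products and sums are conformable. This is automatic because the block row/column partitions $(l_1,\dots,l_\nu)$ are fixed throughout, but it is worth stating explicitly that the computations respect the partition. I would therefore present the argument as a short sequence of block-index observations, leading with \textrm{(2)} as the engine that drives \textrm{(3)}, \textrm{(4)}, and \textrm{(5)}.
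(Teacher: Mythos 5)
Your proposal is correct and takes the same route the paper intends: the paper offers no written proof beyond the remark that the rules "can be checked by straightforward computations," and your block-index argument (with \textrm{(2)} driving \textrm{(3)}--\textrm{(5)} and the Neumann series for \textrm{(4)}) is exactly that computation. One small slip: a product of $k$ members of $SUT$ has zero blocks for $j-i\le k-1$ (not $j-i\le k$), and $(\hat N)^{p}$ has vanishing superdiagonals $1,\ldots,p-1$ (not $1,\ldots,p$); the corrected bounds still yield all your conclusions.
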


The following two subsets of $SUT$ are of special interest because they enable rank determinations and beyond.

\textbullet\; Supposing $l_1\geq \cdots\geq l_{\nu}$ we denote by $SUT_{column}\subset SUT$ the set  of all $N\in SUT$ having exclusively blocks $(N)_{i,i+1}$ with full column rank, that is 
\begin{align}\label{N.col}
 \rank (N)_{i,i+1}=l_{i+1},\quad i=1,\ldots,\nu-1.
\end{align}

\textbullet\; Supposing $l_1\leq \cdots\leq l_{\nu}$ we denote by $SUT_{row}\subset SUT$ the set  of all $N\in SUT$ having exclusively blocks $(N)_{i,i+1}$ with full row rank, that is 
\begin{align}\label{N.row}
 \rank (N)_{i,i+1}=l_{i},\quad i=1,\ldots,\nu-1.
\end{align}
\begin{lemma}\label{l.Ncol}
 Each $N\in SUT_{column}$ has constant rank $l-l_1$ and,   for $k\leq \nu-1$,  one has 
 \begin{align*}
  \ker N=\im \begin{bmatrix}
              I_{l_1}\\0
             \end{bmatrix},\quad 
\ker N^k=\im \begin{bmatrix}
              I_{l_1}&&   \\
              &\ddots&\\
							& & I_{l_k}\\
              0&&0
             \end{bmatrix}.
 \end{align*}
Moreover, for the product of any $k$ elements $N_1,\ldots, N_k\in SUT_{column}$ it holds that 
\begin{align*}
\ker N_1\cdots N_k=\im \begin{bmatrix}
              I_{l_1}&&   \\
              &\ddots& \\
							& & I_{l_k}\\
              0&&0
             \end{bmatrix}, \quad \rank N_1\cdots N_k = l-(l_1+\cdots+l_k).
 \end{align*}
\end{lemma}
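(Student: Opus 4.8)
\textbf{Plan of proof for Lemma~\ref{l.Ncol}.}
The statement splits naturally into two parts: the structure of $\ker N^k$ for a single $N\in SUT_{column}$, and the analogous claim for a product $N_1\cdots N_k$ of (possibly different) members of $SUT_{column}$. The single-matrix claim is really a special case of the product claim (take $N_1=\cdots=N_k=N$), so the plan is to prove the product statement directly, by induction on $k$, and then read off the $N^k$ formula and the rank count. First I would record the trivial base case $k=1$: since $N_1\in SUT_{column}$ has the block form with strictly-upper-triangular shape and full-column-rank superdiagonal block $(N_1)_{12}$ of size $l_2\times l_1$, a vector $z=(z_1,\dots,z_\nu)^{*}$ (partitioned according to $l_1,\dots,l_\nu$) lies in $\ker N_1$ iff $\sum_{j>i}(N_1)_{ij}z_j=0$ for all $i$; working from the bottom row upward and using that each $(N_1)_{i,i+1}$ is injective, one gets $z_\nu=0$, then $z_{\nu-1}=0$, \dots, then $z_2=0$, with $z_1$ free. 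Hence $\ker N_1=\im\begin{bmatrix}I_{l_1}\\0\end{bmatrix}$ and $\rank N_1=l-l_1$.

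For the induction step, suppose the claim holds for products of $k-1$ factors, i.e. $\ker(N_2\cdots N_k)=\im\begin{bmatrix}I_{l_1}&&\\&\ddots&\\&&I_{l_{k-1}}\\0&&0\end{bmatrix}=:V_{k-1}$, the span of the first $l_1+\cdots+l_{k-1}$ coordinates. Now $z\in\ker(N_1N_2\cdots N_k)$ means $N_2\cdots N_k\,z\in\ker N_1=V_1$ (the span of the first $l_1$ coordinates). The key observation is that because $N_1\in SUT$ shifts blocks strictly upward, $N_2\cdots N_k\in SUT$ as well (Lemma~\ref{l.SUT1}(2)), so $N_2\cdots N_k\,z$ always has vanishing last block; more importantly, I need to understand which $z$ are sent by $N_2\cdots N_k$ into $V_1$. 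Here I would argue via the block structure: writing $w=N_2\cdots N_k\, z$, the condition $w\in V_1$ says $w_2=\cdots=w_\nu=0$. Since $N_2\cdots N_k\in SUT_{column}$-type in the sense that its superdiagonal blocks $(N_2\cdots N_k)_{i,i+1}$ need not themselves be full rank, I cannot directly reuse the base-case argument; instead the cleanest route is to use the \emph{known rank} $\rank(N_2\cdots N_k)=l-(l_1+\cdots+l_{k-1})$ from the induction hypothesis together with the inclusion $\ker(N_2\cdots N_k)\subseteq\ker(N_1N_2\cdots N_k)$ and a dimension count. Concretely: $\ker(N_1N_2\cdots N_k)\supseteq\ker(N_2\cdots N_k)=V_{k-1}$, so it suffices to show the kernel has dimension exactly $l_1+\cdots+l_k$, equivalently $\rank(N_1\cdots N_k)=l-(l_1+\cdots+l_k)$, and that the kernel is contained in $V_k$ (the span of the first $l_1+\cdots+l_k$ coordinates). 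The containment in $V_k$ follows because $N_1\cdots N_k\in SUT$ and, applying the base-case-style bottom-up elimination to the composite map restricted appropriately, any kernel vector must have $z_{k+1}=\cdots=z_\nu=0$; this uses that the superdiagonal blocks along the chain compose to injective maps on the relevant coordinate blocks — precisely the full-column-rank hypothesis propagated through Lemma~\ref{l.SUT1}(2), which says $(N_1\cdots N_k)_{i,i+k}=(N_1)_{i,i+1}(N_2)_{i+1,i+2}\cdots(N_k)_{i+k-1,i+k}$, a product of $k$ injective blocks hence injective. The rank equality then follows from $\rank(N_1\cdots N_k)=l-\dim\ker\ge l-(l_1+\cdots+l_k)$ combined with $\rank(N_1\cdots N_k)\le\rank(N_2\cdots N_k)=l-(l_1+\cdots+l_{k-1})$ — wait, this only gives one inequality, so I would instead pin down the rank by exhibiting that $N_1\cdots N_k$ maps the complementary coordinate block (coordinates $l_1+\cdots+l_k+1,\dots,l$) injectively, again via the composed-superdiagonal-blocks argument, giving $\rank\ge l-(l_1+\cdots+l_k)$, and the reverse $\le$ from the kernel containing $V_k$.

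The main obstacle I anticipate is keeping the block bookkeeping honest: the superdiagonal blocks of a product $N_1\cdots N_k$ vanish (Lemma~\ref{l.SUT1}(2) shows $(N_1N_2)_{i,i+1}=0$), and it is the $k$-th diagonal blocks $(N_1\cdots N_k)_{i,i+k}$ that carry the injectivity, so the bottom-up elimination that worked for a single matrix must be re-indexed to peel off $k$ coordinate blocks at a time rather than one. I would therefore present the argument as: first prove $\ker N^k$ has the stated form by this $k$-step elimination (the computation $(N^k)_{i,i+k}=(N)_{i,i+1}\cdots(N)_{i+k-1,i+k}$ injective being the crux), note the rank count $\rank N^k=l-(l_1+\cdots+l_k)$ falls out, and then observe that literally the same elimination works verbatim for $N_1\cdots N_k$ since at no point did we use that the factors coincide — only that each factor lies in $SUT_{column}$. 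This makes the product statement not harder than the single-matrix one, and I would phrase the write-up to avoid duplicating the argument. A cross-check: for $k=\nu$ the formula gives $\ker N^\nu=\im I_l=\R^l$ and $\rank N^\nu=l-(l_1+\cdots+l_\nu)=0$, consistent with Lemma~\ref{l.SUT1}(3).
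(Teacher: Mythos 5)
Your proposal is correct and follows essentially the same route as the paper, whose proof is a one-liner invoking the generalization of Lemma \ref{l.SUT1}(2) to products of several factors together with the full-column-rank condition \eqref{N.col}; your identification of $(N_1\cdots N_k)_{i,i+k}=(N_1)_{i,i+1}\cdots(N_k)_{i+k-1,i+k}$ as an injective composite, followed by bottom-up block elimination, is exactly the intended argument. The induction-with-dimension-count detour you abandon midway is indeed the wrong track, but the final version you settle on is the right one and needs no repair.
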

\begin{proof}
  This follows from generalizing Lemma \ref{l.SUT1} (2) for products of several matrices and \eqref{N.col}. 
\end{proof}

\begin{lemma}\label{l.Nrow}
 Each $N\in SUT_{row}$ has constant rank $l-l_{\nu}$ and,  for $k\leq \nu-1$,  one has
 \begin{align*}
  \im N=\im \begin{bmatrix}
              I_{l_1}& &   \\
              &\ddots&  \\
							& & I_{l_{\nu-1}}\\
              0&&0
             \end{bmatrix},\quad 
\im N^k=\im \begin{bmatrix}
              I_{l_1}&  & \\
              &\ddots&   \\
							 & &  I_{l_{\nu-k}} \\
              0& & 0
             \end{bmatrix}.
 \end{align*}
Moreover, for the product of any $k$ elements $N_1,\ldots, N_k\in SUT_{row}$ it holds that 
\begin{align*}
\im N_1\cdots N_k=\im \begin{bmatrix}
              I_{l_1}&&   \\
              &\ddots& \\
							& & I_{l_{\nu-k}}\\
              0&&0
             \end{bmatrix}, \quad \rank N_1\cdots N_k = l_1+\cdots+l_{\nu-k}.
 \end{align*}
\end{lemma}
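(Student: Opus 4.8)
\textbf{Plan for proving Lemma \ref{l.Nrow}.}

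The plan is to exploit the structural relationship between $SUT_{row}$ and $SUT_{column}$ (or to run a direct block-computation parallel to the proof of Lemma \ref{l.Ncol}) and to reduce everything to two observations: first, that multiplication by a full-row-rank secondary-diagonal block shifts the effective ``width'' of the image down by one block, and second, that the strict upper triangular structure forces all the action to happen on the secondary diagonal when we restrict attention to images. First I would record the base case $k=1$: for $N\in SUT_{row}$, the columns corresponding to the last block $l_\nu$ are the only ones not constrained to feed into a strict upper position, but more to the point, one checks directly that $\im N$ is spanned by the first $l_1+\cdots+l_{\nu-1}$ coordinate directions. This is because $N$ maps each block $j$ of the domain into block $j-1$ of the range via $N_{j-1,j}$ (plus higher strict upper pieces), and the blocks $N_{1,2},\ldots,N_{\nu-1,\nu}$ having full row rank $l_1,\ldots,l_{\nu-1}$ respectively guarantees that the images $\im N_{j-1,j}=\Real^{l_{j-1}}$ fill out blocks $1,\ldots,\nu-1$ completely; the $\nu$-th block of the range is never hit. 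Hence $\rank N=l_1+\cdots+l_{\nu-1}=l-l_\nu$, which is constant.

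Next I would handle powers $N^k$ by induction on $k$, using Lemma \ref{l.SUT1}(2): the product of matrices in $SUT$ is again in $SUT$ and its secondary diagonal vanishes while its ``second secondary diagonal'' $(N^2)_{i,i+2}=N_{i,i+1}N_{i+1,i+2}$. For a product of matrices from $SUT_{row}$, each such entry $N_{i,i+1}N_{i+1,i+2}$ is a product of a full-row-rank $l_i\times l_{i+1}$ matrix with a full-row-rank $l_{i+1}\times l_{i+2}$ matrix, hence (since $l_i\le l_{i+1}\le l_{i+2}$) has full row rank $l_i$ again; more generally the ``$k$-th secondary diagonal'' entry of $N_1\cdots N_k$ is a product of $k$ consecutive full-row-rank blocks and again has full row rank $l_i$. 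This shows $N_1\cdots N_k$ has again the ``staircase'' shape with the first nontrivial diagonal being the $k$-th one, with full-row-rank blocks on it, so the same base-case argument applied to this shifted staircase gives $\im (N_1\cdots N_k)=\im \left[\begin{smallmatrix} I_{l_1}&&\\ &\ddots&\\ &&I_{l_{\nu-k}}\\ 0&&0\end{smallmatrix}\right]$ and $\rank N_1\cdots N_k = l_1+\cdots+l_{\nu-k}$. Specializing $N_1=\cdots=N_k=N$ gives the claim about $N^k$, and also $N^\nu=0$ consistently (all the $\nu$-th secondary diagonal has already been pushed off the matrix).

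The main obstacle I anticipate is the careful bookkeeping for the image of a general staircase matrix with full-row-rank blocks on its first nontrivial diagonal and arbitrary entries above it: one must verify that the ``$*$'' entries above the secondary diagonal do not enlarge or shrink the image beyond what the diagonal blocks already dictate. The clean way to see this is to note that, writing $M$ for such a staircase matrix (first nontrivial diagonal being the $k$-th, blocks $M_{i,i+k}$ of full row rank $l_i$), the range is contained in blocks $1,\ldots,\nu-k$ by the strict upper triangularity, and conversely every unit vector in block $j\le\nu-k$ lies in the range: pick a preimage supported in block $j+k$ mapping onto it via the surjective $M_{j,j+k}$, then correct by subtracting a suitable vector supported in blocks $>j+k$ to cancel the unwanted contributions of the ``$*$'' entries into blocks $<j$ --- this correction terminates because there are only finitely many blocks, and each correction step only touches strictly higher-indexed domain blocks. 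This is the same inductive-peeling argument that underlies Lemma \ref{l.Ncol}, just transposed, so I would either cite that lemma via transposition ($N\mapsto N^*$ interchanges $SUT_{column}$ and $SUT_{row}$ up to reversing the block order, and $\im N=(\ker N^*)^\perp$) or spell out the peeling directly; the transposition route is shortest and I would take it, being careful that transposition reverses the ordering of the block sizes $l_1,\ldots,l_\nu$, which is exactly why the roles of ``first block'' and ``last block'' swap between the two lemmas.
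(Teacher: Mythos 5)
Your proposal is correct and follows essentially the same route as the paper, whose proof is a one-line appeal to the generalization of Lemma \ref{l.SUT1}(2) (products of $SUT$ matrices have $k$-th secondary diagonal blocks equal to products of consecutive secondary diagonal blocks) combined with the full-row-rank condition \eqref{N.row}; you have simply spelled out the details, and the transposition/duality with Lemma \ref{l.Ncol} that you prefer is sound. One small slip in the peeling sketch you set aside: the correction cancelling the spurious contributions into range blocks $<j$ must be supported in domain blocks $<j+k$ (the process terminates at block $1$), not in blocks $>j+k$ — equivalently, solve the block rows by back-substitution from $i=\nu-k$ down to $i=1$.
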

\begin{proof}
This follows from generalizing Lemma \ref{l.SUT1} (2) for products of several matrices and \eqref{N.row}. 
\end{proof}

Next we turn to the derivative array function\footnote{See \eqref{eq.arrayNk} in Section \ref{subs.SCFarrays}.} $\mathcal N_{[k]}:\mathcal I\rightarrow\Real^{(kl+l)\times(kl+l)}$ associated  with $N\in SUT$, that is,
\begin{align}\label{N.array}
\mathcal N_{[k]}:=
\begin{bmatrix}
 N&0&&&\cdots&0\\
 I+\alpha_{2,1} N^{(1)}&N&&&&\vdots\\
 \alpha_{3,1} N^{(2)}&I+\alpha_{3,2}N^{(1)}&N&&\\
\vdots& \ddots&\ddots&\ddots& &\\
 \vdots& &\ddots&\ddots&\ddots&0\\
 \alpha_{k+1,1}N^{(k)}&\cdots&&\alpha_{k+1,k-2}N^{(2)}&I+ \alpha_{k+1,k}N^{(1)}&N
\end{bmatrix}.
\end{align}

Since the derivatives $N^{(i)}$  inherit the basic structure of $SUT$, too, we rewrite
\begin{align}\label{N.M}
\mathcal N_{[k]}:=
\begin{bmatrix}
 N&0&&&\cdots&0\\
 I+M_{2,1}&N&&&&\vdots\\
 M_{3,1}&I+M_{3,2}&N&&\\
\vdots& \ddots&\ddots&\ddots& &\\
 \vdots& &\ddots&\ddots&\ddots&0\\
 M_{k+1,1}&\cdots&&M_{k+1,k-2}&I+ M_{k+1,k}&N
\end{bmatrix},
\end{align}
and keep in mind that  $M_{i,j}\in SUT$ for all $i$ and $j$.

\begin{lemma} \label{l.existence.Ntilde}
Given $N\in SUT$, there
exist a regular lower block triangular matrix function $\mathcal L_{[k]}$, such that
\[
\mathcal L_{[k]} \cdot \mathcal N_{[k]} = \begin{bmatrix}
 N&0&&&\cdots&0\\
 I&\tilde{N}_2&&&&\vdots\\
 0 &I&\tilde{N}_3&&\\
\vdots& \ddots&\ddots&\ddots& &\\
 \vdots& &\ddots&\ddots&\ddots& 0\\
 0&\cdots&&0&I&\tilde{N}_{k+1}
\end{bmatrix}= \mathcal{\tilde{N}}_{[k]},
\]
whereas the matrix functions $\tilde{N}_{2},\ldots,\tilde{N}_{k+1}$ again belong to $SUT$ and  inherit the secondary diagonal blocks of $N$, that means $(\tilde N_s)_{i,i+1}=N_{i,i+1}$, $i=1,\ldots,\nu-1$, $s=2,\ldots,k$.

It holds that 
\begin{align*}
 N\tilde N_2\cdots \tilde N_{k+1}=0\quad \text{if}\quad k\geq l-1.
\end{align*}
Moreover, for $i=1,\ldots,l$, as long as $i+k+1\leq l$,
it results that
\begin{align*}
 (N\tilde N_2\cdots \tilde N_{k+1})_{i,i+j}&=(N^{k+1})_{i,i+j}=0,\quad j=1,\ldots,k,\\
 (N\tilde N_2\cdots \tilde N_{k+1})_{i,i+k+1}&=(N^{k+1})_{i,i+k+1}.
\end{align*}
\end{lemma}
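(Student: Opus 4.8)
The statement collects three distinct claims about the array function $\mathcal N_{[k]}$ built from an arbitrary $N\in SUT$: first, the existence of a regular lower block triangular $\mathcal L_{[k]}$ bringing $\mathcal N_{[k]}$ to the normalized form $\mathcal{\tilde N}_{[k]}$ with the new blocks $\tilde N_2,\ldots,\tilde N_{k+1}$ again in $SUT$ and sharing the secondary diagonal of $N$; second, the nilpotency statement $N\tilde N_2\cdots\tilde N_{k+1}=0$ for $k\geq l-1$; third, the explicit formulas for the entries $(N\tilde N_2\cdots\tilde N_{k+1})_{i,i+j}$ in the range $j\le k$ and $j=k+1$ whenever $i+k+1\le l$. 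I would treat the three parts in that order, since the second and third rest on the first.

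\textbf{Part 1: normalization.} Write $\mathcal N_{[k]}$ in the form \eqref{N.M}, i.e.\ with blocks $M_{i,j}\in SUT$ below the main block diagonal, $N$ on the main block diagonal, and $I+M_{i,i-1}$ on the first block subdiagonal. The plan is to perform block Gaussian elimination from the top down, multiplying on the left by elementary lower block triangular factors. At the first step, use the block $(2,1)$-entry $I+M_{2,1}$: since $M_{2,1}\in SUT$ is nilpotent, $I+M_{2,1}$ is invertible with inverse in $I+SUT$ (Lemma~\ref{l.SUT1}(4) applied to $-M_{2,1}$), so left-multiplication by $\mathrm{diag}((I+M_{2,1})^{-1})$ placed in the appropriate slots, followed by subtracting multiples of the new second block row from the rows below, clears the entire first block column beneath the $(2,1)$ position. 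Crucially, the resulting $(2,2)$-entry becomes $(I+M_{2,1})^{-1}N =: \tilde N_2$, which by Lemma~\ref{l.SUT1}(5) lies in $SUT$ and has $(\tilde N_2)_{i,i+1}=N_{i,i+1}$. Iterating this row-reduction downward, at stage $s$ one divides the $(s{+}1,s)$ block (again of the form $I+(\text{element of }SUT)$, because it is a sum of $I$ and products/combinations of $SUT$ matrices) and eliminates below it; the new diagonal block is of the form $(I-\hat N_s)^{-1}N=:\tilde N_{s+1}\in SUT$ with unchanged secondary diagonal, by the same lemma. All the elementary factors are lower block triangular and invertible, so their product $\mathcal L_{[k]}$ is lower block triangular and invertible; the final result is exactly $\mathcal{\tilde N}_{[k]}$. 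The one point requiring a little care is to verify inductively that the off-diagonal blocks produced at each elimination stage remain in $SUT$ — this follows from the closure properties in Lemma~\ref{l.SUT1}(1),(2): sums and products of $SUT$ matrices stay in $SUT$.

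\textbf{Parts 2 and 3: the product $N\tilde N_2\cdots\tilde N_{k+1}$.} Here the key input is Lemma~\ref{l.SUT1}(2),(3) generalized to products of several matrices: a product of $p$ matrices from $SUT$ is again in $SUT$, vanishes once $p\ge \nu$ (so certainly once $p\ge l$, since $\nu\le l$), and — most importantly — its secondary diagonal blocks are zero while its entries just above are given by a telescoping formula. The product $N\tilde N_2\cdots\tilde N_{k+1}$ is a product of $k+1$ elements of $SUT$, so it vanishes whenever $k+1\ge l$, i.e.\ $k\ge l-1$; that is Part 2. For Part 3, I would compare block-entrywise with $N^{k+1}$. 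The matrix $N^{k+1}$ has its first nonzero off-diagonal at block-distance $k+1$, with $(N^{k+1})_{i,i+k+1}=N_{i,i+1}N_{i+1,i+2}\cdots N_{i+k,i+k+1}$ (the only surviving term in the multinomial expansion when one is exactly $k+1$ blocks above the diagonal). Since each $\tilde N_s$ shares its secondary diagonal with $N$ and all other blocks of $\tilde N_s$ are at block-distance $\ge 2$, the same combinatorial accounting shows $N\tilde N_2\cdots\tilde N_{k+1}$ has zero blocks at distances $1,\ldots,k$ and, at distance $k+1$, the block $N_{i,i+1}(\tilde N_2)_{i+1,i+2}\cdots(\tilde N_{k+1})_{i+k,i+k+1}=N_{i,i+1}N_{i+1,i+2}\cdots N_{i+k,i+k+1}=(N^{k+1})_{i,i+k+1}$; the hypothesis $i+k+1\le l$ is exactly what guarantees that this block index is in range. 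I would phrase this as a short induction on $k$: the inductive step multiplies $N\tilde N_2\cdots\tilde N_k$ (which by induction agrees with $N^k$ up to block-distance $k$) by $\tilde N_{k+1}$ and tracks where the lowest-distance contribution lands, using that only the secondary-diagonal block of $\tilde N_{k+1}$ can lower the distance by the minimal amount.

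\textbf{Main obstacle.} The genuinely fiddly point is the bookkeeping in Part 1: verifying that every intermediate off-diagonal block produced during the downward elimination stays in $SUT$, and that the diagonal blocks really take the clean form $(I-\hat N_s)^{-1}N$ so that Lemma~\ref{l.SUT1}(5) applies verbatim. This is purely a matter of carefully ordering the elementary operations (divide the subdiagonal pivot first, then clear below) and invoking closure of $SUT$ under addition and multiplication at each stage; no new idea is needed, but it is the step where an imprecise argument could hide an error. Parts 2 and 3 are then essentially immediate consequences of the multiplication rules already recorded in Lemma~\ref{l.SUT1}.
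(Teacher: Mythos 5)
Your proposal is correct and follows essentially the same route as the paper: block Gaussian elimination with lower block triangular factors exploiting the invertibility of $I+M_{2,1}$ (and its successors) to produce $\tilde N_{s+1}=(I+\hat M)^{-1}N$, with Lemma \ref{l.SUT1}(4),(5) guaranteeing membership in $SUT$ and preservation of the secondary diagonal, followed by the multiplication rules for products of $SUT$ matrices to handle the nilpotency and entrywise claims. The paper dismisses your Parts 2 and 3 as "straightforward consequences," so your extra detail there (the telescoping of secondary-diagonal blocks) is a welcome elaboration rather than a deviation.
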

\begin{proof}
Since $I+M_{2,1}$ is nonsingular, with
the nonsingular lower block-triangular matrix function
\[
\mathcal L_{[k]}^1 =\begin{bmatrix}
	 I &0  & 0  & & \cdots & 0 \\
	 0 & (I+M_{2,1})^{-1} & 0 & & \cdots & 0 \\
	 0 & -M_{3,1}(I+M_{2,1} )^{-1} & I & 0  & \cdots& 0 \\
	0 & -M_{4,1}(I+M_{2,1})^{-1} & 0 &I  & \ddots & \vdots \\
	 \vdots & \vdots & \vdots & \ddots& \ddots & 0\\
	0 & -M_{k+1,1}(I+M_{2,1})^{-1} & 0 & \cdots & 0&I  
\end{bmatrix}
\]
we generate zero blocks  in the first column of  $\mathcal N_{[k]}$ such that 
\[
\mathcal L^{1}_{[k]} \cdot \mathcal N_{[k]} = \begin{bmatrix}
 N&0&&&\cdots&0\\
 I&\tilde{N}_2&&&&\vdots\\
 0&I+\hat{M}_{3,2}&N&&\\
\vdots& \ddots&\ddots&\ddots& &\\
 \vdots& &\ddots&\ddots&\ddots&0\\
 0&\hat{M}_{k+1,2}&\cdot &M_{k+1,k-2}&I+ M_{k+1,k}&N
\end{bmatrix},
\]
with $\tilde{N}_2=(I+M_{2,1})^{-1}N$. According to Lemma \ref{l.SUT1}, $\tilde{N}_1$ has the same second diagonal blocks than $N$ and  $I+\hat{M}_{3,2} $ is regular with the same pattern than $N$. If we make $k$ such elimination steps, we obtain
\[
\mathcal L^{k}_{[k]}  \cdot  \cdots \cdot \mathcal L^{1}_{[k]} \cdot \mathcal N_{[k]} = \mathcal{\tilde{N}}_{[k]}, 
\]
analogously to an LU-decomposition, and $\mathcal L_{[k]} := \mathcal L^{k}_{[k]}  \cdot  \cdots \cdot \mathcal L^{1}_{[k]}$.

The remaining assertions are straightforward consequences of the properties of matrix functions belongin to the set $SUT$.
\end{proof}
\begin{proposition}\label{prop.rank.Nk}
Given $N\in SUT$ the associated array function $\mathcal N_{[k]} $ has the nullspace
\begin{align*}
\ker \mathcal{{N}}_{[k]}
 &= \{ y=\begin{bmatrix}
         y_0\\\vdots\\y_k
        \end{bmatrix}
 \in \Real^{(k+1)l}: N\tilde{N}_2 \cdots \tilde{N}_{k+1} y_{k} = 0, \\
	& \quad \quad  \quad \quad
y_i=(-1)^{k+1-i}\tilde{N}_{i+1} \cdots \tilde{N}_{k+1}y_{k}, i=0,\ldots,k-1 \},
\end{align*}
and 
\begin{align*}
\dim \ker  \mathcal{{N}}_{[k]} &= \dim\ker N\tilde{N}_2 \cdots \tilde{N}_{k+1},\\
 \rank \mathcal N_{[k]} & = kl +\rank N\tilde{N}_2 \cdots \tilde{N}_{k+1}.
\end{align*}
Moreover, if $N$ belongs even to $SUT_{row}$ or to $SUT_{column}$, then
\begin{align}\label{Nk_rank}
 \rank \mathcal N_{[k]} & = kl +\rank N^{k+1}= \text{constant}.
\end{align}
\end{proposition}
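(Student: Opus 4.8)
\textbf{Proof plan for Proposition \ref{prop.rank.Nk}.}

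The plan is to analyze the linear system $\mathcal N_{[k]}y = 0$ directly in the reduced form $\mathcal{\tilde N}_{[k]}$ supplied by Lemma \ref{l.existence.Ntilde}. Since $\mathcal L_{[k]}$ is pointwise nonsingular, $\ker \mathcal N_{[k]} = \ker \mathcal{\tilde N}_{[k]}$ and $\rank \mathcal N_{[k]} = \rank \mathcal{\tilde N}_{[k]}$, so it suffices to work with the bidiagonal-looking matrix function
\[
\mathcal{\tilde N}_{[k]} = \begin{bmatrix}
 N&0&&&\cdots&0\\
 I&\tilde{N}_2&&&&\vdots\\
 0 &I&\tilde{N}_3&&\\
\vdots& \ddots&\ddots&\ddots& &\\
 \vdots& &\ddots&\ddots&\ddots& 0\\
 0&\cdots&&0&I&\tilde{N}_{k+1}
\end{bmatrix}.
\]
First I would write out the block equations of $\mathcal{\tilde N}_{[k]}y=0$ from the bottom up. The last block line reads $y_{k-1} + \tilde N_{k+1}y_k = 0$, hence $y_{k-1} = -\tilde N_{k+1}y_k$. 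The block line above it gives $y_{k-2} + \tilde N_k y_{k-1} = 0$, so $y_{k-2} = -\tilde N_k y_{k-1} = \tilde N_k\tilde N_{k+1}y_k$. Iterating upward, each $y_i$ for $i=0,\ldots,k-1$ is forced to equal $(-1)^{k+1-i}\tilde N_{i+1}\cdots\tilde N_{k+1}y_k$, a uniquely determined function of $y_k$. Finally, the very first block line is $Ny_0 = 0$; substituting the expression for $y_0$ (which is $(-1)^{k+1}\tilde N_2\cdots\tilde N_{k+1}y_k$, the sign being irrelevant to the kernel condition) yields the single remaining constraint $N\tilde N_2\cdots\tilde N_{k+1}y_k = 0$. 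This establishes the claimed description of $\ker\mathcal{\tilde N}_{[k]}$, and since the map $y_k \mapsto (y_0,\ldots,y_{k-1},y_k)$ is injective, $\dim\ker\mathcal N_{[k]} = \dim\ker N\tilde N_2\cdots\tilde N_{k+1}$. The rank formula follows by the rank–nullity theorem: $\mathcal N_{[k]}$ acts on $\Real^{(k+1)l}$, so $\rank\mathcal N_{[k]} = (k+1)l - \dim\ker N\tilde N_2\cdots\tilde N_{k+1} = kl + \rank N\tilde N_2\cdots\tilde N_{k+1}$.

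For the final refinement \eqref{Nk_rank}, the task is to show $\rank N\tilde N_2\cdots\tilde N_{k+1} = \rank N^{k+1}$ whenever $N \in SUT_{row}$ or $N \in SUT_{column}$. Here I would invoke Lemma \ref{l.existence.Ntilde}, which guarantees that each $\tilde N_s$ lies in $SUT$ and shares the secondary diagonal blocks of $N$: $(\tilde N_s)_{i,i+1} = N_{i,i+1}$. Consequently, if $N \in SUT_{row}$ then every $\tilde N_s \in SUT_{row}$ (the full-row-rank condition on the secondary diagonal is inherited), and similarly for $SUT_{column}$. Then Lemma \ref{l.Nrow} (resp. Lemma \ref{l.Ncol}) applies to the product of the $k+1$ matrices $N,\tilde N_2,\ldots,\tilde N_{k+1}$: in the row case $\rank N\tilde N_2\cdots\tilde N_{k+1} = l_1 + \cdots + l_{\nu-(k+1)}$, which by the same lemma equals $\rank N^{k+1}$; in the column case $\rank N\tilde N_2\cdots\tilde N_{k+1} = l - (l_1 + \cdots + l_{k+1}) = \rank N^{k+1}$. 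In either case the value is a fixed integer independent of $t$, so the rank is constant.

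The main obstacle I anticipate is the bookkeeping around \emph{which} subset $\tilde N_s$ belongs to — one must be careful that the reduced blocks $\tilde N_s = (I + \hat M_{s,s-1})^{-1}N$ (products of a $SUT$-inverse with $N$, by the LU-style elimination in Lemma \ref{l.existence.Ntilde}) really do preserve the full-rank property of the secondary diagonal, not merely membership in $SUT$. This is exactly what Lemma \ref{l.SUT1}(5) delivers: $((I-\hat N)^{-1}N)_{i,i+1} = N_{i,i+1}$, so the secondary diagonal blocks are untouched and their ranks are preserved. Once this is pinned down, everything else is routine back-substitution and an appeal to the rank formulas for products in $SUT_{row}$ / $SUT_{column}$ already proved in Lemmas \ref{l.Nrow} and \ref{l.Ncol}. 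No serious computation is needed beyond the triangular elimination already carried out in Lemma \ref{l.existence.Ntilde}.
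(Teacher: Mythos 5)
Your proposal is correct and follows essentially the same route as the paper's own proof: pass to $\mathcal{\tilde N}_{[k]}$ via Lemma \ref{l.existence.Ntilde}, solve the block system by back-substitution to express $y_0,\ldots,y_{k-1}$ in terms of $y_k$, read off the kernel and the rank by rank–nullity, and then invoke Lemmas \ref{l.Ncol} and \ref{l.Nrow} for the $SUT_{column}$/$SUT_{row}$ cases. Your explicit remark that the $\tilde N_s$ inherit the full-rank secondary diagonal blocks (via Lemma \ref{l.SUT1}(5)), and hence membership in $SUT_{column}$ or $SUT_{row}$, is a point the paper leaves implicit, but it is the same argument.
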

\begin{proof}
Lemma \ref{l.existence.Ntilde} implies $\ker \mathcal{\tilde{N}}_{[k]}=\ker \mathcal{{N}}_{[k]}$ and $\rank \mathcal{\tilde{N}}_{[k]}=\rank \mathcal{{N}}_{[k]}$.
We evaluate the nullspace $\ker \mathcal{\tilde{N}}_{[k]}$,
\begin{align*}
\ker \mathcal{\tilde{N}}_{[k]} &= \{ y \in \Real^{(k+1)l}: Ny_0=0, y_{i}=-\tilde{N}_{i+2}y_{i+1} , i=0,\ldots,k-1 \}\\
  &= \{ y \in \Real^{(k+1)l}: N\tilde{N}_2 \cdots \tilde{N}_{k+1} y_{k} = 0, \\
	& \quad \quad  \quad \quad
	y_i=(-1)^{k+1-i}\tilde{N}_{i+1} \cdots \tilde{N}_{k+1}y_{k}, i=0,\ldots,k-1 \},
\end{align*}
which yields
\begin{align*}
\dim \ker  \mathcal{\tilde{N}}_{[k]} &= \dim\ker N\tilde{N}_2 \cdots \tilde{N}_{k+1},\\
 \rank \tilde{\mathcal N}_{[k]} & = kl +\rank N\tilde{N}_2 \cdots \tilde{N}_{k+1}.
\end{align*}
If, additionally, $N\in SUK_{column}$, then owing to Lemma \ref{l.Ncol} it results that
also $\dim\ker N\tilde{N}_2 \cdots \tilde{N}_{k+1}=\dim \ker N^{k+1} =l-(l_1+\cdots+l_{k+1})$ such that $\rank N\tilde{N}_2 \cdots \tilde{N}_{k+1}=\rank N^{k+1} =l_1+\cdots+l_{k+1}$, and hence
\[
  \rank \tilde{\mathcal N}_{[k]} = kl +\rank N^{k+1}.
\]
If, contrariwise, $N\in SUK_{row}$, then owing to Lemma \ref{l.Nrow} it results that
also $\rank N\tilde{N}_2 \cdots \tilde{N}_{k+1}=\rank N^{k+1} =l_1+\cdots+l_{\nu-(k+1)}$  and hence
\[
  \rank \tilde{\mathcal N}_{[k]} = kl +\rank N^{k+1}.
\]
\end{proof}
\begin{corollary}\label{c.Nk_-full} 
If  $N\in SUT$ then, for $k\geq\nu$,
 \begin{align*}
\ker \mathcal{{N}}_{[k]}
 = \{  y=\begin{bmatrix}
         y_0\\\vdots\\y_k
        \end{bmatrix} \in \Real^{(k+1)m}: y_0 = 0, y_i=\tilde{N}_{i+1} \cdots \tilde{N}_{k+1}y_{k}, i=1,\ldots,k-1 \}.
\end{align*}
\end{corollary}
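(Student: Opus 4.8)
The statement to be proved is Corollary~\ref{c.Nk_-full}: for $N\in SUT$ and $k\geq\nu$, the nullspace of the array function $\mathcal N_{[k]}$ consists precisely of the vectors $y=(y_0,\dots,y_k)$ with $y_0=0$ and $y_i=\tilde N_{i+1}\cdots\tilde N_{k+1}y_k$ for $i=1,\dots,k-1$. This is an immediate consequence of Proposition~\ref{prop.rank.Nk} together with Lemma~\ref{l.existence.Ntilde}; my plan is simply to spell out how the general description of $\ker\mathcal N_{[k]}$ specializes when the number of differentiation levels reaches the nilpotency index.

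\textbf{First step.} I would invoke Proposition~\ref{prop.rank.Nk}, which already gives
\[
\ker\mathcal N_{[k]}=\Bigl\{y=\begin{bmatrix}y_0\\\vdots\\y_k\end{bmatrix}\in\Real^{(k+1)l}:\ N\tilde N_2\cdots\tilde N_{k+1}y_k=0,\ y_i=(-1)^{k+1-i}\tilde N_{i+1}\cdots\tilde N_{k+1}y_k,\ i=0,\dots,k-1\Bigr\}.
\]
So two things remain to be checked for $k\geq\nu$: that the sign $(-1)^{k+1-i}$ can be dropped (which the corollary tacitly does), and that the condition $N\tilde N_2\cdots\tilde N_{k+1}y_k=0$ together with the recursion forces $y_0=0$.

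\textbf{Second step (the substantive point).} For the condition on $y_0$: by the $i=0$ relation in Proposition~\ref{prop.rank.Nk}, $y_0=\pm\,\tilde N_1\cdots\tilde N_{k+1}y_k$ where I set $\tilde N_1:=N$ to match the product $N\tilde N_2\cdots\tilde N_{k+1}$; more precisely $y_0=\pm N\tilde N_2\cdots\tilde N_{k+1}y_k$, which is exactly the quantity that the first constraint kills. Hence $y_0=0$ automatically. Alternatively, and perhaps more transparently, Lemma~\ref{l.existence.Ntilde} states that $N\tilde N_2\cdots\tilde N_{k+1}=0$ whenever $k\geq l-1$, and since $l\geq\nu$ one has to be a little careful here — the clean statement $N\tilde N_2\cdots\tilde N_{k+1}=0$ holds for $k\geq l-1$, while the corollary asserts the nullspace description for $k\geq\nu$. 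So I would use the weaker route: for $k\geq\nu$, the product $N\tilde N_2\cdots\tilde N_{k+1}$ is a product of $k+1\geq\nu+1$ matrix functions each lying in $SUT(l,\nu,l_1,\dots,l_\nu)$, hence by Lemma~\ref{l.SUT1}(3) (which gives $N_1\cdots N_k=0$ for $k\geq\nu$, applied with the shared block structure) this product vanishes identically. Therefore the first constraint $N\tilde N_2\cdots\tilde N_{k+1}y_k=0$ is vacuous, and in particular $y_0=\pm N\tilde N_2\cdots\tilde N_{k+1}y_k=0$.

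\textbf{Third step (the sign).} It remains to reconcile the sign $(-1)^{k+1-i}$ appearing in Proposition~\ref{prop.rank.Nk} with the plain $\tilde N_{i+1}\cdots\tilde N_{k+1}$ in the corollary. I would note that the sign is irrelevant to the \emph{set} $\ker\mathcal N_{[k]}$: the map $y_k\mapsto y_i$ differs only by an overall nonzero scalar $(-1)^{k+1-i}$ per block, so absorbing these signs into the free parameter $y_k$ (or equivalently into a block-diagonal sign change of coordinates) does not change which tuples $(y_0,\dots,y_k)$ lie in the kernel — or, more cleanly, one checks directly from $\mathcal{\tilde N}_{[k]}y=0$ that $y_i=-\tilde N_{i+2}y_{i+1}$, so the corollary's recursion $y_i=\tilde N_{i+1}\cdots\tilde N_{k+1}y_k$ is the same relation up to absorbing the signs, which is harmless. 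So I would either restate the corollary with the signs, or remark that the description holds "up to the obvious signs"; since the paper writes it without signs, I interpret $y_i$ and $y_k$ as representing a sign-adjusted basis of the same space.

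\textbf{Main obstacle.} There is no real obstacle — the corollary is a direct specialization. The only thing to get right is the bookkeeping: making sure that $k\geq\nu$ (rather than the stronger $k\geq l-1$) already suffices for $N\tilde N_2\cdots\tilde N_{k+1}=0$, which follows because all the factors $\tilde N_2,\dots,\tilde N_{k+1}$ belong to the \emph{same} set $SUT(l,\nu,l_1,\dots,l_\nu)$ (they inherit $N$'s block pattern, as asserted in Lemma~\ref{l.existence.Ntilde}), so Lemma~\ref{l.SUT1}(3) applies to a product of $k+1>\nu$ of them. With this observation the proof is two lines: the first constraint in Proposition~\ref{prop.rank.Nk} drops out, leaving $y_k\in\Real^l$ free, $y_0=0$, and the stated recursion for $y_1,\dots,y_{k-1}$.
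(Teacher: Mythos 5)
Your proposal is correct and is essentially the argument the paper intends (the corollary is stated without an explicit proof, as a direct specialization of Proposition \ref{prop.rank.Nk}): the decisive point, which you identify accurately, is that for $k\geq\nu$ the product $N\tilde N_2\cdots\tilde N_{k+1}$ has $k+1\geq\nu+1$ factors all lying in the same class $SUT(l,\nu,l_1,\ldots,l_\nu)$ by Lemma \ref{l.existence.Ntilde}, so it vanishes by Lemma \ref{l.SUT1}(3), making the constraint on $y_k$ vacuous and forcing $y_0=0$. One caveat: your remark that the signs $(-1)^{k+1-i}$ can be "absorbed into the free parameter $y_k$" is not literally valid (a single reparametrization of $y_k$ cannot produce a different sign in each block), but your alternative route via the recursion $y_i=-\tilde N_{i+2}y_{i+1}$ is the right one — the sign (and off-by-one product-index) discrepancy originates in the closed-form display of Proposition \ref{prop.rank.Nk} itself, and is immaterial to the content of the corollary, namely that $y_0=0$ and $y_1,\ldots,y_{k-1}$ are determined by $y_k$ through products of the $\tilde N_j$.
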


\end{document}